\definecolor{JungleGreen}{cmyk}{2,0,0.52,0}
\definecolor{green}{rgb}{0.1,0.6,0.}
\definecolor{blufrance}{rgb}{0.19, 0.55, 0.91}
\definecolor{brickred}{rgb}{1, 0.25, 0.5}
\definecolor{arsenic}{rgb}{0.23, 0.27, 0.29}
\definecolor{amaranth}{rgb}{0.9, 0.17, 0.31}
\definecolor{dgreen}{rgb}{0.0, 0.5, 0.25}
\definecolor{amaranth}{rgb}{0.9, 0.17, 0.31}
\definecolor{americanrose}{rgb}{1.0, 0.01, 0.24}
\definecolor{persianindigo}{rgb}{0.2, 0.07, 0.48}
\newcommand{\spac}[1]{{\quad\text{ #1 }\quad}}
\newtheorem{mtheo}{Theorem}
\newtheorem{mpro}[mtheo]{Proposition}
\newtheorem{theorem}{Theorem}[section]
\newtheorem{lemma}[theorem]{Lemma}
\newtheorem{mlemma}[theorem]{Main Lemma}
\newtheorem{claim}[theorem]{Claim}
\newtheorem{corollary}[theorem]{Corollary}
\newtheorem{proposition}[theorem]{Proposition}
\newtheorem{notation}[theorem]{Notation}
\theoremstyle{definition}
\newtheorem{definition}[theorem]{Definition}
\newtheorem{remark}[theorem]{Remark}
\newtheorem{assumption}[theorem]{Assumption}
\newtheorem*{remark*}{Remark}
\newcommand{\eqdef}{\stackrel{\scriptscriptstyle\rm def}{=}}
\DeclareMathOperator{\Att}{Att}
\DeclareMathOperator{\var}{var}
\DeclareMathOperator{\id}{id}
\DeclareMathOperator{\diam}{diam}
\DeclareMathOperator{\card}{card}
\DeclareMathOperator{\interior}{int}
\DeclareMathOperator{\cudot}{{\bigcup\hspace{-0.31cm}{\boldsymbol{\cdot}}\hspace{0.15cm}}}
\DeclareMathOperator{\cupdot}{\cudot\limits}
\DeclareMathOperator{\smallcudot}{{\cup\hspace{-0.18cm}{\boldsymbol{\cdot}}\hspace{0.15cm}}}
\DeclareMathOperator{\smallcupdot}{\smallcudot\limits}
\def\cB{\EuScript{W}}
\def\cL{\EuScript{L}}
\def\frakR{\mathfrak{R}}
\def\fX{\mathfrak{X}}
\DeclareMathOperator{\Mod}{Mod}
\DeclareMathOperator{\PCs}{PCS}
\DeclareMathOperator{\Cs}{CS}
\newcommand{\PCS}[1]{\PCs(#1)}
\newcommand{\CS}[1]{\Cs(#1)}
\def\c{{\rm c}}
\def\bE{\mathbb{E}}
\def\bA{\mathbf{A}}
\def\bN{\mathbb{N}}
\def\bZ{\mathbb{Z}}
\def\bR{\mathbb{R}}
\def\bP{\mathbb{P}}
\def\bS{\mathbb{S}}
\def\cA{\mathcal{A}}
\def\fG{\EuScript{G}}
\def\fp{\mathfrak{p}}
\def\fS{\EuScript{S}}
\def\fR{\EuScript{P}}
\def\fP{\EuScript{P}}
\def\fT{\EuScript{T}}
\def\fL{\EuScript{L}}
\def\cO{\EuScript{O}}
\def\cM{\EuScript{M}}
\def\ft{\mathfrak{t}}
\def\fb{\mathfrak{b}}
\def\fB{\mathfrak{B}}
\def\Sub{\mathcal{S}}
\def\Cut{\mathcal{C}}
\def\c{{\rm c}}
\def\D{\lVert F\rVert}
\def\va{{\mathbf a}}
\numberwithin{equation}{section}
\DeclareMathSymbol{\varnothing}{\mathord}{AMSb}{"3F}
\renewcommand{\emptyset}{\varnothing}
\thanks{}
\begin{document}

\title[Variational principle for nonhyperbolic ergodic measures]{Variational principle\\for nonhyperbolic ergodic measures: \\Skew products and elliptic cocycles}
\author[L.~J.~D\'iaz]{L. J. D\'\i az}
\address{Departamento de Matem\'atica PUC-Rio, Marqu\^es de S\~ao Vicente 225, G\'avea, Rio de Janeiro 22451-900, Brazil}
\email{lodiaz@mat.puc-rio.br}
\author[K.~Gelfert]{K.~Gelfert}
\address{Instituto de Matem\'atica Universidade Federal do Rio de Janeiro, Av. Athos da Silveira Ramos 149, Cidade Universit\'aria - Ilha do Fund\~ao, Rio de Janeiro 21945-909,  Brazil}\email{gelfert@im.ufrj.br}
\author[M.~Rams]{M. Rams} \address{Institute of Mathematics, Polish Academy of Sciences, ul. \'{S}niadeckich 8,  00-656 Warszawa, Poland}
\email{rams@impan.pl}

\begin{abstract}
For a large class of transitive non-hyperbolic systems, we construct nonhyperbolic ergodic measures with entropy arbitrarily close to its maximal possible value. The systems we consider are partially hyperbolic with one-dimension central direction for which there are positive entropy ergodic measures whose central Lyapunov exponent is negative, zero, or positive. We construct ergodic measures with zero central Lyapunov exponent whose entropy is positive and arbitrarily close to the topological entropy of the set of points with central Lyapunov exponent zero. This provides a restricted variational principle for nonhyperbolic (zero exponent) ergodic measures.

The result is applied to the setting of $\mathrm{SL}(2,\bR)$ matrix cocycles and provides a counterpart to Furstenberg's classical result: for an open and dense subset of elliptic $\mathrm{SL}(2,\bR)$  cocycles we construct ergodic measures with upper Lyapunov exponent zero and with metric entropy arbitrarily close to the topological entropy of the set of infinite matrix products with subexponential growth of the norm.
\end{abstract}

\begin{thanks}{This research has been supported [in part] by the Coordenaç\~ao de Aperfeiçoamento de Pessoal de N\'ivel Superior - Brasil (CAPES) - Finance Code 001, by CNPq-grants, CNPq Projeto Universal, and INCT-FAPERJ  (Brazil) and by National  Science  Centre  grant 2019/33/B/ST1/00275 (Poland). The authors acknowledge the hospitality of IMPAN, IM-UFRJ, and PUC-Rio. }\end{thanks}

\keywords{coded systems, elliptic matrix cocycles, entropy, Lyapunov exponents, nonhyperbolic measures, restricted variational principles, skew products}
\subjclass[2000]{%
37B10, 
37D25, 
37D35, 
37D30, 
28D20, 
28D99
}  

\maketitle
\tableofcontents

\section{Introduction}

Topological entropy, metric entropy, and Lyapunov exponents are key concepts in ergodic theory and thermodynamical formalism to quantify the complexity of  dynamical systems. Several classical results such as the variational principle for entropy \cite{Wal:75} and Ruelle's inequality \cite{Rue:78} provide relations between them. On the other hand, Oseledets' theorem establishes the framework to study the Lyapunov exponents of invariant measures, see \cite{Rue:79}. An ergodic measure is \emph{nonhyperbolic} if its Oseledets splitting has some bundle whose Lyapunov exponent is zero. Otherwise the measure is called \emph{hyperbolic}. In what follows, we use the terms \emph{nonhyperbolic} and \emph{hyperbolic} only for \emph{ergodic measures}. 

A particularly interesting setting are partially hyperbolic systems which, by assumption, have several globally defined, continuous, and invariant subbundles which carry implicitly information about Lyapunov exponents. When investigating nonhyperbolic measures of those systems, it suffices to focus on the ``central bundle'' $E^\c$ which neither displays uniform contraction nor expansion and hence detects nonhyperbolicity. 
We study settings where $E^\c$ is one-dimensional and nonhyperbolic measures are robustly present and essential: they exist, some of them have positive entropy, and these two properties hold also for small perturbations of the dynamics. Moreover, the systems are genuinely nonhyperbolic as they display simultaneously nonhyperbolicity and hyperbolicity of different types in the central bundle: there exist ergodic measures $\mu$ for which the central-Oseledets exponent $\chi^\c(\mu)$ (relative to $E^\c$) is negative, zero or positive. The nonhyperbolic nature of these systems is also reflected by the fact that specification-like properties are not satisfied and that inside this class robust heterodimensional cycles occur densely. 

We aim to understand the ``total amount'' of nonhyperbolicity that can be detected on the ergodic level.  Here our focus is on entropy. To put this discussion into a broader context, recall the concept of  topological entropy $h_{\rm top}$ of continuous maps on general sets (not necessarily compact or invariant)  introduced by Bowen \cite{Bow:73}. One of the key results in \cite{Bow:73} is that the entropy of an ergodic measure bounds from below the entropy of the set of its generic points. This  result has an immediate consequence in the study of the ``set of nonhyperbolicity'' when the Lyapunov exponent $\chi^\c(x)$ of a point $x$ is the Birkhoff average of a continuous potential at that point. 
Bowen's result then implies that the entropy of a nonhyperbolic ergodic measure bounds from below the entropy of the set of nonhyperbolic points
\[
	\sup\{h(\mu)\colon \mu\text{ ergodic, }\chi^\c(\mu)=0\}
	\le h_{\rm top}(\cL^\c(0)),
\]
{where}
\[
	\cL^\c(0)
	\eqdef \{x\colon\chi^\c(x)=0\}.
\]
In general, it is unknown if, in terms of entropy, the set $\cL^\c(0)$ can be larger. One of the goals of this paper is to explore this relation. We exhibit chaotic settings where the above is an equality. Let us now be more precise. 

\subsection{Skew products and cocycles}
Given $N\ge2$, consider a finite family $f_i\colon \bS^1\to \bS^1$, $i=1,\ldots,N$, of $C^1$ diffeomorphisms and the associated  step skew product
\begin{equation}\label{eq:sp}
	F\colon \Sigma_N\times \bS^1\to \Sigma_N\times \bS^1,
	\quad
	F(\xi,x) =(\sigma(\xi), f_{\xi_0}(x)),
\end{equation}
where $\Sigma_N=\{1,\ldots,N\}^\bZ$ and $\sigma$ is the usual left shift in this space.
Given $X=(\xi,x)\in \Sigma_N\times\bS^1$, consider its \emph{(fiber) Lyapunov exponent} 
\[
	\chi(X)
	\eqdef
	 \lim_{n\to\pm\infty}\frac{1}{ n}\log\,\lvert (f_{\xi}^n)'(x)\lvert  ,
\]
where for $\xi=(\ldots,\xi_{-1}|\xi_0,\xi_1,\ldots)$ we write
\begin{equation}\label{eq:reference}
	 f_\xi^{-n}\eqdef f_{\xi_{-n}}^{-1}\circ\cdots \circ f_{\xi_{-1}}^{-1}
	 \quad\text{ and }\quad
	 f_\xi^n \eqdef f_{\xi_{n-1}}\circ\cdots\circ f_{\xi_0},
\end{equation}	 
and we assume that both limits $n\to\pm \infty$  exist and coincide. Otherwise we say that the Lyapunov exponent $\chi(X)$ does not exist.
We will analyze the topological entropy of the \emph{level sets of Lyapunov exponents}: given $\alpha\in\bR$ let
\begin{equation}\label{def:L0}
	\cL(\alpha)
	\eqdef \big\{X\in\Sigma_N\times\bS^1\colon \chi(X)=\alpha\big\}.
\end{equation}
Denote by $\cM_{\rm erg}(F)$ the space of $F$-ergodic measures. Given $\mu\in\cM_{\rm erg}(F)$, denote by $h(F,\mu)$ its \emph{metric entropy} and by $\chi(\mu)$ its \emph{Lyapunov exponent}  
 \[
 	\chi(\mu)
	\eqdef \int\log\,\lvert(f_{\xi_0})'(x)\rvert\,d\mu(\xi,x).
\]

In continuation to our introduction above, maps $F$ as above can be viewed as a special case of partially hyperbolic diffeomorphisms whose one-dimensional central bundle $E^\c$ is integrable. Moreover, $E^\c$ is tangent to the circle fibers and $\chi^\c=\chi$. The class $\mathrm{SP}^1_{\rm shyp}(\Sigma_N\times\bS^1)$ of maps $F$ studied here was introduced in \cite{DiaGelRam:17}, see Section \ref{sec:axioms} for its definition. They capture the relevant properties of the so-called robustly nonhyperbolic transitive sets,  reformulating the main properties of those systems in the setting of skew products over the shift of $N$ symbols whose fiber maps are $C^1$ diffeomorphisms of the circle $\bS^1$. Every $F\in \mathrm{SP}^1_{\rm shyp}(\Sigma_N\times\bS^1)$ is transitive and has so-called  contracting and expanding blenders which, by transitivity, are connected; see also the contraction-expansion-rotation examples as introduced in \cite{GorIlyKleNal:05}. For details see the discussion in \cite[Section 8]{DiaGelRam:17}. A paradigmatic setting  arises from the projective action of $2\times2$ matrix-cocycles. In particular, our approach applies to an open and dense subset of the so-called elliptic $\mathrm{SL}(2,\bR)$ cocycles. 

In our setting, by \cite{GorIlyKleNal:05}, besides hyperbolic measures with negative or positive exponent,  there are nonhyperbolic measures.
As a consequence of \cite{BocBonDia:16}, they can be chosen with positive entropy and hence $h_{\rm top}(F,\cL(0))>0$. The arguments in \cite{BocBonDia:16} are based on the construction of a compact invariant set with positive topological entropy consisting only of  points with zero Lyapunov exponent. Hence the existence of nonhyperbolic measures with positive entropy is a consequence of the classical variational principle for entropy \cite{Wal:75}. Though, the construction in \cite{BocBonDia:16} studies a very specific region of the space (the dynamics associated to some robust cycle involving a blender) and presumably the captured entropy  is much smaller than $h_{\rm top}(F,\cL(0))$. A natural question is if there exist nonhyperbolic measures whose entropy is equal or arbitrarily close to $h_{\rm top}(F,\cL(0))$. We answer positively the second question. The notoriously much harder question about the existence of measures maximizing entropy remains open.

On the other hand, by \cite{DiaGelRam:19}, for $F\in \mathrm{SP}^1_{\rm shyp}(\Sigma_N\times\bS^1)$ the closure of the ergodic measures is the union of two Poulsen simplices (corresponding to negative and positive Lyapunov exponent, respectively) which ``glue along'' nonhyperbolic measures. In particular, any nonhyperbolic measure is a weak$\ast$ and entropy-limit of hyperbolic ones. Moreover the spectrum of the exponent $\chi$ is a closed interval containing negative and positive numbers and for every $\alpha\ne0$ it holds
\[
	\sup\{h(F,\mu)\colon\mu\in\cM_{\rm erg}(F),\chi(\mu)=\alpha\}
	= h_{\rm top}(F,\cL(\alpha))
\]
 and
\begin{equation}\label{eq:varpinrc}\begin{split}
	\sup\{h(F,\mu)&\colon \mu\in\cM_{\rm erg}(F),\chi(\mu)=0\}\\
	&\le \lim_{\varepsilon\to0}
		\sup\{h(F,\mu)\colon \mu\in\cM_{\rm erg}(F),		
			\chi(\mu)\in(-\varepsilon,0)\cup(0,\varepsilon)\}\\
	&=h_{\rm top}(F,\cL(0))	.
\end{split}\end{equation}
This shows that measures with ``weak hyperbolicity'' are well inserted in this space and  are key ingredients to describe nonhyperbolic ones. 

The previous analysis is however insufficient to state in ``what amount'' weakly hyperbolic (and nonhyperbolic) measures contribute to the complexity of the dynamics. For example, in general it is unknown if  any term in \eqref{eq:varpinrc} attains the maximal entropy $\log N$.  Rigidity results for partially hyperbolic diffeomorphisms in \cite{TahYan:19} suggest that generically one should expect that high entropy-measures are hyperbolic. Indeed,  also assuming proximality%
\footnote{\emph{Proximality} holds if for every pair of points $x,y\in\bS^1$ there is $\xi\in\Sigma_N$ so that $\lvert f_\xi^n(x)-f_\xi^n(y)\rvert\to0$ and $\lvert f_\xi^{-n}(x)-f_\xi^{-n}(y)\rvert\to0$ as $n\to\infty$.}%
, by \cite[Theorem 2]{DiaGelRam:19}, there are exactly two ergodic measures maximizing entropy 
and they are both hyperbolic (with negative and positive Lyapunov exponent, respectively) and project to the entropy-maximizing Bernoulli measure in $\Sigma_N$. In particular, proximality implies that all terms in \eqref{eq:varpinrc} are   strictly less than $\log N$. 

The main result claims that in our setting there are nonhyperbolic ergodic measures whose entropy is as large as possible. As a consequence, a restricted variational principle holds for those nonhyperbolic measures. 

\begin{mtheo}\label{teo:1}
	For every $F\in \mathrm{SP}^1_{\rm shyp}(\Sigma_N\times\bS^1)$, $N\ge2$, it holds
\[\begin{split}
	h_{\rm top}(F,\cL(0))
	&= \sup\{h(F,\mu)\colon \mu\in\cM_{\rm erg}(F),\chi(\mu)=0\}\\
	&= \lim_{\varepsilon\to0}\sup\{h(F,\mu)
		\colon\mu\in\cM_{\rm erg}(F),
			\chi(\mu)\in(-\varepsilon,0)\}\\
	&= \lim_{\varepsilon\to0}\sup\{h(F,\mu)
		\colon\mu\in\cM_{\rm erg}(F),
			\chi(\mu)\in(0,\varepsilon)\}. 
\end{split}\]	
\end{mtheo}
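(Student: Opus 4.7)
The known chain \eqref{eq:varpinrc}, together with Bowen's inequality $\sup\{h(F,\mu):\chi(\mu)=0\}\le h_{\rm top}(F,\cL(0))$, already identifies three of the four quantities in the theorem as $h_{\rm top}(F,\cL(0))$; only the lower bound on the supremum over genuinely nonhyperbolic ergodic measures is missing. My plan is therefore to show that for every $\eta>0$ there exists an ergodic measure $\nu$ with $\chi(\nu)=0$ and $h(F,\nu)\ge h_{\rm top}(F,\cL(0))-\eta$.

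First, using \eqref{eq:varpinrc} I would pick, for some small $\varepsilon>0$, weakly hyperbolic ergodic measures $\mu^\pm$ with $\chi(\mu^-)\in(-\varepsilon,0)$ and $\chi(\mu^+)\in(0,\varepsilon)$ and entropy above $h_{\rm top}(F,\cL(0))-\eta/3$. Since $\mu^\pm$ are hyperbolic, Katok's approximation theorem furnishes horseshoes $\Lambda^\pm$ whose topological entropies still exceed $h_{\rm top}(F,\cL(0))-\eta/2$ and whose fiber Lyapunov spectra are concentrated near $\chi(\mu^\pm)$, hence close to zero. The hypotheses on $F\in\mathrm{SP}^1_{\rm shyp}(\Sigma_N\times\bS^1)$ provide a contracting and an expanding blender with robust heterodimensional connections, so orbit segments of $\Lambda^+$ can be joined to orbit segments of $\Lambda^-$ via short blender-supplied connector words of bounded length.

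The central step is a GIKN-type inductive construction (in the spirit of \cite{BocBonDia:16} and of the skew product arguments of \cite{DiaGelRam:17}): I would build a sequence of composite combinatorial blocks $W_n$ of increasing length by alternating long pieces from $\Lambda^+$ and $\Lambda^-$ glued by connectors, calibrating the proportion of $\Lambda^-$- versus $\Lambda^+$-time at each scale so that the Birkhoff average of $\log|f_{\xi_0}'|$ along $W_n$ tends to zero at a controlled rate. Empirical measures along a nested concatenation of such blocks, together with the GIKN ergodicity lemma (applicable since the block proportions can be made summably close to the previous ones), produce an ergodic $F$-invariant limit $\nu$ whose generic points belong to $\cL(0)$.

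The main obstacle is to perform this scheme without squandering entropy: naive concatenations designed to force the exponent to zero tend to collapse the combinatorial freedom dramatically. The way out is to invoke a Pliss-type selection inside each $\Lambda^\pm$ of orbit segments along which the finite-time exponent is already within $\varepsilon$ of $\chi(\mu^\pm)$ — such segments have density arbitrarily close to one — so that restricting to them costs only a tiny fraction of the horseshoe entropy. With Pliss segments and connector words of length $o(n)$ relative to the block length $n$, the number of admissible blocks $W_n$ grows at least like $\exp\bigl(n(h_{\rm top}(F,\cL(0))-\eta)\bigr)$, which is inherited by $h(F,\nu)$ through the standard concatenation-model entropy formula. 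Letting $\eta\to0$ then closes the variational principle.
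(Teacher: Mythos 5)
Your high-level architecture is sound — approximate $h_{\rm top}(F,\cL(0))$ by hyperbolic measures near the nonhyperbolic boundary, build a cascade of pieces drifting towards exponent zero, and invoke a GIKN-type ergodicity criterion — but the proposal has a genuine gap precisely at the point the paper invests all its work. You describe the limit $\nu$ as an ``empirical measure along a nested concatenation of such blocks.'' If each approximating measure is carried by a single (periodic or quasi-periodic) orbit, the result of Kwietniak–{\L}{\k{a}}cka cited in the paper shows the weak$\ast$ limit must have zero entropy, so this scheme cannot reach $h_{\rm top}(F,\cL(0))-\eta$. What rescues the paper's argument is that it replaces periodic orbits by genuine horseshoes $\Gamma_n$ carrying full-entropy Bernoulli-suspension measures $\mu_n=(H_n)_\ast\lambda_n$; entropy then survives by upper semicontinuity, and the ``repeat $m_n$ times and tail'' step is coded so that each level remains uniquely left decipherable, giving a finite-to-one factor map and hence Abramov's formula. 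Your Pliss-selection observation that exponentially many blocks $W_n$ exist is correct, but counting blocks does not by itself produce a positive-entropy limit measure: one still needs an explicit sequence of high-entropy \emph{ergodic} measures $\mu_n$ that converges, and a mechanism to verify ergodicity of the limit.

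The second gap is the ergodicity verification itself. The GIKN criterion (Proposition~\ref{pro:mainrussian}) does not follow from ``block proportions made summably close''; it requires exhibiting, for every continuous $\phi$ and every $\varepsilon>0$, sets of $\mu_n$-measure $>1-\varepsilon$ on which finite-time Birkhoff averages of $\phi$ at a fixed scale $T(\ell)$ are $\varepsilon$-close to $\int\phi\,d\mu$, uniformly over all $n\ge N(\ell)$. When $\mu_n$ is supported on a periodic orbit this is trivial by uniform convergence; here the paper has to prove it from scratch via Bernstein-type large deviations on the abstract suspension models $\fS_n$ (Proposition~\ref{pro:LD} and Main Lemma~\ref{mlemmapro:proofpro}), which is the technical heart of the argument and is entirely absent from your sketch. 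Finally, a structural remark: you alternate pieces from a fiber-expanding horseshoe $\Lambda^+$ and a fiber-contracting one $\Lambda^-$ to balance the exponent; the paper instead works only on the contracting side, starting with exponent $\alpha<0$ and appending tails (supplied by Axiom CEC$+$) that exactly halve the exponent at each level, so $\chi(\Gamma_n)\approx 2^{-n}\alpha<0$ and the exponent approaches $0$ monotonically. This one-sided scheme is what makes the coded structure and the CIFS/attractor machinery applicable; your alternating scheme would require a different symbolic model and a new argument for why the limit is not merely a nonergodic convex combination of a positive-exponent and a negative-exponent measure.
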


Theorem \ref{teo:1} follows from a more quantitative result, see Theorem~\ref{teo:2} stated below. 

Let us now draw a consequence for $2\times 2$-matrix cocycles generated by a finite collection of matrizes  $\bA=\{A_1,\ldots,A_N\}$ in $\mathrm{SL}(2, \mathbb{R})^N$, $N\ge2$. The action of any matrix on the projective line $\bP^1$ (which is topologically the circle $\bS^1$) is a very special diffeomorphism. Given $A_i$, we define 
\begin{equation}\label{neq:defsteskecoc}
	f_i =f_{A_i}\colon \bP^1 \to \bP^1, \quad
	f_i (v)
	\eqdef \frac{A_i v}{\lVert A_i v\rVert}
\end{equation}
and denote by $F_\bA$ the associated skew product generated by the maps $f_1,\ldots,f_N$ as in \eqref{eq:sp}. 
Note that the spectrum of Lyapunov exponents of the cocycle $\bA$ and the fiber Lyapunov spectrum of $F_{\bA}$ are related (see \cite[Section 11]{DiaGelRam:19} for details) and hence our results can be translated to the elliptic cocycles setting. We postpone the details to Section \ref{sec:cocyc}.

Consider the (\emph{upper}) \emph{Lyapunov exponent} of $\xi^+=(\xi_0,\xi_1,\ldots)\in\Sigma_N^+\eqdef\{1,\ldots,N\}^{\bN_0}$ by
\begin{equation}\label{eq:upper}
	\lambda_1(\bA,\xi^+)
	\eqdef\lim_{n\to\infty}\frac1n\log\,
		\lVert A_{\xi_{n-1}}\circ\cdots\circ A_{\xi_0}\rVert
\end{equation}
whenever this limit exists. Given an ergodic measure $\nu^+$ with respect to the left shift $\sigma^+$ on $\Sigma_N^+$, by the subadditive ergodic theorem, almost surely it holds
\[
	\lambda_1(\bA,\xi^+)
	= \lambda_1(\bA,\nu^+)
	\eqdef \lim_{n\to\infty}\int\frac1n\log\,
		\lVert A_{\eta_{n-1}}\circ\cdots\circ A_{\eta_0}\rVert\,d\nu^+(\eta^+).
\]
 Note that $\lambda_1(\bA,\nu^+)\ge0$. Moreover, given any (nondegenerate) Bernoulli measure $\fb^+$, Furstenberg's theorem \cite{Fur:63} states that, assuming that the semi-group generated by $\bA$ is not relatively compact and there is no finite set $\emptyset\ne L\subset\bP^1$ such that $A(L)=L$ for every $A\in \bA$, then $\lambda_1(\bA,\fb^+)>0$.%
 \footnote{Furstenberg's result states the dichotomy ``positive Lyapunov exponent versus rigid dynamics''. As we are, by hypotheses,  in a non-rigid context, this implies always positive exponent.}

Bernoulli measures are rather specific ergodic measures and, besides Lyapunov-maxi\-mizing ones, very little is known about the ergodic theory of measures in this context. The following result complements this line of research (see also \cite{BocRam:16}) and can be read as a study of Lyapunov-minimizing measures 
 of matrix cocycles in our non-rigid context.
Recall that $\bA$ is \emph{elliptic} if its associated multiplicative semigroup  contains some \emph{elliptic} element $R$ (i.e., the absolute value of the  trace of $R$ is less than $2$). The set  $\mathfrak{E}_N$ of elliptic cocycles is open. In \cite{DiaGelRam:19} it is introduced an open and dense subset $\mathfrak{E}_{N,\rm shyp}$ of $\mathfrak{E}_N$, the so-called \emph{elliptic cocycles having some hyperbolicity}. The key property is that if $\bA\in \mathfrak{E}_{N,\rm shyp}$ then $F_{\bA}\in \mathrm{SP}^1_{\rm shyp}(\Sigma_N\times\bP^1)$, see Section \ref{sec:cocyc} for details. Also observe that we are in the case of proximality, which implies that entropy is positive and less than $\log N$.

Analogously to \eqref{def:L0} define the set of nonhyperbolic matrix concatenations by
\begin{equation}\label{eq:orquic}
	\cL_\bA^+(0)
	\eqdef \{\xi^+\in\Sigma_N^+\colon \lambda_1(\bA,\xi^+)=0\}.
\end{equation}

\begin{mtheo}\label{teo:3}
	For every $N\ge2$ and every $\bA$ in the open and dense subset $\mathfrak{E}_{N,\rm shyp}$ of $\mathfrak{E}_N$ it holds
\[
	0
	<h_{\rm top}(\sigma^+,\cL^+_\bA(0))
	= \sup\{h(\sigma^+,\nu^+)\colon \nu^+\in\cM_{\rm erg}(\sigma^+)
		,\lambda_1(\bA,\nu^+)=0\}
	<\log N	.
\]	
\end{mtheo}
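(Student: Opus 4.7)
The strategy is to reduce Theorem~\ref{teo:3} to Theorem~\ref{teo:1} applied to $F_\bA$, translating between the fiber Lyapunov exponent of $F_\bA$ and the matrix-cocycle exponent of $\bA$. Since $\bA\in\mathfrak{E}_{N,\rm shyp}$ gives $F_\bA\in\mathrm{SP}^1_{\rm shyp}(\Sigma_N\times\bP^1)$ by construction, Theorem~\ref{teo:1} applies. The main technical obstacle is to identify $h_{\rm top}(F_\bA,\cL(0))$ with $h_{\rm top}(\sigma^+,\cL^+_\bA(0))$, since $\chi$ is defined via two-sided limits while $\lambda_1$ uses only one-sided matrix products.

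The dictionary rests on the identity $\lvert f_A'(v)\rvert=\lVert Av\rVert^{-2}$ for $A\in\mathrm{SL}(2,\bR)$, so $\chi(\xi,v)=-2\lim n^{-1}\log\lVert A_\xi^n v\rVert$ when the latter exists; together with $\lVert A\rVert=\lVert A^{-1}\rVert$ for $\mathrm{SL}(2,\bR)$ matrices, this yields the pointwise bound $\lvert\chi(\xi,v)\rvert\le 2\lambda_1(\bA,\xi^+)$ (and an analogous backward estimate). At the measure level, any ergodic $\mu\in\cM_{\rm erg}(F_\bA)$ with $\chi(\mu)=0$ projects to $\nu^+\eqdef\pi^+_\ast\pi_\ast\mu\in\cM_{\rm erg}(\sigma^+)$ with $\lambda_1(\bA,\nu^+)=0$; Ruelle's inequality applied in the one-dimensional fiber direction forces zero fiber conditional entropy, so by Abramov--Rokhlin and the entropy-preserving two-sided-to-one-sided projection of shifts, $h(F_\bA,\mu)=h(\sigma^+,\nu^+)$. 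Feeding in the approximants provided by Theorem~\ref{teo:1} for $F_\bA$, we obtain ergodic $\nu^+$ with $\lambda_1(\bA,\nu^+)=0$ and $h(\sigma^+,\nu^+)$ arbitrarily close to $h_{\rm top}(F_\bA,\cL(0))$.

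At the set level, the same pointwise inequality gives $(\pi^+\circ\pi)(\cL(0))\subseteq\cL^+_\bA(0)$. The reverse inclusion uses ellipticity: some product $A_{i_k}\cdots A_{i_1}$ of elements of $\bA$ is elliptic, and inserting the periodic past $\xi^-=\overline{(i_1,\ldots,i_k)}$ before any $\xi^+\in\cL^+_\bA(0)$ makes the backward cocycle exponent vanish; hence $(\xi,v)\in\cL(0)$ for every $v\in\bP^1$. Thus $(\pi^+\circ\pi)(\cL(0))=\cL^+_\bA(0)$, and the continuous factor map $\pi^+\circ\pi$ gives $h_{\rm top}(\sigma^+,\cL^+_\bA(0))\le h_{\rm top}(F_\bA,\cL(0))$. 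Combining with the measure-level lower bound above and with the Bowen upper bound $\sup\{h(\sigma^+,\nu^+):\lambda_1(\bA,\nu^+)=0\}\le h_{\rm top}(\sigma^+,\cL^+_\bA(0))$ yields the variational principle of Theorem~\ref{teo:3}. Positivity $>0$ then follows from $h_{\rm top}(F_\bA,\cL(0))>0$, established in \cite{BocBonDia:16}.

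For the strict bound $<\log N$, suppose for contradiction that the supremum equals $\log N$ and take an approximating sequence $\nu^+_n$. Expansivity of $\sigma^+$ gives upper-semicontinuity of $h(\sigma^+,\cdot)$, and Kingman's subadditive theorem gives upper-semicontinuity of $\lambda_1(\bA,\cdot)$ as an infimum of continuous functionals; any weak$\ast$ limit $\nu^+_\infty$ therefore satisfies $h(\sigma^+,\nu^+_\infty)=\log N$ and $\lambda_1(\bA,\nu^+_\infty)=0$. But the unique measure of maximal entropy on $\Sigma_N^+$ is the symmetric Bernoulli $\fb^+$, and in our setting the hypotheses of Furstenberg's theorem \cite{Fur:63} hold---non-compactness of the semigroup from the hyperbolicity built into $\mathfrak{E}_{N,\rm shyp}$, and absence of a common finite invariant set on $\bP^1$ from proximality---forcing $\lambda_1(\bA,\fb^+)>0$. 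This contradicts $\lambda_1(\bA,\nu^+_\infty)=0$, completing the proof.
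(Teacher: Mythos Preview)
Your overall strategy is sound and, where it differs from the paper, often more self-contained: the paper simply quotes \cite[Theorem 11.1]{DiaGelRam:19} for the identity $h_{\rm top}(\sigma^+,\cL^+_\bA(0))=h_{\rm top}(F_\bA,\cL(0))$ and \cite[Theorem B]{DiaGelRam:19} for both strict inequalities, whereas you supply direct arguments. Your proof of $<\log N$ via upper semi-continuity of $h$ and $\lambda_1$, uniqueness of the measure of maximal entropy on $\Sigma_N^+$, and Furstenberg's theorem is correct and pleasant; note that to verify Furstenberg's ``no finite invariant set'' hypothesis you really only need that an elliptic element of $\mathrm{SL}(2,\bR)$ has no fixed point in $\bP^1$ (ruling out a singleton) together with the fact that a hyperbolic element has exactly two fixed points and no other finite orbit in $\bP^1$.

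There is, however, a genuine gap in how you use the pointwise estimate. The inequality $\lvert\chi(\xi,v)\rvert\le 2\lambda_1(\bA,\xi^+)$ yields only the implication $\lambda_1=0\Rightarrow\chi=0$; it does \emph{not} give the reverse. Yet you invoke it twice in the wrong direction: first to conclude $(\pi^+\circ\pi)(\cL(0))\subseteq\cL^+_\bA(0)$, and then, more critically, to assert that an ergodic $\mu$ with $\chi(\mu)=0$ projects to $\nu^+$ with $\lambda_1(\bA,\nu^+)=0$. The latter implication is exactly what you need to feed the output of Theorem~\ref{teo:1} into your chain, and it fails to follow from your inequality (knowing $\chi=0$ gives only the trivial $0\le 2\lambda_1$). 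The fix is the Oseledets dichotomy, stated in the paper as Lemma~\ref{lem:querende}: if $\lambda_1(\bA,\xi^+)=\alpha>0$ then $\chi^+(\xi^+,v)\in\{-2\alpha,2\alpha\}$ for every $v$, so $\chi^+=0$ forces $\alpha=0$. At the measure level this is immediate since $\nu^+$ is ergodic and $\lambda_1(\bA,\cdot)$ exists $\nu^+$-a.e.\ by the subadditive ergodic theorem. Incidentally, for the factor-map bound you only need the inclusion $\cL^+_\bA(0)\subseteq(\pi^+\circ\pi)(\cL(0))$, which your elliptic-past argument does establish correctly; the opposite inclusion is not needed. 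Finally, your appeal to Ruelle's inequality for the entropy identity is heavier than necessary: since circle homeomorphisms have zero topological entropy, Ledrappier--Walters \cite{LedWal:77} gives $h(F_\bA,\mu)=h(\sigma,\nu)=h(\sigma^+,\nu^+)$ for \emph{every} lift $\mu$ of $\nu^+$, with no hypothesis on $\chi(\mu)$.
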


Let us now discuss the tools to prove the above results and describe their context. 

\subsection{Nonhyperbolic measures: constructions and tools}

When dealing with nonhyperbolic ergodic measures, one major problem is that, at the current state of the art, there are very few general tools available as there are for hyperbolic ones (for example Pesin theory \cite{BarPes:07}). 
Among the few tools available to deal with nonhyperbolic measures are the so-called invariance principles in the spirit of Furstenberg's result \cite{Fur:63} and also \cite{Led:86,Cra:90,AviVia:10}. This principle is very well adapted to dynamics arising from cocycles and, in very rough terms, states that if the fiber Lyapunov exponent is zero then the fiber dynamics carries some transversally invariant structure. Though, these tools apply only to base measures which have a local product structure. In general, it is unknown if measures maximizing \eqref{eq:varpinrc} fall into this  category. 

An alternative approach is the explicit construction of nonhyperbolic measures. Naively, one can think of taking a weak$\ast$ limit of hyperbolic ergodic measures with central exponents approaching zero. Though, it is in general not guaranteed that the limit measure is ergodic and \emph{nontrivial} (i.e., with uncountable support).
The control of entropy of the limit measure is another issue.%
\footnote{By \cite[Corollary 1.2]{DiaFis:11} (see also \cite{CowYou:05}), in our setting, the entropy map is upper semi-continuous.}
 This approach was implemented and improved in several steps. It was initiated by the so-called \emph{GIKN construction} in \cite{GorIlyKleNal:05} for circle fiber-skew products. It was generalized first in \cite{KleNal:07} for certain partially hyperbolic diffeomorphisms and thereafter in \cite{DiaGor:09,BonDiaGor:10,WanZha:20} for nonhyperbolic homoclinic classes.%
\footnote{Our focus here is on as-large-as-possible entropy. The GIKN construction can be adapted and extended to produce nonhyperbolic measures with zero entropy and full support (see \cite{BonDiaGor:10,BocBonDia:18,BonZha:19}). 
The method in \cite{BocBonDia:16} was modified in \cite{BonDiaKwi:} to get nonhyperbolic measures with positive entropy and also full support. It was adapted also in \cite{BocRam:16} to deal with matrix cocycles. 
The constructions in this paper lay the foundations to construct nonhyperbolic measures with entropy as large as possible and also full support, following the ideas in \cite{BonDiaGor:10,BonDiaKwi:}.}
\footnote{Concerning nonhyperbolic measures with several zero Lyapunov exponents (that is, a higher-dimensional central bundle), the state of the art is very incipient, see results in \cite{BocBonDia:14} for iterated function systems and in \cite{WanZha:20} for some nonhyperbolic homoclinic classes.} 

Our construction is naively inspired by the GIKN method in \cite{GorIlyKleNal:05} that we proceed to sketch. This construction starts from an appropriate sequence of periodic orbits $\mathcal{O}_n$, say, expanding in the fiber direction.  In very rough terms, each periodic orbit has two parts: one ``repeats and shadows'' the previous orbit and the second part is a ``tail''.  There is a balance between both parts. The tail is used to ``spread" the support of the measures and to decrease the Lyapunov exponent by visiting a fiber-contracting region. A general criterium in \cite{GorIlyKleNal:05} (see also Proposition \ref{pro:mainrussian}) guarantees ergodicity of any limit measure. 

As stated in \cite{KwiLac:}, any limit measure of a GIKN construction involving a limit of a sequence of periodic measures has zero entropy. Thus, this method is not useful for our purpose since we aim for positive (maximal) entropy. We extend the GIKN approach of repeating and tailing to a much broader context to enable to capture positive entropy.  In our construction we replace periodic measures by specifically chosen ``Bernoulli-suspended measures on horseshoes'' carrying enough entropy. This choice is based on a ``skeleton property'' of hyperbolic measures with negative Lyapunov exponent. More precisely, we choose sufficiently many orbit pieces capturing the ergodic properties of the measure in finite time (compare also \cite[Section 4]{DiaGelRam:17}) to ``transfer a substantial amount'' of its entropy to the constructed nonhyperbolic measure (see Theorem \ref{teo:2}). The skeletons provide the ``repeat''-part. We combine this approach with ideas in the proof of \cite[Theorem 5 item 2]{DiaGelRam:17} which states how much entropy from measures with negative exponent can ``carry over'' to those with positive exponent. This provides the ``tail''-part. To prove Theorem \ref{teo:1}, we will consider measures with exponent close to zero and entropy close to the target entropy, though our construction is general. 

The verification of ergodicity of the limit measure (see \cite{GorIlyKleNal:05} and Proposition \ref{pro:mainrussian}) relies on the control of Birkhoff averages simultaneously on all scales on large measure sets. It is relatively easy to check if the measures of the sequence are \emph{periodic} (that is, supported on a periodic orbit). Indeed, on a periodic orbit Birkhoff averages converge uniformly, which makes this verification relatively simple. In our study, checking ergodicity of limit measures is much more intricate and requires a new approach. Here, we will deal with ergodic measures on horseshoes and have to rely on large deviation arguments from probability theory to identify appropriate sets with control of  Birkhoff averages. 

Our approach has roughly two, somewhat independent, parts: an abstract model for the repeat-and-tail scheme and its implementation. As abstract model we consider a cascade%
\footnote{As in this paper we consider plenty of sequence spaces, we prefer this terminology.}
 of abstract suspension spaces of Bernoulli shifts where we perform the large deviation control. This model is chosen such that it extends the corresponding measure preserving systems on a cascade of horseshoes in the product space $\Sigma_N\times\bS^1$. Each horseshoe has a coded system inside the base shift space $\Sigma_N$ which is obtained by a ``repeat-and-tail process''. Each coded system is uniquely left decipherable, which is the key ingredient to the fact that entropy is not lost when considering this factor. 

We perform this analysis in our context of skew products. However, the general idea of an entropy preserving cascade of suspensions of coded systems is fairly general and can be applied to partially hyperbolic diffeomorphisms following the scheme sketched in \cite[Section 8.3]{DiaGelRam:17} and implemented with all details in \cite{DiaGelSan:20}, see also \cite{YanZha:20}. But this goes beyond the goal of this paper.

\subsection{Organization}

In Section \ref{sec:results}, we state the remaining main results. In particular, we provide Theorem \ref{teo:2} which is the key result towards Theorem \ref{teo:1}. Section \ref{sec:axioms} describes our axiomatic setting and defines the class $\mathrm{SP}^1_{\rm shyp}(\Sigma_N\times\bS^1)$. We also detail the consequences for elliptic cocycles in Section \ref{sec:cocyc}.

The abstract model is developed in Sections \ref{sec:notation}--\ref{sec:cascade-1}. Section \ref{sec:notation} collects some basic properties of coded systems. In Section \ref{se:suspmode}, we define the suspension of a Bernoulli shift and recall some fundamental properties. In particular, we state a key result on large deviations. In Section \ref{sec:cascade-1}, we consider a cascade of those abstract suspension spaces assuming some growth condition of the associated roof functions.

The implementation of this abstract model is done in Sections \ref{sec:horses}--\ref{sec:internal}. In Section \ref{sec:horses}, we return to consider our skew product setting and study horseshoes which are defined by attractors of contracting iterated function systems (CIFS) induced by the family $\{f_i\}_{i=1}^N$ on some interval $J\subset\bS^1$. Here the idea of skeleton plays an important role. 
Section \ref{ssec:horsreptai} introduces a repeat-and-tail scheme. It induces a cascade of CIFSs and hence a cascade of horseshoes whose properties are studied in Section \ref{sec:ncashor}. In particular, in Section, \ref{sec:internal} we describe the inherited internal self-similar structures across this cascade.

The proof of Theorem \ref{teo:2} is split into Sections \ref{sec:proooof} and \ref{sec:proooofb}. In the Appendix we slightly reformulate and prove a result from \cite{GorIlyKleNal:05} guaranteeing ergodicity of a weak$\ast$ limit measure.

\section{Statement of results}\label{sec:results}

Before stating our remaining main results in Section \ref{subsec:res}, let us first give the complete description of our setting. In Section \ref{sec:cocyc} we discuss matrix cocycles in detail.

\subsection{Axiomatic setup}\label{sec:axioms}

For $n\in\bN$ let $\Sigma_N^n\eqdef\{1,\ldots,N\}^n$ and define $\Sigma_N^\ast \eqdef \bigcup_{n=1}^\infty\Sigma_N^n$. 
Given a finite sequence $\xi=(\xi_0,\ldots ,\xi_{n-1})\in\Sigma_N^\ast$, we denote by $\lvert\xi\rvert=n$ its length. Given $x\in\bS^1$, using the notation in \eqref{eq:reference}, consider its \emph{forward} and \emph{backward orbits} defined by
\[
	\cO^+(x)
	\eqdef \bigcup_{n\ge0}\,\,\bigcup_{\xi\in\Sigma_N}f_{\xi}^n(x)
	\spac{and}
	\cO^-(x)
	\eqdef \bigcup_{m\ge1}\,\,\bigcup_{\xi\in\Sigma_N}f_{\xi}^{-m}(x),
\]
respectively. Let $\cO^\pm(H)\eqdef \bigcup_{x\in H}\cO^\pm(x)$ for any subset $H\subset\bS^1$.
Given an interval $H$, we denote by $\lvert H\rvert$ its length. We assume that $\bS^1$ has length one.

We require the following properties to be satisfied. 

\medskip\noindent
\textbf{T (Transitivity)} There is $x\in\bS^1$ such that $\cO^+(x)$ and $\cO^-(x)$ are both dense in $\bS^1$.
%

\medskip\noindent
\textbf{CEC+($J^+$) (Controlled Expanding forward Covering  relative to $J^+$).} 
The set $J^+\subset\bS^1$ is a nontrivial closed interval such that there exist positive constants $K_1,\ldots,K_5$ so that for every interval $H\subset\bS^1$ intersecting $J^+$ with $\lvert H\rvert<K_1$ it holds
\begin{itemize}
\item  (controlled covering)  there exists a finite sequence $(\eta_0,\ldots,\eta_{\ell-1})$ for some positive integer $\ell\le  K_2\,\lvert\log\,\lvert H\rvert\rvert +K_3$ such that
\[
	\left(f_{\eta_{\ell-1}}\circ\cdots\circ f_{\eta_0}\right)(H)\supset B(J^+,K_4),
\]	
where $B(J^+,\delta)$ is the $\delta$-neighborhood of the set $J^+$,
\item  (controlled expansion) for every $x\in H$ we have
\[
	\log \,\lvert  \left(f_{\eta_{\ell-1}}\circ\cdots\circ f_{\eta_0}\right)'(x)\rvert
	\ge \ell K_5.
\]	
\end{itemize}

\medskip\noindent
\textbf{CEC$-(J^-$) (Controlled Expanding backward Covering relative to $J^-$).} The step skew product  $F^{-1}$ satisfies the Axiom CEC$+(J^-)$.

\medskip\noindent
\textbf{Acc$+$($J^+$) (forward Accessibility  relative to $J^+$).} $\cO^+(\interior J^+)=\bS^1$.

\medskip\noindent
\textbf{Acc$-$($J^-$) (backward Accessibility relative to $J^-$).} $\cO^-(\interior J^-)=\bS^1$.

\begin{definition}[The set $\mathrm{SP}^1_{\rm shyp}(\Sigma_N\times\bS^1)$]
A skew product $F$ as in \eqref{eq:sp} belongs to $\mathrm{SP}^1_{\rm shyp}(\Sigma_N\times\bS^1)$ if it satisfies Axioms T (transitivity), CEC$\pm(J^\pm)$, and Acc$\pm(J^\pm)$ for some closed intervals $J^-,J^+\subset\bS^1$, which are called \emph{backward} and \emph{forward blending intervals}, respectively. 
\end{definition}

We state some consequences of our axioms. 

\begin{remark}[Common blending interval and quantifiers]\label{rem:commonblending}
Let $F\in \mathrm{SP}^1_{\rm shyp}(\Sigma_N\times\bS^1)$. 
By \cite[Lemma 2.3]{DiaGelRam:17}, there are positive constants $K_1,\ldots,K_5$, and $K_6$ such that for every $\delta\in(0,K_6/2)$ and $x\in\bS^1$ the interval $J=[x-2\delta,x+2\delta]$ satisfies Axioms CEC$\pm(J)$ and Acc$\pm(J)$ with these constants. We call such $J$ a \emph{blending interval}. 
\end{remark}

The next observation is an immediate consequence of the compactness of $\bS^1$.

\begin{claim}[{\cite[Remark 2.1 and Lemma 2.2]{DiaGelRam:17}}]\label{nclalem:connect}
	Assume Axioms T, CEC$\pm(J)$ and Acc$\pm(J)$ are satisfied for some closed interval $J$. Then for every closed subinterval $I$ of $J$ there exists $m_{\rm c}=m_{\rm c}(I)\in\bN$ such that for every $x\in\bS^1$ there are finite sequences $(\theta_1,\ldots,\theta_r)$ and $(\beta_1,\ldots,\beta_s)$ with $r,s\le m_{\rm c}$ such that
\[
	(f_{\beta_s}\circ\cdots f_{\beta_1})(x)\in I
	\quad\text{ and }\quad
	(f_{\theta_r}^{-1}\circ\cdots\circ f_{\theta_1}^{-1})(x)\in I.
\]	
\end{claim}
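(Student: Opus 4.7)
The plan is to prove the forward inclusion $(f_{\beta_s}\circ\cdots\circ f_{\beta_1})(x)\in I$; the backward inclusion follows by the symmetric argument with Acc$-(J)$, CEC$-(J)$ replaced by Acc$+(J)$, CEC$+(J)$. I factor the argument into two steps: first, bring any $x\in\bS^1$ into $\interior J$ by a uniformly bounded forward composition; second, bring all of $\interior J$ into $\interior I$ by a single, $x$-independent forward composition.

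For Step 1, Axiom Acc$-(J)$ asserts $\cO^-(\interior J)=\bS^1$, which unwraps to: for every $x\in\bS^1$ some finite forward composition $\psi_\beta\eqdef f_{\beta_s}\circ\cdots\circ f_{\beta_1}$ of the generators satisfies $\psi_\beta(x)\in\interior J$. The nested open sets
\[
	U_m\eqdef\bigcup_{|\beta|\le m}\psi_\beta^{-1}(\interior J)
\]
then form an increasing open cover of compact $\bS^1$, so some $U_M$ equals $\bS^1$, yielding a uniform length bound $M$. Step 2 is the crux, and my idea is an inversion trick. Choose an interval $H'\subset\interior I$ with $|H'|<K_1$, which is possible because $I\subset J$ is a nondegenerate closed subinterval. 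Since $H'\subset I\subset J$, Axiom CEC$-(J)$ supplies a backward composition $\Phi^-$---a composition of inverses $f_{\eta_j}^{-1}$---of length $\ell\le K_2|\log|H'||+K_3$, satisfying $\Phi^-(H')\supset B(J,K_4)\supset\interior J$. Inverting the diffeomorphism $\Phi^-$ produces the sought forward composition $\Phi^+\eqdef(\Phi^-)^{-1}$, again of length $\ell$, and now with $\Phi^+(\interior J)\subset H'\subset\interior I$.

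Concatenating the two steps, $\Phi^+\circ\psi_\beta$ is a forward composition of length at most $M+\ell$ sending $x$ into $\interior I\subset I$ for every $x\in\bS^1$; setting $m_{\rm c}(I)\eqdef M+\ell$ completes the argument for the forward inclusion. The hard part is really Step 2: Axiom Acc$-$ alone only brings points into $\interior J$, not into the possibly much smaller $I$, and it is the ``opposite-direction'' axiom CEC$-$, applied to a tiny interval inside $\interior I$ and then inverted, that supplies the bounded universal composition needed to bridge this gap.
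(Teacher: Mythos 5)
Your proof is correct. The compactness argument in Step~1 (open increasing cover $U_m=\bigcup_{|\beta|\le m}\psi_\beta^{-1}(\interior J)$ of $\bS^1$, which by Acc$-(J)$ exhausts $\bS^1$, hence stabilizes) is exactly the ``immediate consequence of compactness'' the paper alludes to, and your Step~2 --- applying CEC$-(J)$ to a small interval $H'\subset\interior I$ and inverting the resulting covering composition to produce a \emph{single} fixed-length forward word that maps all of $B(J,K_4)\supset\interior J$ into $H'\subset I$ --- is the right way to bridge from $\interior J$ down into $I$, since Acc alone cannot do this; the backward case is indeed the mirror image using Acc$+$ and CEC$+$. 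The paper only cites \cite{DiaGelRam:17} here without reproducing the argument, but your route is the natural one and matches the intended reasoning.
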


\begin{definition}[The constant $L_1$]\label{defrem:commonblending}
Given $F\in \mathrm{SP}^1_{\rm shyp}(\Sigma_N\times\bS^1)$ and a blending interval $J=[x-2\delta,x+2\delta]$ with associated constants $K_1,\ldots, K_5$, define
\[
	L_1
	= L_1(F,J)
	\eqdef K_2\big(2+\lvert\log(4\delta)\rvert +K_3\big)+m_{\rm c}([x-\delta,x+\delta]).
\]
\end{definition}

\subsection{Key results}\label{subsec:res}

We are now ready to state the key result towards the proof of Theorem \ref{teo:1}. First note that by \cite[Theorem A and Lemma 5.2]{DiaGelRam:19} we have $\cL(0)\ne\emptyset$ and there holds the inequality
\begin{equation}\label{eq:resvarpri0}
	\sup\{h(F,\mu)\colon \mu\in\cM_{\rm erg}(F),\chi(\mu)=0\}
	\le h_{\rm top}(F,\cL(0)). 
\end{equation}
By \cite[Theorem A]{DiaGelRam:19}, there are numbers $\alpha_{\rm min}<0<\alpha_{\rm max}$ such that $\cL(\alpha)\ne\emptyset$ if and only if $\alpha\in [\alpha_{\rm min},\alpha_{\rm max}]$. Moreover, the map $\alpha\mapsto h_{\rm top}(F,\cL(\alpha))$ is continuous on the interval $[\alpha_{\rm min},\alpha_{\rm max}]$. Finally, for every $\varepsilon>0$ there exists some ergodic measure $\mu$ with negative Lyapunov exponent satisfying $\alpha=\chi(\mu)\in(-\varepsilon,0)$ and $h(F,\mu)>h_{\rm top}(F,\cL(0))-\varepsilon$; analogously for $\alpha\in(0,\varepsilon)$. With these results at hand, Theorem \ref{teo:1} is now an immediate consequence of the following.

\begin{mtheo}[Transfer of entropy to nonhyperbolic measures]\label{teo:2}
	For every $F\in \mathrm{SP}^1_{\rm shyp}(\Sigma_N\times\bS^1)$, $N\ge2$, there is some constant $L_1=L_1(F)>0$ such that for every $F$-invariant ergodic measure $\mu$ with negative Lyapunov exponent $\alpha\eqdef\chi(\mu)<0$ and positive entropy $h(F,\mu)$ and every $\varepsilon_H\in(0,h(F,\mu))$ there is a sequence of ergodic measures $(\mu_n)_n$ with negative Lyapunov exponents which converges weak$\ast$ to an ergodic measure $\mu_\infty$ satisfying
\[
	\chi(\mu_\infty)=0
	\spac{and}
	h(F,\mu_\infty)
	\ge e^{-L_1\lvert\alpha\rvert}(h(F,\mu)-\varepsilon_H).
\]	
\end{mtheo}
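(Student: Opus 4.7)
The plan is to follow the repeat-and-tail philosophy of the GIKN construction \cite{GorIlyKleNal:05}, but with Bernoulli measures supported on well-chosen horseshoes replacing the periodic measures used there, so that positive entropy can be preserved in the limit. First, using the hypothesis that $\mu$ has negative fiber exponent $\alpha$ and positive entropy, together with Pesin/Katok theory adapted to the dominated one-dimensional central setting, I would extract a \emph{skeleton} in the spirit of \cite[Section~4]{DiaGelRam:17}: a finite collection $\eS\subset\Sigma_N^\ast$ of base words of a common large length $\ell$ such that $\#\eS\ge e^{\ell(h(F,\mu)-\varepsilon_H/2)}$ and such that each associated fiber map $f_\xi^\ell$ is uniformly contracting with rate close to $\alpha$ on a common interval $J\subset\bS^1$. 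This yields a contracting iterated function system (CIFS) $\Phi_0=\{f_\xi^\ell\}_{\xi\in\eS}$ on $J$ whose attractor, equipped with the natural Bernoulli measure on $\eS^\bZ$, realizes a hyperbolic basic-set approximation of $(F,\mu)$ with almost the same entropy and exponent.

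Next, I would build a cascade of CIFSs $\Phi_1,\Phi_2,\ldots$ on nested intervals by iterating a repeat-and-tail operation. At stage $n$ the generators of $\Phi_n$ are words of the form ``repeat $R_n$ copies of a $\Phi_{n-1}$-generator, then append a fixed tail $\tau_n$ of length $T_n$'', where $\tau_n$ is chosen using Axioms~CEC$-(J^-)$ and Acc$-(J^-)$ together with Remark~\ref{rem:commonblending} and Claim~\ref{nclalem:connect}, so that its fiber action enters and exits the backward blending interval and thereby \emph{expands} fibers with a controlled amount that partially cancels the contraction from the repeat part. Tuning $T_n$ (of order $\lvert\alpha\rvert$ through the blending-interval constants packaged into $L_1=L_1(F,J)$) and letting $R_n\to\infty$, I arrange that the fiber Lyapunov exponent $\alpha_n$ of the Bernoulli measure on $\Phi_n$ decreases to $0$ while the coded system in $\Sigma_N$ generated at each level remains uniquely left-decipherable. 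Consequently its topological entropy equals the naive combinatorial entropy and the loss at each level is quantitatively controlled by $T_n/(\ell R_n+T_n)$.

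Let $\mu_n$ denote the ergodic measure on $\Sigma_N\times\bS^1$ associated with $\Phi_n$, and extract a weak$\ast$ limit $\mu_\infty$ along a subsequence. By construction $\chi(\mu_\infty)=\lim\alpha_n=0$. Combining upper semicontinuity of the entropy map \cite[Corollary~1.2]{DiaFis:11} with the observation that the telescoped entropy loss across the whole cascade --- each tail addition multiplies the effective entropy density by a factor at worst $1-\mathrm{const}\cdot T_n/R_n$ --- is governed by the single constant $L_1\lvert\alpha\rvert$ in the exponent, yields
\[
h(F,\mu_\infty)\ge e^{-L_1\lvert\alpha\rvert}\bigl(h(F,\mu)-\varepsilon_H\bigr).
\]

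The last and hardest step is \emph{ergodicity} of $\mu_\infty$. I would apply the GIKN-type criterion recorded as Proposition~\ref{pro:mainrussian} in the appendix, which requires, at every level $n$, control of Birkhoff averages on a set of $\mu_n$-measure close to~$1$ at a scale compatible with the next level's construction. For periodic approximants this is automatic, but here each $\mu_n$ is a genuine Bernoulli-suspension on a horseshoe, so one must produce the required ``good sets'' via large-deviation estimates on the abstract cascade of Bernoulli-shift suspensions developed in Sections~\ref{se:suspmode}--\ref{sec:cascade-1}. Ensuring that these deviation controls propagate coherently up the cascade --- so that appending the tail $\tau_{n+1}$ does not destroy the control gained at level $n$ --- is the main technical obstacle, and it forces a careful balancing of the speeds at which $R_n$, $T_n$, and the deviation tolerances tend to their respective limits. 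The clean separation between the abstract cascade model and its dynamical implementation, set up in the remainder of the paper, is what makes this bookkeeping feasible.
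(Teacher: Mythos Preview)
Your outline matches the paper's strategy closely: skeleton $\to$ initial CIFS (Theorem~\ref{teo:existenceCIFS}), cascade of repeat-and-tail CIFSs (Theorem~\ref{thepro:tailing}) with the exponent halved at each stage, abstract suspension model carrying Bernoulli measures, entropy bound via Corollary~\ref{cor:tailinghorse}, and ergodicity via Proposition~\ref{pro:mainrussian} fed by the large-deviation Proposition~\ref{pro:LD}. Two points in your description would, if taken literally, break the argument and should be corrected.

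First, the ``repeat'' part is \emph{not} $R_n$ copies of a single $\Phi_{n-1}$-generator, and the tail is \emph{not} a fixed word $\tau_n$. In Definition~\ref{def:reptaiSFT} and Theorem~\ref{thepro:tailing} each new generator is a free concatenation $(w_1,\ldots,w_{m_n})$ of $m_n$ (generally different) words from $\cB_{n-1}$, followed by a tail $\ft(w_1,\ldots,w_{m_n})$ that depends on that specific tuple. The free concatenation is precisely what gives $\card\cB_n=(\card\cB_{n-1})^{m_n}$ and hence preserves entropy density up to the tail correction; repeating a single word would collapse the entropy to zero. The tuple-dependent tail is what allows one to target the interval $f_{[w_1,\ldots,w_{m_n}]}(J)$ and control expansion uniformly. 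Second, the tail is built from the \emph{forward} axioms CEC$+(J)$ and Acc$+(J)$ (see the proof of Theorem~\ref{thepro:tailing}), not CEC$-$/Acc$-$: forward fiber expansion is what pushes the negative exponent toward zero. With these two fixes your sketch is the paper's proof; note also that the paper establishes convergence of the full sequence $(\mu_n)_n$ (Lemma~\ref{cla:convergence}), not merely along a subsequence, which streamlines the verification of the hypotheses of Proposition~\ref{pro:mainrussian}.
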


The proof of Theorem \ref{teo:2} is given in Sections \ref{sec:proooof} and \ref{sec:proooofb}. A byproduct of our construction is the following fact, which we prove at the end of Section \ref{sec:ncashor}.

\begin{mpro}\label{pro:main}
	Let $F\in \mathrm{SP}^1_{\rm shyp}(\Sigma_N\times\bS^1)$, $N\ge2$, and  $\mu$ be an $F$-invariant ergodic measure with negative Lyapunov exponent  and positive entropy $h(F,\mu)$. Then for every $\varepsilon_H\in(0,h(F,\mu))$ there are sequences of compact $F$-invariant sets $\Gamma_n\subset\Sigma_N\times\bS^1$ and numbers $\alpha_n\le\beta_n<0$ with
\[
	\lim_{n\to\infty}\alpha_n=\lim_{n\to\infty}\beta_n
	=0
\]
having the following properties: for every $n\in\bN$
\begin{itemize}
\item the set $\Gamma_n$ has uniform fiber contraction in the sense that $\alpha_n\le\chi(\tilde\mu)\le\beta_n<0$ for every $\tilde\mu\in\cM_{\rm erg}(F|_{\Gamma_n})$,
\item natural projection of $\Gamma_n$ to $\Sigma_N$ is a coded subshift and for every $\xi$ in this projection the fiber $\big(\{\xi\}\times\bS^1\big)\cap\Gamma_n$ is a finite set,
\item it holds
\[
	\limsup_{n\to\infty}h_{\rm top}(F,\Gamma_n)
	\ge e^{-L_1\lvert\alpha\rvert}(h(F,\mu)-\varepsilon_H).
\]	
\end{itemize}
\end{mpro}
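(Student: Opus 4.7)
The plan is to extract the horseshoe $\Gamma_n$ as the skew-product realization of the $n$-th stage of the cascade of CIFSs produced by the repeat-and-tail scheme sketched in the introduction, and to read off the three bullet points from the construction itself. Since $h(F,\mu)>0$ and $\chi(\mu)=\alpha<0$, by a skeleton/Katok-type result (as in \cite[Section 4]{DiaGelRam:17}) one picks, for each large $k$, a finite collection $\fS_k$ of $(k,\varepsilon)$-separated orbit pieces of length $k$ along which the fiber Birkhoff sums of $\log|f'_{\xi_0}|$ are close to $k\alpha$ and such that $\tfrac{1}{k}\log\#\fS_k\to h(F,\mu)$; throwing away an arbitrarily small proportion one may further assume a uniform fiber contraction of order $e^{k\alpha}$ along each skeleton word, i.e.\ a uniform bound $\alpha_n\le\beta_n$ with $\beta_n<0$ for the exponents of the corresponding fiber branches. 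The skeleton will play the role of the \emph{repeat} block.

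Next, fix a blending interval $J$ with constants $K_1,\dots,K_5,L_1$ as in Remark~\ref{rem:commonblending} and Definition~\ref{defrem:commonblending}. Using Axioms CEC$+(J)$, CEC$-(J)$, and Acc$\pm(J)$ one produces, for each skeleton word, a uniformly bounded \emph{connector} block of length at most $L_1$ that returns the fiber image into $J$ and sets up the next concatenation; compactness of $\bS^1$ and Claim~\ref{nclalem:connect} guarantee that such connectors exist with lengths controlled by $L_1$ independently of the skeleton element. The \emph{tail} block is then a word of length roughly $t_n\approx k|\alpha|/\varepsilon_n$ (for a parameter $\varepsilon_n\downarrow 0$) that sends the fiber through the expanding side of the blender in such a way that the total fiber derivative of skeleton$+$connector$+$tail has logarithm in a prescribed small window around $-k\varepsilon_n$, so the resulting Lyapunov exponent of the concatenated CIFS lies in some interval $[\alpha_n,\beta_n]$ with $\alpha_n\le\beta_n<0$ and both $\alpha_n,\beta_n\to 0$. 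Declaring the $n$-th admissible alphabet to be the set of such concatenated words $w_1,\dots,w_{M_n}$, the induced CIFS on a subinterval of $J$ is uniformly contracting and its attractor, suspended over all bi-infinite concatenations of these words, defines the compact $F$-invariant set $\Gamma_n$. Unique left-decipherability of the $w_i$'s (which follows from inserting an unambiguous marker inside the connector block, exactly as required in Section~\ref{sec:notation}) ensures the projection of $\Gamma_n$ to $\Sigma_N$ is a coded subshift and that each fiber $(\{\xi\}\times\bS^1)\cap\Gamma_n$ is finite, since only finitely many admissible decompositions give rise to points in a given fiber.

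Finally, the entropy estimate follows from the decipherability: every $F$-invariant ergodic measure on $\Gamma_n$ is a suspension (in the cascade sense of Section~\ref{se:suspmode}) of a Bernoulli measure on the alphabet $\{w_1,\dots,w_{M_n}\}$, so by Abramov's formula
\[
h_{\rm top}(F,\Gamma_n)=\frac{\log M_n}{\overline{L}_n},
\]
where $\overline{L}_n=k+L_1+t_n$ is the uniform word length. Since $\log M_n\ge \log\#\fS_k \ge k(h(F,\mu)-\varepsilon_H/2)$ for $k$ large, and $\overline{L}_n/k\to 1+|\alpha|/\varepsilon_n\cdot(\varepsilon_n/|\alpha|)\le 1+L_1|\alpha|/k+o(1)$, one obtains, after choosing $\varepsilon_n$ and $k$ appropriately and using $1/(1+x)\ge e^{-x}$,
\[
h_{\rm top}(F,\Gamma_n)\ge e^{-L_1|\alpha|}\bigl(h(F,\mu)-\varepsilon_H\bigr)+o(1),
\]
which yields the required $\limsup$. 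I expect the main obstacle to be the simultaneous balancing of the tail length $t_n$ (which must be large enough to drive $\beta_n\to 0$ but small enough not to dilute entropy below the target $e^{-L_1|\alpha|}(h(F,\mu)-\varepsilon_H)$) together with the uniform contraction and the unique decipherability of the concatenated alphabet; these three constraints are precisely what the cascaded CIFS machinery of Sections~\ref{sec:horses}--\ref{sec:internal} is designed to reconcile.
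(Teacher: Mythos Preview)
Your overall architecture is right—skeleton from the measure, connector and tail from the axioms, coded subshift from decipherability, entropy from Abramov—but the quantitative core is wrong, and the error is exactly the one you flag as ``the main obstacle''.

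The formula $t_n\approx k|\alpha|/\varepsilon_n$ is incorrect. With that tail length the ratio $\overline L_n/k$ blows up as $\varepsilon_n\to0$ and the entropy collapses to zero; your computation ``$\overline L_n/k\to 1+|\alpha|/\varepsilon_n\cdot(\varepsilon_n/|\alpha|)$'' is not coherent. The correct estimate comes directly from Axiom CEC$+(J)$: after the skeleton the fiber image has length of order $e^{k\alpha}|J|$, and controlled covering says that a word of length at most $K_2\,k|\alpha|+K_3$ suffices to expand it back over $J$. Hence \emph{any} partial expansion (stopping before full covering) needs a tail of length at most $L_1 k|\alpha|$, \emph{independently} of the target exponent. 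This is equation~\eqref{eq:lengthoftailings} in Theorem~\ref{thepro:tailing}, and it is what produces the factor $e^{-L_1|\alpha|}$ via $1/(1+L_1|\alpha|)\ge e^{-L_1|\alpha|}$.

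The paper does not attempt a single step from $\alpha$ to $-\varepsilon_n$. Instead it \emph{halves} the exponent at each stage (Theorem~\ref{thepro:tailing}): $\cB_n=(\cB_{n-1}^{m_n})_{\ft_n}$ is a CIFS with exponent window around $2^{-n}\alpha$, and the tail added at stage $n$ has length at most $L_1\cdot 2^{-(n-1)}|\alpha|$ times the current word length (Corollary~\ref{cor:cons1}). The total dilation of word length across the cascade is then $\prod_{n}(1+L_1\,2^{-(n-1)}|\alpha|)$, a convergent product, and Abramov's formula applied to the Bernoulli suspension gives the entropy lower bound (Corollary~\ref{cor:tailinghorse}). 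The horseshoes $\Gamma_n$ in Proposition~\ref{pro:main} are precisely the $n$th-level horseshoes of this cascade; the exponent bounds $\alpha_n,\beta_n$ come from property~(c) of the CIFS at level $n$, the coded projection is $\CS{\cB_n}$, and finiteness of fibers follows because $\Lambda_n$ is a graph over its symbolic base.

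Two further points: decipherability is not obtained by inserting a marker but from \emph{disjointness}---the skeleton words all have the same length and are pairwise distinct (Claim~\ref{clalem:skeletonstar}(ii)), hence $\cB_0$ is disjoint, and this is preserved under repetition and tailing (Corollaries~\ref{cor:dis2}, \ref{corlem:dis3}); disjointness then gives unique left decipherability (Lemma~\ref{lem:COND}). And it is not true that every ergodic measure on $\Gamma_n$ is a Bernoulli suspension; only the specific measure $\mu_n=(H_n)_\ast\lambda_n$ is, and its entropy provides the required \emph{lower} bound for $h_{\rm top}(F,\Gamma_n)$.
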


\subsection{Consequences for elliptic cocycles}\label{sec:cocyc}
The space of cocycles  $\mathrm{SL}(2,\bR)^N$  roughly splits into the disjoint union of the sets of hyperbolic and elliptic cocycles: these sets are open and their union is dense in $\mathrm{SL}(2,\bR)^N$, see \cite[Proposition 6]{Yoc:04}. The set of hyperbolic cocycles, including the description of its boundary, is quite well understood, see \cite{AviBocYoc:10}. However, much less is known about the elliptic cocycles $\mathfrak{E}_N$. In \cite[Section 11]{DiaGelRam:19} it is introduced an open and dense subset $\mathfrak{E}_{N,\rm shyp}$ of $\mathfrak{E}_N$, the set of  \emph{elliptic cocycles having some hyperbolicity}. For our purposes, the key property of the set  $\mathfrak{E}_{N,\rm shyp}$ is that it consists of cocycles $\mathbf{A}$ whose associated skew products $F_{\mathbf{A}}$ with fiber maps defined as in \eqref{neq:defsteskecoc} are contained in  $\mathrm{SP}^1_{\rm shyp}(\Sigma_N\times\bP^1)$.
 
Instead of giving the precise definition of $\mathfrak{E}_{N,\rm shyp}$, let us describe its essential properties.  First, recall that an element $A\in \mathrm{SL}(2,\bR)$ is \emph{hyperbolic} if the absolute value of its trace is larger than $2$, which means that the matrix $A$ has one eigenvalue with absolute value bigger than one and one smaller than one. In particular, the union of the disjoint open sets 
\[
	B_A^-\eqdef\{v\in\bP^1\colon \lvert f_A'(v)\rvert <1\}
	\spac{and}
	B_A^+\eqdef\{v\in\bP^1\colon \lvert f_A'(v)\rvert>1\}
\]	 
is dense in $\bP^1$. The  set $\mathfrak{E}_{N,\rm shyp}$ 
consists of cocycles $\mathbf{A}\in\mathfrak{E}_{N}$ such that:
\begin{itemize}
\item Some hyperbolicity:
The semi-group generated by $\mathbf A$ contains a hyperbolic element.
\item Transitions in finite time: There is $M\ge1$ such that for every $v\in\bP^1$ there are sequences $\theta^+,\beta^+\in\Sigma_N^+$ such that $f_{\theta_{s-1}}\circ\cdots\circ f_{\theta_0}(v)\in B_A^+$ and $f_{\beta_{r-1}}\circ \cdots\circ f_{\beta_0}(v)\in B_A^-$ for some $s,r\le M$.
\end{itemize}
These properties are just the translation of the properties of maps in $\mathrm{SP}^1_{\rm shyp}(\Sigma_N\times\bP^1)$ to skew products arising from cocycles.   They also immediately refer precisely to the  context considered in \cite{GorIlyKleNal:05}. The set  $\mathfrak{E}_{N,\rm shyp}$ is open and dense in $\mathfrak{E}_N$, see \cite[Proposition 11.23]{DiaGelRam:19}. 

Let us now recall some results relating the (upper) Lyapunov exponent of a cocycle $\bA$ with the (fiber) Lyapunov exponent of its associated skew product $F_\bA$ on $\Sigma_N\times\bP^1$.
Let us consider the (forward) Lyapunov exponent
\[
	\chi^+(\xi^+,v)
	\eqdef \lim_{n\to\infty}\frac1n\log\,\lvert(f_{\xi^+}^n)'(v)\rvert,
\]
using the notation analogous to \eqref{eq:reference}, provided this limit exists. For the next results recall the definitions of $\lambda_1(\bA,\xi^+)$ in \eqref{eq:upper} and $\cL_{\mathbf A}^+(0)$ in \eqref{eq:orquic}. 

\begin{lemma}[{\cite[Theorem 11.1 and Claim 11.21 for $\alpha=0$]{DiaGelRam:19}}]\label{lem:blala}
	For every $\bA\in\mathrm{SL}(2,\bR)^N$ 
\[
	h_{\rm top}(\sigma^+,\cL^+_\bA(0))
	= h_{\rm top}(F_\bA,\cL(0)) .
\]	
\end{lemma}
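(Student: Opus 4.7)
The plan is to use the explicit $\mathrm{SL}(2,\bR)$-correspondence between matrix-norm growth and projective-derivative growth to identify $\cL(0)$, up to an entropy-invisible exceptional set, with the pullback of $\cL^+_\bA(0)$ under the natural factor map. First I would record the basic identity: for $A\in\mathrm{SL}(2,\bR)$ and a unit representative $v\in\bR^2$ of $[v]\in\bP^1$, one has $\lvert f_A'([v])\rvert=\lVert Av\rVert^{-2}$, so that
\[
\log\lvert(f_{\xi^+}^n)'([v])\rvert = -2\log\lVert A_{\xi_{n-1}}\cdots A_{\xi_0}v\rVert.
\]
Combining with the $\mathrm{SL}(2,\bR)$-symmetry $\lVert M^{-1}\rVert=\lVert M\rVert$ (which yields $\lvert\log\lVert Mv\rVert\rvert\le\log\lVert M\rVert$ for every unit $v$) gives the pointwise sandwich
\[
\tfrac{1}{n}\bigl\lvert\log\lvert(f_{\xi^+}^n)'([v])\rvert\bigr\rvert \le \tfrac{2}{n}\log\lVert A_{\xi_{n-1}}\cdots A_{\xi_0}\rVert.
\]
Letting $n\to\infty$, $\lambda_1(\bA,\xi^+)=0$ forces $\chi^+(\xi^+,[v])=0$ for every $[v]\in\bP^1$; conversely, when $\lambda_1(\bA,\xi^+)>0$, Oseledets' theorem for the cocycle shows the forward fiber exponent equals $-2\lambda_1$ off the unique slow Oseledets direction and $+2\lambda_1$ on it, so no point of the fiber over $\xi^+$ lies in $\cL(0)$. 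Analogous reasoning controls the backward half of the two-sided limit defining $\chi(X)$.

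Thus, up to Lyapunov-irregular points, $\cL(0)\subset\Phi^{-1}(\cL^+_\bA(0))$ for the factor map $\Phi\colon(\Sigma_N\times\bP^1,F_\bA)\to(\Sigma_N^+,\sigma^+)$, $\Phi(\xi,x)\eqdef\xi^+$. The fibers of $\Phi$ are compact copies of $\Sigma_N^-\times\bP^1$; two points in the same $\Phi$-fiber have their shift coordinates agreeing on ever-longer past windows after iteration by $F_\bA$ (the shared future symbols push the only disagreement further into the past), and the $\bP^1$-coordinates evolve under the same fiber diffeomorphism. Combined with the subexponential growth of $\lVert A_{\xi_{n-1}}\cdots A_{\xi_0}\rVert$ on $\cL^+_\bA(0)$, a standard Bowen-type counting argument bounds the $(n,\epsilon)$-complexity of each $\Phi$-fiber by a subexponential factor, uniformly in the base. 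This yields $h_{\rm top}(F_\bA,\Phi^{-1}(A))=h_{\rm top}(\sigma^+,A)$ for every $A\subset\cL^+_\bA(0)$ and hence the upper bound $h_{\rm top}(F_\bA,\cL(0))\le h_{\rm top}(\sigma^+,\cL^+_\bA(0))$.

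For the reverse inequality the plan is variational: given $\nu^+\in\cM_{\rm erg}(\sigma^+)$ supported on $\cL^+_\bA(0)$ with $\lambda_1(\bA,\nu^+)=0$ and entropy close to $h_{\rm top}(\sigma^+,\cL^+_\bA(0))$, I would lift $\nu^+$ first to its natural extension $\nu$ on $\Sigma_N$ (same entropy, $\sigma$-ergodic) and then to an $F_\bA$-invariant ergodic measure $\mu$ on $\Sigma_N\times\bP^1$ projecting to $\nu$, chosen with zero fiber entropy so that $h(F_\bA,\mu)=h(\sigma,\nu)=h(\sigma^+,\nu^+)$ by Abramov--Rokhlin. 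Such a $\mu$ then satisfies $\chi(\mu)=0$ and is supported in $\cL(0)$, so Bowen's theorem on entropies of generic-point sets gives $h(F_\bA,\mu)\le h_{\rm top}(F_\bA,\cL(0))$, closing the inequality. The main obstacle is the construction of this zero-fiber-entropy lift with two-sided vanishing exponent: one must ensure that not only the forward but also the backward Lyapunov exponent vanishes on the extension, which for $\mathrm{SL}(2,\bR)$-cocycles follows from applying the subadditive ergodic theorem simultaneously to $\lVert A_{\xi_{n-1}}\cdots A_{\xi_0}\rVert$ and $\lVert A_{\xi_{-1}}^{-1}\cdots A_{\xi_{-n}}^{-1}\rVert$ and invoking the symmetry $\lVert M^{-1}\rVert=\lVert M\rVert$ once more.
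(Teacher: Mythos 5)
The paper does not prove Lemma~\ref{lem:blala}; it cites it from \cite{DiaGelRam:19} (Theorem 11.1 and Claim 11.21 there), so there is no in-paper argument to compare against. Your ingredients --- the identity $\lvert f_A'([v])\rvert=\lVert Av\rVert^{-2}$ for a unit representative $v$, the $\mathrm{SL}(2,\bR)$-symmetry $\lVert M^{-1}\rVert=\lVert M\rVert$, and the resulting dichotomy (exactly Lemma~\ref{lem:querende}) --- are all correct, but two steps of your sketch do not go through as written.

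For the upper bound, the containment ``up to Lyapunov-irregular points, $\cL(0)\subset\Phi^{-1}(\cL^+_\bA(0))$'' is not established. Membership of $(\xi,x)$ in $\cL(0)$ gives only that $\lim_n\frac1n\log\lVert A_{\xi_{n-1}}\cdots A_{\xi_0}v\rVert=0$ for the single unit vector $v$ over $x$ (and an analogous statement about past products indexed by $\xi^-$); this is strictly weaker than the existence of $\lim_n\frac1n\log\lVert A_{\xi_{n-1}}\cdots A_{\xi_0}\rVert$, which is what $\xi^+\in\cL^+_\bA(0)$ requires. The operator norm can oscillate on exponential scale while the slow singular direction of the partial products repeatedly shadows $v$ closely enough to keep $\lVert A_{\xi_{n-1}}\cdots A_{\xi_0}v\rVert$ subexponential, so $\cL(0)$ genuinely meets the complement of $\Phi^{-1}(\cL^+_\bA(0))$. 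Discarding those points as ``Lyapunov-irregular'' is not harmless: Bowen entropy of a noncompact set is not preserved under deleting an irregular part, and irregular sets of subadditive/matrix cocycles are well known to be capable of carrying positive, even full, topological entropy. You would need an actual estimate showing that this excess part of $\cL(0)$ has Bowen entropy at most $h_{\rm top}(\sigma^+,\cL^+_\bA(0))$.

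For the lower bound your argument presupposes a restricted variational principle on the base. You start from $\nu^+\in\cM_{\rm erg}(\sigma^+)$ with $\lambda_1(\bA,\nu^+)=0$ and $h(\sigma^+,\nu^+)$ close to $h_{\rm top}(\sigma^+,\cL^+_\bA(0))$. Bowen's theorem gives only the easy inequality $h(\sigma^+,\nu^+)\le h_{\rm top}(\sigma^+,\cL^+_\bA(0))$ for such $\nu^+$; that the supremum over them saturates this topological entropy is precisely the nontrivial content of Theorem~\ref{teo:3}, which in this paper is deduced \emph{from} Lemma~\ref{lem:blala}. So as written the lower bound is circular within the paper's logical structure, and in any event you would need an independent proof of the equality $h_{\rm top}(\sigma^+,\cL^+_\bA(0))=\sup\{h(\sigma^+,\nu^+)\colon\lambda_1(\bA,\nu^+)=0\}$. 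Granting such a $\nu^+$, the rest of your lower bound is sound: a zero-relative-entropy lift $\mu$ has $h(F_\bA,\mu)=h(\sigma^+,\nu^+)$, Lemma~\ref{lem:querende}(1) forces $\chi(\mu)=0$ (and the backward exponent vanishes $\mu$-a.e.\ automatically, since for an invariant measure forward and backward Birkhoff averages coincide a.e.), and Bowen's generic-points theorem finishes.
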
 

Given $\xi^+\in\Sigma_N^+$ and $\ell\in\bN$, denote by $v_+(\xi^+,\ell)\in\bP^1$ a vector  at which $\lvert (f_{\xi^+}^\ell)'\rvert$ attains its maximum; note that this vector is unique unless $f_{\xi^+}^\ell$ is an isometry. 

\begin{lemma}[{\cite[Proposition 11.5]{DiaGelRam:19}}]\label{lem:querende}
	Assume $\xi^+\in\Sigma_N^+$ satisfies $\lambda_1(\bA,\xi^+)=\alpha$.
\begin{itemize}
\item[(1)] If $\alpha=0$, then $\chi^+(\xi^+,v)=0$ for all $v\in\bP^1$.
\item[(2)] If $\alpha>0$, then the limit $v_0(\xi^+)=\lim_{\ell\to\infty}v_+(\xi^+,\ell)$ exists and it holds
\[
	\chi^+(\xi^+,v)
	=\begin{cases}
		2\alpha&\text{ for }v= v_0(\xi^+),\\
		-2\alpha&\text{ otherwise}.
	\end{cases}
\]
\end{itemize}
\end{lemma}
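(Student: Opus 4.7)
The plan is to work entirely with the partial matrix products $M_\ell\eqdef A_{\xi_{\ell-1}}\circ\cdots\circ A_{\xi_0}\in\mathrm{SL}(2,\bR)$ and to reduce all projective derivatives to matrix norms via the identity $\lvert f_M'(v)\rvert=\lVert Mv\rVert^{-2}$ for $M\in\mathrm{SL}(2,\bR)$ and unit $v\in\bP^1$; this identity follows from a direct computation in angular coordinates using $\det M=1$. The chain rule then telescopes into
\[
\lvert(f_{\xi^+}^\ell)'(v)\rvert=\lVert M_\ell v\rVert^{-2},
\]
so $\chi^+(\xi^+,v)=-2\lim_\ell\frac1\ell\log\lVert M_\ell v\rVert$ whenever the right-hand limit exists. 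Since $M_\ell\in\mathrm{SL}(2,\bR)$ has singular values $\sigma_\ell\eqdef\lVert M_\ell\rVert$ and $1/\sigma_\ell$, and $\lVert M_\ell^{-1}\rVert=\sigma_\ell$, every unit $v$ satisfies $\sigma_\ell^{-1}\le\lVert M_\ell v\rVert\le\sigma_\ell$. Taking logs, dividing by $\ell$, and squeezing yields part (1): when $\alpha=0$, $\chi^+(\xi^+,v)=0$ for every $v$.

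For part (2), I would set $w_\ell\eqdef v_+(\xi^+,\ell)$, which by the identity above is the unit eigenvector of $M_\ell^T M_\ell$ for the smaller eigenvalue $\sigma_\ell^{-2}$; in particular $\lVert M_\ell w_\ell\rVert=\sigma_\ell^{-1}$ and the singular images $M_\ell w_\ell,\,M_\ell w_\ell^\perp$ are orthogonal with respective norms $\sigma_\ell^{-1}$ and $\sigma_\ell$. The first step is to show that $(w_\ell)$ is Cauchy in $\bP^1$ by summing one-step angle bounds. Let $\phi_\ell\eqdef\angle(w_\ell,w_{\ell+1})\in[0,\pi/2]$ and expand $w_\ell=\cos\phi_\ell\cdot w_{\ell+1}+\sin\phi_\ell\cdot w_{\ell+1}^\perp$ in the orthonormal singular basis of $M_{\ell+1}$. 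Orthogonality of the singular images yields
\[
\lVert M_{\ell+1}w_\ell\rVert^2=\cos^2\phi_\ell\cdot\sigma_{\ell+1}^{-2}+\sin^2\phi_\ell\cdot\sigma_{\ell+1}^2.
\]
On the other hand, $M_{\ell+1}w_\ell=A_{\xi_\ell}(M_\ell w_\ell)$ with $\lVert M_\ell w_\ell\rVert=\sigma_\ell^{-1}$ forces $\lVert M_{\ell+1}w_\ell\rVert\le C\sigma_\ell^{-1}$, where $C\eqdef\max_i\lVert A_i\rVert$. Comparing gives $\sin^2\phi_\ell\le C^2/(\sigma_\ell\sigma_{\ell+1})^2$. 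Since $\alpha>0$ implies $\sigma_\ell\ge e^{\ell(\alpha-\epsilon)}$ eventually for any $\epsilon\in(0,\alpha)$, the angles $\phi_\ell$ are exponentially summable, so $(w_\ell)$ converges in $\bP^1$ to some $v_0(\xi^+)$ with $\angle(w_\ell,v_0(\xi^+))=O(e^{-2\ell(\alpha-\epsilon)})$.

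The values of $\chi^+$ then follow by applying the same singular-basis expansion to the fixed vectors $v_0(\xi^+)$ and $v\ne v_0(\xi^+)$ against the basis of $M_\ell$. Writing $v_0=\cos\psi_\ell\cdot w_\ell+\sin\psi_\ell\cdot w_\ell^\perp$ gives $\lVert M_\ell v_0\rVert^2=\cos^2\psi_\ell\cdot\sigma_\ell^{-2}+\sin^2\psi_\ell\cdot\sigma_\ell^2$; the above decay of $\psi_\ell$ combined with $\sigma_\ell=e^{\ell\alpha+o(\ell)}$ makes both terms $e^{-2\ell\alpha+o(\ell)}$ (with the $\cos^2\cdot\sigma_\ell^{-2}$ term bounding from below), so $\chi^+(\xi^+,v_0)=2\alpha$. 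For $v\ne v_0(\xi^+)$ the angle $\angle(v,w_\ell)$ converges to $\angle(v,v_0(\xi^+))>0$ and stays bounded away from $0$ for large $\ell$, so the term $\sin^2\cdot\sigma_\ell^2\sim\sigma_\ell^2$ dominates, $\lVert M_\ell v\rVert\sim\sigma_\ell$, and $\chi^+(\xi^+,v)=-2\alpha$.

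The hard part is the Cauchy estimate for $(w_\ell)$. A tempting approach via the intermediate product $B_{n,m}\eqdef M_mM_n^{-1}$ fails: the only general bound $\lVert B_{n,m}\rVert\le\sigma_n\sigma_m$ (from $\lVert M_n^{-1}\rVert=\sigma_n$) yields only the trivial bound $\sin\angle(w_n,w_m)\le 1$, and no improvement is available without extra hypotheses on the block $(A_{\xi_n},\ldots,A_{\xi_{m-1}})$, which need not grow at the Lyapunov rate. The one-step comparison above circumvents this by using the fixed uniform bound $\lVert A_{\xi_\ell}\rVert\le C$ rather than $\lVert B_{n,m}\rVert$, and by transferring the already-minimal norm $\lVert M_\ell w_\ell\rVert=\sigma_\ell^{-1}$ to $\lVert M_{\ell+1}w_\ell\rVert\le C\sigma_\ell^{-1}$; this is what makes the angles summable when $\alpha>0$.
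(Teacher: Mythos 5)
Your proof is correct and self-contained. Since the paper simply cites \cite[Proposition~11.5]{DiaGelRam:19} and does not reproduce its proof, there is no in-text argument to compare against; your derivation stands on its own and is the standard (and, I believe, essentially the intended) argument. A few points worth recording:
\begin{itemize}
\item The telescoping identity $\lvert(f_{\xi^+}^\ell)'(v)\rvert=\lVert M_\ell v\rVert^{-2}$ (with $M_\ell=A_{\xi_{\ell-1}}\cdots A_{\xi_0}$) is exactly right and is the crucial reduction; it converts the projective statement entirely into singular-value language. In particular $v_+(\xi^+,\ell)$ is the least-expanded singular direction of $M_\ell$, as you note, and for $\alpha>0$ this direction is eventually unique because $\sigma_\ell\to\infty$.
\item Part (1) is a clean squeeze: $\sigma_\ell^{-1}\le\lVert M_\ell v\rVert\le\sigma_\ell$ with $\frac{1}{\ell}\log\sigma_\ell\to 0$.
\item Your one-step Cauchy estimate is the right device and you correctly diagnose why the naive two-point comparison via $B_{n,m}=M_mM_n^{-1}$ is useless without extra information on blocks. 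The inequality $\sin^2\phi_\ell\le C^2/(\sigma_\ell\sigma_{\ell+1})^2$ does follow from dropping the nonnegative $\cos^2\phi_\ell\,\sigma_{\ell+1}^{-2}$ term and using $\lVert M_{\ell+1}w_\ell\rVert\le\lVert A_{\xi_\ell}\rVert\,\lVert M_\ell w_\ell\rVert\le C\sigma_\ell^{-1}$; since $\sigma_\ell=e^{\ell\alpha+o(\ell)}$ with $\alpha>0$, the angles are geometrically summable, giving both convergence of $w_\ell$ in $\bP^1$ and the tail bound $\angle(w_\ell,v_0)=O(e^{-2\ell(\alpha-\epsilon)})$.
\item The final step is also correct: plugging $v_0$ (respectively $v\ne v_0$) into the orthogonal expansion with respect to the singular basis of $M_\ell$ gives $\lVert M_\ell v_0\rVert^2=\cos^2\psi_\ell\,\sigma_\ell^{-2}+\sin^2\psi_\ell\,\sigma_\ell^2$. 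For $v_0$, the second term is $O(e^{-4\ell(\alpha-\epsilon)}\cdot e^{2\ell(\alpha+\epsilon)})=O(e^{-2\ell(\alpha-3\epsilon)})$, so after letting $\epsilon\to0$ one gets $\frac{1}{\ell}\log\lVert M_\ell v_0\rVert\to-\alpha$, i.e.\ $\chi^+=2\alpha$. For $v\ne v_0$ the angle to $w_\ell$ is bounded below eventually, so $\lVert M_\ell v\rVert\asymp\sigma_\ell$ and $\chi^+=-2\alpha$.
\end{itemize}
The only cosmetic issue is that $w_\ell$ lives in $\bP^1$ and is only defined up to sign, so the expansions $w_\ell=\cos\phi_\ell\,w_{\ell+1}+\sin\phi_\ell\,w_{\ell+1}^\perp$ should be read with a consistent choice of unit representatives and $\phi_\ell\in[0,\pi/2]$ taken as the projective angle; since all quantities you use are norms, this is harmless. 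Overall: a correct, complete, and nicely organized proof of the cited result.
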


\begin{proof}[Proof of Theorem \ref{teo:3}]
	By Lemma \ref{lem:blala}, it suffices to study the metric entropy of  measures  $\nu^+\in\cM_{\rm erg}(\sigma^+)$ satisfying $\lambda_1(\mathbf A,\nu^+)=0$.
Consider the projections  $\pi^+\colon\Sigma_N\to\Sigma_N^+$,  $\pi^+(\xi^-.\xi^+)=\xi^+$, and  $\pi_1\colon\Sigma_N\times\bP^1\to\Sigma_N$, $\pi_1(\xi,x)=\xi$. 

\begin{claim}
	Given $\mu\in\cM_{\rm erg}(F_\bA)$, let $\nu^+\eqdef(\pi^+\circ\pi_1)_\ast\mu$.
	If $\chi(\mu)=0$ then $\lambda_1(\bA,\nu^+)=0$.
\end{claim}

\begin{proof}
By ergodicity, $\chi(\xi,v)=0$ for $\mu$-almost every  $(\xi,v)$. 
Denote by $\mu^+$ the ergodic measure obtained as the push-forward of $\mu$ by the map $(\xi,v)\mapsto(\xi^+,v)$.
Hence for $\mu^+$-almost every $(\xi^+,v)$ it holds $\chi^+(\xi^+,v)=0$. It follows from Lemma \ref{lem:querende} (1) that $\lambda_1(\bA,\xi^+)=0$ for $\nu^+$-almost every $\xi^+$. Note that $\nu^+$ is ergodic. Hence, by the subadditive ergodic theorem, the claim follows. 
\end{proof}

\begin{claim}
	For every $\nu^+\in\cM_{\rm erg}(\sigma^+)$ with $\lambda_1(\bA,\nu^+)=0$ there exists $\mu\in\cM_{\rm erg}(F_\bA)$ satisfying $\chi(\mu)=0$.
\end{claim}

\begin{proof}
Given $\nu^+\in\cM_{\rm erg}(\sigma^+)$ there exists $\mu\in\cM_{\rm erg}(F_\bA)$ such that $\nu^+=(\pi^+\circ\pi_1)_\ast\mu$. It follows from  Lemma \ref{lem:querende} (1)  that $\chi^+(\xi^+,v)=0$ for $\nu^+$-almost every $\xi^+$ and any $v$. Hence, $\chi(\xi,v)=0$ for $\mu$-almost every $(\xi,v)$, which implies $\chi(\mu)=0$. 
\end{proof}

It follows%
\footnote{
It holds
\[
	\sup_{\mu\colon\mu\circ(\pi^+\circ\pi_1)^{-1}=\nu^+}h(F_\bA,\mu)
	=h(\sigma^+,\nu^+)+\int_{\Sigma_N^+}h_{\rm top}(F_\bA,(\pi^+\circ\pi_1)^{-1}(\xi^+))\,d\nu^+(\xi^+).
\]
It is straightforward to check that $h_{\rm top}(F_\bA,(\pi^+\circ\pi_1)^{-1}(\xi^+))=0$ for every $\xi^+$.}  from \cite{LedWal:77} that $h(\sigma^+,\nu^+)=h(F_\bA,\mu)$. 
Hence
\[\begin{split}
	\sup\{h(F_\bA,\mu)&\colon \mu\in\cM_{\rm erg}(F_\bA)
		,\chi(\mu)=0\}\\
	&=	
	\sup\{h(\sigma^+,\nu^+)\colon \nu^+\in\cM_{\rm erg}(\sigma^+)
		,\lambda_1(\bA,\nu^+)=0\}.
\end{split}\] 
Hence, the equality follows from Theorem \ref{teo:1}. It remains to show that the topological entropy of this level set is less than $\log N$ and positive. Indeed, this was shown in \cite[Theorem B]{DiaGelRam:19}. This proves the theorem.
\end{proof}

\section{Coded systems}\label{sec:notation}
\subsection{Preliminaries and decipherability}

Throughout this section, let $\cA$ be a finite collection of \emph{symbols}, also called an \emph{alphabet}. 
The \emph{full shift} over $\cA$, denoted by $\cA^\bZ$, is the collection of all bi-infinite sequences of symbols from $\cA$. We write an element of this space as $\underline a=(a_k)_{k\in\bZ}=(\ldots ,a_{-1}|a_0,a_1,\ldots)$, where $a_k\in\cA$ for all $k\in\bZ$. Equipped with the metric $d_1(\underline a,\underline b)\eqdef e^{-\inf\{\lvert k\rvert\colon a_k\ne b_k\}}$ this space is compact. Given $\underline a\in\cA^\bZ$ and $n\in\bN$, we denote by 
\[
	[a_0,\ldots,a_{n-1}]
	\eqdef \{\underline b\in\cA^\bZ\colon b_k= a_k\text{ for all }k=0,\ldots,n-1\}
\] 
its \emph{$n$-cylinder} or simply \emph{cylinder} associated to $(a_0,\ldots,a_{n-1})$. 

A \emph{word} over $\cA$ is a finite sequence of symbols in $\cA$. The \emph{length} of a word $a$ is the number of symbols it contains and is denoted by $\lvert a\rvert_\cA$. A \emph{$k$-word} is a word of length $k$ and the set of all $k$-words over $\cA$ is denoted by $\cA^k$. The \emph{empty word} $\epsilon$ is the unique word with no symbols and of length zero. The set of all  words (including the empty word) over the alphabet $\cA$ is denoted by $\cA^\ast$.
A \emph{concatenation} of a pair of words $a=(a_1,\ldots ,a_k)$ and  $b=(b_1,\ldots, b_\ell)$ is the word $(a,b)\eqdef (a_1,\ldots,a_k,b_1,\ldots ,b_\ell)$. The concatenation of any number of words is analogous. A \emph{prefix} of a word $b\in\cA^\ast$ is a word $a\in\cA^\ast$ such that $b=(a,c)$ for some $c\in\cA^\ast$.  A \emph{suffix} of a word $b\in\cA^\ast$ is a word $a\in\cA^\ast$ such that $b=(c,a)$ for some $c\in\cA^\ast$. 

The \emph{shift} $\sigma_\cA\colon \cA^\bZ\to\cA^\bZ$, defined by $(\sigma_\cA(\underline a))_k=a_{k+1}$ for every $k\in\bZ$, is a continuous map. A \emph{subshift} is a closed subset of $\cA^\bZ$ which is $\sigma_\cA$-invariant. Replacing $\bZ$ by $\bN_0$ and by $-\bN$ one obtains one-sided shift spaces.

A \emph{coded system} (\emph{CS}) over the alphabet $\cA$ is a compact subshift $S\subset\cA^\bZ$ such that there is a
  collection $\cB\subset\cA^\ast$ of words over $\cA$ such that  $S$ is the shift invariant closure of all bi-infinite concatenations of words in $\cB$. 
  Here we always assume that $\cB$ is finite%
\footnote{The general definition of a \emph{coded system} allows $\cB$ to be infinite. However, this will not be needed here.}.
  Any such collection $\cB$ is called a \emph{code} for $S$. Let us be a bit more precise in our notation. Each code $\cB\subset\cA^\ast$ by itself can be considered as an alphabet giving rise to the space $\cB^\bZ$. Consider the map $\hat\iota_\cB\colon\cB\to\cA^\ast$ which sends each element $w\in\cB$ to the corresponding word in $\cA^\ast$ and let
\begin{equation}\label{eq:iotaW1}
	\iota_\cB\colon\cB^\ast\to\cA^\ast,\quad
	\iota_\cB(w_1,\ldots,w_k)
	\eqdef \big(\hat\iota_\cB(w_1),\ldots,\hat\iota_\cB(w_k)\big).
\end{equation}
Consider the extension of this map to the space of bi-infinite concatenations
\begin{equation}\label{eq:iotaW2}\begin{split}
	&\underline\iota_\cB\colon\cB^\bZ\to\cA^\bZ,\\
	&\underline\iota_\cB(\ldots,w_{-1}|w_0,w_1,\ldots)
	\eqdef (\ldots,\iota_\cB(w_{-1})|\iota_\cB(w_0),\iota_\cB(w_1),\ldots),
\end{split}\end{equation}
which identifies the bi-infinite concatenation $(\ldots,w_{-1}|w_0,w_1,\ldots)\in\cB^\bZ$ of elements in $\cB$ (that is, words over $\cA$) with the corresponding  sequence in $\cA^\bZ$. We denote by
\begin{equation}\label{eq:Pnumber}
	\PCS{\cB}
	\eqdef \underline\iota_\cB\big(\cB^\bZ\big)
	\subset \cA^\bZ	
\end{equation}
the \emph{pre-coded system} defined by $\cB$. As we assume $\cB$ to be finite,  
\begin{equation}\label{eq:number}
	\CS{\cB}
	\eqdef \bigcup_{k\in\bZ}\sigma_\cA^k\big(\PCS{\cB}\big)
	= \big\{(\sigma_\cA^k\circ\underline\iota_\cB)(\underline w)
		\colon \underline w\in\cB^\bZ,k\in\bZ\big\}
\end{equation}
is a coded system (that is, this set is, in particular, compact and shift-invariant).
Note that every coded system is transitive. We refer to \cite[Chapter 13.5]{LinMar:95} for more details and references on coded systems.

Let $\cB$ be a finite collection  of (nonempty) words over the alphabet $\cA$. We say that $\cB$ is \emph{disjoint} if no element in $\cB$ is a prefix of another element in $\cB$. This term is justified by the fact that for any disjoint collection of (nonempty) words their associated cylinders are pairwise disjoint. We say that $\cB$ is \emph{uniquely left decipherable} if whenever $\iota_\cB(w_1,\ldots, w_m)$ is a prefix of $\iota_\cB(u_1,\ldots ,u_n)$ where $w_i,u_j\in\cB$, then $m\le n$ and $w_i=u_i$ for every $i=1,\ldots,m$.%
\footnote{Note that this is slightly weaker than being \emph{uniquely decipherable} as  in \cite[Definition 8.1.21]{LinMar:95}.}

\begin{lemma} \label{lem:COND}
	Every disjoint finite collection of words  is uniquely left decipherable. 
\end{lemma}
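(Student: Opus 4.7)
The plan is to proceed by induction on the length $m$ of the left-hand concatenation.

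For the base case $m=1$, suppose $\hat\iota_\cB(w_1)$ is a prefix of the concatenated word $\iota_\cB(u_1,\ldots,u_n)$ in $\cA^\ast$. Since $\hat\iota_\cB(u_1)$ is also a prefix of $\iota_\cB(u_1,\ldots,u_n)$, both $\hat\iota_\cB(w_1)$ and $\hat\iota_\cB(u_1)$ are prefixes of a common word; hence one of them is a prefix of the other (of two prefixes of a common word, the shorter is always a prefix of the longer). Since $w_1,u_1\in\cB$ and $\cB$ is disjoint, neither can be a strict prefix of the other, so they must coincide as words over $\cA$. Because $\hat\iota_\cB$ is injective on $\cB$, this yields $w_1=u_1$. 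The inequality $1\le n$ is automatic since $\iota_\cB(u_1,\ldots,u_n)$ must be nonempty to admit the nonempty prefix $\hat\iota_\cB(w_1)$.

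For the inductive step, assume the statement for $m-1$. Given that $\iota_\cB(w_1,\ldots,w_m)$ is a prefix of $\iota_\cB(u_1,\ldots,u_n)$, observe that $\hat\iota_\cB(w_1)$ is itself a prefix of this concatenation, so the base case yields $w_1=u_1$. Removing this common initial segment of length $\lvert\hat\iota_\cB(w_1)\rvert_\cA=\lvert\hat\iota_\cB(u_1)\rvert_\cA$ from both sides (which is unambiguous since words in $\cB$ are nonempty, so the length is positive, and the prefix relation in $\cA^\ast$ is preserved under such cancellation), we see that $\iota_\cB(w_2,\ldots,w_m)$ is a prefix of $\iota_\cB(u_2,\ldots,u_n)$. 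The inductive hypothesis gives $m-1\le n-1$ and $w_i=u_i$ for $i=2,\ldots,m$, which together with $w_1=u_1$ completes the induction.

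The main (and only) obstacle is the base case, which is really the content of the statement: once one knows that the two initial blocks $\hat\iota_\cB(w_1)$ and $\hat\iota_\cB(u_1)$ must agree, the inductive step is a purely combinatorial cancellation. The proof is short and elementary; the only point requiring care is that the cancellation in the inductive step is legitimate, which rests on the nonemptiness of the elements of $\cB$ and on the fact that prefix-comparison in $\cA^\ast$ is preserved under removing equal prefixes.
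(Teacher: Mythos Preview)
Your proof is correct and follows essentially the same approach as the paper's: both proceed by induction on the length $m$ of the left concatenation, use disjointness to identify the first blocks $w_1=u_1$, and then cancel and apply the inductive hypothesis. You spell out the base case in more detail than the paper (which simply asserts it follows from disjointness), but the argument is the same.
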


\begin{proof}
Let $\cB$ be a disjoint collection of words over  the alphabet $\cA$.
	We proceed by induction over the length of a prefix of a word over the alphabet $\cB$. As $\cB$ by hypothesis is disjoint, the statement is true for $k=1$. Assuming the statement is true for $k\in \bN$, suppose that $\iota_\cB(w_{i_1}, w_{i_2},\ldots, w_{i_k},w_{i_{k+1}})$ is a prefix of $\iota_\cB(u_{j_1},u_{j_2},\ldots, u_{j_m})$ for some $m\in\bN$. Again invoking our hypothesis that $\cB$ is disjoint, it follows $w_{i_1}=u_{j_1}$. Hence $\iota_\cB(w_{i_2},\ldots ,w_{i_k},w_{i_{k+1}})$ is a prefix of $\iota_\cB(u_{j_2},\ldots, u_{j_m})$. Then the induction hypothesis implies $w_{i_n}=u_{j_n}$ for every $n=2,\ldots,k+1$, proving the lemma.
\end{proof}

In what follows we assume that $\cB=\{w_1,\ldots,w_M\}$. Given $\underline a\in\PCS{\cB}$ a bi-infinite concatenation of words in $\cB$,  a \emph{decoding} of  $\underline a$ in $\cB$ is a sequence $(\ldots,i_{-1}|i_0,i_1,\ldots)\in\{1,\ldots,M\}^\bZ$ such that
\[
	\underline a
	= \iota_\cB (\ldots ,w_{i_{-1}}|w_{i_0},w_{i_1},\ldots),
	\spac{that is}
	\underline a
	\in\iota_\cB^{-1}(\underline w).
\]
We call then the one-sided sequences $(\ldots,i_{-1})\in\{1,\ldots,M\}^{-\bN}$ and $(i_0,i_1,\ldots)\in\{1,\ldots,M\}^{\bN_0}$ a \emph{backward} and  \emph{forward decoding} of $\underline a$, respectively. 

Let us denote by 
\[
	\underline\iota_\cB^+\colon \cB^{\bN_0}\to \cA^{\bN_0}
	\spac{and}
	\underline\iota_\cB^-\colon\cB^{-\bN}\to\cA^{-\bN}
\]	 
the maps analogously defined to the one in \eqref{eq:iotaW2}, then a code $\cB$ is uniquely left decipherable if, and only if, $\underline\iota_\cB^+$ is invertible.
Lemma \ref{lem:COND} hence implies immediately the following. 

\begin{corollary}\label{cor:forward}
	Let $\cB$ be a disjoint finite collection of words over $\cA$. Then every element in $\PCS{\cB}$ has a unique  forward decoding in $\cB$ and hence $\underline\iota_\cB^+$ is invertible.
\end{corollary}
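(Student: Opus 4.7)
The plan is to deduce existence and uniqueness of forward decodings directly from Lemma~\ref{lem:COND}, with essentially all of the work contained in the translation of ``two distinct decodings'' into a prefix relation between finite concatenations. Existence is immediate: by definition $\PCS{\cB}=\underline\iota_\cB(\cB^\bZ)$, so any $\underline w\in\cB^\bZ$ with $\underline\iota_\cB(\underline w)=\underline a$ restricted to nonnegative coordinates furnishes a forward decoding. The substance of the corollary is uniqueness.

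To establish uniqueness I would argue by contradiction: suppose $\underline a$ admits two forward decodings $(i_0,i_1,\ldots)$ and $(j_0,j_1,\ldots)$ in $\{1,\ldots,M\}^{\bN_0}$, and let $k\geq 0$ be the smallest index with $w_{i_k}\neq w_{j_k}$. Since the initial $\cA$-words $\iota_\cB(w_{i_0},\ldots,w_{i_{k-1}})$ and $\iota_\cB(w_{j_0},\ldots,w_{j_{k-1}})$ coincide and both expansions reproduce the same forward half of $\underline a$, the tails from position $k$ onward yield the same element of $\cA^{\bN_0}$, so there is no loss in taking $k=0$. Next I would pick integers $m,n\geq 0$ large enough that $\lvert \iota_\cB(w_{i_0},\ldots,w_{i_m})\rvert_\cA \geq \lvert\iota_\cB(w_{j_0},\ldots,w_{j_n})\rvert_\cA$, which is always possible because every word in $\cB$ is nonempty. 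Because both expansions agree on the common half-infinite sequence, it follows that $\iota_\cB(w_{j_0},\ldots,w_{j_n})$ is a prefix of $\iota_\cB(w_{i_0},\ldots,w_{i_m})$. Lemma~\ref{lem:COND} (disjointness implies unique left decipherability) then forces $w_{j_0}=w_{i_0}$, contradicting the choice at $k=0$. The invertibility of $\underline\iota_\cB^+$ on its image $\underline\iota_\cB^+(\cB^{\bN_0})$ is precisely this injectivity statement.

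The main obstacle, such as it is, is only the bookkeeping needed to pass from a pair of bi-infinite decodings to a prefix relation between two finite words to which Lemma~\ref{lem:COND} can be directly applied. Once the common prefix is peeled off by shifting to $k=0$, the argument is essentially mechanical.
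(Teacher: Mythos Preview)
Your proposal is correct and follows the same route as the paper: the paper simply states that the corollary is an immediate consequence of Lemma~\ref{lem:COND}, and your argument is precisely the unpacking of that implication. One minor simplification: after reducing to $k=0$, you do not even need to choose large $m,n$ and invoke Lemma~\ref{lem:COND}; since both $w_{i_0}$ and $w_{j_0}$ are prefixes of the same half-infinite $\cA$-sequence, one is a prefix of the other, and disjointness of $\cB$ directly forces $w_{i_0}=w_{j_0}$.
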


\begin{remark}{\rm
In general, a (even disjoint) collection of words defines sequences which cannot be uniquely decoded. 
For example, for the alphabet $\cA=\{a,b\}$  the collection $\cB=\{(a,b,a)$, $(a,b,b),(b,a),(b,b,a)\}$ is disjoint, but 
\[
	\big((a,b,b)^{-\bN},a,b,a|\ldots\big)
	= \big((b,b,a)^{-\bN},b,a|\ldots\big)
	\in\PCS{\cB}\subset\cA^\bZ
\]
can be written in two different ways as concatenation of words in $\cB$. Nevertheless, the following fundamental fact holds. For completeness, we provide its proof.
}\end{remark}

\begin{lemma}\label{lem:soandso}
	Let $\cB$ be a disjoint finite collection of words over $\cA$. Then every element in $\PCS{\cB}$ has at most $R$ decodings in $\cB$, where $R$ is the largest length of a word in $\cB$.
\end{lemma}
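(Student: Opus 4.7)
My plan is to bound the number $N$ of decodings of $\underline a$ by pigeonholing their cut positions into a carefully chosen window of $R$ consecutive integers. Write $C_i\subset\bZ$ for the set of cut positions of a decoding $d_i$ (the integers where a word begins, including $0$). By Corollary \ref{cor:forward}, the forward decoding from any fixed cut is unique, so all the $C_i$ coincide on $\bZ_{\geq 0}$; every disagreement between decodings is confined to $\bZ_{<0}$.

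The crux is the following observation. For each pair $i\neq j$ I would set
$c^*_{ij}=\max\{c<0\colon c\in C_i\triangle C_j\}$, the most recent position of disagreement; this is a well-defined integer in $\{-1,-2,\ldots\}$. I claim that on $(-\infty,c^*_{ij}]$ the sets $C_i$ and $C_j$ are disjoint. Indeed, if some $c\leq c^*_{ij}$ lay in $C_i\cap C_j$, Corollary \ref{cor:forward} applied to the forward decodings starting at the common cut $c$ would force $C_i\cap[c,+\infty)=C_j\cap[c,+\infty)$, so in particular $c^*_{ij}\in C_i$ would be equivalent to $c^*_{ij}\in C_j$, contradicting $c^*_{ij}\in C_i\triangle C_j$. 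This propagation of agreement forward from an arbitrary shared cut is the main step I expect to require care; the rest is essentially pigeonhole.

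Setting $c^*=\min_{i\neq j}c^*_{ij}$, which is a finite integer because the minimum is taken over finitely many pairs, the sub-claim applied to every pair yields that the $N$ sets $C_i\cap(-\infty,c^*]$ are pairwise disjoint: any $c\leq c^*$ belongs to at most one $C_i$. Next observe that since consecutive cuts of any $d_i$ are separated by the length of a single word in $\cB$, hence by at most $R$, any window of $R$ consecutive integers must contain at least one cut of each $d_i$. Applying this to the window
$W=\{c^*-R+1,c^*-R+2,\ldots,c^*\}\subset(-\infty,c^*]$ gives $|C_i\cap W|\geq 1$ for every $i$, and combining with pairwise disjointness yields
\[
	N
	\leq \sum_{i=1}^N|C_i\cap W|
	\leq |W|
	=R,
\]
which is the desired bound.
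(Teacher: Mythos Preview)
Your proof is correct and uses the same two ingredients as the paper's: forward uniqueness from a common cut (Corollary~\ref{cor:forward}) and pigeonholing cut positions into a window of $R$ consecutive integers. The organization differs slightly: the paper argues by contradiction, works with decoding indices $k_0(i^-,j^-)$ rather than cut positions, and only converts to positions at the end when invoking the pigeonhole; your direct argument via the cut sets $C_i\subset\bZ$ and the ``last disagreement'' $c^*_{ij}$ is cleaner and makes the disjointness step (your sub-claim) more transparent. One small point worth stating explicitly: your argument as written assumes the number $N$ of decodings is finite (needed for $c^*=\min_{i\ne j}c^*_{ij}$ to exist), but this is harmless since the bound $N\le R$ for every finite subfamily of decodings immediately rules out infinitely many.
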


\begin{proof}
	By Corollary \ref{cor:forward} it suffices to study backward decodings of a sequence.
	By contradiction, suppose that there exists $\underline a\in\PCS{\cB}$ which has $R+1$ backward decodings. We consider a family $\mathcal I\subset\{1,\ldots,M\}^{-\bN}$ of decodings of $\underline a$ having $R+1$ elements. 
Given two such backward decodings $i^-=(\ldots,i_{-1}),j^-=(\ldots,j_{-1})\in\mathcal I$, for $k\in\bN$ let 
\[
	I_k
	\eqdef \lvert \iota_\cB(w_{i_{-k}},\ldots, w_{i_{-1}})\rvert_\cA
	\spac{and}
	J_k
	\eqdef \lvert \iota_\cB(w_{j_{-k}},\ldots, w_{j_{-1}})\rvert_\cA.	
\]	
If there is $k\in\bN$ such that there exists an index $\ell\in\bN$ with $I_k=J_\ell$, then corresponding parts of the decodings coincide as concatenated words over the alphabet $\cA$, that is,
\[
	\iota_\cB(w_{i_{-k}},\ldots ,w_{i_{-1}})
	= \iota_\cB(w_{j_{-\ell}},\ldots ,w_{j_{-1}}).
\] 
As we assume that $\cB$ is disjoint, by Lemma \ref{lem:COND} it is uniquely left decipherable. Hence, it follows $k=\ell$ and $i_{-n}=j_{-n}$ for every $n=k,\ldots,1$. If there are infinitely many such indices $k\in\bN$, then it follows $i^-=j^-$, contradicting the fact that we consider two distinct backward decodings. Hence, there exists one largest such index $k_0=k_0(i^-,j^-)$. 
Let 
\[
	N
	\eqdef \max_{i^-,j^-\in\mathcal I}k_0(i^-,j^-)
\]
and note that $N$ is finite as $\mathcal I$ has $R+1$ elements.
Hence, by uniquely left decipherability,  this implies that 
\begin{equation}\label{eq:stepproof}
	\big\{\iota_\cB(w_{i_{-N}},\ldots,w_{i_{-1}})
	\colon i^-\in\mathcal I,w_{-\ell}\in\cB,\ell=N,\ldots,1\big\}
\end{equation}
is a collection of distinct words.
Because every word in $\cB$ has a length bounded by $R$, for every $i^-\in\mathcal I$ there exists some index $n\ge N$ such that
\[
	-NR-R
	\le -\sum_{\ell=1}^{n}\lvert \iota_\cB(w_{i_{-\ell}})\rvert_\cA
	\le -NR-1.
\]
In other terms, the marker of the starting position of the $n$th word in this decoding is between  position $-NR-R$ and position $-NR-1$ of the ``spelling'' of $\underline a$ in the alphabet $\cA$. As by assumption we have $R+1$ such decodings, by the pigeonhole principle (at least) two of them must end at the same position. Say, there are $i^-,j^-\in\mathcal I$ and indices $k$ and $\ell$, respectively,  such that
\[
	-\lvert \iota_\cB(w_{i_{-k}})\rvert_\cA-\ldots-\lvert \iota_\cB(w_{i_{-1}})\rvert_\cA
	=
	-\lvert \iota_\cB(w_{j_{-\ell}})\rvert_\cA-\ldots-\lvert \iota_\cB(w_{j_{-1}})\rvert_\cA.
\]
As in the argument before, uniquely left decipherability implies
\[
	\iota_\cB(w_{i_{-k}},\ldots,w_{i_{-1}})
	=\iota_\cB(w_{j_{-\ell}},\ldots,w_{j_{-1}})
\]
and hence $k=\ell$ and $i_n=j_n$ for every $n=k,\ldots,1$. In particular, the latter implies $i_{-n}=j_{-n}$ for every $n=N,\ldots,1$. This contradicts the fact that all words in \eqref{eq:stepproof} are distinct. This proves the lemma.
\end{proof}

The following facts are immediate consequences of the definition of disjointness.

\begin{corollary}\label{cor:dis2}
	Let $\cB$ be a disjoint collection of words over the alphabet $\cA$ and $m\in\bN$. Then $\cB^m$ is a disjoint collection of words over $\cA$.
\end{corollary}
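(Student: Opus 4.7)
The plan is to deduce this immediately from the unique left decipherability of $\cB$ (Lemma \ref{lem:COND}), viewing elements of $\cB^m$ as length-$m$ words over the alphabet $\cB$ and their image under $\iota_\cB$ as the corresponding concatenated words over $\cA$.

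First I would fix two distinct elements $(w_1,\ldots,w_m)$ and $(u_1,\ldots,u_m)$ of $\cB^m$ and assume, for contradiction, that $\iota_\cB(w_1,\ldots,w_m)$ is a prefix of $\iota_\cB(u_1,\ldots,u_m)$ as words over $\cA$. Since by hypothesis $\cB$ is disjoint, Lemma \ref{lem:COND} applies and tells us that $\cB$ is uniquely left decipherable: whenever $\iota_\cB(w_1,\ldots,w_m)$ is a prefix of $\iota_\cB(u_1,\ldots,u_n)$ with $w_i,u_j\in\cB$, then $m\le n$ and $w_i=u_i$ for every $i=1,\ldots,m$. Applied to our situation (with $n=m$), this forces $w_i=u_i$ for every $i$, contradicting the assumption that the two $m$-tuples were distinct.

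Thus no element of $\iota_\cB(\cB^m)$ is a proper prefix of another, which is exactly the assertion that $\cB^m$ is disjoint as a collection of words over $\cA$. There is no real obstacle here: everything reduces to a direct appeal to Lemma \ref{lem:COND}, the only subtlety being to keep straight the two levels of alphabets (words over $\cA$ that are the letters of the compound alphabet $\cB$, versus the concatenated words over $\cA$ obtained via $\iota_\cB$).
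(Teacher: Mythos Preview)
Your proof is correct and essentially matches the paper's intent. The paper states this corollary (together with Corollary~\ref{corlem:dis3}) as ``immediate consequences of the definition of disjointness'' and gives no explicit argument; your route through Lemma~\ref{lem:COND} is a clean way to make that one-line claim precise, and the underlying induction (peeling off $w_1=u_1$ by disjointness, then repeating) is exactly what the paper has in mind.
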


\begin{corollary}\label{corlem:dis3}
	Let $\cB'=\{w_1',\ldots,w_M'\}$ and $\cB=\{w_1,\ldots,w_M\}$ be two collections of words over the alphabet $\cA$ such that $w_i$ is a prefix of $w_i'$ for every $i=1,\ldots,M$. If $\cB$ is disjoint then $\cB'$ is disjoint.
\end{corollary}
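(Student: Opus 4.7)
The plan is to argue by contradiction, exploiting the elementary fact that two prefixes of the same word are always comparable under the prefix order.

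Suppose $\cB'$ fails to be disjoint. Then there exist indices $i\ne j$ such that $w_i'$ is a prefix of $w_j'$. Since by hypothesis $w_i$ is a prefix of $w_i'$ and $w_i'$ is a prefix of $w_j'$, transitivity of the prefix relation gives that $w_i$ is a prefix of $w_j'$. By hypothesis, $w_j$ is also a prefix of $w_j'$. Thus both $w_i$ and $w_j$ are prefixes of the single word $w_j'$.

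Now I would invoke the basic observation that if two words $u$ and $v$ are both prefixes of a common word $w$, then necessarily $u$ is a prefix of $v$ or $v$ is a prefix of $u$ (compare them symbol by symbol: they must agree on the first $\min(\lvert u\rvert_\cA,\lvert v\rvert_\cA)$ symbols, namely with the corresponding prefix of $w$). Applying this to $w_i$ and $w_j$ as prefixes of $w_j'$, either $w_i$ is a prefix of $w_j$ or $w_j$ is a prefix of $w_i$. Since $i\ne j$, this contradicts disjointness of $\cB$. Hence $\cB'$ is disjoint.

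The proof is essentially a one-line transitivity argument, so there is no real obstacle; the only point worth making explicit is the elementary lemma that any two prefixes of a common word are prefix-comparable, which is what allows us to pull the contradiction in $\cB'$ back to $\cB$.
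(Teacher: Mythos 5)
Your argument is correct, and since the paper states this corollary without proof (only remarking that it is an immediate consequence of the definition of disjointness), your contradiction argument—pulling a prefix relation in $\cB'$ back through the common extension $w_j'$ to a prefix relation in $\cB$, using the elementary fact that two prefixes of the same word are prefix-comparable—is exactly the intended one-liner.
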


\subsection{Coded subsystems of the sequence space $\Sigma_N$}\label{ssec:codedSigma}

The base space $\Sigma_N=\cA^\bZ$ of the skew product \eqref{eq:sp} with $\cA=\{1,\ldots,N\}$ is a special case of the above. 
 
\begin{notation}\label{eq:notation}{\rm
In Sections \ref{sec:horses}--\ref{sec:proooof}, our base alphabet will always be $\cA=\{1,\ldots,N\}$. In Sections \ref{se:suspmode}--\ref{sec:cascade-1}, the alphabet $\cA$ is unspecified. We are also going to use two families of other alphabets, $(\cA_n)_n$ and $(\cB_n)_n$, defined by some finite families of words from $\cA^\ast$.
All of those alphabets are going to be disjoint. For better readability, we will identify the finite words in alphabets $\cA_n$ and $\cB_n$ with the corresponding finite words in $\cA$. Note that, because of disjointness we have decipherability of any \emph{finite} word, hence this convention is not going to lead to any ambiguity. For infinite or bi-infinite words in those alphabets we will use the precise notation, to always keep track whether we are talking about an element of $\cB_n^\bZ$ or an element of $\cA^\bZ$:
Note that $\PCS{\cB}$ is a subset of $\cA^\ast$. Given  $w_1,\ldots,w_m\in\cB$, we denote by 
\[
	\lvert(w_1,\ldots,w_m)\rvert
	\eqdef\lvert (\iota_\cB(w_1),\ldots,\iota_\cB(w_m))\rvert_\cA
\]	 
the length of the concatenated word (spelled in the alphabet $\{1,\ldots,N\}$). We let
\begin{equation}\label{eq:specific}
	[w]^+
	\eqdef [\iota_\cB(w)]^+
\end{equation}
denote the cylinder in $\Sigma_N^+$. Given $u\in\Sigma_N^\ast$ we let
\[
	(w_1,\ldots,w_m,u)
	\eqdef (\iota_\cB(w_1,\ldots,w_m),u)
	\in\Sigma_N^\ast
\]
denote the corresponding concatenated word (spelled in the alphabet $\{1,\ldots,N\}$). Finally, denote
\[
	\cB^m
	\eqdef \{(w_1,\ldots,w_m)\colon w_k\in\cB,k=1,\ldots,m\}
	\subset\cA^\ast.
\]
Analogously to notation \eqref{eq:reference}, for $k=1,\ldots,\lvert(w_1,\ldots,w_m)\rvert\eqdef L$, we denote by 
\[
	f_{[w_1,\ldots,w_m]}^k
	\eqdef f_{\xi_{k-1}}\circ\cdots\circ f_{\xi_0},\,\,\text{where}\,\,
	(\xi_0,\xi_1,\ldots,\xi_{L-1})\eqdef(\iota_\cB(w_1,\ldots,w_m))
		\in\Sigma_N^\ast,
\]
the map obtained by concatenating the maps from the family $\{f_i\}_i$ which are indexed by the first  $k$  elements  of the concatenated words $w_1,\ldots,w_m$ (spelled in $\{1,\ldots,N\}$). 	
}\end{notation}

\section{Suspensions of shift spaces}\label{se:suspmode}

We consider measure preserving systems obtained from suspensions of Bernoulli shifts, collect some standard facts (Section \ref{ssec:susmod}), and discuss large deviation results (Section \ref{ssec:LD}). 

Throughout this section, we fix a finite initial alphabet $\cA$. Later (starting in Section \ref{sec:horses}) we will assume that this alphabet is $\{1,\ldots,N\}$ and will also invoke the axioms in Section \ref{sec:axioms}, however these ingredients are irrelevant in this section.

\subsection{Suspension model in the full shift over $\cA$}\label{ssec:susmod}

Given a function $R\colon \cA\to\bN$, we extend it to a step function on the sequence space $\cA^\bZ$ by
\[
	\underline R \colon \cA^\bZ\to\bN,\quad
	\underline a=(\ldots ,a_{-1}|a_0,a_1,\ldots)\in\cA^\bZ
	\mapsto \underline R(\underline a)=R(a_0).
\]	
We define the \emph{discrete-time suspension space}
\[
	\fS_{\cA,R}
	\eqdef (\cA^\bZ\times\bN_0)_{\sim},
\]
defined as the quotient space of $\cA^\bZ\times\bN_0$ modulo the equivalence relation $\sim$ that identifies $(\underline a,s)$ with $(\sigma_\cA(\underline a),s-\underline R(\underline a))$ for every $s\ge \underline R(\underline a)$ and $\underline a\in\cA^\bZ$. For convenience, we represent each class by its element $(\underline a,s)$ with $s\in\{0,\ldots,\underline R(\underline a)-1\}$, called its \emph{canonical representation}. 
 
The \emph{suspension of $\sigma_\cA$ by $\underline R$} is the map 
\[
	\Phi_{\cA,R}\colon \fS_{\cA,R}\to \fS_{\cA,R} 
	,\quad
	\Phi_{\cA,R}(\underline a,s)
	\eqdef 
	\begin{cases}
	(\underline a,s+1) &\text{ if }0\le s+1<\underline R(\underline a),\\
	(\sigma_\cA(\underline a),0) &\text{ if }s=\underline R(\underline a)-1.
	\end{cases}
\]
We also consider the \emph{ground floor}
\[
	\fG_{\cA,R}
	\eqdef\cA^\bZ\times\{0\}
	\subset \fS_{\cA,R}.
\]

\begin{remark}[(Piecewise constant) roof function]
	We can view the map $\underline R$ as a \emph{roof function} over $\cA^\bZ$. By our choice, this function is piecewise constant on each cylinder determined by a symbol of $\cA$, that is, for every $a_0\in\cA$ it holds
\[
	\underline R(\underline a)
	= R(a_0)
	\spac{for every}
	\underline a\in[a_0].
\]
\end{remark}

Let $M\eqdef \card\cA$, $\fb_\cA$ be the $(\frac{1}{M},\ldots,\frac{1}{M})$-Bernoulli measure on $\cA^\bZ$, and $\mathfrak m$ be the counting measure on $\bZ$. Define  the measure $\lambda_\cA$ on $\fS_{\cA,R}$ by  
\begin{equation}\label{eq:defmeasure}
	\lambda_{\cA,R}
	\eqdef \frac{1}{(\fb_\cA\times \mathfrak m)(\fS_{\cA,R})}
		(\fb_\cA\times \mathfrak m)|_{\fS_{\cA,R}}
	= \frac{(\fb_\cA\times \mathfrak m)|_{\fS_{\cA,R}}}
		{\int \underline R\,d\fb_\cA}	.
\end{equation}
Given a continuous function $\psi\colon\fS_{\cA,R}\to\bR$, define $\Delta\psi\colon\cA^\bZ\to\bR$ by
\begin{equation}\label{eq:defDeltapsi}
	\Delta\psi(\underline a)
	\eqdef \sum_{k=0}^{\underline R(\underline a)-1}\psi(\underline a,k).
\end{equation}

\begin{lemma}[Abramov's formula]\label{lem:Abramov}
The measure $\lambda_{\cA,R}$ is a $\Phi_{\cA,R}$-invariant and ergodic Borel probability measure satisfying
\[
	h_{\rm top}(\Phi_{\cA,R},\fS_{\cA,R})
	\ge h(\Phi_{\cA,R},\lambda_{\cA,R})
	= \frac{\log \card \cA}{\int \underline R\,d\fb_\cA}
	= \frac{\log M}{\frac1M\sum_{a\in\cA} R(a)}.
\]
 Moreover, for any continuous function $\psi\colon\fS_{\cA,R}\to\bR$ it holds
\[
	\int\psi\,d\lambda_{\cA,R}
	=\frac{\int\Delta\psi\,d\fb_\cA}{\int \underline R\,d\fb_\cA}.
\]
\end{lemma}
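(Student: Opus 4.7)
The statement has three ingredients---invariance/ergodicity, the entropy formula, and the integral formula---and I would establish them essentially independently, treating the lemma as the standard instance of Abramov's theorem in the very concrete setting of a Bernoulli shift suspended by a piecewise-constant integer roof.

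First, invariance. It suffices to verify $\lambda_{\cA,R}(\Phi_{\cA,R}^{-1}C)=\lambda_{\cA,R}(C)$ on the generating algebra of cylinder boxes of the form $C=[a_0,\ldots,a_{n-1}]\times\{s\}$ with $0\le s<R(a_0)$. For $s\ge 1$ the preimage is $[a_0,\ldots,a_{n-1}]\times\{s-1\}$, which carries the same $\fb_\cA\times\mathfrak m$-mass. The only nontrivial case is $s=0$, where the preimage decomposes as the disjoint union $\bigsqcup_{b\in\cA}[b,a_0,\ldots,a_{n-1}]\times\{R(b)-1\}$; summing the $\fb_\cA$-masses and using $\sigma_\cA$-invariance of $\fb_\cA$ gives back $\fb_\cA([a_0,\ldots,a_{n-1}])$. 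The normalization in the definition of $\lambda_{\cA,R}$ turns this into a probability measure.

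For ergodicity I would run the usual Rokhlin-tower argument: given a $\Phi_{\cA,R}$-invariant measurable $A\subset\fS_{\cA,R}$, consider the ground-floor slice $A_0\eqdef\{\underline a\in\cA^\bZ:(\underline a,0)\in A\}$. The return map to the ground floor is exactly $\sigma_\cA$, so $\Phi_{\cA,R}$-invariance of $A$ forces $\sigma_\cA^{-1}(A_0)=A_0$ modulo $\fb_\cA$-null sets. Ergodicity of the Bernoulli measure $\fb_\cA$ yields $\fb_\cA(A_0)\in\{0,1\}$, and $\Phi_{\cA,R}$-invariance of $A$ propagates this dichotomy to the higher floors, giving $\lambda_{\cA,R}(A)\in\{0,1\}$.

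The entropy formula is then Abramov's theorem applied to this discrete suspension: $h(\Phi_{\cA,R},\lambda_{\cA,R})=h(\sigma_\cA,\fb_\cA)/\int\underline R\,d\fb_\cA$. Since $\fb_\cA$ is the uniform Bernoulli measure on $M=\card\cA$ symbols, $h(\sigma_\cA,\fb_\cA)=\log M$, while $\int\underline R\,d\fb_\cA=\sum_{a\in\cA}R(a)\fb_\cA([a])=\frac{1}{M}\sum_{a\in\cA}R(a)$. The inequality $h_{\rm top}(\Phi_{\cA,R},\fS_{\cA,R})\ge h(\Phi_{\cA,R},\lambda_{\cA,R})$ is just the variational principle.

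Finally, the integral formula is a direct Fubini computation from the definition \eqref{eq:defmeasure}:
\[
	\int\psi\,d\lambda_{\cA,R}
	=\frac{1}{\int\underline R\,d\fb_\cA}\int_{\cA^\bZ}\sum_{s=0}^{\underline R(\underline a)-1}\psi(\underline a,s)\,d\fb_\cA(\underline a)
	=\frac{\int\Delta\psi\,d\fb_\cA}{\int\underline R\,d\fb_\cA},
\]
using \eqref{eq:defDeltapsi} in the inner sum. No step is genuinely difficult; the only real obstacle is bookkeeping---keeping the quotient structure of $\fS_{\cA,R}$ and the ``roll-over'' between floors straight when verifying invariance on cylinders and when restricting $\fb_\cA\times\mathfrak m$ to the canonical fundamental domain $\{(\underline a,s):0\le s<\underline R(\underline a)\}$. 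If one wanted to avoid citing Abramov's theorem as a black box, the entropy identity can alternatively be obtained by invoking the Abramov--Rokhlin-type formula for the first return of $\Phi_{\cA,R}$ to $\fG_{\cA,R}$, which is conjugate to $(\sigma_\cA,\fb_\cA)$ with first-return time $\underline R$.
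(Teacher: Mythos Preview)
Your proof is correct and complete. The paper does not actually give a proof of this lemma---it is stated as a standard fact (Abramov's formula) and left without justification---so there is nothing to compare against beyond noting that your argument is exactly the expected one: verify invariance on cylinder boxes using $\sigma_\cA$-invariance of $\fb_\cA$ at the roll-over, deduce ergodicity from ergodicity of the base via the first-return map to the ground floor, cite Abramov for the entropy identity, and read off the integral formula by Fubini from the definition of $\lambda_{\cA,R}$.
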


\subsection{Controlled large deviations}\label{ssec:LD}

We now study the fluctuation of  Birkhoff sums of repeated returns to the ground floor of the suspension space. For that, given a potential $\psi\colon\fS_{\cA,R}\to\bR$ and $\Delta\psi\colon\cA^\bZ\to\bR$ defined as above, let
\begin{equation}\label{eq:defAve}
	\var_\cA(\Delta\psi)
	\eqdef \max_{a\in\cA}
		\max_{\underline b,\underline c\in[ a]}
		\,\Big\{\big(\Delta\psi(\underline b)-\Delta\psi(\underline c)\big)\Big\} .
\end{equation}

\begin{proposition}\label{pro:LD}
	Let $\psi\colon\fS_{\cA,R}\to\bR$ be a continuous potential. 
	For every $\varepsilon>0$ there exists $N_0=N_0(\psi,\varepsilon)\in\bN$ such that if $m\ge N_0$ then there exists a set $A\in\cA^\bZ$ such that
$
	\fb_\cA(A)>1-\varepsilon
$	
and for every $\underline a\in A$, $i=0,\ldots,m-1$, and $k\in\{1,\ldots,m\}$ it holds
\[\begin{split}
	\Big\lvert\sum_{j=i}^{i+k-1}\left(\underline R(\sigma_\cA^j(\underline a))
		-\int \underline R\,d\fb_\cA\right)\Big\rvert
	&<m\varepsilon,\\
	\Big\lvert\sum_{j=i}^{i+k-1}\left(\Delta\psi(\sigma_\cA^j(\underline a))
		-\int \Delta\psi\,d\fb_\cA\right)\Big\rvert
	&<m(2\var_\cA(\Delta\psi)+\varepsilon).	
\end{split}\]
\end{proposition}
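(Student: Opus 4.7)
The plan is to treat both inequalities by the same scheme: reduce each sum to one over i.i.d.\ random variables depending only on the $0$-th coordinate, apply the strong law of large numbers combined with Egoroff's theorem to obtain uniform control on all partial Birkhoff sums at once, and then absorb the failure of $\Delta\psi$ to be cylinder-constant into an error term via its variation. Despite the subsection title, no exponential large-deviation estimate is actually required; the ``large deviations'' here refer only to simultaneous control of all partial sums, uniformly in the starting time.

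First I would establish the bound on $\underline R$. Since $\underline R(\sigma_\cA^j\underline a)=R(a_j)$ depends only on the $j$-th coordinate, $(R(a_j))_j$ is i.i.d.\ under $\fb_\cA$. Put $\mu_R\eqdef\int\underline R\,d\fb_\cA$ and $S_n(\underline a)\eqdef\sum_{j=0}^{n-1}(R(a_j)-\mu_R)$. The strong law gives $S_n/n\to 0$ almost surely, so Egoroff's theorem furnishes $A_1\subset\cA^\bZ$ with $\fb_\cA(A_1)>1-\varepsilon/2$ and $N_1\in\bN$ such that $|S_n(\underline a)|\le n\varepsilon/8$ for all $\underline a\in A_1$ and all $n\ge N_1$. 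The sum in the first inequality equals $S_{i+k}(\underline a)-S_i(\underline a)$ with $0\le i\le m-1$ and $1\le k\le m$, so both indices lie in $[0,2m-1]$. For $m$ large enough that $2N_1\cdot\max_{a\in\cA}R(a)<m\varepsilon/2$, the short-index case is handled trivially; for $n\ge N_1$ one has $|S_n|\le n\varepsilon/8\le m\varepsilon/4$. The triangle inequality $|S_{i+k}-S_i|\le |S_{i+k}|+|S_i|$ then yields the bound $<m\varepsilon$ uniformly in $i,k$ on $A_1$.

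Next I would treat $\Delta\psi$. The obstacle is that $\Delta\psi$ is not constant on the $1$-cylinders $[a]$, which is precisely where the $2\var_\cA(\Delta\psi)$ term comes from. The fix is to introduce the cylinder-minimum $\bar\Delta\psi(a)\eqdef\inf_{\underline b\in[a]}\Delta\psi(\underline b)$, a function of the $0$-th coordinate only, so that $(\bar\Delta\psi(a_j))_j$ is i.i.d.\ under $\fb_\cA$. By definition of $\var_\cA$, pointwise $0\le\Delta\psi(\underline a)-\bar\Delta\psi(a_0)\le\var_\cA(\Delta\psi)$, and integrating $0\le\int\Delta\psi\,d\fb_\cA-\int\bar\Delta\psi\,d\fb_\cA\le\var_\cA(\Delta\psi)$. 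Writing
\[
\sum_{j=i}^{i+k-1}\bigl(\Delta\psi(\sigma_\cA^j\underline a)-\int\Delta\psi\,d\fb_\cA\bigr)
=\sum_{j=i}^{i+k-1}\bigl(\bar\Delta\psi(a_j)-\int\bar\Delta\psi\,d\fb_\cA\bigr)+E,
\]
the correction satisfies $|E|\le 2k\var_\cA(\Delta\psi)\le 2m\var_\cA(\Delta\psi)$: one factor of $\var_\cA(\Delta\psi)$ per summand from the pointwise deviation, one per summand from the shift in the integral. Applying the first-inequality argument to the i.i.d.\ sequence $(\bar\Delta\psi(a_j))_j$ produces $A_2$ with $\fb_\cA(A_2)>1-\varepsilon/2$ and $N_2$ on which the main sum is bounded by $m\varepsilon$. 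Setting $A\eqdef A_1\cap A_2$ gives $\fb_\cA(A)>1-\varepsilon$, and $N_0\eqdef\max\{N_1,N_2\}$, suitably enlarged to absorb the short-index constants, completes the argument.

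The main technical subtlety I anticipate is ensuring simultaneous uniformity in \emph{both} the starting index $i$ and the length $k$, whereas Birkhoff and Egoroff only directly produce control of forward averages starting at time $0$. This is resolved by the observation that admissible pairs satisfy $i+k\le 2m-1$, so uniform bounds on $|S_n|$ for $n\le 2m$ are enough, and the triangle inequality $|S_{i+k}-S_i|\le|S_{i+k}|+|S_i|$ only loses a factor of $2$, which is easily absorbed into the Egoroff threshold.
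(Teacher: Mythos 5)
Your proof is correct, and it takes a genuinely different route from the paper. The paper proves the uniform-in-$i$ statement by applying the Bernstein inequality to get an exponential tail bound $\bP(|S_{i+k}-S_i|\ge m\varepsilon)<2e^{-3m\varepsilon/2C}$ and then taking a union bound over the $m$ admissible starting indices $i$; the resulting exceptional set depends on $m$ and has measure $<2me^{-3m\varepsilon/2C}$, which is $<\varepsilon$ once $m$ is large. You instead invoke the strong law of large numbers together with Egoroff's theorem to extract a single set $A_1$, independent of $m$, on which the Ces\`aro means $S_n/n$ converge uniformly, and you recover the uniformity in $(i,k)$ via the telescoping bound $|S_{i+k}-S_i|\le|S_{i+k}|+|S_i|$ with $i,i+k\le 2m-1$, absorbing the finitely many indices $n<N_1$ by taking $m$ large. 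The treatment of $\Delta\psi$ via the cylinder-minimizer $\bar\Delta\psi$ and the error term bounded by $\var_\cA(\Delta\psi)$ per summand is exactly parallel to the paper's use of the cylinder-maximizer $B(a_k)$, and it yields the same $2\var_\cA(\Delta\psi)$ slack. Your approach is more elementary in that it avoids quantitative concentration inequalities; what the Bernstein route buys is an explicit, summable decay rate in $m$, which the authors don't actually need here but which would matter if one wanted effective constants. One small remark: each term of your correction $E$ lies in $[-\var_\cA(\Delta\psi),\var_\cA(\Delta\psi)]$ since it is a difference of two quantities each in $[0,\var_\cA(\Delta\psi)]$, so the sharper bound $|E|\le k\var_\cA(\Delta\psi)$ holds; your stated $2k\var_\cA(\Delta\psi)$ is a correct but slightly loose estimate, and either suffices.
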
 

\begin{proof}
We use the following probability result based on the Bernstein inequality. 

\begin{lemma}\label{lemcla:Bernstein}
Let $X$ be a bounded random variable with expected value $\bE(X)$. Then for every $\varepsilon>0$, there exists $N_0\in\bN$ such that for every $m\ge N_0$ the following holds. Let $X_1,\ldots, X_{2m}$ be independent and identically distributed copies of $X$. 
Then for every $k\in\{1,\ldots,m\}$ it holds
\[
	\bP\left(
		\big\lvert \sum_{j=i}^{i+k-1} \big(X_j - \bE (X)\big)\big\rvert < m\varepsilon
		\quad\text{ for every } i\in \{1,\ldots,m\} \right)
		 > 1- \varepsilon.
\]
\end{lemma}

\begin{proof}
Take $C>0$ such that $\lvert X-\bE(X)\rvert\le C$. By the Bernstein inequality, for every $m\in\bN$, $i\in\{0,\ldots,m-1\}$, and $k\in\{1,\ldots,m\}$ it holds
\[
	\bP\Big( \big\lvert\sum_{j=i}^{i+k-1} (X_j-\bE(X))\big\rvert < m\varepsilon \Big)
	> 1- 2e^{-3 m\varepsilon/2C}.
\] 
Hence, it follows
\[
	\bP\Big(\big\lvert\sum_{j=i}^{i+k-1} (X_j-\bE(X))\big\rvert < m\varepsilon
		\text{ for every }i\in\{1,\ldots,m\} \Big)
	>1- 2me^{-3 m\varepsilon/2C}.
\] 
The assertion follows taking $m$ large enough.
\end{proof}

To continue with the proof of the proposition, given $\cA=\{a_1,\ldots,a_M\}$,  consider the partition $\{[a_k]\colon k=1,\ldots, M\}$ of $\cA^\bZ$ into cylinders and let
\[
	B(a_k)
	\eqdef \max_{[a_k]}\Delta\psi.
\]
Let
\[
	\fB
	\eqdef \frac{1}{M}(B(a_1)+\cdots +B(a_{M}))
	,\quad
	\frakR
	\eqdef \frac{1}{M}(R(a_1)+\cdots+R(a_{M})).
\]

Consider the Bernoulli measure $\fb_\cA$ on $\cA^\bZ$ and the two random variables on this space
\[
	X_j(\underline a)=B({a_j})
	\quad\text{ and }\quad
	Y_j(\underline a)=R({a_j}),
\]	
which are bounded from above by $\max_kB(a_k)$ and $\max_kR(a_k)$, respectively. Moreover, they are independent and identically distributed and have expected values $\fB$ and $\frakR$, respectively. Given $\varepsilon>0$, applying Lemma \ref{lemcla:Bernstein} to each of those variables, there exists $N_0\in\bN$ such that for every $m\ge N_0$ there is a set $A$ satisfying $\fb_\cA(A)>1-\varepsilon$ such that for every $\underline a\in A$, $i=0,\ldots,m-1$, and  $k\in\{1,\ldots,m\}$ it holds
\begin{equation}\label{eq:usethis}
	 \Big\lvert
		\sum_{j=i}^{i+k-1}B(a_j)-k\fB
	\Big\rvert
	\le m\varepsilon,
	\qquad
	\Big\lvert
		\sum_{j=i}^{i+k-1}R(a_j)-k\frakR
	\Big\rvert
	\le m\varepsilon.
\end{equation}

The second inequality in \eqref{eq:usethis} and the fact that $\underline R$ is piecewise constant proves the first claim of the proposition as
$\frakR=\int \underline R\,d\fb_\cA$.

To prove the second claim, recall that by the very definition of $\var_\cA$ in \eqref{eq:defAve} it holds
\begin{equation}\label{eq:usethisb}
	\Big\lvert
		\Delta\psi(\sigma_\cA^j(\underline a))
		- B(a_j)
	\Big\rvert
	\le\var_\cA(\Delta\psi)
\end{equation}
for every $j\in\bZ$. For every $i=0,\ldots,m-1$, by summing over $j=i,\ldots,i+k-1$, it follows
\[\begin{split}
	\Big\lvert
		\sum_{j=i}^{i+k-1}\Delta\psi(\sigma_\cA^j(\underline a))
		-k\fB\Big\rvert
	&\le \Big\lvert
		\sum_{j=i}^{i+k-1}\big(\Delta\psi(\sigma_\cA^j(\underline a))
		- B(a_j)\big)\Big\rvert
		+\Big\lvert 
		 \sum_{j=i}^{i+k-1}B(a_j)
		 -k\fB
	\Big\rvert\\
	{\tiny{\text{use \eqref{eq:usethis},  \eqref{eq:usethisb}, and $k\le m$}}}\quad	
	&\le k\var_\cA(\Delta\psi)+m\varepsilon
	\le m\var_\cA(\Delta\psi)+m\varepsilon.
\end{split}\]
Also note that  
\[
	\lvert \fB - \int \Delta\psi\,d\fb_\cA\rvert
	\le \var_\cA(\Delta\psi),
\]
which together with $k\le m$ and the above  implies 
\[\begin{split}
	\Big\lvert
		\sum_{j=i}^{i+k-1}\Delta\psi(\sigma_\cA^j(\underline a))
		-k\int\Delta\psi\,d\fb_\cA \Big\rvert
	&\le
\Big\lvert
		\sum_{j=i}^{i+k-1}\Delta\psi(\sigma_\cA^j(\underline a))
		-k\fB\Big\rvert
		+m\var_\cA(\Delta\psi)\\
	&\le m \big(\var_\cA(\Delta\psi)+\varepsilon\big)
		+m\var_\cA(\Delta\psi)\\
	&= m(2\var_\cA(\Delta\psi)+\varepsilon)	.
\end{split}\]
This implies the second claim of the proposition.
\end{proof}

\section{Cascade of alphabets: Repetition and tailing}\label{sec:cascade-1}

Fix a finite alphabet $\cA$ and any increasing sequence of positive integers $(m_n)_{n\ge1}$. In Section \ref{sec:inddef}, we present an inductive construction of a cascade of alphabets $\cA_n$, each obtained by concatenating $m_n$ words of the former. Each such alphabet comes with a sequence space and corresponding suspension space as in Section \ref{se:suspmode}. The inductive definition gives some ``self-similar'' structure in the sense that each space ``contains copies'' of the precedents. Section \ref{sec:susmodint} studies this structure and develops some terminology. As our measure preserving suspensions are defined by means of Bernoulli measures, all their ``copy measures'' coincide, see Section \ref{ssec:bermea}.   
In Section \ref{sec:rooffunctions} we collect some relations between the roof functions across the cascade.
\subsection{Inductive definition of alphabets}\label{sec:inddef}

Let $\cA_0\eqdef\cA$, $M_0\eqdef\card\cA_0=M$.
For $n\ge1$, assume that we have given the collection $\cA_{n-1}$ of $M_{n-1}$ finite words over $\cA$. Let
\[
	\cA_n
	\eqdef (\cA_{n-1})^{m_n}
	\subset\cA^\ast.
\]
 Note that 
\[
	\card\cA_n
	= M_n
	\eqdef  (M_{n-1})^{m_n}.
\]
This concludes the inductive definition of the cascade of alphabets $\cA_n$. 

Note that $\cA_n$ is a finite collection of words over the alphabet $\cA$: the collection of all $(m_1\cdot m_2\cdots m_n)$-words over $\cA$. Indeed, each word is obtained by a concatenation of $m_n$-words in the alphabet $\cA_{n-1}$. On one hand, by convention, each concatenation of words over $\cA$ again is a word over $\cA$. On the other hand, the collection $\cA_n$ by itself can serve as an alphabet, each of its elements being a symbol in $\cA_n$. 
In each step, we have the corresponding canonical bijection between the symbols in $\cA_n$ and the $m_n$-words over the alphabet $\cA_{n-1}$. This defines a \emph{substitution map} from $\cA_n$ to $\cA_{n-1}$:
\[
	\Sub_{n,n-1}
	\colon\cA_n\to(\cA_{n-1})^{m_n}
\]
We extend this map to the bijection between the corresponding spaces of sequences
\begin{equation}\label{eq:tailaddn-1n}\begin{split}
	&\underline \Sub_{n,n-1}
	\colon (\cA_n)^\bZ\to\big((\cA_{n-1})^{m_n}\big)^\bZ
	=(\cA_{n-1})^\bZ,
	\quad\\
	&\underline\Sub_{n,n-1}(\ldots | a_0^{(n)},a_1^{(n)}, \ldots)
	\eqdef (\ldots | \Sub_{n,n-1}(a^{(n)}_0) ,\Sub_{n,n-1}(a^{(n)}_1),\ldots).
\end{split}\end{equation}

Denote by $\sigma_n=\sigma_{\cA_n}\colon(\cA_n)^\bZ\to(\cA_n)^\bZ$ the left shift over $\cA_n$. Let $\fb_n=\fb_{\cA_n}$ be the $(\frac{1}{M_n},\ldots,\frac{1}{M_n})$-Bernoulli measure on $(\cA_n)^\bN$.

\begin{remark}\label{rem:isomorphisms}
Note that $\underline\Sub_{n,n-1}$ defined in \eqref{eq:tailaddn-1n} is a bijection. Moreover 
$\sigma_n\colon(\cA_n)^\bZ\to(\cA_n)^\bZ$ is topologically conjugate with $\sigma_{n-1}^{m_n}\colon(\cA_{n-1})^\bZ\to(\cA_{n-1})^\bZ$ by $\underline\Sub_{n,n-1}$. Observe that 
\begin{equation}\label{eq:isedbelow}
	(\underline\Sub_{n,n-1})_\ast\fb_n=\fb_{n-1}.
\end{equation}	 
Thus, 
\[
	\underline\Sub_{n,n-1}\colon
	\big((\cA_n)^\bZ,\sigma_{n},\fb_n\big)\to
	\big((\cA_{n-1})^\bZ,\sigma_{{n-1}}^{m_n},\fb_{n-1}\big)
\]  
is a metric isomorphism. Hence, applying the previous argument inductively, we obtain that for every $n\in\bN$ and $\ell\in\{n-1,\ldots,0\}$ it holds that
\begin{equation}\label{def:Tnell}
	\underline\Sub_{n,\ell}
	\eqdef \underline\Sub_{\ell+1,\ell}\circ\cdots\circ \underline\Sub_{n,n-1}\colon
	\big((\cA_n)^\bZ,\sigma_{n},\fb_n\big)\to
	\big((\cA_{\ell})^\bZ,\sigma_{{\ell}}^{m_{\ell+1}\cdots m_n},
		\fb_{\ell}\big)
\end{equation}
is a metric isomorphism. In particular, taking $\ell=0$, the map
\begin{equation}\label{eq:isomorfus}
	\underline\Sub_{n,0}\colon
	\big((\cA_n)^\bZ,\sigma_{n},\fb_n\big)\to
	\big(\cA^\bZ,\sigma_\cA^{m_1\cdots m_n},\fb\big),
\end{equation}
where $\sigma_\cA$ is the shift in the original alphabet $\cA$, is a metric isomorphism.
\end{remark}

\subsection{Suspension spaces and their internal structure}\label{sec:susmodint}

We now invoke the suspension model in Section \ref{ssec:susmod} and apply it to each alphabet $\cA_n$, $n\in\bN$.
 Let $R_n\colon\cA_n\to\bN$ be  some  function and consider the corresponding function $\underline R_n\colon(\cA_n)^\bZ\to\bN$. Consider the suspension space $\fS_n=\fS_{\cA_n,R_n}$ and the suspension of $\sigma_n=\sigma_{\cA_n}$ by $R_n$ and denote it by $\Phi_n=\Phi_{\cA_n,R_n}$. Denote by $\sim_n$ the corresponding equivalence relation in the definition of the suspension space. Recalling \eqref{eq:defmeasure}, consider the $\Phi_n$-ergodic Borel probability measure
\begin{equation}\label{eq:deflambdan}
	\lambda_n
	= \lambda_{\cA_n,R_n}
	\eqdef \frac{1}{(\fb_n\times \mathfrak m)(\fS_n)}
		(\fb_n\times \mathfrak m)|_{\fS_n}.
\end{equation}
We will always use these short notations, unless there is risk of confusion. 
 
The suspension spaces have a ``self-similar'' internal structure that we proceed to study.  
 
\subsubsection{Roofs and tailing functions}

We impose the following assumption about the roof functions across the cascade. 	Most of this section only requires the lower bound in Assumption \ref{ass:roof}. We will invoke the upper bound only in Proposition \ref{procor:notormenta} to estimate the ``expected roof heights'' and ``expected length of tails''.

\begin{assumption}[Roof functions]\label{ass:roof}
There exists $K>0$ such that for every $b\in\cA_n$ with $\Sub_{n,n-1}(b) =(a_0,\ldots,a_{m_n-1})\in (\cA_{n-1})^{m_n}$ it holds
\[
	\sum_{k=0}^{m_n-1} R_{n-1}(a_k)
	< R_n(b)
	\le \Big(1+K2^{-(n-1)}\Big)\sum_{k=0}^{m_n-1} R_{n-1}(a_k).
\]
\end{assumption}

Assumption \ref{ass:roof} allows to define the ``tailing length'' function 
\begin{equation}\label{eq:suppe}
	t_n\colon \cA_n\to\bN,\quad
	t_n( b)
	\eqdef R_n( b)
		- \sum_{k=0}^{m_n-1}R_{n-1}\big((\Sub_{n,n-1}( b))_k\big).
\end{equation}
As before, we extend it to $\cA_n^\bZ$ by $\underline t_n(\underline b)\eqdef t_n(b_0)$, $\underline b=(\ldots,b_{-1}|b_0,b_1,\ldots)\in(\cA_n)^\bZ$.

\subsubsection{Ground floors}\label{sec:Susspagroflo}

Consider the \emph{$n$th level ground floor} 
\begin{equation}\label{def:groundlfoor}
	\fG_n
	= \fG_{\cA_n,R_n}
	\eqdef (\cA_n)^\bZ\times\{0\}
	\subset \fS_n.
\end{equation}
Given $(\underline a,s)\in \fS_n$ in its canonical form, define by
\[
	\fp_n(\underline a,s)
	\eqdef \underline a
\]
the natural projection from the suspension space ``to its ground floor''. 
By definition, 
\begin{equation}\label{def:groundlfoorreturn}
	\Phi_n^{\underline R_n\circ\fp_n(\underline b,0)}(\underline b,0)
	= \Phi_n^{\underline R_n(\underline b)}(\underline b,0)
	= (\sigma_{n}(\underline b),0)
	\in \fG_n
	\spac{for every}
	(\underline b,0)\in\fG_n.
\end{equation}
Note that \eqref{def:groundlfoorreturn} is a return map on the ground floor.

\subsubsection{Intermediate floors of first order}

We now extend the concept of \emph{ground floor} $\fG_n$ to so-called \emph{intermediate floors}. For that we first divide the suspension space  into its \emph{principal part} and its \emph{tail},
\begin{equation}\label{eq:regtai}
	\fS_n
	= \fR_n\,\smallcupdot \,\,\fT_n,
\end{equation}
as follows. A point $(\underline b,s)\in \fS_n$ (in its canonical representation) is in $\fR_n$ if and only if
\[
	0
	\le s
	< \sum_{k=0}^{m_n-1}R_{n-1}\big((\underline\Sub_{n,n-1}(\underline b))_k\big)
	= \underline R_n(\underline b)-\underline t_n(\underline b).
\]
Otherwise it belongs to $\fT_n$. We define the map 
\[
	P_{n,n-1}\colon \fR_n\to \fS_{n-1}
	,\quad	P_{n,n-1}(\underline b,s)
	\eqdef (\underline \Sub_{n,n-1}(\underline b),s).
\]
Note that for $s>R_{n-1}(\underline\Sub_{n,n-1}(\underline b))$, the latter is not in its canonical representation. To obtain this representation, one has to take into account (possibly several times) the identification 
\[
	(\underline\Sub_{n,n-1}(\underline b),s)
	\sim_{n-1} \big(\sigma_{{n-1}}(\underline\Sub_{n,n-1}(\underline b)),s
		-\underline R_{n-1}(\underline\Sub_{n,n-1}(\underline b))\big).
\]
By construction, the following holds.

\begin{lemma}
 The suspension map $\Phi_{n-1}$ on $\fS_{n-1}$ is a topological factor of the map $\Phi_n$ restricted to $\fR_n$ by the factor map $P_{n,n-1}$. 
\end{lemma}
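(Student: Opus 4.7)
The plan is to verify the three defining properties of a topological factor map—continuity, surjectivity, and dynamical intertwining—for $P_{n,n-1}$, where ``$\Phi_n$ restricted to $\fR_n$'' is interpreted as the first-return map $\widehat\Phi_n \colon \fR_n \to \fR_n$ of $\Phi_n$ to the principal part. The key structural fact is that the principal part $\fR_n$ has total height $\sum_{k=0}^{m_n-1} R_{n-1}(c_k)$ over each $\underline b$ with $\underline c = \underline\Sub_{n,n-1}(\underline b)$, which is exactly the time $\Phi_{n-1}$ takes to make $m_n$ consecutive returns to its own ground floor starting at $(\underline c, 0)$. This matches the two expressions by design and is the content of the lemma.

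I would first check continuity of $P_{n,n-1}$, which is immediate from the continuity of $\underline\Sub_{n,n-1}$ together with the local triviality of the canonicalization under $\sim_{n-1}$ (on each cylinder the canonicalization is just a fixed shift of the height coordinate). For surjectivity, given $(\underline c, s) \in \fS_{n-1}$ in canonical form (so $0 \le s < R_{n-1}(c_0)$), Remark \ref{rem:isomorphisms} gives $\underline b \eqdef \underline\Sub_{n,n-1}^{-1}(\underline c) \in (\cA_n)^\bZ$; then $0 \le s < R_{n-1}(c_0) \le \sum_{k=0}^{m_n-1} R_{n-1}(c_k)$ shows $(\underline b, s) \in \fR_n$ and by definition $P_{n,n-1}(\underline b, s) = (\underline c, s)$.

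The intertwining is where the work lies. By the product structure $\fS_n = \fR_n \smallcupdot \fT_n$ along each fiber, the first-return map $\widehat\Phi_n$ is given by $\widehat\Phi_n(\underline b, s) = (\underline b, s+1)$ as long as $s+1 < \sum_{k=0}^{m_n-1} R_{n-1}(c_k)$, and $\widehat\Phi_n(\underline b, s) = (\sigma_n(\underline b), 0)$ once $s+1$ reaches this threshold (one jumps over the tail of length $t_n(b_0)$ via $m_n$ applications of $\sim_n$). In the ``internal'' case the identity $P_{n,n-1}(\underline b, s+1) = \Phi_{n-1}(P_{n,n-1}(\underline b, s))$ follows directly from the definitions, since both sides apply the same $\sim_{n-1}$ reductions to the pair $(\underline c, s+1)$. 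In the ``jump'' case, $s+1 = \sum_{k=0}^{m_n-1} R_{n-1}(c_k)$, and the conjugation $\underline\Sub_{n,n-1} \circ \sigma_n = \sigma_{n-1}^{m_n} \circ \underline\Sub_{n,n-1}$ from Remark \ref{rem:isomorphisms} gives $P_{n,n-1}(\sigma_n(\underline b), 0) = (\sigma_{n-1}^{m_n}(\underline c), 0)$, while iterating $\sim_{n-1}$ exactly $m_n$ times on $(\underline c, s+1)$ also yields $(\sigma_{n-1}^{m_n}(\underline c), 0)$.

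The only mildly delicate point is the bookkeeping of the canonicalization in the internal case: the raw height $s+1$ may already exceed $R_{n-1}(c_0)$, and one must match the ``how many $\sim_{n-1}$ reductions'' between the two sides of the diagram. This is handled by a short induction on this number, using that applying $\sim_{n-1}$ commutes with ``forgetting'' the ambient suspension. I expect no deeper obstacle beyond this bookkeeping.
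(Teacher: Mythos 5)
Your proof is correct; the paper itself asserts this lemma ``by construction'' without an explicit proof, and your reading of ``$\Phi_n$ restricted to $\fR_n$'' as the first-return map of $\Phi_n$ to the (non-invariant) set $\fR_n$ is the natural formalization, with the continuity, surjectivity, and intertwining checks all sound. One cosmetic slip in the parenthetical: the jump over the tail is realized by a \emph{single} application of $\sim_n$ (equivalently, $t_n(b_0)+1$ further iterations of $\Phi_n$), not by $m_n$ applications; this does not affect the argument.
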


We now extend the term \emph{ground floor}. The quotient map $P_{n,n-1}$ is $m_n$-to-$1$. Indeed, this follows because $P_{n,n-1}$  precisely restricts to the the principal part $\fR_n$ of the suspension space.  Moreover, there is a natural order of the preimages given by the number of times we have to take into account the identification $\sim_{n-1}$ in order to obtain the canonical representation. Denote by $\fG_n^{(i)}$ the $i$th preimage of the $(n-1)$st level ground floor $\fG_{n-1}$ under $P_{n,n-1}$, for $i=1,\ldots,m_n-1$, and call them \emph{intermediate floors}. More precisely, let $\fG_n^{(0)}\eqdef\fG_n$ and for $i=1,\ldots,m_n-1$ let
\begin{equation}\label{eq:intfloor-a}
	\fG_n^{(i)}
	\eqdef 
	\Big\{(\underline b,s)\in \fS_n
		\colon s = \sum_{\ell=0}^{i-1}
		R_{n-1}\big((\underline \Sub_{n,n-1}(\underline b))_\ell\big)
	\Big\}
	\subset\fS_n.
\end{equation}
They are ``lifted copies'' of the ground floor $\fG_{n-1}$ in the suspension space $\fS_n$, using the fact that any symbol in the alphabet $\cA_n$ is obtained as a concatenation of words in the lower-level alphabet $\cA_{n-1}$. 

Note that the intermediate floors separate the suspension space $\fS_n$ into the \emph{strips} 
\[
	\fL_n^{(i)}
	\eqdef \Big\{(\underline b,s)\in \fS_n\colon
		 \sum_{\ell=0}^{i-1}R_{n-1}\big((\underline\Sub_{n,n-1}(\underline b))_\ell\big)
		 \le s 
		 <  \sum_{\ell=0}^{i}R_{n-1}\big((\underline\Sub_{n,n-1}(\underline b))_\ell\big)
	\Big\}
	\subset\fS_n,
\]
for $i=0,\ldots,m_n-1$ (where for $i=0$ the first sum is understood to be $0$). 

\begin{remark}
A point $(\underline b,s)\in\fS_n$ (in its canonical representation) belongs to the strip $\fL_n^{(i)}$ with address ${}^{(i)}$ if the first (symbolic) coordinate of the canonical representation of $P_{n,n-1}(\underline b,s)$ is $\sigma_{{n-1}}^i(\underline\Sub_{n,n-1}(\underline b))$. 
\end{remark}

By construction, the following holds.

\begin{lemma}\label{lemeq:bijima}
Each strip $\fL_n^{(i)}$ is mapped by $P_{n,n-1}$  onto $\fS_{n-1}$ in a bijective way,
\[
	P_{n,n-1}\big(\fL_n^{(i)}\big)
	= \fS_{n-1}.
\]
Moreover, the (disjoint) union of all strips is the principal part, that is,
\[
	\fR_n
	= \bigcup_{i=0}^{m_n-1}\fL_n^{(i)}.
\]
\end{lemma}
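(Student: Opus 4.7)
The plan is to unwind the definitions carefully; both claims are essentially combinatorial consequences of how the identification $\sim_n$ interacts with the substitution $\underline\Sub_{n,n-1}$. For the second claim, a point $(\underline b, s) \in \fS_n$ in canonical form lies in the principal part $\fR_n$ exactly when $0 \le s < \sum_{k=0}^{m_n-1} R_{n-1}((\underline\Sub_{n,n-1}(\underline b))_k)$. Setting $a_\ell \eqdef (\underline\Sub_{n,n-1}(\underline b))_\ell$, this interval of admissible $s$ partitions as the disjoint union over $i=0,\ldots,m_n-1$ of the half-open subintervals $\bigl[\sum_{\ell=0}^{i-1}R_{n-1}(a_\ell),\ \sum_{\ell=0}^{i}R_{n-1}(a_\ell)\bigr)$, which is precisely the definition of the strips $\fL_n^{(i)}$. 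This gives $\fR_n = \bigcupdot_{i=0}^{m_n-1}\fL_n^{(i)}$.

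For the first claim, the key observation is that if $(\underline b, s) \in \fL_n^{(i)}$, then writing $P_{n,n-1}(\underline b,s) = (\underline\Sub_{n,n-1}(\underline b), s)$ in canonical form in $\fS_{n-1}$ requires applying the identification $\sim_{n-1}$ exactly $i$ times, producing
\[
    P_{n,n-1}(\underline b, s)
    \sim_{n-1} \Big(\sigma_{n-1}^i(\underline\Sub_{n,n-1}(\underline b)),\ s - \sum_{\ell=0}^{i-1} R_{n-1}(a_\ell)\Big),
\]
where the second coordinate now lies in $[0, R_{n-1}(a_i))$ by definition of $\fL_n^{(i)}$. Injectivity on $\fL_n^{(i)}$ is then immediate: if two points have the same image, then matching first coordinates together with the bijectivity of $\sigma_{n-1}^i$ and of $\underline\Sub_{n,n-1}$ (Remark \ref{rem:isomorphisms}) force $\underline b = \underline b'$, and matching reduced second coordinates then force $s = s'$.

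For surjectivity, given any $(\underline c, t) \in \fS_{n-1}$ in canonical form (so $t \in [0, R_{n-1}(c_0))$), I would construct a preimage explicitly: set $\underline b \eqdef \underline\Sub_{n,n-1}^{-1}(\sigma_{n-1}^{-i}(\underline c))$ and $s \eqdef t + \sum_{\ell=0}^{i-1} R_{n-1}(c_{\ell - i})$. By construction $\sigma_{n-1}^i(\underline\Sub_{n,n-1}(\underline b)) = \underline c$, so the first $m_n$ symbols of $\underline\Sub_{n,n-1}(\underline b)$ are $(c_{-i},\ldots,c_{-i+m_n-1})$, whence $(\underline b, s) \in \fL_n^{(i)}$ in the required range. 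One small sanity check is that $(\underline b, s)$ is itself in canonical form in $\fS_n$, i.e.\ $s < R_n(b_0)$; this follows from the lower bound in Assumption \ref{ass:roof} together with $t < R_{n-1}(c_0)$, since the total $s$ is then bounded by $\sum_{\ell=0}^{m_n-1} R_{n-1}(c_{\ell-i}) \le R_n(b_0)$. The main (and minor) obstacle throughout is simply bookkeeping between canonical representatives on the two levels of the cascade; no new estimate beyond Assumption \ref{ass:roof} and the bijectivity of $\underline\Sub_{n,n-1}$ is needed.
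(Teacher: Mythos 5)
Your proposal is correct. The paper offers no proof of this lemma at all, stating only ``By construction, the following holds''; your argument supplies the verification that this implicit claim rests on, and it does so exactly as one would expect: the partition of $\fR_n$ into strips is read off from the definitions, and the bijectivity of $P_{n,n-1}$ on each strip reduces, after applying the identification $\sim_{n-1}$ exactly $i$ times, to the bijectivity of $\sigma_{n-1}^i$ and $\underline\Sub_{n,n-1}$ together with the strict lower bound in Assumption \ref{ass:roof} (which guarantees the constructed preimage is again in canonical form). One cosmetic point: in the final sanity check the last inequality should be strict, $\sum_{\ell=0}^{m_n-1} R_{n-1}(c_{\ell-i}) < R_n(b_0)$, as Assumption \ref{ass:roof} gives a strict bound, though this does not affect the conclusion.
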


\subsubsection{Intermediate floors of higher order: Inductive definition}\label{ssec:intfloors}
Above we defined intermediate floors $\fG_n^{(\cdot)}$ by means of the roof function defined on the alphabet $\cA_{n-1}$. Recalling that any word in $\cA_{n-1}$ is in turn spelled in the symbols of the alphabet $\cA_{n-2}$, inside any strip $\fL_n^{(i)}$ that is bounded by the intermediate floors $\fG_n^{(i)}$ and $\fG_n^{(i+1)}$ (for some index $i=0,\ldots,m_n-1$) we will introduce further, deeper-level, intermediate floors and strips, and we will continue from level $n-2$ down to any level $\ell\in\{1,\ldots,n-1\}$. 

For reasons which will be apparent in what follows, let us write
\[
	\fR_n^{(n-1)}
	\eqdef \fR_n
	\spac{and}
	\fT_n^{(n-1)}
	\eqdef \fT_n
\]
emphasizing the level $n-1$ that was taken into account in the definition. Analogously, let
\begin{equation}\label{eq:notationintflor}
	\fG_n^{(n-1,(i))}
	\eqdef \fG_n^{(i)}
	\spac{and}
	\fL_n^{(n-1,(i))}
	\eqdef \fL_n^{(i)}.
\end{equation}

Before giving the full, inductive, definition, let us first proceed with one further step. As in \eqref{eq:regtai}, the $(n-1)$st level suspension space splits into its principal part and its tail, $\fS_{n-1}=\fR_{n-1}\,\smallcupdot\,\,\fT_{n-1}=\fR_{n-1}^{(n-2)}\,\smallcupdot\,\,\fT_{n-1}^{(n-2)}$. Let 
\[
	\fR_n^{(n-2)}
	\eqdef P_{n,n-1}^{-1}(\fR_{n-1}^{(n-2)})
	\subset \fR_n^{(n-1)}
	\spac{and}
	\fT_{n}^{(n-2)}
	\eqdef P_{n,n-1}^{-1}(\fT_{n-1}^{(n-2)})
	\cup \fT_n^{(n-1)}.
\]
Hence, the $n$th level suspension space splits as
\[
	\fS_n
	= \fR_n^{(n-2)}\,\smallcupdot\,\,\fT_n^{(n-2)}.
\]
We subdivide the principal part $\fR_n^{(n-2)}$ into strips 
\[\begin{split}
	\fL_n^{(n-2,(j,i))}
	&\eqdef \big(P_{n,n-1}|_{\fL_n^{(n-1,(i))}}\big)^{-1}(\fL_{n-1}^{(n-2,(j))})\\
	\tiny{\text{ using Lemma \ref{lemeq:bijima} }}\quad
	&= 	\big(P_{n,n-1}|_{\fL_n^{(n-1,(i))}}\big)^{-1}
			\circ
		\big(P_{n-1,n-2}|_{\fL_{n-1}^{(n-2,(j))}}\big)^{-1}	(\fS_{n-2}),
\end{split}\]
where $i=0,\ldots,m_n-1$ and $j=0,\ldots,m_{n-1}-1$. Each such strip is separated by the corresponding intermediate floors, defined by
\[
	\fG_n^{(n-2,(j,i))}
	\eqdef \big(P_{n,n-1}|_{\fG_n^{(n-1,(i))}}\big)^{-1}(\fG_{n-1}^{(n-2,(j))}).
\]
The pair of indices $\va=(j,i)$ above labels what we call the \emph{$(n-2,n)$-address} of the strip and the intermediate floor.

Below we will consider further levels of our construction. Let us describe our general terminology.

\begin{notation}[Addresses]{\rm
Each intermediate floor and strip in $\fS_n$ will be indexed by ${}_n$ and ${}^{(\ell,\va)}$, where $\ell\in\{0,\ldots,n-1\}$ and $\va=(\mathrm a_\ell,\ldots,\mathrm a_{n-1})$ is a tuple with $\mathrm a_k\in\{0,\ldots,m_{k+1}-1\}$ for $k=\ell,\ldots,n-1$. Here the lower index ${}_{n}$ indicates to which suspension space the defined set belongs and the upper index ${}^{(\ell,\va)}$ indicates which previous levels are taken into account. The length of the tuple $\va$ indicates the difference of levels, it also implicitly determines $\ell$ which we keept in the notation for better readability. This notation will be also used for other objects of our construction. Compare Figure \ref{fig.1}.
}\end{notation}

We now provide the full inductive definition. Given $n\in\bN$ and $\ell=n-2,\ldots,0$, assuming that the principal part $\fR_{n-1}^{(\ell)}$ and the tail part $\fT_{n-1}^{(\ell)}$ were already defined, let 
\[
	\fR_n^{(\ell)}
	\eqdef P_{n,n-1}^{-1}\big(\fR_{n-1}^{(\ell)}\big),
	\quad
	\fT_n^{(\ell)} 
	\eqdef \fT_n^{(n-1)} \,\smallcupdot\,\, P_{n,n-1}^{-1}\big( \fT_{n-1}^{(\ell)}\big).
\]

\begin{remark}\label{rem:tails}
	Note that $\fT_n^{(\ell)}$ gathers all tails added at levels $\ell+1,\ldots,n$, that is, in each fiber of the suspension space there are
\[\begin{split}
	&\text{one tail added at level }n \text{ of length }\ft_n(\cdot),\\
	&m_n\text{ tails added at level }n-1\text{ of length }\ft_{n-1}(\cdot),
	\ldots,\\
	&m_n\cdots m_{\ell+1}\text{ tails added at level }\ell\text{ of length }\ft_\ell(\cdot).
\end{split}\]	
\end{remark}

We subdivide the principal part $\fR_n^{(\ell)}$ into strips. 
For that, we call $\va=(\mathrm a_\ell,\ldots,\mathrm a_{n-1})$, where $ \mathrm a_k\in\{0,\ldots,m_k-1\}$ for every $k=\ell,\ldots,n$, an \emph{$(\ell,n)$-address}. We define the map that ``lifts'' the $\ell$th level suspension space into the  $n$th level suspension space,
\begin{equation}\label{eq:Lnellalpha}
	L_n^{(\ell,\va)}
	\colon\fS_\ell\to\fS_n,
	\quad
	L_n^{(\ell,\va)}
	\eqdef \big(P_{n,n-1}|_{\fL_n^{(n-1,(\mathrm a_{n-1}))}}\big)^{-1}
			\circ\cdots\circ
		\big(P_{\ell+1,\ell}|_{\fL_{\ell+1}^{(\ell,(\mathrm a_\ell))}}\big)^{-1},
\end{equation}
by concatenating the corresponding inverse branches. Define  by
\begin{equation}\label{eq:defintflo}
	\fL_n^{(\ell,\va)}
	\eqdef L_n^{(\ell,\va)}(\fS_\ell)
	\spac{and}
		\fG_n^{(\ell,\va)}
	\eqdef L_n^{(\ell,\va)}(\fG_\ell)
\end{equation}
the  \emph{strip} and the \emph{intermediate floor} with $(\ell,n)$-address $\va$, respectively. By construction, the following holds.

\begin{lemma}
Every strip $\fL_n^{(\ell,\va)}$ is the bijective image of $\fS_\ell$ under $L_n^{(\ell,\va)}$. Every intermediate floor $\fG_n^{(\ell,\va)}$ is the bijective image of the ground floor $\fG_\ell$ under $L_n^{(\ell,\va)}$.
\end{lemma}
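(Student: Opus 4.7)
The plan is to verify both statements by a straightforward induction on the level gap $n-\ell$, exploiting the fact that the map $L_n^{(\ell,\va)}$ in \eqref{eq:Lnellalpha} is by construction a composition of inverse branches of the factor maps $P_{k,k-1}$ restricted to appropriate strips, each of which is a bijection by Lemma \ref{lemeq:bijima}. The second assertion about intermediate floors will then follow for free from the first by restricting the bijection to a distinguished subset.

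First, I would dispatch the base case $n-\ell=1$: here $\va=(\mathrm{a}_\ell)$ and $L_n^{(\ell,(\mathrm{a}_\ell))}=(P_{n,n-1}|_{\fL_n^{(n-1,(\mathrm{a}_\ell))}})^{-1}$, which Lemma \ref{lemeq:bijima} guarantees as a bijection $\fS_{n-1}\to\fL_n^{(n-1,(\mathrm{a}_\ell))}$; this matches the definition in \eqref{eq:defintflo}, noting that in this situation $\fL_n^{(\ell,(\mathrm{a}_\ell))}=\fL_n^{(n-1,(\mathrm{a}_\ell))}$ per the notation in \eqref{eq:notationintflor}. For the inductive step, I would factor
\[
L_n^{(\ell,\va)}=\bigl(P_{n,n-1}|_{\fL_n^{(n-1,(\mathrm{a}_{n-1}))}}\bigr)^{-1}\circ L_{n-1}^{(\ell,\va')},
\]
where $\va'=(\mathrm{a}_\ell,\ldots,\mathrm{a}_{n-2})$. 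By the inductive hypothesis, $L_{n-1}^{(\ell,\va')}$ is a bijection from $\fS_\ell$ onto $\fL_{n-1}^{(\ell,\va')}\subset\fS_{n-1}$; composing with the bijection $\fS_{n-1}\to\fL_n^{(n-1,(\mathrm{a}_{n-1}))}$ supplied by Lemma \ref{lemeq:bijima} produces a bijection $\fS_\ell\to L_n^{(\ell,\va)}(\fS_\ell)$, and the latter set is $\fL_n^{(\ell,\va)}$ by definition \eqref{eq:defintflo}. The only point requiring a line of verification is that the image of $L_{n-1}^{(\ell,\va')}$ lies in the correct domain for the next inverse branch, but Lemma \ref{lemeq:bijima} states that $P_{n,n-1}$ restricted to $\fL_n^{(n-1,(\mathrm{a}_{n-1}))}$ surjects onto all of $\fS_{n-1}$, so the inverse branch is well defined on $\fS_{n-1}$ and a fortiori on $\fL_{n-1}^{(\ell,\va')}$.

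For the second statement, once bijectivity of $L_n^{(\ell,\va)}\colon\fS_\ell\to\fL_n^{(\ell,\va)}$ is established, its restriction to the subset $\fG_\ell\subset\fS_\ell$ is automatically a bijection from $\fG_\ell$ onto its image, and by the very definition in \eqref{eq:defintflo} this image is $\fG_n^{(\ell,\va)}$.

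There is no real obstacle here; this is a bookkeeping lemma whose purpose is to confirm that the set-theoretic notation introduced around \eqref{eq:Lnellalpha}--\eqref{eq:defintflo} is internally consistent. The only conceptual content is that \emph{each} step of the inductive lifting stays within a single strip at the next higher level and covers it entirely, which is exactly the assertion of Lemma \ref{lemeq:bijima}; the induction then simply iterates this one-step consistency from level $\ell$ up to level $n$.
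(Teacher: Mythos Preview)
Your proposal is correct and is precisely the natural unpacking of what the paper means by ``By construction, the following holds'': the paper gives no proof beyond that phrase, and your induction on $n-\ell$ using Lemma \ref{lemeq:bijima} at each step is exactly the verification the construction implicitly encodes.
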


Finally note that
\begin{equation}\label{eq:fodase}
	\fS_n
	= \fR_n^{(\ell)}\,\smallcupdot\,\,\fT_n^{(\ell)}
	\spac{and}
	\fR_n^{(\ell)}
	= \cupdot_\va\fL_n^{(\ell,\va)},
	\end{equation}
where in the latter the union is taken over all $(\ell,n)$-addresses $\va$.	

\begin{figure}[h] 
 \begin{overpic}[scale=.43]{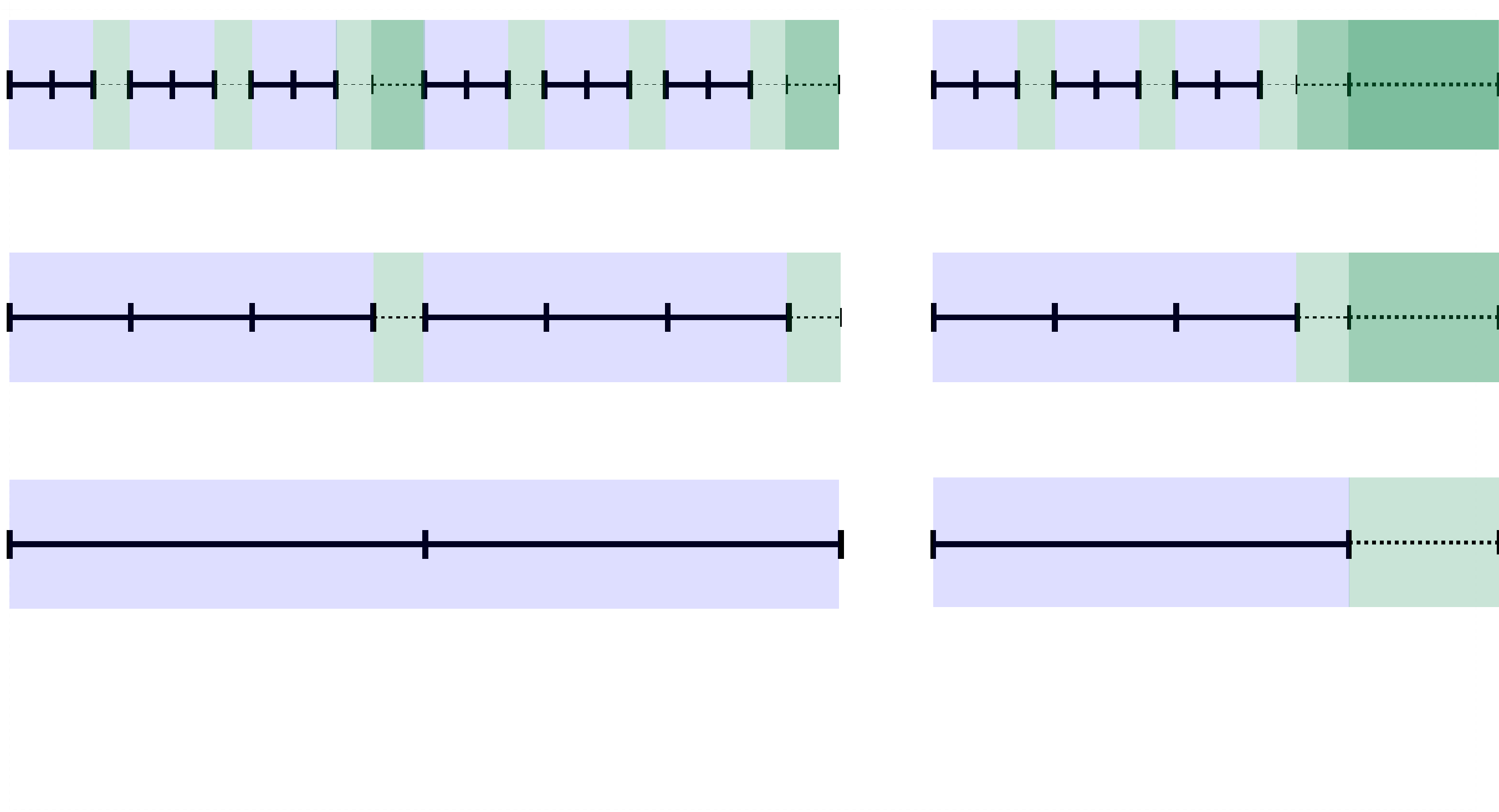}
 	\put(-1,0){ {\rotatebox{90}{\small$\fG_n$}}}
 	\put(-1,26){ {\rotatebox{90}{\tiny$(0,0)$}}}
 	\put(7,26){ {\rotatebox{90}{\tiny$(1,0)$}}}
 	\put(15,26){ {\rotatebox{90}{\tiny$(2,0)$}}}
 	\put(27,26){ {\rotatebox{90}{\tiny$(0,1)$}}}
 	\put(35.3,26){ {\rotatebox{90}{\tiny$(1,1)$}}}
 	\put(43.5,26){ {\rotatebox{90}{\tiny$(2,1)$}}}
 	\put(54,22){ {\rotatebox{45}{\tiny$(0,m_n-1)$}}}
 	\put(62,22){ {\rotatebox{45}{\tiny$(1,m_n-1)$}}}
 	\put(70,22){ {\rotatebox{45}{\tiny$(2,m_n-1)$}}}
 	\put(-1,13){ {\rotatebox{90}{\tiny$(0)$}}}
 	\put(27,13){ {\rotatebox{90}{\tiny$(1)$}}}
 	\put(56,9){ {\rotatebox{45}{\tiny$(m_n-1)$}}}	
	\put(-4,12){{\rotatebox{90}{\tiny$\va=(\mathrm a_{n-1})$}}}
	\put(-7,25){{\rotatebox{90}{\tiny$(\mathrm a_{n-2},\mathrm a_{n-1})$}}}
	\put(-4,37){{\rotatebox{90}{\tiny$(\mathrm a_{n-3},\mathrm a_{n-2},\mathrm a_{n-1})$}}}
 	\put(-1,40){ {\rotatebox{90}{\tiny$(0,0,0)$}}}	
 	\put(1.7,40){ {\rotatebox{90}{\tiny$(1,0,0)$}}}	
	\put(7.3,40){ {\rotatebox{90}{\tiny$(0,1,0)$}}}	
 	\put(10,40){ {\rotatebox{90}{\tiny$(1,1,0)$}}}	
 	\put(52,37){ {\rotatebox{45}{\tiny$(0,0,m_n-1)$}}}	
 	\put(55,37){ {\rotatebox{45}{\tiny$(1,0,m_n-1)$}}}	
	\put(17,40){{\small$\ldots$}}
	\put(57.5,48){{\tiny$\ldots$}}
	\put(57.5,32.6){{\tiny$\ldots$}}
	\put(57.5,18){{\tiny$\ldots$}}
	\put(68,40){{\tiny$\ldots$}}
 \end{overpic}
  \caption{Ground floor $\fG_n$ and addresses of intermediate floors at levels $\ell=n-3,n-2$, and $n-1$ (from top to bottom), respectively. All are subsets of the suspension space $\fS_n$. The shaded regions indicate  the principle part $\fR_n^{(\ell)}$ (blue) and the tail part $\fT_n^{(\ell)}$ (green) for $\ell=n-3,n-2$, and $n-1$ (from top to bottom), respectively.}
 \label{fig.1}
\end{figure}

\begin{remark}[Factors between the principal part and lower-level suspension spaces]
	For every $n\in\bN$ and $\ell\in\{n-1,\ldots,0\}$ the map
\[
	P_{n,\ell}
	\eqdef P_{\ell+1,\ell}\circ\cdots\circ P_{n,n-1}\colon
	\fP_n^{(\ell)}\to \fS_\ell,
\]
defines a ``factor map'', though $\fP_n^{(\ell)}$ is not a $\Phi_n$-invariant set.
Given a canonically represented point $(\underline a,s) $,
\[
	(\underline a,s) 
	\eqdef P_{n,n-1}(\underline b,t)	\in\fS_{n-1},
	\spac{where}
	(\underline b,t)\in{\cL}_n^{(n-1,(\mathrm a_{n-1}))}
	\subset\fS_n,
\] 
then
\[
	\underline a
	= (\sigma_{{n-1}}^{\mathrm a_{n-1}}\circ \underline\Sub_{n,n-1})(\underline b)
\]
and by \eqref{eq:isomorfus} it follows that 
\[\begin{split}
	\underline\Sub_{n-1,0}(\underline a) 
	&= \big(\underline\Sub_{n-1,0}\circ \sigma_{{n-1}}^{\mathrm a_{n-1}}\big)(\underline\Sub_{n,n-1}(\underline b))\\
	&= \big((\sigma_\cA^{m_1\cdots m_{n-1}} )^{\mathrm a_{n-1} }\circ \underline\Sub_{n-1,0}\big)
		(\underline\Sub_{n,n-1}(\underline b))
\end{split}\]	 
where $\sigma_\cA=\sigma_0$ is the shift in the original alphabet $\cA$.
In other words, using the notation above, 
\[
	\underline\Sub_{n-1,0}\circ\fp_{n-1}\circ P_{n,n-1}|_{\fL_n^{(n-1,\mathrm a_{n-1})}}
	= \sigma_\cA^{\mathrm a_{n-1}\cdot m_1\cdots m_{n-1}} \circ \underline\Sub_{n,0}\circ\fp_n.
\]
Analogously, for $\ell\in\{0,\ldots,n-1\}$ and $(\ell,n)$-address $\va$ it holds
\begin{equation}\label{eq:conjuu}
	\underline\Sub_{\ell,0}\circ \fp_\ell \circ P_{n,\ell}|_{\fL_n^{(\ell,\va)}}
	= \sigma_\cA^{\sum_{i=\ell}^{n-1} \mathrm a_i \cdot m_1\cdots m_i} 
		\circ \underline\Sub_{n,0} \circ \fp_n.
\end{equation}
\end{remark}

\subsubsection{Localization of intermediate floors in the suspension space}\label{sec:locintflor}

Given $n\in\bN$, for every $(\underline a,0)\in \fG_n$,  $\ell\in\{0,\ldots, n-1\}$, and  $(\ell,n)$-address $\va=(\mathrm a_\ell, \ldots,\mathrm  a_{n-1})$ there is a unique $s_n^{(\ell,\va)}(\underline a)\in\bN_0$ such that $(\underline a, s_n^{(\ell,\va)}(\underline a) ) \in \fG_n^{(\ell,\va)}$ (in its canonical form). Lemma \ref{lem:hund}, that we pospone to the end of the section, precisely describes this number. 
To state and prove this lemma, we start by introducing the necessary notation that will be used thereafter.

Given numbers $n\in\bN$, $\ell\in\{1,\ldots, n-1\}$, and some $(\ell,n)$-address $\va=(\mathrm a_\ell,\ldots,\mathrm a_{n-1})$, for $j\in\{0,\ldots,m_\ell-1\}$ denote by 
\[
	j\va\eqdef(j,\mathrm a_\ell,\ldots,\mathrm a_{n-1})
\]	 
the corresponding $(\ell-1,n)$-address. Given $\ell\in\{0,\ldots, n-1\}$, define the collection of all $(\ell,n)$-addresses
\[
	W_n^{(\ell)}
	\eqdef \{\va=(\mathrm a_\ell,\ldots,\mathrm a_{n-1})
			\colon \mathrm a_k \in\{0,\ldots, m_{k+1}-1\}, k=\ell,\ldots,n-1\}
\] 
 and  let
\[
	W_n
	\eqdef \bigcup_{\ell=0}^{n-1} W_n^{(\ell)}.
\]

Consider on $W_n$ the equivalence relation obtained by identifying $\va\in W_n^{(\ell)}$ with $0\va\in W_n^{(\ell-1)}$ and denote by $\tilde W_n$ the corresponding collection of equivalence classes.
For simplicity, by a slight abuse of notation, we denote by $\va$ the equivalence class it represents. An element in $\tilde W_n$ can have several \emph{representations}. 
Given $\va=(\mathrm a_\ell,\ldots,\mathrm a_{n-1})\in \tilde W_n$, $\va\ne\overline0$, its \emph{simplified representation}%
\footnote{We use the term \emph{simplified} to avoid confusion with the term \emph{canonical} defined above.}
 is the unique tuple $(\mathrm a_k,\ldots,\mathrm a_{n-1})$ where $k\ge\ell$ and $\mathrm a_k\ne0$ and where $k\in\{\ell,\ldots,n-1\}$ is minimal with this property, and we let in this case
\[
	w(\va)
	\eqdef k.
\] 
The simplified representation of a tuple $\overline 0$ consisting only of $0$s is $0$, and in this case we let $w(\overline 0)=0$.
Given $\va=(\mathrm a_\ell,\ldots,\mathrm a_{n-1})\in \tilde W_n$, let 
\[
	\lVert\va\rVert
	\eqdef \sum_{k=w(\va)}^{n-1} \mathrm a_k.
\]	
Note that this value does not depend on the representation of $\va\in \tilde W_n$.

Given $\va=(\mathrm a_\ell,\ldots,\mathrm a_{n-1})\in \tilde W_n$, to ``move between intermediate floors'', assuming $\mathrm a_\ell<m_\ell-2$, let us introduce the notation
\[
	\va+1_\ell
	\eqdef (\mathrm a_\ell,\ldots,\mathrm a_{n-1}) +1_\ell
	\eqdef (\mathrm a_\ell+1,\ldots,\mathrm a_{n-1}).
\]
Analogously, if $\ell\in\{1,\ldots,n-1\}$, let
\[
	\va+1_{\ell-1}
	\eqdef (\mathrm a_\ell,\ldots,\mathrm a_{n-1}) +1_{\ell-1}
	\eqdef (1,\mathrm a_\ell,\ldots,\mathrm a_{n-1}).	
\]
For every $\va\in \tilde W_n$ there exists a unique sequence $\va^{(0)}=0,\va^{(1)},\ldots, \va^{(\lVert\va\rVert)}=\va$ of elements of $\tilde W_n$ such that 
\[
	\va^{(k+1)}
	= \va^{(k)} + 1_{w(\va^{(k+1)})}.
\]	
Indeed, if $\va=(\mathrm a_\ell,\ldots,\mathrm a_{n-1})$ is in its simplified representation, then:
\begin{equation}\label{eq:serefer}\begin{split}
	&\va^{(0)}=
	(0)\\
	&\va^{(1)}=(1),\va^{(2)}=(2),\ldots,\va^{(\mathrm a_{n-1})}=(\mathrm a_{n-1}),\\
	&\va^{(\mathrm a_{n-1}+1)}=(1,\mathrm a_{n-1}),\va^{(\mathrm a_{n-1}+2)}=(2,\mathrm a_{n-1}),\ldots,\\
	&\va^{(\mathrm a_{n-1}+\mathrm a_{n-2})}=(\mathrm a_{n-2},\mathrm a_{n-1}),\\
	&\ldots,\\
	&\va^{(\sum_{k=\ell+1}^{n-1}\mathrm a_k+1)}=(1,\mathrm a_{\ell+1},\ldots,\mathrm a_{n-1}),\ldots,\va^{(\lVert\va\rVert)}=(\mathrm a_\ell,\mathrm a_{\ell+1},\ldots,
	\mathrm a_{n-1}).
\end{split}\end{equation}

Given $\underline a\in(\cA_n)^\bZ$, recall that by \eqref{def:Tnell} the sequence $\underline b=\underline\Sub_{n,\ell}(\underline a)\in(\cA_\ell)^\bZ$ is obtained by ``reading $\underline a$ in its spelling in the alphabet $\cA_\ell$''. Recall also that if $\underline b=(\ldots,b_{-1}|b_0,b_1,\ldots)$ then $b_k$ denotes the $k$th element in this bi-infinite sequence (in the  alphabet $(\cA_\ell)^\bZ$). The above discussion proves the lemma that we finally state. Compare also Figure \ref{fig.1b}.

\begin{figure}[h] 
 \begin{overpic}[scale=.45]{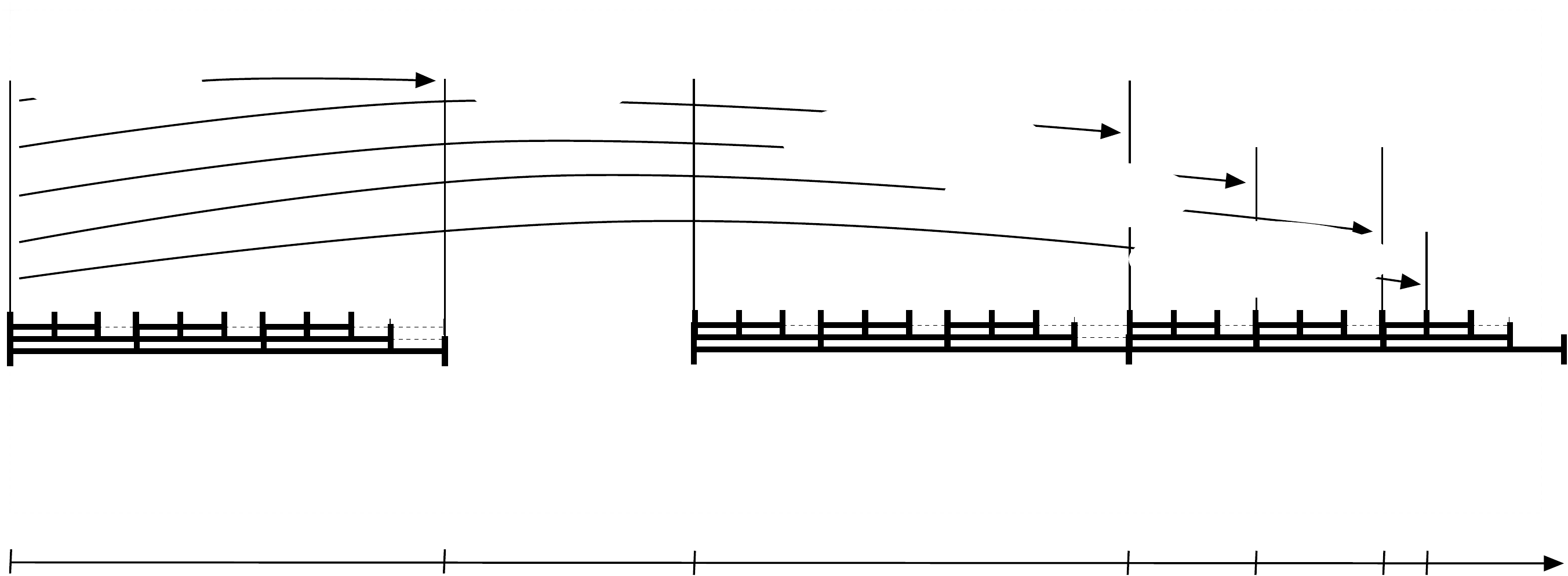}
 	\put(-1,9){ {\rotatebox{45}{\tiny$(0)$}}}
 	\put(26,9){ {\rotatebox{45}{\tiny$(1)$}}}
 	\put(36,4){ {\rotatebox{45}{\tiny$(\mathrm a_{n-1}-1)$}}}	
 	\put(66,7){ {\rotatebox{45}{\tiny$(\mathrm a_{n-1})$}}}	
 	\put(73,6){ {\rotatebox{45}{\tiny$(\mathrm a_{n-1},1)$}}}	
 	\put(81,7){ {\rotatebox{45}{\tiny$(\mathrm a_{n-1},2)$}}}	
 	\put(80,2){ {\rotatebox{45}{\tiny$\va=(\mathrm a_{n-1},2,1)$}}}	
	\put(35,13){{\tiny$\ldots$}}
	\put(51,26.5){{\rotatebox{0}{\tiny$\varsigma_n^{(n-2,(\mathrm a_{n-1},1))}$}}}
	\put(61,23.2){{\rotatebox{0}{\tiny$\varsigma_n^{(n-2,(\mathrm a_{n-1},2))}$}}}
	\put(73.5,18){{\rotatebox{0}{\tiny$\varsigma_n^{(n-3,(\mathrm a_{n-1},2,1))}$}}}
	\put(4,30){{\rotatebox{0}{\tiny$\varsigma_n^{(n-1,(1))}$}}}
	\put(31,30){{\rotatebox{0}{\tiny$\varsigma_n^{(n-1,(\mathrm a_{n-1}))}$}}}
	\put(100,0){{\small $s$}}
 \end{overpic}
  \caption{Example: Moving in $\fS_n$ to the intermediate floor with address $\va=(\mathrm a_{n-1},2,1)$. The maps depict the shifts in the bi-infinite sequences to the corresponding symbol in the corresponding alphabet. The horizontal coordinate marks the position $s$ of a point $(\underline a,s)\in\fS_n$, where $s=s_n^{(\ell,\va)}(\underline a)$ is given by the corresponding $(\ell,n)$-address $\va$.}
 \label{fig.1b}
\end{figure}

\begin{lemma}\label{lem:hund}
For every $n\in\bN$, $\ell\in\{0,\ldots,n-1\}$, and $(\ell,n)$-address $\va=(\mathrm{a}_\ell,\ldots,\mathrm a_{n-1})$ the following is true.
Let
\[
	k=k(n,\ell,\va)=\sum_{i=\ell}^{n-1} 
		\mathrm a_i \cdot m_{\ell+1}\cdots m_{i+1}.
\]		
and consider the map 
\begin{equation}\label{eq:posintfloooor}
	\varsigma_n^{(\ell,\va)}\colon(\cA_n)^\bZ\to\cA_\ell,\quad
	\varsigma_n^{(\ell,\va)}(\underline a)
	\eqdef (\underline\Sub_{n,\ell}(\underline a))_k
	=\big( (\sigma_\ell^k\circ \Sub_{n,\ell})(\underline a)\big)_0.
\end{equation}
Then the number  
\[
	s
	= s_n^{(\ell,\va)}(\underline a)
	\eqdef \sum_{i=1}^{\lVert\va\rVert} 
		R_{\ell_i}(\varsigma_n^{(\ell_i,\va^{(i-1)})}(\underline a)),
	\spac{where}
	\ell_i = w(\va^{(i)}),
\]
is the unique number such that $(\underline a,s)\in\fG_n^{(\ell,\va)}$ (in its canonical form).
\end{lemma}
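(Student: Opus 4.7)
The plan is to prove the lemma by finite induction on $\lVert\va\rVert$, traversing the canonical path $\va^{(0)}=\overline 0,\va^{(1)},\ldots,\va^{(\lVert\va\rVert)}=\va$ of \eqref{eq:serefer}. The base case $\lVert\va\rVert=0$ is immediate: $\va^{(0)}=\overline 0$ corresponds to the ground floor $\fG_n$ itself, and the empty sum equals $s=0$.

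For the inductive step, consider the increment $\va^{(i)}=\va^{(i-1)}+1_{\ell_i}$ with $\ell_i=w(\va^{(i)})$. The two intermediate floors $\fG_n^{(\ell_i,\va^{(i-1)})}$ and $\fG_n^{(\ell_i,\va^{(i)})}$ are the bases of two consecutive strips of the copy of $\fS_{\ell_i}$ lifted into $\fS_n$ via the map $L_n^{(\ell_i,\va^{(i-1)})}$ of \eqref{eq:Lnellalpha} (see also \eqref{eq:defintflo}); hence their heights in $\fS_n$ differ by exactly one $R_{\ell_i}$-roof, namely the roof of the $\fG_{\ell_i}$-symbol reached by projecting down to $\fS_{\ell_i}$ through the factor map $P_{n,\ell_i}$. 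The conjugation identity \eqref{eq:conjuu} identifies this symbol: projecting a canonical representative of $(\underline a,s_{i-1})\in\fG_n^{(\ell_i,\va^{(i-1)})}$ yields $(\sigma_{\ell_i}^{k}(\underline\Sub_{n,\ell_i}(\underline a)),0)$ with $k=k(n,\ell_i,\va^{(i-1)})$, whose $0$th coordinate is precisely $\varsigma_n^{(\ell_i,\va^{(i-1)})}(\underline a)$. Summing these increments along the path produces the claimed formula for $s$.

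Two subsidiary points need attention. First, whenever $w(\va^{(i-1)})>\ell_i$, the address $\va^{(i-1)}$ must be re-represented at level $\ell_i$ by padding with leading zeros; the identity $L_{\ell+1}^{(\ell,(0))}(\fG_\ell)=\fG_{\ell+1}$ (built into the equivalence relation on $\tilde W_n$) guarantees that the intermediate floor does not depend on this choice. Second, the canonical-form requirement $s<\underline R_n(\underline a)$ follows from Assumption \ref{ass:roof} applied recursively down the cascade, since at each level $\ell'+1$ the total height of all $(\ell',n)$-strips embedded in a single cell is strictly less than $R_{\ell'+1}(\cdot)$. Uniqueness of $s$ is then automatic from uniqueness of the canonical representative in $\fS_n$.

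The main obstacle will be the combinatorial bookkeeping needed to verify that the shift exponent produced by composing the inverse branches in \eqref{eq:Lnellalpha} is exactly $k(n,\ell_i,\va^{(i-1)})=\sum_{j=\ell_i}^{n-1}\mathrm{a}_j\,m_{\ell_i+1}\cdots m_{j+1}$. This amounts to unfolding \eqref{eq:conjuu} level by level and using the isomorphism \eqref{def:Tnell} to collect the individual shift powers $\sigma_\ell^{\mathrm{a}_\ell}$ contributed by each inverse branch $(P_{\ell+1,\ell}|_{\fL_{\ell+1}^{(\ell,(\mathrm{a}_\ell))}})^{-1}$ into a single power of $\sigma_{\ell_i}$, whose exponent telescopes to the stated formula.
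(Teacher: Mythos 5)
Your argument reproduces the paper's own reasoning: the "proof" the authors intend is exactly the discussion in Section~5.2.3 culminating in the canonical path $\va^{(0)}=\overline0,\ldots,\va^{(\lVert\va\rVert)}=\va$ of \eqref{eq:serefer}, with each step $\va^{(i-1)}\to\va^{(i)}=\va^{(i-1)}+1_{\ell_i}$ advancing the height by one $R_{\ell_i}$-roof of the symbol $\varsigma_n^{(\ell_i,\va^{(i-1)})}(\underline a)$, and the paper simply states that "the above discussion proves the lemma." Your write-up spells out the induction, the identification of the incremental roof via \eqref{eq:conjuu}, and the canonical-form check via Assumption~\ref{ass:roof} with slightly more detail than the paper does, but the route is the same.
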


Note that, in the above lemma, $\ell_k$ is simply the corresponding level at which we change from one address to its successor.

\subsection{Lifted Bernoulli measures on intermediate floors}\label{ssec:bermea}

For $n\in\bN$, consider the Bernoulli measure $\fb_n$ on $(\cA_n)^\bZ$. Recall that $\fb_0=\fb$. For every $(\ell,n)$-address $\va$ let
\begin{equation}\label{eq:defbnellva}
	\fb_n^{(\ell,\va)} 
	\eqdef \fb_n \circ  \fp_n|_{\fG_n^{(\ell,\va)}}.
\end{equation}
Note that if $\va$ is an $(\ell,n)$-address for some $\ell>0$, then $0\va$ is an $(\ell-1,n)$-address. Hence
\[
	\fb_n^{(\ell,\va)}
	= \fb_n^{(\ell-1,0\va)}.
\]

\begin{lemma}\label{lem:defBerinmflo}
	The measure $\fb_n^{(\ell,\va)}$ is a Borel probability measure on $\fG_n^{(\ell,\va)}$ satisfying
\[	
	\fb_n^{(\ell,\va)} 
	= \fb_\ell \circ \fp_\ell \circ P_{n,\ell}|_{\fG_n^{(\ell,\va)}}.
\]
\end{lemma}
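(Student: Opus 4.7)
The plan is to exhibit both measures as pushforwards of Bernoulli measures under measurable bijections, then identify the connecting map $T \eqdef \fp_\ell\circ P_{n,\ell}\circ(\fp_n|_{\fG_n^{(\ell,\va)}})^{-1}\colon (\cA_n)^\bZ\to(\cA_\ell)^\bZ$ as a shift conjugated by the substitution isomorphisms, and finally use that those isomorphisms carry $\fb_n$ and $\fb_\ell$ to the same Bernoulli measure $\fb$ on $\cA^\bZ$.

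First I would verify the two bijectivity statements that are needed. By Lemma~\ref{lem:hund}, for every $\underline a\in(\cA_n)^\bZ$ there is a unique height $s=s_n^{(\ell,\va)}(\underline a)$ with $(\underline a,s)\in\fG_n^{(\ell,\va)}$ in canonical form, so $\fp_n|_{\fG_n^{(\ell,\va)}}\colon \fG_n^{(\ell,\va)}\to (\cA_n)^\bZ$ is a measurable bijection, and consequently $\fb_n^{(\ell,\va)}=\fb_n\circ\fp_n|_{\fG_n^{(\ell,\va)}}$ is a well-defined Borel probability measure on $\fG_n^{(\ell,\va)}$. Next, $\fG_n^{(\ell,\va)}=L_n^{(\ell,\va)}(\fG_\ell)$ by \eqref{eq:defintflo}, and $L_n^{(\ell,\va)}$ is built from the inverse branches $(P_{k,k-1}|_{\fL_k^{(k-1,(\mathrm a_{k-1}))}})^{-1}$, each of which is a bijection by Lemma~\ref{lemeq:bijima}. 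Hence $P_{n,\ell}|_{\fG_n^{(\ell,\va)}}\colon\fG_n^{(\ell,\va)}\to\fG_\ell$ is a bijection, so $\fp_\ell\circ P_{n,\ell}|_{\fG_n^{(\ell,\va)}}$ is a bijection from $\fG_n^{(\ell,\va)}$ onto $(\cA_\ell)^\bZ$ and the right-hand side of the claimed identity is likewise a Borel probability measure on $\fG_n^{(\ell,\va)}$.

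It remains to show $T_\ast\fb_n=\fb_\ell$. Applying the conjugacy identity \eqref{eq:conjuu} on $\fL_n^{(\ell,\va)}\supset\fG_n^{(\ell,\va)}$ yields
\[
\underline\Sub_{\ell,0}\circ T \,=\, \sigma_\cA^{k}\circ\underline\Sub_{n,0},
\qquad
k\eqdef\sum_{i=\ell}^{n-1}\mathrm a_i\cdot m_1\cdots m_i,
\]
so $T=\underline\Sub_{\ell,0}^{-1}\circ\sigma_\cA^{k}\circ\underline\Sub_{n,0}$. By Remark~\ref{rem:isomorphisms}, and in particular by the isomorphism \eqref{eq:isomorfus} applied at levels $n$ and $\ell$, we have $(\underline\Sub_{n,0})_\ast\fb_n=\fb=(\underline\Sub_{\ell,0})_\ast\fb_\ell$; and since the Bernoulli measure $\fb$ is $\sigma_\cA$-invariant, $(\sigma_\cA^k)_\ast\fb=\fb$. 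Chaining these pushforwards gives $T_\ast\fb_n=\fb_\ell$, which is precisely the stated identity. I do not foresee a serious obstacle here: the argument is a careful unpacking of the inductive definition of the intermediate floors, and the only substantive ingredient is the ready-made conjugacy relation \eqref{eq:conjuu} together with the Bernoulli-preserving nature of the substitution maps.
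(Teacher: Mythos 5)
Your proof is correct and takes essentially the same route as the paper's: both reduce the identity to a statement about Bernoulli measures via the conjugacy \eqref{eq:conjuu}, the substitution isomorphism in Remark \ref{rem:isomorphisms}, and the $\sigma_\cA$-invariance of $\fb_0$. The only cosmetic difference is that you transport one side to the other via the map $T\colon(\cA_n)^\bZ\to(\cA_\ell)^\bZ$, while the paper pushes both sides forward to $\cA^\bZ$ via $\underline\Sub_{n,0}\circ\fp_n$ and compares them there.
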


\begin{proof}
Note that, by definition, it holds
\[
	\fp_n(\fG_n^{(\ell,\va)}) 
	= (\cA_n)^\bZ
\]
and that $\fp_n$ bijectively maps $\fG_n^{(\ell,\va)}$ onto $(\cA_n)^\bZ$. Recall that $\underline\Sub_{n,0}$ is a bijection and that, by definition, it holds 
\begin{equation}\label{eq:doris}
	\fb_n 
	= 
	\fb_0 \circ \underline\Sub_{n,0}.
\end{equation}
As $(\underline\Sub_{n,0} \circ \fp_n|_{\fG_n^{(\ell,\va)}})$ is bijective, to see that both measures coincide, it is enough to check 
\[
	(\underline\Sub_{n,0} \circ \fp_n|_{\fG_n^{(\ell,\va)}})_\ast \fb_n^{(\ell,\va)} 
	=(\underline\Sub_{n,0} \circ \fp_n|_{\fG_n^{(\ell,\va)}})_\ast 
		(\fb_\ell \circ \fp_\ell \circ P_{n,\ell}|_{\fG_n^{(\ell,\va)}}).
\]
On one hand \eqref{eq:doris} immediately implies
\[\begin{split}	
	(\underline\Sub_{n,0} \circ \fp_n|_{\fG_n^{(\ell,\va)}})_\ast \fb_n^{(\ell,\va)} 
	=  (\underline\Sub_{n,0} \circ \fp_n|_{\fG_n^{(\ell,\va)}})_\ast 
		(\fb_n \circ \fp_n|_{\fG_n^{(\ell,\va)}})
	= \fb_0.
\end{split}\]
On the other hand, by definition and using \eqref{eq:conjuu}, it holds
\[\begin{split}	
	&(\underline\Sub_{n,0} \circ \fp_n|_{\fG_n^{(\ell,\va)}})_\ast 
		(\fb_\ell \circ \fp_\ell \circ P_{n,\ell}|_{\fG_n^{(\ell,\va)}})\\
	&= (\sigma_{\cA}^{-\sum_{i=\ell}^{n-1} \mathrm a_i \cdot m_1\cdots m_i} 
		\circ \underline\Sub_{\ell,0} \circ \fp_\ell \circ P_{n,\ell}|_{\fG_n^{(\ell,\va)}})_\ast 
		(\fb_\ell \circ \fp_\ell \circ P_{n,\ell}|_{\fG_n^{(\ell,\va)}})	\\
	{\tiny{\text{using \eqref{eq:doris}}}}\quad	
	&= (\sigma_{\cA}^{-\sum_{i=\ell}^{n-1} \mathrm a_i \cdot m_1\cdots m_i} 
		\circ \underline\Sub_{\ell,0} )_\ast \fb_\ell 	
	= \big(\sigma_{\cA}^{-\sum_{i=\ell}^{n-1} \mathrm a_i \cdot m_1\cdots m_i} \big)_\ast
	\fb_0
	= \fb_0,
\end{split}\]
where the latter follows from the fact that $\fb_0$ is Bernoulli and hence $\sigma_\cA$-invariant.
\end{proof}

\subsection{Roof functions: Upper bounds and estimates}\label{sec:rooffunctions}

Recall the constant $K$ in Assumption \ref{ass:roof}. Let 
\[
	L_2
	\eqdef  2\max\Big\{
		K e^K
			\frac{\max R_0}{\min R_0},
		e^K\frac{\max R_0}{\min R_0}\Big\}.
\]

\begin{proposition}[Estimates on roof functions]\label{procor:notormenta}
	Let $\frakR_n\eqdef\int R_n\,d\fb_n$. Under Assumption \ref{ass:roof} it holds  
\begin{itemize}
\item[(1)] $m_n \max R_{n-1} < \max R_n $,\vspace{0.1cm}
\item[(2)] $\displaystyle	\frac{\max R_n}{\frakR_n}
	\le \frac{\max R_n}{\min R_n}
	\le \prod_{k=\ell}^n(1+\frac{1}{2^{k-1}}K)
		\frac{\max R_{\ell-1}}{\min R_{\ell-1}}
	<L_2$, \vspace{0.1cm}
\item[(3)] $\displaystyle	1<\frac{\frakR_n}{m_n\frakR_{n-1}}<1+L_2\frac{1}{2^n}$,
\item[(4)] $\displaystyle	\max\lvert\ft_{n}\rvert
	\le L_2\frac{1}{2^n}\frakR_n$, where $\ft_n$ is as in \eqref{eq:suppe}.
\end{itemize}
\end{proposition}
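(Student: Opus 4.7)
The plan is to verify each of the four statements in turn, in each case pushing the multiplicative bounds from Assumption \ref{ass:roof} from level $n$ down to level $n-1$ (or all the way to level $0$) and using that $\fb_n$ is the uniform Bernoulli measure on the finite alphabet $\cA_n$.

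For (1), pick $a^\ast\in\cA_{n-1}$ with $R_{n-1}(a^\ast)=\max R_{n-1}$ and let $b^\ast\in\cA_n$ be the unique element with $\Sub_{n,n-1}(b^\ast)=(a^\ast,\ldots,a^\ast)$. The strict lower bound in Assumption \ref{ass:roof} then gives $R_n(b^\ast)>m_n\max R_{n-1}$, whence $\max R_n\ge R_n(b^\ast)>m_n\max R_{n-1}$.

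For (2), the first inequality $\max R_n/\frakR_n\le\max R_n/\min R_n$ is immediate since $\frakR_n\ge\min R_n$. For the main chain, observe that Assumption \ref{ass:roof} yields
\[
\max R_n\le(1+K2^{-(n-1)})\,m_n\max R_{n-1},\qquad \min R_n\ge m_n\min R_{n-1},
\]
so the ratio grows by a factor of at most $1+K2^{-(n-1)}$ per level. Iterating from level $n$ down to level $\ell-1$ produces the stated product. The uniform bound $<L_2$ then follows by iterating all the way to $\ell=1$, bounding the infinite product $\prod_{k\ge 1}(1+K 2^{-(k-1)})$ using $\sum_{k\ge 1}K 2^{-(k-1)}=2K$, and matching the result to the definition of $L_2$.

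For (3), I would compute $\frakR_n$ directly. Since $\fb_n$ is uniform on the $M_n$-element alphabet $\cA_n$,
\[
\frakR_n=\frac{1}{M_n}\sum_{b\in\cA_n}R_n(b).
\]
Because $\Sub_{n,n-1}\colon\cA_n\to(\cA_{n-1})^{m_n}$ is a bijection, each coordinate $a_k$ ranges uniformly over $\cA_{n-1}$, so
\[
\sum_{b\in\cA_n}\sum_{k=0}^{m_n-1}R_{n-1}\big((\Sub_{n,n-1}(b))_k\big)=m_n M_n\frakR_{n-1}.
\]
Summing the two-sided estimate of Assumption \ref{ass:roof} over all $b\in\cA_n$ and dividing by $M_n$ gives
\[
m_n\frakR_{n-1}<\frakR_n\le\bigl(1+K2^{-(n-1)}\bigr)m_n\frakR_{n-1},
\]
which, after rewriting $K2^{-(n-1)}=2K/2^n$ and using $L_2\ge 2K$, yields claim (3).

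For (4), from \eqref{eq:suppe} and the upper bound in Assumption \ref{ass:roof},
\[
0<\ft_n(b)\le K 2^{-(n-1)}\sum_{k=0}^{m_n-1}R_{n-1}\big((\Sub_{n,n-1}(b))_k\big)< K 2^{-(n-1)}R_n(b)\le K 2^{-(n-1)}\max R_n.
\]
Using (2) to bound $\max R_n\le L_2\,\frakR_n$ and rewriting $K 2^{-(n-1)}=2K/2^n$, one obtains $\max\lvert\ft_n\rvert\le 2KL_2\cdot 2^{-n}\frakR_n$, so the claim follows once $L_2$ has been chosen large enough to absorb the factor $2K$ (which is the content of the $e^K$-type factors built into the definition of $L_2$).

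The main obstacle will be the bookkeeping of the explicit constant $L_2$: one has to verify that the single constant simultaneously dominates the iterated product in (2), absorbs the factor $2K$ appearing in (3), and dominates the compounded factor in (4). For (2), the naive bound $\prod(1+K 2^{-(k-1)})\le e^{2K}$ via $1+x\le e^x$ is borderline, and a slightly finer estimate (or a slightly larger choice of $L_2$) is required; once that is done, (3) and (4) reduce to arithmetic manipulations.
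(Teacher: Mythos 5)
Your plan follows essentially the same route as the paper's proof. For (1) and (3) you argue by a direct combinatorial count over $(\cA_{n-1})^{m_n}$ rather than the paper's route of integrating the step-function inequality and invoking $\sigma$-invariance of $\fb_{n-1}$; both are valid, and your version is arguably more transparent. For (2) you iterate the per-level ratio bound exactly as in \eqref{eq:fellow}, and you correctly flag a real slip there: $\prod_{k\ge1}(1+K2^{-(k-1)})\le e^{2K}$, not $e^K$ as the last inequality of \eqref{eq:fellow} asserts, so the $e^K$ factors in the stated definition of $L_2$ should really be $e^{2K}$; this is a genuine (if inessential) error in the paper's constants and worth noticing.

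Your treatment of (4), however, contains a circularity that you do not resolve. You feed the \emph{final} conclusion of (2), namely $\max R_n\le L_2\frakR_n$, into the tail estimate and obtain $\max|\ft_n|\le 2KL_2\,2^{-n}\frakR_n$, then propose to ``absorb $2K$ into $L_2$.'' But $L_2$ has already been used at full strength: to deduce $\max|\ft_n|\le L_2\,2^{-n}\frakR_n$ you would need $2KL_2\le L_2$, i.e.\ $K\le\tfrac12$, which is not assumed. The correct route, which is the one the paper takes, is to plug in the \emph{intermediate} bound from (2), i.e.\ $\max R_n/\min R_n<e^K\,\max R_0/\min R_0$ (or $e^{2K}$ after your correction), not the weaker $L_2$-bound; then $2K\,e^K\,\max R_0/\min R_0$ matches, by design, one of the two terms in $L_2=2\max\{Ke^K\max R_0/\min R_0,\,e^K\max R_0/\min R_0\}$, and nothing further needs absorbing. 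Rewrite step (4) to go through that intermediate expression rather than through $L_2$ itself.
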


\begin{proof}
By Assumption \ref{ass:roof}, it holds
\[
	\sum_{j=0}^{m_n-1}\underline R_{n-1}\circ\sigma_{{n-1}}^j
	< \underline R_n\circ \underline\Sub_{n,n-1}^{-1}.
\]
Observe that this implies property (1). 
Integrating the above, using that $\fb_{n-1}$ is $\sigma_{n-1}$-invariant and   \eqref{eq:isedbelow}, we obtain the following estimate which we use below
\begin{equation}\label{eq:chuvab}
	m_n\frakR_{n-1}
	<\int \underline R_n\circ \underline\Sub_{n,n-1}^{-1}\,d\fb_{n-1}
	=\int \underline R_n\,d\fb_n
	=\frakR_n.
\end{equation}

To prove (2), observe that the second inequality in Assumption \ref{ass:roof} implies that
\begin{equation}\label{eq:chuvaanew}
	\underline R_n\circ \underline\Sub_{n,n-1}^{-1}
	\le (1+K\frac{1}{2^{n-1}})
		\sum_{j=0}^{m_n-1}\underline R_{n-1}\circ\sigma_{{n-1}}^j.
\end{equation}
Hence, together with $\min R_n>m_n\min R_{n-1}$, it follows
\begin{equation}\label{eq:fellow}\begin{split}
	\frac{\max R_n}{\min R_n}
	&\le (1+K\frac{1}{2^{n-1}})
		\frac{\max R_{n-1}}{\min R_{n-1}}\\
	&\le \prod_{k=\ell}^n(1+K\frac{1}{2^{k-1}})
		\frac{\max R_{\ell-1}}{\min R_{\ell-1}}
	< e^K \frac{\max R_0}{\min R_0},
\end{split}\end{equation}
proving (2).

Recalling again that $\fb_{n-1}$ is $\sigma_{{n-1}}$-invariant, we note that by  \eqref{eq:chuvaanew} 
\[\begin{split}
	\frakR_n
	&= \int \underline R_n\,d\fb_n
	=\int \underline R_n\circ \underline\Sub_{n,n-1}^{-1}\,d\fb_{n-1}\\
	&\le \Big(1+K\frac{1}{2^{n-1}}\Big) \int
		 \sum_{j=0}^{m_n-1}\underline R_{n-1}\circ \sigma_{{n-1}}^j\,d\fb_{n-1}
	= \Big(1+L_2\frac{1}{2^n}\Big)m_n\frakR_{n-1}	,
\end{split}\]
where we also used $2K\le L_2$.
This together with \eqref{eq:chuvab} implies  (3).

To get (4), by the second inequality in Assumption \ref{ass:roof}  and item (2) it holds
\[
	\lvert \ft_n\rvert
	\le K\frac{1}{2^{n-1}} \cdot m_n\max R_{n-1}
	< K\frac{1}{2^{n-1}}\cdot\max R_n
	\le K\frac{1}{2^{n-1}}  \cdot\frac{\max R_n}{\min R_n}\,\frakR_n.
\]
The estimate in (4)  now follows from \eqref{eq:fellow}.
\end{proof}

%
%

\section{Contracting IFSs and horseshoes}\label{sec:horses}

In what follows, we consider $C^1$ diffeomorphisms $f_1,\ldots,f_N\colon\bS^1\to\bS^1$ and its associated skew product $F$ as in \eqref{eq:sp}. In Section \ref{ssec:horsIFS}, given an appropriate finite collection of words $\cB\subset\Sigma_N^\ast$, following \cite{Hut:81}, we consider the attractor of an associated contracting IFS.  In Section \ref{ssec:horsexpent}, we introduce a contracting IFS with further quantifiers. In Section \ref{sec:pieces}, we explain how such collection is derived from an $F$-ergodic hyperbolic measure with negative Lyapunov exponent. Here we invoke the idea of skeletons associated to an ergodic measure, relying on the axioms stated in Section \ref{sec:axioms}. The main result of this section is Theorem \ref{teo:existenceCIFS}. 
In Section \ref{ssec:horsdist} we derive an auxiliary distortion result. 
Finally, in Section \ref{sec:horrrrse} we explain how these attractors lead to horseshoes invariant under the skew product $F$.

 In this section, all words are over the alphabet $\{1,\ldots,N\}$. We will invoke the concepts and objects in Sections \ref{sec:notation} and \ref{se:suspmode} in this particular case. Given a finite collection of words $\cB\subset\Sigma_N^\ast$, let 
 \[
 	\CS{\cB}
	= \bigcup_{k\in\bZ}\sigma^k(\PCS{\cB})
	=\bigcup_k(\sigma^k\circ\iota_\cB)(\cB^\bZ)
\]	 
as in \eqref{eq:number}, where $\sigma$ is the usual shift in $\Sigma_N$   and $\underline\iota_\cB\colon\cB^\bZ\to\Sigma_N$ as in \eqref{eq:iotaW2}. We will drop the corresponding index ${}_\cB$ unless there is risk of confusion. Recall our simplifying Notation \ref{eq:notation}. In particular,  for $w\in\cB$ we denote by $\lvert w\rvert$ the length of this word ``spelled in $\{1,\ldots,N\}$''.
 
 \begin{notation}{\rm
Denote by $\cB^{-\bN}$ and $\cB^{\bN_0}$ the corresponding one-sided shift spaces, similarly  $\Sigma_N^-=\{1,\ldots,N\}^{-\bN}$ and $\Sigma_N^+=\{1,\ldots,N\}^{\bN_0}$. 
Given $\xi=(\ldots,\xi_{-1}|\xi_0,\xi_1,\ldots)\in\Sigma_N$ we write $\xi=\xi^-|\xi^+$, where $\xi^+\in\Sigma_N^+$ and $\xi^-\in\Sigma_N^-$.
Denote by $\pi^\pm\colon\Sigma_N\to\Sigma_N^\pm$ the projections 
\[
	\pi^-(\xi^-|\xi^+)
	\eqdef\xi^-,
	\quad
	\pi^+(\xi^-|\xi^+)
	\eqdef\xi^+.
\] 
For $n\in\bN_0$, let $[\xi_0,\ldots,\xi_n]^+=\pi^+([\xi_0,\ldots,\xi_n])$. 
We consider the distance 
\[
d^+_1(\xi^+,\eta^+)\eqdef e^{-n(\xi^+,\eta^+)}, \,\,\text{where}\,\,
n(\xi^+,\eta^+)\eqdef \inf\{ \ell\colon \xi^+_\ell\ne\eta^+_\ell\},
\]
on $\Sigma_N^+$. Analogously, we define $d^-_1$ on $\Sigma_N^-$. 
}\end{notation}
 
\subsection{The attractor of a contracting IFS}\label{ssec:horsIFS}

\begin{definition}[CIFS]\label{def:CIFS0}
	A finite collection of words $\cB\subset\Sigma_N^\ast$ defines a \emph{contracting iterated function system} (\emph{CIFS}) on a closed interval  $J\subset\bS^1$ if for every $w\in\cB$ it holds
\begin{itemize}
\item[(a)]  $f_{[w]}(J)\subset J$,
\item[(b)] $\lvert f_{[w]}'(x)\rvert<1$ for every $x\in J$.
\end{itemize}
\end{definition}

In Definition \ref{def:CIFS} below, we will further specify a CIFS including some quantifiers of the rate of contraction. We first establish the existence of the attractor for a CIFS.

\begin{proposition}[Attractor of a CIFS]\label{pro:defineshorseshoe}
	Let $\cB\subset\Sigma_N^\ast$ be a finite collection of words over the alphabet $\{1,\ldots,N\}$ defining a CIFS on a closed interval $J\subset\bS^1$. The map
\[
	x\colon\cB^\bZ\to\bS^1,\quad x(\underline w)
	\eqdef \lim_{n\to\infty} \big(f_{[w_{-1}]} \circ \cdots\circ f_{[w_{-n}]}\big) (x_0),
\]
is well defined for every $\underline w=(\ldots,w_{-1}|w_0,w_1,\ldots)\in\cB^\bZ$ and independent of the point  $x_0\in J$.	
Moreover, the map 
\[
	\Pi_\cB\colon\cB^\bZ\to \Sigma_N\times\bS^1,\quad
	\Pi_\cB(\underline w)\eqdef (\underline\iota_\cB(\underline w),x(\underline w))
\]
is continuous and satisfies
\[
	\big(\Pi_\cB\circ\sigma_\cB\big)
	(\ldots,w_{-1}|w_0,w_1,\ldots)
	=\big(F^{\lvert w_0\rvert}\circ\Pi_\cB\big)
	(\ldots,w_{-1}|w_0,w_1,\ldots).
\]
Consider the  \emph{attractor} associated to $\cB$,
\[
	\Lambda(\cB)\eqdef\Pi_\cB(\cB^\bZ).
\]
The map 
\[
	\big(\underline w,x)
	\mapsto F^{\lvert w_0\rvert}(\underline\iota_\cB(\underline w),x)
\]
is a return map on $\Lambda(\cB)$.

Moreover, if $\cB$ is disjoint then $\Pi_\cB$ is uniformly finite-to-one so that 
\[
	\card \Pi_\cB^{-1}(\{X\})
	\le \max_{w\in\cB}\,\lvert w\rvert,
	\spac{for every}X\in \Lambda(\cB).
\]	
\end{proposition}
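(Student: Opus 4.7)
My plan is to unpack the statement into four independent pieces, each of which reduces to a routine contracting-IFS argument except for the last, which is where Lemma \ref{lem:soandso} does the work. First I would establish uniform contraction of the family $\{f_{[w]}\colon w\in\cB\}$ on $J$: because $J$ is compact, $\cB$ is finite, and $|f_{[w]}'(x)|<1$ on $J$ for every $w\in\cB$, the constant
\[
	\rho \eqdef \max_{w\in\cB}\max_{x\in J}\,\lvert f_{[w]}'(x)\rvert
\]
is strictly less than $1$. Property (a) of the CIFS ensures that each composition $f_{[w_{-1}]}\circ\cdots\circ f_{[w_{-n}]}$ sends $J$ into $J$, and the chain rule together with uniform contraction gives that the diameter of $f_{[w_{-1}]}\circ\cdots\circ f_{[w_{-n}]}(J)$ is at most $\rho^n\lvert J\rvert$. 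Hence we get a nested sequence of closed intervals of diameter $\to 0$, so the limit defining $x(\underline w)$ exists and does not depend on $x_0\in J$.

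For continuity of $\Pi_\cB$, the symbolic component $\underline\iota_\cB$ is continuous as the standard concatenation map $\cB^\bZ\to\Sigma_N$. For the fiber component: if $\underline w,\underline w'\in\cB^\bZ$ agree on coordinates $-n,\ldots,n$, then the backward compositions from index $-n$ onwards coincide, so the contraction estimate above yields $\lvert x(\underline w)-x(\underline w')\rvert\le\rho^n\lvert J\rvert$. The conjugation identity $\Pi_\cB\circ\sigma_\cB=F^{\lvert w_0\rvert}\circ\Pi_\cB$ is then checked component-wise. On the base, $\underline\iota_\cB(\sigma_\cB(\underline w))=\sigma^{\lvert w_0\rvert}(\underline\iota_\cB(\underline w))$ is immediate from the definition of the concatenation. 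On the fiber, replacing the backward tail $(\ldots,w_{-1}|)$ by $(\ldots,w_{-1},w_0|)$ in the defining limit exactly precomposes with $f_{[w_0]}$, and this matches the fiber action of $F^{\lvert w_0\rvert}$ along $\underline\iota_\cB(\underline w)$.

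The return-map assertion is then a direct corollary of the conjugation identity, since it shows that $F^{\lvert w_0\rvert}$ maps $\Pi_\cB(\underline w)$ to $\Pi_\cB(\sigma_\cB(\underline w))$, which again lies in $\Lambda(\cB)$.

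The main substantive point is the finite-to-one bound when $\cB$ is disjoint. Given $X=(\xi,x)\in\Lambda(\cB)$, any preimage $\underline w\in\Pi_\cB^{-1}(X)$ satisfies $\underline\iota_\cB(\underline w)=\xi$, so it is nothing but a decoding of $\xi$ in the alphabet $\cB$. By Lemma \ref{lem:soandso}, the number of such decodings is at most $R=\max_{w\in\cB}\lvert w\rvert$, which yields the claimed bound. The hardest step is therefore already done (in Lemma \ref{lem:soandso}); no further analysis of the fiber coordinate is needed, since $x=x(\underline w)$ is a function of the decoding.
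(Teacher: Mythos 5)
Your argument is correct. The one place where it departs from the paper is in establishing the existence and continuity of $x(\underline w)$: you work directly with the uniform contraction constant $\rho=\max_{w\in\cB}\max_{x\in J}\lvert f_{[w]}'(x)\rvert<1$ and a nested-intervals argument on $J$, whereas the paper sets up the auxiliary product space $D=\cB^{-\bN}\times J$ and appeals to Hutchinson's theorem for the attractor of the finite family of uniform contractions $\hat f_k=\hat\sigma_k^{-1}\times f_{[w_k]}$. Both proofs rest on exactly the same analytic fact (compactness of $J$ and finiteness of $\cB$ give a uniform contraction ratio, plus property (a) keeps all iterates in $J$); your version is slightly more self-contained at the cost of not explicitly exhibiting the attractor as the fixed point of a Hutchinson operator, which the paper does not need either. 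The conjugation identity checked coordinatewise, the return-map statement as an immediate corollary, and the finite-to-one bound reduced to Lemma~\ref{lem:soandso} via the observation that any preimage of $(\xi,x)$ must be a decoding of $\xi$ (the fiber coordinate being determined by $\underline w$) all coincide with the paper's argument.
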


\begin{proof}
Let $\cB=\{w_1,\ldots,w_M\}$ be some enumeration. Recall that the (bi-)infinite concatenation of words in $\cB$ gives a (bi-)infinite sequence in $\Sigma_N$. We will first consider one-sided sequences in $\cB^{-\bN}\subset\Sigma_N^-$. For every $k=1,\ldots, M$ define
\[
	\hat\sigma_k^{-1}\colon\cB^{-\bN}\to\cB^{-\bN},\quad
	\hat\sigma_k^{-1}(\ldots,w_{i_{-2}},w_{i_{-1}})
	\eqdef(\ldots,w_{i_{-2}},w_{i_{-1}},w_k)
\]	
and consider the map
\[
	\hat f_k\eqdef \hat\sigma^{-1}_k\times f_{[w_k]}
	\colon D\to \cB^{-\bN}\times \bS^1,
	\quad\text{ where }\quad
	D\eqdef \cB^{-\bN}\times J.
\]
By construction, it holds $\hat f_k(D)\subset D$ and $\hat f_k$ is uniformly contracting for every $k$. Therefore, $\{\hat f_1,\ldots,\hat f_M\}$ is a finite family of contractions on $D$. By  \cite{Hut:81}, we can consider its associated attractor $\Att^-(\cB)\subset D$. Every point $\big(\underline w^{-},x\big)\in \Att^-(\cB)$, with $\underline w^-=(\ldots,w_{i_{-2}},w_{i_{-1}})\in\cB^{-\bN}$, is uniquely defined by its first coordinate. Indeed, the map
\[
	\underline w^-\in\cB^{-\bN}\mapsto (\underline w^-,\hat x(\underline w^-)),\quad
	\hat x(\underline w^-)
	\eqdef \lim_{n\to\infty} \big(f_{[w_{i_{-1}}]} \circ \cdots\circ f_{[w_{i_{-n}}]}\big) (x_0),
	\quad x_0\in J,
\]
is continuous and onto $\Att^-(\cB)$ (and in particular, it does not depend on $x_0$).
Let
\[
	\Att(\cB)\eqdef \Att^-(\cB)\times\cB^{\bN_0}.
\]
For convenience, we write a point $(\underline w^-,\hat x(\underline w^-),\underline w^+)\in \Att(\cB)$ as $(\underline w^-,\hat x(\underline w^-),\underline w^+)=(\underline w^-|\underline w^+,\hat x(\underline w^-))=(\underline w,\hat x(\underline w^-))$. Letting $x(\underline w)\eqdef \hat x(\underline w^-)$, this ends the definition of $\Pi_\cB$. 

By construction, for every $\underline w= (\ldots,w_{i_{-1}}|w_{i_0},w_{i_1},\ldots)$ it holds
\[\begin{split}
	(\Pi_\cB\circ\sigma_\cB)(\underline w)
	&= \big((\underline\iota_\cB\circ\sigma_\cB)(\underline w),
		x(\sigma_\cB(\underline w))\big)\\
	&= \big((\underline\iota_\cB\circ\sigma_\cB)(\underline w),f_{[w_{i_0}]}(x(\underline w))\big)
	= F^{\lvert w_{i_0}\rvert}(\underline\iota_\cB(\underline w),x(\underline w))\\
	&= \big(F^{\lvert w_{i_0}\rvert}\circ\Pi_\cB\big)(\underline w).
\end{split}\]

Finally, let us check the cardinality of the set of preimages $\Pi_\cB^{-1}(\{X\})$ for any point $X=(\iota_\cB(\underline w),x(\underline w))$. Note that if $\cB$ disjoint then together with Lemma \ref{lem:soandso} every element in $\PCS{\cB}=\underline\iota_\cB(\cB^\bZ)$ has at most $\max_{ w\in\cB}\,\lvert w\rvert$ decodings in $\cB$.
\end{proof}

\subsection{Contracting IFS with quantifiers}\label{ssec:horsexpent}

The following extends Definition \ref{def:CIFS0}, adding some contraction quantifiers.

\begin{definition}[CIFS with quantifiers]\label{def:CIFS}
	A finite collection of words $\cB	\subset\Sigma_N^\ast$ defines a \emph{contracting iterated function system} (\emph{CIFS}) on an interval  $J\subset\bS^1$ relative to $K\ge1$, $\alpha_0<0$, $\alpha<0$, and  $\varepsilon\in(0,\lvert\alpha\rvert)$ if
\begin{itemize}
\item[(a)] for every $ w\in\cB$ it holds $f_{[ w]}(J)\subset J$,
\item[(b)] for every $m\in\bN$, $ w_1,\ldots, w_m\in\cB$, and $k=1,\ldots,\lvert( w_1,\ldots, w_m)\rvert$
 it holds
\[
	\lvert (f_{[ w_1,\ldots, w_m]}^k)'(y)\rvert
	\le Ke^{k\alpha_0},
\]
\item[(c)]
the \emph{spectrum of finite-time Lyapunov exponents} satisfies
\[
	\Big\{\frac{1}{\lvert w\rvert}\log\,\lvert (f_{[ w]})'(x)\rvert
	\colon x\in J, w\in\cB\Big\}
	\subset (\alpha-\varepsilon,\alpha+\varepsilon).
\] 
\end{itemize}
\end{definition}

\subsection{Existence of contracting IFS with quantifiers}\label{sec:pieces}

Given any $F$-ergodic measure with negative Lyapunov exponent, the following theorem provides a collection of words which defines a CIFS with quantifiers. It builds on the existence of ``skeletons'', that is, orbit pieces which ``ergodically mimic'' the measure, see Claim \ref{clalem:skeletonstar} and \cite[Section 4]{DiaGelRam:17} for further discussion. 

\begin{theorem}[Existence of a CIFS with quantifiers]\label{teo:existenceCIFS}
	Let $F\in \mathrm{SP}^1_{\rm shyp}(\Sigma_N\times\bS^1)$ and $\mu$ an $F$-ergodic hyperbolic measure with Lyapunov exponent $\alpha=\chi(\mu)<0$ and entropy $h=h(F,\mu)>0$. Then for every $\varepsilon_E\in(0,\lvert\alpha\rvert/4)$ and $\varepsilon_H\in(0,h)$ there exist a closed interval $J\subset\bS^1$ and a finite disjoint collection of words $\cB\subset\Sigma_N^\ast$ defining a CIFS on $J$ relative to some constant $K>1$ and $\alpha+\varepsilon_E$, $\alpha$, and $\varepsilon_E$ such that 
\[
	\min_{ w\in\cB}\lvert w\rvert(h-\varepsilon_H)
	\le \log\card\cB
	\le \max_{ w\in\cB}\lvert w\rvert(h+\varepsilon_H).
\]
\end{theorem}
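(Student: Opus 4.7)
The strategy is to combine the \emph{skeleton} construction mentioned above (Claim \ref{clalem:skeletonstar} and \cite[Section 4]{DiaGelRam:17}) with the connectivity built into the axioms of Section \ref{sec:axioms}. The skeleton at scale $n$ supplies a finite collection $\eS_n\subset\Sigma_N^n$ of length-$n$ words that ergodically mimic $\mu$: one has $\lvert\eS_n\rvert\in[e^{n(h-\varepsilon_H/2)},e^{n(h+\varepsilon_H/2)}]$, and for every $w\in\eS_n$ a base point $x_w$ together with a uniform-size neighborhood $I_w\ni x_w$ on which $f_{[w]}$ has bounded distortion and partial derivatives bounded by $\bar K e^{k(\alpha+\varepsilon_E/4)}$ for $k=1,\ldots,n$. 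This data does not yet define a CIFS on a common interval: the $I_w$ and the endpoints $f_{[w]}(x_w)$ vary with $w$, and the images do not generally return to a common $J$. The axioms will be used to glue the skeleton words into new words $\tilde w$ that do.

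\textbf{Gluing.} Partition $\bS^1$ into finitely many intervals of mesh $<r_0/2$, where $r_0>0$ is a uniform lower bound for $\lvert I_w\rvert$. Pigeonholing on the pair (source interval containing $x_w$, target interval containing $f_{[w]}(x_w)$) extracts a subcollection $\eS_n'\subset\eS_n$ of cardinality at least $\lvert\eS_n\rvert/L$, with $L$ depending only on $r_0$, such that all $x_w$ lie in a common interval $U$ and all $f_{[w]}(x_w)$ lie in a common interval $V$; by construction $U\subset I_w$ for every $w\in\eS_n'$. Next, fix a blending interval $J^*$ from Remark \ref{rem:commonblending} and a closed subinterval $J\subset\interior J^*$ to be shrunk below. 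Claim \ref{nclalem:connect} supplies words $p,q$ of length at most $m_c(J)$ with $f_{[p]}$ sending some point of $J$ into $U$ and $f_{[q]}$ sending some point of $V$ into $\interior J$. Shrinking $J$ so that $f_{[p]}(J)\subset U$, and so that $f_{[q]}(B)\subset\interior J$ for every set $B$ of diameter at most $\bar Ke^{n(\alpha+\varepsilon_E/4)}\cdot r_0$, forces the composition $f_{[p\cdot w\cdot q]}$ to send $J$ into $J$ for every $w\in\eS_n'$ once $n$ is large enough.

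\textbf{Verification.} Set $\cB\eqdef\{\tilde w\eqdef p\cdot w\cdot q:w\in\eS_n'\}$. Item (a) of Definition \ref{def:CIFS} is the previous inclusion. For item (c), split $\log\lvert f_{[\tilde w]}'(y)\rvert$ into contributions from $p$, $w$, and $q$: the outer two are bounded by $m_c\log\sup\lvert f'\rvert$ (a constant), while the middle is $n\alpha+O(n\varepsilon_E)$ by the skeleton estimate and bounded distortion on $I_w$; dividing by $\lvert\tilde w\rvert=n+\lvert p\rvert+\lvert q\rvert$ and taking $n$ large places the finite-time exponent inside $(\alpha-\varepsilon_E,\alpha+\varepsilon_E)$. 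For item (b), iterate the previous estimate along a concatenation $\tilde w_1,\ldots,\tilde w_m$: every fully completed factor contributes at most $K_0 e^{\lvert\tilde w\rvert(\alpha+\varepsilon_E/2)}$, and the single possibly incomplete trailing piece absorbs into the overall constant $K$, giving $\lvert(f_{[\tilde w_1,\ldots,\tilde w_m]}^k)'(y)\rvert\le Ke^{k(\alpha+\varepsilon_E)}$. All $\tilde w$ have the same length $n+\lvert p\rvert+\lvert q\rvert$, so $\cB$ is automatically disjoint and $\lvert\cB\rvert=\lvert\eS_n'\rvert\ge\lvert\eS_n\rvert/L$. For $n$ large, the requested inequalities $\lvert\tilde w\rvert(h-\varepsilon_H)\le\log\lvert\cB\rvert\le\lvert\tilde w\rvert(h+\varepsilon_H)$ follow.

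\textbf{Main obstacle.} The delicate point is the calibration of $J$: the prefix $p$ and the suffix $q$ are fixed words of bounded length but are \emph{not} contracting, so their expansion factor $(\sup\lvert f'\rvert)^{m_c}$ is a constant that must be beaten by the skeleton contraction $e^{n\alpha}$. This forces $n$ to be large and $J$ to be small \emph{simultaneously} with the pigeonhole step preserving enough skeletons to retain the entropy count. A related technical point is that the construction really needs the skeleton of \cite[Section 4]{DiaGelRam:17}---rather than a generic Birkhoff-generic collection---to ensure the uniform distortion and partial contraction estimates on $I_w$ that drive the verification above.
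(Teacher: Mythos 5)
Your construction shares the skeleton step (Claim \ref{clalem:skeletonstar}) and the connectivity step (Claim \ref{nclalem:connect}) with the paper, but the way you glue is genuinely different and, as stated, has a gap. The paper pigeonholes \emph{only} on where the base points $x_i$ land, among a fixed finite cover of $\bS^1$ by blending intervals $J_j=[y_j-2\delta,y_j+2\delta]$ with $\delta<r/2$, and takes $J$ to be the one containing the most $x_i$. Because $x_i\in J$ and $\lvert J\rvert<r$, the skeleton estimates apply on all of $J$ with no prefix at all; the only connecting piece is a \emph{per-index} suffix $\beta^i$ of length $s_i\le m_{\rm c}$ sending the individual endpoint $f_{\xi^i}^n(x_i)$ back into $I\subset J$, and $f_{[w_i]}(J)$ then has diameter $<\delta$ and intersects $I$, hence lies in $J$. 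Disjointness comes from the distinct length-$n$ prefixes, so variable word lengths $n+s_i$ cause no trouble.

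Your route — pigeonhole on the pair (source, target) and use one common prefix $p$ and one common suffix $q$ — runs into contradictory constraints on $\lvert J\rvert$. On the one hand you need $f_{[p]}(J)\subset U$; since $p$ has length $\le m_{\rm c}$, this forces $\lvert J\rvert\lesssim\lvert U\rvert\cdot\lVert F\rVert^{-m_{\rm c}}\le (r_0/2)\lVert F\rVert^{-m_{\rm c}}$, and $J$ must sit around the specific point $z'\in I$ that $p$ pulls into $U$. On the other hand the sets $B_w=f_{[w]}(f_{[p]}(J))$ are tiny but are scattered across a set of diameter $\approx r_0/2$ (they cluster around the different targets $f_{[w]}(x_w)\in V$, not around the single anchor $y_0$ of $q$), so the images $f_{[q]}(B_w)$ are spread over a $\lVert F\rVert^{m_{\rm c}}\cdot r_0/2$-neighborhood of $f_{[q]}(y_0)\in I$. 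For all of them to land in $J$ you would need $\lvert J\rvert\gtrsim\lVert F\rVert^{m_{\rm c}}r_0$, and together with the prefix bound this forces $\lVert F\rVert^{2m_{\rm c}}\lesssim 1$, which fails. There is also no control matching the prefix anchor $z'$ to the suffix target $f_{[q]}(y_0)$ — both are produced independently by Claim \ref{nclalem:connect} and may sit at opposite ends of $I$. Your ``Main obstacle'' paragraph gestures at the tension but does not resolve it: taking $n$ large shrinks $B_w$ but not the $r_0/2$ spread of their centers, which is the real problem. The fix is essentially the paper's choice: drop the prefix by taking $J$ to be a blending interval through the cluster of base points, and allow the suffix to depend on $i$.
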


\begin{proof}
Let $F\in \mathrm{SP}^1_{\rm shyp}(\Sigma_N\times\bS^1)$ with associated constants $K_1,\ldots, K_5,K_6$  as in Remark  \ref{rem:commonblending}.
Fix $\varepsilon_E\in(0,\lvert\alpha\rvert/4)$ and $\varepsilon_H\in(0,h)$.
We will use the following result.

\begin{claim}[Existence of skeletons, {\cite[Proposition 4.11]{DiaGelRam:17}}]\label{clalem:skeletonstar}
	There exist $K_0,L_0\ge1$, and $n_0\in\bN$ such that for every $n\ge n_0$ there exists a finite set $\fX=\fX(n)=\{(\xi^i,x_i)\}\subset\Sigma_N\times\bS^1$, where $\xi^i=(\ldots,\xi^i_{-1}|\xi^i_0,\xi^i_1,\ldots)$, satisfying:
\begin{itemize}
\item[(i)] the set $\fX$ has cardinality
\[
	L_0^{-1}e^{n(h-\varepsilon_H/2)}
	\le\card\fX
	\le L_0e^{n(h+\varepsilon_H/2)},
\] 
\item[(ii)] the words $(\xi^i_0,\ldots,\xi^i_{n-1})$ are all different, and
\item[(iii)] for every $k=1,\ldots,n$ it holds
\[
	K_0^{-1}e^{k(\alpha-\varepsilon_E/4)}
	\le \lvert(f_{\xi^i}^k)'(x_i)\rvert
	= \lvert(f_{[\xi^i_0,\ldots,\xi^i_{k-1}]})'(x_i)\rvert
	\le K_0e^{k(\alpha+\varepsilon_E/4)}.
\]
\end{itemize}
\end{claim}

\smallskip\noindent{\textbf{Control of distortion.}}
Let $K_0,L_0$, and $n_0$ be as in Claim \ref{clalem:skeletonstar}. We need some auxiliary distortion results. Let
\begin{equation}\label{def:cunifconst}
		\lVert F\rVert
		\eqdef \max\Big\{\lvert f_i'(x)\rvert,\lvert(f^{-1}_i)'(x)\rvert
		\colon i=1,\ldots,N,x\in\bS^1\Big\}.
\end{equation}
Let
\begin{equation}\label{eq:defMod}	
	\Mod_F(\varepsilon)
	\eqdef \Big\{\big\lvert\log{\lvert f_i'(y)\rvert}-\log{\lvert f_i'(x)\rvert}\big\rvert
			\colon i=1,\ldots,N,x,y\in\bS^1,\lvert y-x\rvert\le\varepsilon\Big\}. 
\end{equation}
Clearly, $\Mod_F(\varepsilon)\to0$ as $\varepsilon\to0$. Let $r\in(0,1)$ so that 
\begin{equation}\label{eq:rK1}
	\Mod_F(K_0r)\le \varepsilon_E/4
	\spac{and}
	2r <K_1.
\end{equation}

\begin{lemma}\label{lem:distortionC1b}
Let $(\xi,x)\in \Sigma_N\times\bS^1$ and $n\in\bN$ such that for every $k=0,\ldots,n$ it holds
\[
	K_0^{-1}e^{k(\alpha-\varepsilon_E/4)}
	\le \lvert (f_\xi^k)'(x)\rvert
	\le K_0e^{k(\alpha+\varepsilon_E/4)}.
\]
Then, with $r$ satisfying \eqref{eq:rK1}, for every $y\in B(x,r)$ and $k=0,\ldots,n$ it holds
\[
	K_0^{-1}e^{k(\alpha-\varepsilon_E/2)}
	\le \lvert (f_\xi^k)'(y)\rvert
	\le K_0e^{k(\alpha+\varepsilon_E/2)}.
\]
\end{lemma}

\begin{proof}
The claim is true for $k=0$. By induction, suppose that the claim is true for $k$. Then $y\in B(x,r)$ satisfies 
\[
	\lvert f_\xi^k(y)-f_\xi^k(x)\rvert \le K_0 e^{k(\alpha+\varepsilon_E/2)} r<K_0r.
\]	 
The above choice of $r$ implies 
\[
	\lvert\log\lvert f_{\xi_{k}}'(f_\xi^k(y))\rvert - \log \lvert f_{\xi_{k}}'(f_\xi^k(x))\rvert\rvert
	<\frac{\varepsilon_E}{4},
\]	 
and hence the claim for $k+1$.
\end{proof}

\smallskip\noindent{\textbf{Fixing a covering by blending intervals.}}
Recalling Remark \ref{rem:commonblending}, fix 
\begin{equation}\label{eq:passaro}
	\delta\in\big(0,\min\{\frac r2,K_6/2\}\big)
\end{equation}	 
and take a cover of $\bS^1$ by finitely many intervals of the form $J_j=[y_j-2\delta,y_j+2\delta]$. Let
\[
	m_{\rm c}
	\eqdef\max_j m_{\rm c}(I_j),
	\spac{where}
	I_j\eqdef [y_j-\delta,y_j+\delta])
\]	 
and $m_{\rm c}(I_j)$ is as in Claim \ref{nclalem:connect}, and let
\[
 	L_1
	\eqdef \max_{j}L_1(F,J_j),
\]
where $L_1(F,J_j)$ is as in Definition \ref{defrem:commonblending}.

\smallskip\noindent{\textbf{Choice of further constants.}}
Let 
\[
	K
	\eqdef 
	K_0 \D^{m_{\rm c}}
	e^{-m_{\rm c}(\alpha+\varepsilon_E/2)}.
\]
Consider now $n_1\in\bN$  large enough such that
\begin{equation}\label{eq:choice1}\begin{split}
	&2rK_0\lVert F\rVert^{m_{\rm c}}e^{n_1(\alpha+\varepsilon_E/2)}
	<\delta,\\
	&\frac{1}{n_1}\log K
	<\frac{\varepsilon_E}{4},\quad
	\frac{1}{n_1}\log L_0
	<\frac{\varepsilon_H}{2}.
\end{split}\end{equation}

\smallskip\noindent{\textbf{Choice of the IFS.}}
Fix any integer $n\ge \max\{n_0,n_1\}$ let $\fX =\fX(n)=\{(\xi^i,x_i)\}_i$ be the set provided by Claim \ref{clalem:skeletonstar} so that for every $i$ and $k=1,\ldots,n$
\begin{equation}
	K_0^{-1}e^{k(\alpha-\varepsilon_E/4)}
	\le \lvert(f_{\xi^i}^k)'(x_i)\rvert
	\le K_0e^{k(\alpha+\varepsilon_E/4)}.
\end{equation}
By Lemma \ref{lem:distortionC1b}, for every $i$, $y\in B(x_i,r)$, and $k=1,\ldots,n$ it holds
\begin{equation}\label{eq:cheval}
	K_0^{-1}e^{k(\alpha-\varepsilon_E/2)}
	\le \lvert(f_{\xi^i}^k)'(y)\rvert
	\le K_0e^{k(\alpha+\varepsilon_E/2)}.
\end{equation}
Hence, in particular,
\begin{equation}\label{eq:cheval-b}
	\lvert f_{\xi^i}^n(B(x_i,r))\rvert
	\le 2r K_0e^{n(\alpha+\varepsilon_E/2)}.
\end{equation}

\smallskip\noindent{\textbf{Choice of a common blending interval.}}
Choose now an index $j$ for which
$
	N_j
	\eqdef \card\big(J_j\cap \{x_i\}_i\big)
$
is maximal and let $J=J_j$, $I=I_j$, and $N=N_j$. Observe that by Claim \ref{clalem:skeletonstar} (i) and the choice of $N$
\begin{equation}\label{eq:usingclaim78}
	L_0e^{n(h+\varepsilon_H/2)}
	\ge \card\fX
	\ge N
	\ge \frac{1}{2\delta}\cdot \card\fX
	\ge \frac{1}{2\delta}\cdot \frac{1}{L_0}e^{n(h-\varepsilon_H/2)}.
\end{equation}
We can, renumbering this set of points, assume that $x_1,\ldots,x_N\in J$.
By Claim \ref{nclalem:connect}, there are words $(\beta^i_1,\ldots,\beta^i_{s_i})$, $s_i\le m_{\rm c}$, such that $f_{[\xi^i_1,\ldots,\xi^i_n,\beta^i_1,\ldots,\beta^i_{s_i}]}(x_i)\in I$. 
Let now
\[
	\cB
	\eqdef \{ w_i\}_{i=1}^N,
	\quad\text{ where }\quad
	 w_i
	\eqdef (
		\xi^i_1,\ldots,\xi^i_n,\beta^i_1,\ldots,\beta^i_{s_i}).
\]

\begin{lemma}
	The collection of words    $\cB$ is disjoint.
\end{lemma}

\begin{proof}
	This is an immediate consequence of Claim \ref{clalem:skeletonstar} (ii).
\end{proof}

\noindent{\textbf{Checking properties of a CIFS with quantifiers.}}

\begin{lemma}
	The collection of words $\cB$ satisfies properties (a), (b), and (c) of a CIFS on $J$ relative to $K$, $\alpha+\varepsilon_E$, $\alpha$, and $\varepsilon_E$.
\end{lemma}

The following two claims prove the above lemma.

\begin{claim}
	Property (a) holds.
\end{claim}

\begin{proof}
Observe that \eqref{eq:cheval-b}, \eqref{eq:choice1}, and $n\ge n_1$ together imply
\[
	\lvert f_{[\xi^i_1,\ldots,\xi^i_n,\beta^i_1,\ldots,\beta^i_{s_i}]}(B(x_i,r))\rvert
	\le  \D^{m_{\rm c}}\cdot 2rK_0 e^{n(\alpha+\varepsilon_E/2)}
	<\delta.		
\]
Hence, together with \eqref{eq:passaro}, it follows
\[
	f_{[ w_i]}(J)
	= f_{[\xi^i_1,\ldots,\xi^i_n,\beta^i_1,\ldots,\beta^i_{s_i}]}(J)
	\subset f_{[\xi^i_1,\ldots,\xi^i_n,\beta^i_1,\ldots,\beta^i_{s_i}]}(B(x_i,r))
	\subset J,
\]
which gives property (a).
\end{proof}

Notice that for every $i$ it holds
\begin{equation}\label{eq:n1m}
	\lvert w_i\rvert
	= n+s_i
	> n_1.
\end{equation}

Using \eqref{eq:cheval} together with the estimates $s_i\le m_{\rm c}$ and \eqref{eq:choice1}, for every $y\in J$ and  $k=1,\ldots,\lvert w_i\rvert$ it holds
\begin{equation}\label{eq:seu1}\begin{split}
	\lvert (f_{[ w_i]}^k)'(y)\rvert
	&\le K_0 \D^{m_{\rm c}}e^{-m_{\rm c}(\alpha+\varepsilon_E/2)}
		\cdot e^{k(\alpha+\varepsilon_E/2)}
	\le Ke^{k(\alpha+\varepsilon_E/2)},
\end{split}\end{equation}
which is a first step towards proving (b) and also (c). 

\begin{claim}
	Properties (b) and (c) hold.	
\end{claim}

\begin{proof}
We first prove a slightly stronger version of property (c). Observe that  \eqref{eq:seu1} and \eqref{eq:choice1} together with \eqref{eq:n1m} imply
\begin{equation}\label{eq:theabove}
	\frac{1}{\lvert w_i\rvert}\log	\,\lvert (f_{[ w_i]})'(y)\rvert
	\le \frac{1}{n+s_i}
		\log K + \alpha + \frac12\varepsilon_E
	< \alpha + \frac34\varepsilon_E	,
\end{equation}
together with the analogous lower bound.

By \eqref{eq:seu1} property (b) holds for $m=1$. For $m\ge1$, let $ w_1,\ldots, w_m, w_{m+1}\in\cB$. For every $k\in\{\lvert( w_1,\ldots, w_m)\rvert+1,\ldots,\lvert ( w_1,\ldots, w_{m+1})\rvert\}$, by \eqref{eq:theabove} and \eqref{eq:seu1} it follows
\[\begin{split}
	\lvert (f_{[ w_1,\ldots , w_m, w_{m+1}]}^k)'(y)\rvert 
	&\le e^{\lvert w_1\rvert(\alpha+3\varepsilon_E/4)}\cdots
		e^{\lvert w_m\rvert(\alpha+3\varepsilon_E/4)}
		Ke^{(k-\lvert( w_1,\ldots, w_m)\rvert)(\alpha+\varepsilon_E/2)}\\
	&< Ke^{k(\alpha+\varepsilon_E)}.	
\end{split}\]
This proves property (b).
\end{proof}

\smallskip\noindent{\textbf{Cardinality of $\cB$.}}
Recall that $\max_{ w\in\cB}\lvert w\rvert>n$. By \eqref{eq:usingclaim78} and also using \eqref{eq:choice1} we obtain 
\[\begin{split}
	\log\card\cB
	= \log N
	&\le \log L_0 + n(h+\frac{\varepsilon_H}{2})\\
	&< \max_{ w\in\cB}\lvert w\rvert
		\Big(h+\frac{\varepsilon_H}{2}+\frac{1}{n}\log L_0\Big)
	< \max_{ w\in\cB}\lvert w\rvert(h+\varepsilon_H),
\end{split}\] 
the estimate from below is analogous, adapting the choice of $n$.
 This completes the proof of the theorem.
\end{proof}

\subsection{Distortion}\label{ssec:horsdist}

In what follows, given a function $\phi\colon \Sigma_N\times \bS^1 \to \bR$, for each $n\in\bN$ we denote by 
\[
	S_n\phi
	\eqdef \phi+\phi(F)+\ldots+\phi\circ F^{n-1}.
\]
the corresponding Birkhoff sum of  $\phi$ (relative to $F$). Denote $\lVert\phi\rVert\eqdef\sup\lvert\phi\rvert$.

\begin{proposition}\label{pro:distortion}
Let $\cB\subset\Sigma_N^\ast$ be a finite collection of words defining a CIFS on a compact interval $J\subset\bS^1$ relative to $K$, $\alpha_0$, $\alpha$, and $\varepsilon$. 
Then for every $\phi\colon \Sigma_N\times \bS^1 \to \bR$ continuous and $\tau>0$ there exists $N_1=N_1(\phi,\tau)\in\bN$ such that for every $m\ge N_1$ and finite sequence of concatenated words $( w_1,\ldots, w_m)\in\cB^m$ it holds
\[
	\max_{X,Y\in  \Sigma_N^- \times [ w_1,\ldots, w_m]^+
		\times J} 
		\big\lvert S_n\phi(X) - S_n\phi(Y)\big\rvert 
	< \tau n,
	\spac{where}
	n=\sum_{j=1}^m\lvert w_j\rvert.
\]
\end{proposition}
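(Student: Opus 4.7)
\textbf{Proof plan for Proposition \ref{pro:distortion}.}

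The plan exploits the observation that after $k$ steps, the images $F^kX$ and $F^kY$ have symbolic coordinates that agree on a long window centered near position $0$, and fiber coordinates that have been subjected to the same composition $f^k_{[w_1,\ldots,w_m]}$ applied to two points of $J$. Write $X=(\xi,x)$, $Y=(\eta,y)$ with $\xi^+$ and $\eta^+$ both beginning with the spelling of $(w_1,\ldots,w_m)$ and $x,y\in J$. Then for every $k=0,\ldots,n-1$:
\begin{itemize}
\item The symbols of $\sigma^k\xi$ and $\sigma^k\eta$ coincide on positions $-k,\ldots,n-1-k$, so
\[
d_1(\sigma^k\xi,\sigma^k\eta)\le e^{-\min(k+1,\,n-k)}.
\]
\item Since both fibers evolve under the same map $f^k_{[w_1,\ldots,w_m]}$, property (b) of a CIFS with quantifiers gives
\[
\lvert f^k_\xi(x)-f^k_\eta(y)\rvert
\le Ke^{k\alpha_0}\lvert x-y\rvert
\le Ke^{k\alpha_0}.
\]
\end{itemize}
Both quantities decay like $e^{-ck}$ (for the fiber part, using $\alpha_0<0$) and like $e^{-(n-k)}$ (for the symbolic part) near the endpoints.

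The argument then is a standard ``middle vs.\ ends'' split. Given $\tau>0$, invoke uniform continuity of $\phi$ on the compact space $\Sigma_N\times\bS^1$ to produce $\delta=\delta(\phi,\tau)>0$ such that any two points within distance $\delta$ (in a fixed product metric combining $d_1$ and the usual metric on $\bS^1$) have $\phi$-values differing by less than $\tau/2$. Choose a constant $M_0=M_0(\phi,\tau,K,\alpha_0)\in\bN$ large enough that simultaneously
\[
Ke^{M_0\alpha_0}<\frac{\delta}{2}
\spac{and}
e^{-M_0}<\frac{\delta}{2}.
\]
For every $k$ in the ``middle range'' $[M_0,\,n-M_0]$, both the symbolic and the fiber distance between $F^kX$ and $F^kY$ are bounded by $\delta/2$, hence $\lvert\phi(F^kX)-\phi(F^kY)\rvert<\tau/2$. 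For the at most $2M_0$ ``end indices'' in $[0,M_0)\cup(n-M_0,n-1]$, use the crude bound $2\lVert\phi\rVert_\infty$.

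Summing the two contributions gives
\[
\lvert S_n\phi(X)-S_n\phi(Y)\rvert
\;\le\; n\cdot\frac{\tau}{2}+2M_0\cdot 2\lVert\phi\rVert_\infty,
\]
so defining $N_1=N_1(\phi,\tau)\eqdef \lceil 8M_0\lVert\phi\rVert_\infty/\tau\rceil$ ensures that whenever $n\ge N_1$ (in particular, $m\ge N_1$ since $n\ge m$) the right-hand side is at most $\tau n$, uniformly in the choice of $(w_1,\ldots,w_m)\in\cB^m$ and of $X,Y$ in the prescribed set. No step is an obstacle; the only subtle point is verifying that both fiber and symbolic agreement contribute simultaneously in the middle range, which is why the hypothesis that $\alpha_0<0$ (contraction along the whole concatenation, not just within individual words) is essential to control the fiber distance uniformly in the number $m$ of concatenated words.
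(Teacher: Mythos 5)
Your proof is correct and takes essentially the same approach as the paper: a ``middle vs.~ends'' split, where symbolic agreement controls the middle indices away from the right endpoint $n$, fiber contraction (property (b) of the CIFS) controls them away from $0$, and the $O(1)$ many end indices are handled by the trivial $2\lVert\phi\rVert_\infty$ bound. The only cosmetic difference is that the paper tracks the diameters of the sets $F^j(H)$ as cartesian products of cylinders and an interval, while you track the distance between a single pair $F^kX,F^kY$; these are equivalent.
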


\begin{proof}
The function $\phi$ is uniformly continuous and hence there is $\delta>0$ so that at any pair of points in distance at most $\delta$ the values of $\phi$ differ at most by $\tau/2$. Fix $\ell,N_1\in\bN$ so that
\begin{equation}\label{eq:contra}
	\lvert J\rvert\cdot Ke^{\ell\alpha_0}
	\le \delta
,\quad	e^{-\ell}
	\le \delta
,\spac{and}	N_1
	> \max\big\{2\ell,\frac2\tau 
		\cdot 4\ell\lVert\phi\rVert\big\}.
\end{equation}

Fix $m\ge N_1$ and let $( w_1,\ldots, w_m)\in\cB^m$ and  $n=\sum_{j=1}^m \lvert w_j\rvert\ge N_1$.
Consider $H= \Sigma_N^- \times [ w_1 ,\ldots,  w_m]^+\times J$ and observe that $H$ is a cartesian product of $\Sigma_N^-$, a cylinder of level $n$, and the interval $J$. Hence, recalling that $F$ is a step skew product, for every $j=0,\ldots, n$, the image $F^j(H)$ is a cartesian product of  a cylinder of level $j$,  a cylinder of level $n-j$, and an interval. 

For the course of this proof, denote by $\pi_k$, $k=1,2,3$, the projection to the $k$th component of the product space $\Sigma_N^-\times\Sigma_N^+\times\bS^1$.  
As by property (b) of a CIFS, every map $f_{[ w_j]}$ is a contraction, together with \eqref{eq:contra} it follows 
\[
	\lvert\pi_3(F^j(H))\rvert
	\le\delta
	\quad\text{ for all }j= \ell,\ldots,n.
\]	
Recall the metrics $d^\pm$ on $\Sigma_N^\pm$ defined in Section \ref{ssec:codedSigma}. Note that for every $j=0,\ldots,n$ 
\[
	\diam_{d^-}(\pi_1(F^j(H)))
	\le e^{-j}
	\quad\text{ and }\quad
	\diam_{d^+}(\pi_2(F^j(H)))
	\le e^{-n+j}.
\]
Together with \eqref{eq:contra} it then follows that for every $j=\ell,\ldots,n-\ell$ it holds
\[
	\diam_{d^-}(\pi_1(F^j(H)))
	\le\delta,\quad
	\diam_{d^+}(\pi_2(F^j(H)))
	\le\delta,\quad
	\lvert\pi_3(F^j(H))\rvert
	\le\delta.
\]
Thus, for every $X,Y\in H$ we obtain
\[\begin{split}
	\lvert S_n\phi(X)-S_n\phi(Y)\rvert
	&\le \ell2\lVert\phi\rVert + (n-2\ell)\frac\tau2 
		+ \ell2\lVert\phi\rVert
	< n\frac\tau2 +4\ell\lVert\phi\rVert	\\
	{\tiny\text{using \eqref{eq:contra} }}\quad
	&< n\frac\tau2 
		+ \frac\tau2 N_1
	\le n\frac\tau2+\frac\tau2 n
	= \tau n.
\end{split}\]
This finishes the proof.
\end{proof}

\subsection{Horseshoes associated to CIFSs}\label{sec:horrrrse}

For every CIFS $\cB\subset\Sigma_N^\ast$, Proposition \ref{pro:defineshorseshoe} asserts the existence of its associated attractor $\Lambda(\cB)\subset\Sigma_N\times\bS^1$. Moreover, if $\cB$ is a CIFS on an interval $J$ then 
\begin{equation}\label{eq:msoqui1}	
	\Lambda(\cB)
	=\Pi_\cB(\cB^\bZ)
	\subset \PCS{\cB}\times J
	\subset\Sigma_N\times J.
\end{equation}

\begin{proposition}[Horseshoe induced by a CIFS]\label{procor:specCIFS}
	Let $\cB=\{ w_1,\ldots, w_M\}\subset\Sigma_N^\ast$ be a finite disjoint collection of words defining a CIFS on an interval $J$ relative to $K,\alpha_0,\alpha$, and $\varepsilon$ and $\Lambda(\cB)$ its associated attractor. Let
\[
	\Gamma(\cB)
	\eqdef \bigcup_{k=0}^{R-1}F^k(\Lambda(\cB)),
	\spac{where}
	R\eqdef\max_{ w\in\cB}\,\lvert w\rvert.
\]
Then $\Gamma(\cB)$ is a compact $F$-invariant set such that every ergodic Borel probability measure $\mu'\in\cM(F|_{\Gamma(\cB)})$ satisfies
\[
	\chi(\mu')
	\in (\alpha-\varepsilon,\alpha+\varepsilon).
\]
\end{proposition}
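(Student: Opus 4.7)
The plan rests on two pillars: the semiconjugacy $\Pi_\cB \circ \sigma_\cB = F^{\lvert w_0\rvert} \circ \Pi_\cB$ from Proposition~\ref{pro:defineshorseshoe}, and a compactness-sharpening of property~(c) of a CIFS with quantifiers.

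For compactness and invariance, first I would observe that $\Lambda(\cB) = \Pi_\cB(\cB^\bZ)$ is compact as the continuous image of the compact shift space $\cB^\bZ$; hence $\Gamma(\cB)$ is compact as a finite union of compact sets. To show $F(\Gamma(\cB)) = \Gamma(\cB)$, fix $X = \Pi_\cB(\underline w)$ with $\underline w = (\ldots, w_{-1} \vert w_0, w_1, \ldots) \in \cB^\bZ$ and set $T_k \eqdef \lvert w_0\rvert + \cdots + \lvert w_{k-1}\rvert$ (so $T_0 = 0$). The semiconjugacy yields $F^{T_k}(X) = \Pi_\cB(\sigma_\cB^k \underline w) \in \Lambda(\cB)$ for every $k\ge 0$, so consecutive returns of the forward orbit of $X$ to $\Lambda(\cB)$ are separated by $\lvert w_k\rvert \le R$ iterates. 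This forces every forward iterate of $X$ into some $F^j(\Lambda(\cB))$ with $0 \le j \le R-1$, hence into $\Gamma(\cB)$. Running the same argument backward using $\sigma_\cB^{-1}$ and $w_{-1}$ gives $F^{-1}(\Gamma(\cB)) \subset \Gamma(\cB)$.

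For the Lyapunov estimate, the preparatory step is to sharpen property~(c): since $J$ is compact, $\cB$ is finite, and $y \mapsto \log\lvert f_{[w]}'(y)\rvert$ is continuous, there exists $\varepsilon_0 \in (0,\varepsilon)$ such that
\[
	\frac{1}{\lvert w\rvert}\log\lvert f_{[w]}'(y)\rvert
	\in [\alpha-\varepsilon+\varepsilon_0,\,\alpha+\varepsilon-\varepsilon_0]
	\quad\text{for every } w\in\cB,\; y\in J.
\]
Fix an ergodic $\mu'\in\cM(F|_{\Gamma(\cB)})$. By $F$-invariance and the decomposition $\Gamma(\cB) = \bigcup_{k=0}^{R-1}F^k(\Lambda(\cB))$, we have $\mu'(\Lambda(\cB)) \ge 1/R > 0$, so by ergodicity I can pick a $\mu'$-generic point of the form $X = \Pi_\cB(\underline w) \in \Lambda(\cB)$ at which $\chi(\mu') = \lim_n \frac{1}{n}\log\lvert(f_\xi^n)'(x)\rvert$. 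Along the return times $T_k$, the chain rule together with the inclusion $\Lambda(\cB) \subset \Sigma_N \times J$ from \eqref{eq:msoqui1} gives
\[
	\log\lvert(f_\xi^{T_k})'(x)\rvert
	= \sum_{j=0}^{k-1}\log\lvert f_{[w_j]}'(x_j)\rvert,
\]
where each $x_j \in J$ is the fiber coordinate of $F^{T_j}(X)\in\Lambda(\cB)$. Applying the sharpened bound summand by summand and dividing by $T_k$ places the ratio in $[\alpha-\varepsilon+\varepsilon_0,\,\alpha+\varepsilon-\varepsilon_0]$; the bridge from $T_k$ to arbitrary $n\in[T_k,T_{k+1})$ costs at most $R\log\lVert F\rVert$ by \eqref{def:cunifconst}, which is $O(1)$ and washes out in the limit. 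Hence $\chi(\mu') \in (\alpha-\varepsilon,\alpha+\varepsilon)$, as required.

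The only step that requires genuine care is the strict inequality in the conclusion: the open-interval containment in property~(c) does not pass directly to the limit, and a naive argument would yield only the closed bound $[\alpha-\varepsilon,\alpha+\varepsilon]$. The compactness-induced slack $\varepsilon_0 > 0$ rules this out; this is where the finiteness of $\cB$ and the choice of a compact interval $J$ in the very definition of a CIFS with quantifiers are essential.
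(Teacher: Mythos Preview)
Your proof is correct and follows the same outline as the paper's own argument: compactness via the continuous image of $\cB^\bZ$, invariance via the semiconjugacy of Proposition~\ref{pro:defineshorseshoe}, and the Lyapunov bound via property~(c) applied along the return times to $\Lambda(\cB)$. The paper's proof is a terse two-liner that simply invokes property~(c) and the fact that generic orbits are infinite concatenations of the $f_{[w]}$; your version spells out the return-time decomposition and, more notably, handles the open-versus-closed interval issue via the compactness slack $\varepsilon_0$, a point the paper leaves implicit.
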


\begin{proof}
By Proposition \ref{pro:defineshorseshoe}, $\Lambda(\cB)$ is the image of a compact set under a continuous map, and hence compact. The semi-conjugation  in Proposition \ref{pro:defineshorseshoe} implies that $\Gamma(\cB)$ is $F$-invariant. 

Hence, the property of the range of Lyapunov exponents is an immediate consequence of property (c) of a CIFS and the fact that the orbit of every point generic for an $F$-ergodic measure is described by an infinite concatenation of fiber maps $f_{[ w]}$ with $ w\in\cB$.
\end{proof}

\begin{remark}[Horseshoes]
The set $\Gamma(\cB)$ can be seen as a $F$-invariant multi-variable-time horseshoe as in \cite[Section 5]{DiaGelRam:17}. For simplicity, we will refer to such sets simply as \emph{horseshoes}.
\end{remark}

\section{Repetition and tailing scheme}\label{ssec:horsreptai}

In this section, we introduce the repeat-and-tail scheme which will provide us a cascade of collections of words $\cB$ over the alphabet $\{1,\ldots,N\}$.  By writing $\lvert w\rvert$ for some $ w\in\cB$ we always mean its length as spelled in $\{1,\ldots,N\}$.

\begin{definition}\label{def:reptaiSFT}
Let $\cB\subset\Sigma_N^\ast$ be a collection of (nonempty) words. Given $m\in\bN$, consider a \emph{tailing map} $\ft=\ft_{\cB,m}\colon\cB^m\to\Sigma_N^\ast$ and define 
\[
	(\cB^m)_\ft
	\eqdef \big\{\big( w_1,\ldots, w_m,
		\ft( w_1,\ldots, w_m)\big)
		\colon  w_k\in\cB
		\text{ for every }k=1,\ldots,m\big\}
	\subset\Sigma_N^\ast.
\]
We say that $(\cB^m)_\ft$ \emph{$m$-times repeats and $\ft$-tails} $\cB$. We define the \emph{tail-adding map} 
\[
	T_{(\cB^m)_\ft}\colon\cB^m\to{(\cB^m)_\ft}
,\quad
	T_{(\cB^m)_\ft}( w_1,\ldots, w_m)
	\eqdef \big( w_1,\ldots, w_m,		
		\ft( w_1,\ldots, w_m)\big).
\]
\end{definition}

Recall that in the above definition we use our simplifying Notation \ref{eq:notation}. 
We point out that the words in $\cB$ may have different length. The same applies to words in $(\cB^m)_\ft$. The following is an immediate consequence of Corollary \ref{corlem:dis3} and Lemma \ref{lem:COND}.

\begin{corollary}\label{cor:CONreptai}
	Let $\cB\subset\Sigma_N^\ast$ be a finite collection of words which is disjoint. Let $m\in\bN$ and consider a tailing map $\ft=\ft_{\cB,m}\colon\cB^m\to\Sigma_N^\ast$. Then  $T_{(\cB^m)_\ft}$ is bijective. Moreover, $(\cB^m)_\ft$ is disjoint and hence uniquely left decipherable. 
\end{corollary}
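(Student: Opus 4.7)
The plan is to first unpack what needs to be checked: the map $T_{(\cB^m)_\ft}$ is automatically surjective onto $(\cB^m)_\ft$ by construction, so the bijectivity statement reduces to injectivity. Once bijectivity is established, disjointness of $(\cB^m)_\ft$ will imply its unique left decipherability directly via Lemma \ref{lem:COND}, so the remaining substantive task is to prove that $(\cB^m)_\ft$ is a disjoint collection of words.

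For injectivity of $T_{(\cB^m)_\ft}$, I would argue as follows. Suppose two tuples $(w_1,\ldots,w_m)$ and $(w_1',\ldots,w_m')$ in $\cB^m$ have the same image in $\Sigma_N^\ast$, viewed as concatenated words over $\{1,\ldots,N\}$. Then in particular the prefix $\iota_\cB(w_1,\ldots,w_m)$ coincides with the prefix $\iota_\cB(w_1',\ldots,w_m')$ of the common word. Since $\cB$ is disjoint, Lemma \ref{lem:COND} gives that $\cB$ is uniquely left decipherable, which forces $w_i=w_i'$ for all $i=1,\ldots,m$; hence the tails also match and the tuples agree.

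For disjointness of $(\cB^m)_\ft$, take two distinct elements $u=(w_1,\ldots,w_m,\ft(w_1,\ldots,w_m))$ and $u'=(w_1',\ldots,w_m',\ft(w_1',\ldots,w_m'))$. By the just-proved injectivity, the tuples $(w_1,\ldots,w_m)$ and $(w_1',\ldots,w_m')$ must differ; let $k$ be the smallest index where $w_k\ne w_k'$. After the common prefix $\iota_\cB(w_1,\ldots,w_{k-1})$ the spellings of $u$ and $u'$ continue with $\iota_\cB(w_k)$ and $\iota_\cB(w_k')$ respectively. Disjointness of $\cB$ means neither of these is a prefix of the other, so they must disagree at some position within the shorter of $|w_k|$ and $|w_k'|$. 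This mismatch occurs strictly inside both words, hence well before either $u$ or $u'$ terminates; consequently neither is a prefix of the other, proving disjointness. Unique left decipherability of $(\cB^m)_\ft$ then follows from a second application of Lemma \ref{lem:COND}.

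The main obstacle, if any, is simply keeping the bookkeeping straight between tuples in $\cB^m$ and their concatenations as words in $\Sigma_N^\ast$; the simplifying Notation \ref{eq:notation} already collapses these, so the argument is essentially a two-step application of Lemma \ref{lem:COND} once injectivity is unwound — no further geometric or quantitative input from the CIFS setup is needed.
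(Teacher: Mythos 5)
Your argument is correct and matches the paper's approach, which simply cites Corollaries~\ref{cor:dis2}, \ref{corlem:dis3}, and Lemma~\ref{lem:COND} as ``immediate''; you have unpacked those citations into an explicit proof. One small phrasing point worth tightening: in the injectivity step, the prefixes $\iota_\cB(w_1,\ldots,w_m)$ and $\iota_\cB(w_1',\ldots,w_m')$ do not automatically coincide --- being both prefixes of the same word, one is a prefix of the other, and it is unique left decipherability of $\cB$ that then forces them to coincide, so the conclusion stands but the logical order should be stated in that direction.
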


The next theorem is a key ingredient. It provides a choice of CIFS's (and hence of the associated attractors and the horseshoes they generate) whose Lyapunov exponent drops by a controlled amount. The estimate on the length of the tails also allows to control the drop of  entropy of the horseshoes. 

\begin{theorem}[Choice of a tailing map]\label{thepro:tailing}
	Consider $F\in \mathrm{SP}^1_{\rm shyp}(\Sigma_N\times\bS^1)$, $N\ge2$. 
	Let $J\subset\bS^1$ be a blending interval. 
	Let $\cB$ be a finite disjoint collection of words defining a CIFS on   $J$ relative to  $K\ge1,\alpha_0=\alpha+\varepsilon<0$, $\alpha<0$, and $\varepsilon$, for some $\varepsilon \in(0,\lvert\alpha\rvert/2)$.
	There is $N_2=N_2(\cB)\in\bN$ such that for every $m\ge N_2$ there exists a tailing map $\ft=\ft_{\cB,m}\colon\cB^m\to\Sigma_N^\ast$ such that the $m$-times repeated and $\ft$-tailed collection of words $(\cB^m)_\ft$ defines a CIFS on $J$ relative to  $K,\alpha_0'$,  $\alpha'$, and $\varepsilon'$, where
\[
	\alpha_0'=\frac12(\alpha+\varepsilon),\quad
	\alpha'=\frac12\alpha,\quad
	\varepsilon'=\frac\varepsilon2.
\] 
Moreover, the tailing map satisfies for every $ w_1,\ldots, w_m\in\cB$
\begin{equation}\label{eq:lengthoftailings}
	\lvert\ft( w_1,\ldots, w_m)\rvert
	\le L_1\sum_{j=1}^m\lvert w_j\rvert\lvert\alpha\rvert,
\end{equation}
where $L_1=L_1(F,J)>0$ is as in Definition \ref{defrem:commonblending}.
\end{theorem}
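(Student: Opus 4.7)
Plan. Fix $(w_1,\ldots,w_m)\in\cB^m$ with $m\ge N_2$ (to be chosen), set $n\eqdef\sum_{j=1}^m|w_j|$, and consider $H_0\eqdef f_{[w_1,\ldots,w_m]}(J)\subset J$. By CIFS property~(b) applied to $\cB$, $|H_0|\le K|J|e^{n(\alpha+\varepsilon)}=4\delta K e^{n(\alpha+\varepsilon)}$, so $\lvert\log|H_0|\rvert$ is of order $n|\alpha|$; and for $N_2$ large, $|H_0|$ is much smaller than both $\delta$ and $K_1$. Together with Proposition \ref{pro:distortion}, this controls all the constants we must absorb below.

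The tail $\ft(w_1,\ldots,w_m)=(\beta,\eta,\gamma)$ is built in three pieces. First, $\beta$ is a connecting word of length $\le m_{\rm c}([x-\delta,x+\delta])$ supplied by Claim \ref{nclalem:connect}, sending one point of $H_0$ into the inner interval $I=[x-\delta,x+\delta]$; then $H_1\eqdef f_{[\beta]}(H_0)\subset J$ satisfies $H_1\cap I\ne\emptyset$ and $|H_1|<K_1$. Second, $\eta$ is produced by Axiom CEC$+(J)$ applied to $H_1$: a word of length $\ell\le K_2\lvert\log|H_1|\rvert+K_3$ with $f_{[\eta]}(H_1)\supset B(J,K_4)$ and pointwise lower bound $\log|f_{[\eta]}'(y)|\ge \ell K_5$ on $H_1$. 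Third, $\gamma$ is an additional short word---obtained from a further transition via Claim \ref{nclalem:connect} followed by one more $\cB$-word---that maps the over-expanded image back inside $J$, its length depending on $\cB$, $F$, $J$ but not on $n$. Estimating,
\[
|\ft|\le K_2\bigl(n|\alpha|+\lvert\log(4\delta)\rvert+K_3\bigr)+2m_{\rm c}+\text{(lower order)}\le L_1 n|\alpha|,
\]
for $n\ge N_2$ large enough that the lower-order terms are absorbed into the dominant $K_2 n|\alpha|$. For the finite-time Lyapunov exponent of $f_{[w_1,\ldots,w_m,\ft]}$, the total log-derivative reads $\approx n\alpha+\ell K_5$ (dominant terms) against total length $n+|\ft|$, so tuning within the flexibility of CEC$+$ places the ratio in $(\alpha/2-\varepsilon/2,\alpha/2+\varepsilon/2)$, establishing new CIFS properties~(b) and~(c) with parameters $\alpha_0'=(\alpha+\varepsilon)/2$, $\alpha'=\alpha/2$, $\varepsilon'=\varepsilon/2$; the uniform sub-composition bound of~(b) uses Proposition \ref{pro:distortion}. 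Disjointness of $(\cB^m)_\ft$ follows from the disjointness of $\cB$ via Corollary \ref{cor:CONreptai}.

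The main obstacle is the return-to-$J$ step encoded in $\gamma$. Axiom CEC$+$ only guarantees an image \emph{covering} $B(J,K_4)\supsetneq J$, so after step two the CIFS property~(a) is violated and $\gamma$ must repair this without (i) disturbing the derivative arithmetic that lands the new exponent in the prescribed window, and (ii) inflating the tail length beyond $L_1 n|\alpha|$. What makes this possible is that the over-expansion is by the fixed $K_4$-neighborhood of $J$ only, independent of $n$, so $\gamma$ can be chosen of length depending only on $\cB$, $F$, $J$, and is absorbed into the error for $n$ large.
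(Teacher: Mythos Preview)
Your three–piece tail $(\beta,\eta,\gamma)$ with $\eta$ the \emph{full} CEC$+$ expanding word has a genuine gap at the point you yourself flag as ``the main obstacle''. Once $f_{[\eta]}(H_1)\supset B(J,K_4)$, the image is an interval of size at least $|J|+2K_4$, independent of $n$; mapping such an interval back into $J$ by a word $\gamma$ of bounded length is not generally possible (a $\cB$-word only contracts $J$ into $J$, and Claim~\ref{nclalem:connect} only moves a single point, not a large interval). More seriously, the exponent arithmetic does not come out right. The CEC$+$ axiom gives only the \emph{lower} bound $\log|f_{[\eta]}'(y)|\ge\ell K_5$, so writing the total log-derivative as $n\alpha+\ell K_5$ is not an estimate but a lower bound. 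Since $f_{[\eta]}$ stretches the tiny interval $H_1$ (of length $\approx e^{n\alpha}$) to cover $B(J,K_4)$, its log-derivative is in fact $\approx n|\alpha|$, so the total log-derivative is $\approx n\alpha+n|\alpha|=0$, giving finite-time exponent near $0$ rather than $\alpha/2$. The phrase ``tuning within the flexibility of CEC$+$'' is exactly the missing idea: the axiom provides a single sequence, not a family, and you never say what the tuning is.

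The paper resolves both problems at once by \emph{truncating} the CEC$+$ sequence $(\eta_1,\ldots,\eta_L)$ at the smallest $\ell\le L$ for which $|f_{[\eta_1,\ldots,\eta_\ell]}(H)|\ge |J|\,e^{\frac12 n\alpha}$; see \eqref{eq:maaximal}. This stopping rule forces the accumulated log-derivative of the tail to be precisely $\approx \tfrac12 n\alpha - n\alpha = \tfrac12 n|\alpha|$, so the total is $\approx \tfrac12 n\alpha$ as required, and since the image is still of length $\lesssim e^{\frac12 n\alpha}\ll \delta$ a short connecting word (via Claim~\ref{nclalem:connect}, length $\le m_{\rm c}$) lands it in $I$ and hence inside $J$. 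No third piece $\gamma$ is needed, and no ``flexibility'' of CEC$+$ is invoked beyond the freedom to stop early. Your initial connecting word $\beta$ is also unnecessary, since $H\subset J$ already intersects $J$ and CEC$+(J)$ applies directly.
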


\begin{proof} Similarly to the proof of Theorem \ref{teo:existenceCIFS}, 
we consider the  constants $K_1,\ldots$, $K_5$ associated to the blending interval $[x-2\delta,x+2\delta]=J$. Let $I=[x-\delta,x+\delta]$ and $m_{\rm c}=m_{\rm c}(I)$ as in Claim \ref{nclalem:connect}. Recall that
\begin{equation}\label{eq:L1recall}
	L_1=
	L_1(F,J)=
	K_2(2+\lvert\log(4\delta)\rvert+K_3)+m_{\rm c}.
\end{equation}

\smallskip\noindent{\textbf{Choice of quantifiers.}}
Choose $r>0$ such that  
\begin{equation}\label{eq:choicer}
	r<\min\{K_1,K_4,\D^{-m_{\rm c}}\delta\},
	\spac{and}
	\Mod_F(r)<\frac\varepsilon8,
\end{equation}
where $\D$ is defined in \eqref{def:cunifconst} and $\Mod_F$ in \eqref{eq:defMod}.

\begin{claim}\label{clai:distoortion}
	For $N_2\in\bN$ sufficiently large, every $m\ge N_2$, and $ w_1,\ldots, w_m\in\cB$  it holds
\[		
	\log \sup_{x,y\in J}
		\frac{\lvert (f_{[ w_1,\ldots, w_m]})'(x)\rvert}
			{\lvert (f_{[ w_1,\ldots, w_m]})'(y)\rvert}
			\le \sum_{j=1}^m\lvert w_j\rvert\frac\varepsilon8.
\]			
\end{claim}

\begin{proof}
	Since $\cB$ is a CIFS on $J$, applying repeatedly its maps to $J$ shrinks this interval exponentially fast, which implies that the modulus of continuity of $f_{[ w_j]}$ on the corresponding image interval also decreases. Then
\[
	\max_{x,y\in J}
		\log\frac{\lvert (f_{[ w_1,\ldots, w_m]})'(x)\rvert}
			{\lvert (f_{[ w_1,\ldots, w_m]})'(y)\rvert}
	\le \sum_{j=1}^{m-1}
	\max_{x,y\in f_{[ w_1,\ldots, w_j]}(J)}
		\log\frac{\lvert (f_{[ w_{j+1}]})'(x)\rvert}
			{\lvert (f_{[ w_{j+1}]})'(y)\rvert}		,	
\]	
where the latter is a finite sum of terms converging to zero. This implies the claim.
\end{proof}

We also assume the following properties to be satisfied for $N_2$:
\begin{equation}\label{eq:formula-1nnnmao}\begin{split}
	&\lvert J\rvert 
	\max\Big\{
		e^{N_2 \min_{ w\in\cB}\lvert w\rvert(\alpha+\varepsilon)},
		K e^{N_2\frac12\min_{ w\in\cB}\lvert w\rvert\alpha}
		\Big\}
	< r,\\
	&1\le N_2\lvert\alpha\rvert,\\
	&\frac{1}{N_2}\log\D^{1+m_{\rm c}}	
	< \frac\varepsilon8	.	
\end{split}\end{equation}
In the following, let
\[
	m\ge N_2.
\]

\smallskip\noindent{\textbf{Length of iterates of $J$.}}
Fix some enumeration $\cB=\{ w_1,\ldots, w_M\}$. Given $(i_1,\ldots ,i_m)\in \{1,\ldots,M\}^m$, by property (a) of the CIFS on $J$ defined by $\cB$, it holds
\[
	H_{i_1,\ldots,i_m}
	\eqdef  f_{[ w_{i_1},\ldots, w_{i_m}]}(J)
	\subset J.
\]
Property (c) of the CIFS first implies that for every $y\in J$ it holds
\[
	e^{\sum_{j=1}^m\lvert w_{i_j}\rvert(\alpha-\varepsilon)}
	\le \lvert (f_{[ w_{i_1},\ldots, w_{i_m}]})'(y)\rvert
	\le e^{\sum_{j=1}^m\lvert w_{i_j}\rvert(\alpha+\varepsilon)}
\]
and hence
\begin{equation}\label{eq:forrr}\begin{split}
	\lvert J\rvert e^{\sum_{j=1}^m\lvert w_{i_j}\rvert(\alpha-\varepsilon)}
	\le \lvert H_{i_1,\ldots,i_m}\rvert
	&= \lvert f_{[ w_{i_1},\ldots, w_{i_m}]}(J)\rvert
	\le \lvert J\rvert  e^{\sum_{j=1}^m\lvert w_{i_j}\rvert(\alpha+\varepsilon)}\\
		{\tiny\text{by  \eqref{eq:formula-1nnnmao} and $m\ge N_2$}}\quad
	&\le \lvert J\rvert  e^{m\min_{ w\in\cB}\lvert w\rvert(\alpha+\varepsilon)}
	< r.
\end{split}\end{equation}

\smallskip\noindent{\textbf{Definition of the tailing map.}}
Given $(i_1,\ldots ,i_m)\in \{1,\ldots,M\}^m$, it holds $H_{i_1,\ldots,i_m}\subset J$. By Axiom CEC$+(J)$, there exists a finite expanding and covering sequence $(\eta_1,\ldots,\eta_{L})$ such that
\[
	f_{[\eta_1,\ldots,\eta_{L}]}(H_{i_1,\ldots,i_m})
	\supset B( J,K_4),
\]
where $L\in\bN$ satisfies
\begin{equation}\label{eq:bach}\begin{split}
	L
	&\le K_2\lvert\log\lvert H_{i_1,\ldots,i_m}\rvert\rvert+K_3\\
	{\tiny\text{using \eqref{eq:forrr} }}\quad
	&\le K_2\sum_{j=1}^m\lvert w_{i_j}\rvert\lvert\alpha-\varepsilon\rvert	
		+K_2\lvert\log\lvert J\rvert\rvert +K_3\\
	{\tiny\text{using $\lvert\alpha-\varepsilon\rvert=\lvert\alpha\rvert+\varepsilon<2\lvert\alpha\rvert$}}\quad	
	&\le \sum_{j=1}^m\lvert w_{i_j}\rvert \lvert\alpha\rvert
		\left(2K_2	
		+\frac{1}{m\lvert\alpha\rvert}(K_2\lvert\log\lvert J\rvert\rvert +K_3)\right).
\end{split}\end{equation}

In the following, instead of ``going all the way'' to cover the blending interval $J$, we will only consider a certain truncated sequence $(\eta_1,\ldots,\eta_{\ell})$ for some $\ell\le L$. Indeed, choose $\ell\in\{1,\ldots,L\}$ to be the smallest number satisfying
\begin{equation}\label{eq:maaximal}
	\lvert f_{[\eta_1,\ldots,\eta_{\ell}]}(H_{i_1,\ldots,i_m})\rvert
	\ge \lvert J\rvert e^{\frac12\sum_{j=1}^m\lvert w_{i_j}\rvert\alpha}.
\end{equation}
Note that the estimate in \eqref{eq:forrr} together with $\alpha+\varepsilon<\frac12\alpha<0$ implies $\ell\ge1$.

By Claim \ref{nclalem:connect}, there exists a finite sequence $(\beta_1,\ldots,\beta_s)$, $s\le m_{\rm c}$, such that 
\begin{equation}\label{eq:Hintersect}
	(f_{[\beta_1,\ldots,\beta_s]}\circ f_{[\eta_1,\ldots,\eta_\ell]})( H_{i_1,\ldots,i_m})
	\cap I
	\ne\emptyset.
\end{equation}	 
Define now the tailing map 
\[
	\ft\colon\cB^m\to\Sigma_N^\ast,\quad
	\ft( w_{i_1},\ldots, w_{i_m})
	\eqdef (\eta_1,\ldots,\eta_\ell,\beta_1,\ldots,\beta_s).
\]
Using \eqref{eq:formula-1nnnmao} we have $m\lvert\alpha\rvert\ge N_2\lvert\alpha\rvert\ge1$.
Observing that $\ell\le L$, together with \eqref{eq:bach} it hence follows
\[\begin{split}
	\ell + s
	&\le \sum_{j=1}^m\lvert w_{i_j}\rvert\lvert\alpha\rvert
		\left(2K_2 
		+\frac{1}{m\lvert\alpha\rvert}(K_2\lvert\log\lvert J\rvert\rvert+K_3) 
		\right)
	+m_{\rm c}	
	\le L_1\sum_{j=1}^m\lvert w_{i_j}\rvert\lvert\alpha\rvert,
\end{split}\]
where $L_1$ as in \eqref{eq:L1recall}.
This implies property \eqref{eq:lengthoftailings}.

What remains to prove is that
\[
	(\cB^m)_\ft
	\eqdef \big\{\big( w_{i_1},\ldots, w_{i_m}	,
			\ft( w_{i_1}\ldots w_{i_m})\big)\colon
		( w_{i_1},\ldots, w_{i_m})\in\cB^m\big\}.
\]
 defines a CIFS on $J$ with the claimed quantifiers.

\smallskip\noindent{\textbf{Checking properties of a CIFS with quantifiers.}}

\begin{lemma}
	The collection of words $(\cB^m)_\ft$ satisfies properties (a), (b), and (c) of a CIFS on $J$ relative to $K$, $\frac12(\alpha+\varepsilon)$, $\frac12\alpha$, and $\frac12\varepsilon$.
\end{lemma}

We split the proof of this lemma into claims.

\begin{claim}
	Property (a) holds.
\end{claim}

\begin{proof}
By the choice of $\ell$ in \eqref{eq:maaximal}, for every $k=1,\ldots,\ell-1$ it holds
\begin{equation}\label{eq:choicer2}
	\lvert f_{[\eta_1,\ldots,\eta_k]}( H_{i_1,\ldots,i_m})\rvert
	< \lvert J\rvert e^{\frac12\sum_{j=1}^m\lvert w_{i_j}\rvert\alpha}
	<r,
\end{equation}
where for the latter inequality we used \eqref{eq:formula-1nnnmao} and $m\ge N_2$.
This together with $s\le m_{\rm c}$ and \eqref{eq:choicer} implies
\[
	\lvert (f_{[\beta_1,\ldots,\beta_s]}\circ
		f_{[\eta_1,\ldots,\eta_\ell]})( H_{i_1,\ldots,i_m})\rvert
	\le r \D^{m_{\rm c}}
	< \delta.
\]
Hence, by the intersection property in \eqref{eq:Hintersect} and the fact that $I=[x-\delta,x+\delta]\subset J=[x-2\delta,x+2\delta]$, it follows
\[
	(f_{[\beta_1,\ldots,\beta_s]}\circ f_{[\eta_1,\ldots,\eta_\ell]})( H_{i_1,\ldots,i_m})
	\subset J.
\]
This implies the claim.
\end{proof}

\begin{claim}
	Property (b) holds.
\end{claim}

\begin{proof}
By our choice of $\ell$ in \eqref{eq:maaximal}, there exists $y\in J$ such that
\begin{equation}\label{eq:disss}\begin{split}
	\lvert (f_{[\eta_1,\ldots,\eta_{\ell}]}\circ f_{[ w_{i_1},\ldots, w_{i_m}]})'(y)\rvert
	&\ge e^{\frac12\sum_{j=1}^m\lvert w_{i_j}\rvert\alpha}.
\end{split}\end{equation}
As $\ell$ is minimal satisfying \eqref{eq:maaximal},  for every $k=1,\ldots,\ell$ there exists $z_k\in J$ so that
\begin{equation}\label{eq:gelad}
	\lvert (f_{[\eta_1,\ldots,\eta_k]}\circ f_{[ w_{i_1},\ldots, w_{i_m}]})'(z_k)\rvert
	< \D e^{\frac12\sum_{j=1}^m\lvert w_{i_j}\rvert\alpha}.
\end{equation}
Using \eqref{eq:choicer2}, by the choice of $r$ in \eqref{eq:choicer}, for every $k=1,\ldots,\ell$
\begin{equation}\label{eq:disss2}
	\log \sup_{x,y\in H_{i_1,\ldots,i_m}}
		\frac{\lvert (f_{[\eta_1,\ldots,\eta_{k}]})'(x)\rvert}
						{\lvert (f_{[\eta_1,\ldots,\eta_{k}]})'(y)\rvert}
			\le k \frac\varepsilon8.
\end{equation}
Hence, together with  \eqref{eq:gelad}, for every $z\in J$ and $k=1,\ldots,\ell$ it holds
\begin{equation}\label{eq:below1}
	\lvert (f_{[\eta_1,\ldots,\eta_k]}\circ f_{[ w_{i_1},\ldots, w_{i_m}]})'(z)\rvert
	  \le \D  e^{\frac12\sum_{j=1}^m\lvert w_{i_j}\rvert\alpha}
		\cdot e^{k\varepsilon/8}.
\end{equation}
On the other hand, by \eqref{eq:disss2} and \eqref{eq:disss} and also distortion Claim \ref{clai:distoortion},  for every $z\in J$ and  $k=\ell$
\begin{equation}\label{eq:below}
	  e^{-\sum_{j=1}^m\lvert w_{i_j}\rvert\varepsilon/8}
	  e^{-\ell\varepsilon/8}
	\le \frac{\lvert (f_{[\eta_1,\ldots,\eta_{\ell}]}\circ 
		f_{[ w_{i_1},\ldots, w_{i_m}]})'(z)\rvert}
		{e^{\frac12\sum_{j=1}^m\lvert w_{i_j}\rvert\alpha}}
	\le \D\cdot e^{\ell\varepsilon/8}.
\end{equation}
Further for every $x\in J$ and $k=1,\ldots,\ell$, using \eqref{eq:below1}, it holds
\begin{equation}\label{Aprime3}\begin{split}
	\frac{1}{\sum_{j=1}^m\lvert w_{i_j}\rvert+k}
		&\log\,\lvert(f_{[\eta_1,\ldots,\eta_k]}\circ 
			f_{[ w_{i_1},\ldots, w_{i_m}]})'(x)\rvert\\
	&\le
	 \frac{1}
		{\sum_{j=1}^m\lvert w_{i_j}\rvert+k}
	\left(\log\, \D
		+\sum_{j=1}^m\lvert w_{i_j}\rvert
		\frac12\alpha+k\frac\varepsilon8\right)\\
		{\tiny{\text{by \eqref{eq:formula-1nnnmao} and $m\ge N_2$}}}\quad	
	&<\frac\varepsilon8+\frac12\alpha + \frac\varepsilon8
	< \frac12\alpha+\frac\varepsilon2.
\end{split}\end{equation}
Moreover, for every $x\in J$ and $k=1,\ldots,s$ it holds
\begin{equation}\label{Aprime2}\begin{split}
	\frac{1}{\sum_{j=1}^m\lvert w_{i_j}\rvert+\ell+k}
		&\log\,\lvert(f_{[\beta_1,\ldots,\beta_k]}\circ 
					f_{[\eta_1,\ldots,\eta_\ell]}\circ 
					f_{[ w_{i_1},\ldots, w_{i_m}]})'(x)\rvert	\\
		{\tiny{\text{using \eqref{eq:below1} for $\ell$ }}}\quad	
	&\le 	\frac{1}{\sum_{j=1}^m\lvert w_{i_j}\rvert+\ell+k}
		\log\D^{1+k}			
	+ \frac12\alpha +\frac\varepsilon8\\
		{\tiny{\text{using \eqref{eq:formula-1nnnmao} and $m\ge N_2$ }}}\quad	
	&<\frac\varepsilon8 +\frac12\alpha +\frac\varepsilon8
	< \frac12\alpha+\frac\varepsilon2 .
\end{split}\end{equation}
Hence,  the hypothesis on $\cB$ defining a CIFS with quantifiers, \eqref{Aprime3}, and \eqref{Aprime2} together imply property (b).
\end{proof}

\begin{claim}
	Property (c) holds.
\end{claim}

\begin{proof}
The upper bound for the spectrum follows from \eqref{Aprime2}. It remains to prove the lower one. Note that \eqref{eq:disss} together with \eqref{eq:below} implies
\[
\begin{split}
	&\frac{1}{\sum_{j=1}^m\lvert w_{i_j}\rvert+\ell+s}
	\log\,\lvert(f_{[\beta_1,\ldots,\beta_s]}\circ 
					f_{[\eta_1,\ldots,\eta_\ell]}\circ 
					f_{[ w_{i_1},\ldots, w_{i_m}]})'(x)\rvert\\
	&\ge \frac{1}{\sum_{j=1}^m\lvert w_{i_j}\rvert+\ell+s}
	\left(\log\,\D^{-m_{\rm c}}		
		+ \frac12\sum_{j=1}^m\lvert w_{i_j}\rvert\alpha
		-\ell\frac\varepsilon8	
		- \sum_{j=1}^m\lvert w_{i_j}\rvert
			\frac\varepsilon8
	\right)	\\
	&> \frac{1}{\sum_{j=1}^m\lvert w_{i_j}\rvert+\ell+s}
	\log\,\D^{-m_{\rm c}}		
		+\frac{\sum_{j=1}^m\lvert w_{i_j}\rvert}
			{\sum_{j=1}^m\lvert w_{i_j}\rvert+\ell+s}
		\left(\frac12\alpha-\frac\varepsilon8\right)
		-\frac\varepsilon8\\
			{\tiny{\text{using \eqref{eq:formula-1nnnmao} }}}\quad	
	&>	-\frac\varepsilon8+\frac12\alpha-\frac\varepsilon4
	> \frac12\alpha-\frac\varepsilon2,
\end{split}\]
proving the claim.
\end{proof}

This finishes the proof of the theorem.
\end{proof}

\section{Cascades of horseshoes}\label{sec:ncashor}

Throughout this section, consider $F\in \mathrm{SP}^1_{\rm shyp}(\Sigma_N\times\bS^1)$, $N\ge2$. 
Let $\mu$ be an ergodic measure with Lyapunov exponent $\alpha=\chi(\mu)<0$ and entropy $h=h(F,\mu)>0$. 
Fix $\varepsilon\in(0,\lvert\alpha\rvert/4)$ and $\varepsilon_H\in(0,h)$. Let $J\subset\bS^1$ be a blending interval and $\cB\subset\Sigma_N^\ast$ be a finite disjoint collection of words as provided by Theorem \ref{teo:existenceCIFS}, defining a CIFS on $J$ relative to some constant $K>1$ and $\alpha_0=\alpha+\varepsilon$, $\alpha$, and $\varepsilon$.  
In particular, it holds
\begin{equation}\label{eq:estentropyy}
	(h(F,\mu)-\varepsilon_H)\min_{ w\in\cB}\lvert  w\rvert
	\le \log\card \cB
	\le (h(F,\mu)+\varepsilon_H)\max_{ w\in\cB}\lvert  w\rvert.
\end{equation}
Let $L_1=L_1(F,J)$ as in Definition \ref{defrem:commonblending}.

In Section \ref{ssec:cascadehorsese}, we construct two cascades of alphabets $(\cA_n)_n$ and $(\cB_n)_n$. Every alphabet $\cB_n$ is formed by words in $\{1,\ldots,N\}$ and obtained from the previous one $\cB_{n-1}$ by the repeat-and-tail scheme with tailing functions $\ft_n$ as in Theorem \ref{thepro:tailing}. Moreover, every $\cB_n$ defines a CIFS with associated attractor $\Lambda_n=\Lambda(\cB_n)$  which in turn generates a horseshoe $\Gamma_n$ (as in Propositions \ref{pro:defineshorseshoe} and \ref{procor:specCIFS}). Each alphabet $\cA_n$ is the abstract companion of $\cB_n$ and gives rise to a suspension space for an appropriate roof function $R_n\colon \cA_n\to\bN$, see \eqref{eq:bug}. 
Note that all these objects depend on $\mu$. 

In Section \ref{subsec:suspen}, we see that those horseshoes are factors of the suspension spaces. Using the latter, in Section \ref{subsec:entropiiie} we obtain estimates of entropy and exponents of the horseshoes. We conclude this section by proving Proposition \ref{pro:main}.

\subsection{Construction of a cascade of horseshoes}\label{ssec:cascadehorsese}

In the following we introduce the two cascades $(\cA_n)_n$ and $(\cB_n)_n$ of alphabets. Our scheme is fairly general and only requires $\cB_0$ and an initially fixed sequence $(m_n)_n$. We always denote by $\lvert\cdot\rvert$ the length of the corresponding word spelled in $\{1,\ldots,N\}$.

\subsubsection{Inductive definition of alphabets}

We proceed inductively.  
Let $\cB_0\eqdef\cB$, $M_0\eqdef\card\cB_0$, and write $\cB_0=\{ w^{(0)}_1,\ldots, w^{(0)}_{M_0}\}$. By hypothesis in the beginning of this section, $\cB_0$ defines a CIFS on $J$ relative to $K,\alpha_0=\alpha+\varepsilon,\alpha,$ and $\varepsilon$. Let
\[
	\cA_0
	\eqdef \cB_0
\]
For $n\ge1$, assume that there is a finite disjoint collection of words
\[
	\cB_{n-1}
	=\{ w^{(n-1)}_1,\ldots, w^{(n-1)}_{M_{n-1}}\}\subset\Sigma_N^\ast,
\]	 
which defines a CIFS on $J$ relative to $K$, $2^{-(n-1)}\alpha_0$, $2^{-(n-1)}\alpha$, and $2^{-(n-1)}\varepsilon$, a collection $\cA_{n-1}$, and $m_{n-1}\in\bN$. Let $N_2=N_2(\cB_{n-1})\in\bN$ as in Theorem \ref{thepro:tailing} and choose $m_n\ge N_2$ with $m_n\ge m_{n-1}$. Consider the tailing map $\ft_n=\ft_{\cB_{n-1},m_n}\colon(\cB_{n-1})^{m_n}\to\Sigma_N^\ast$ as provided in that theorem and, recalling Definition \ref{def:reptaiSFT}, denote by 
\[
	\cB_n
	\eqdef (\cB_{n-1}^{m_n})_{\ft_n}
\]	
 the collection which $m_n$-times repeats and $\ft_n$-tails $\cB_{n-1}$.
Let
\[
	\cA_n\eqdef(\cA_{n-1})^{m_n}.
\] 
 Note that for every $n\in\bN$
\begin{equation}\label{eq:cardAn}
	\card\cA_n
	= M_n
	= m_n\cdot m_{n-1}\cdots m_0.
\end{equation} 
This concludes the inductive description of the alphabets.
 
\subsubsection{Dictionaries}

 Let us point out natural ``dictionaries'' between $(\cB_n)_n$ and $(\cA_n)_n$ and the corresponding cascade of sequence spaces. For every $n\in\bN$, $\cB_n$ and $\cA_n$ have the same cardinality, the former is a collection of words that almost coincide with the words in $\cA_n$ up to a cascade of tails which were introduced at every intermediate level. Recursively, we define a bijection between each such pair of collections:
\begin{itemize}
\item $\Cut_0$ is the identity on $\cA_0=\cB_0=\cB$,
\item for every $ w = ( w_{i_1}^{(n-1)},\ldots, w_{i_{m_n}}^{(n-1)},
			\ft_n( w_{i_1}^{(n-1)},\ldots, w_{i_{m_n}}^{(n-1)}))\in\cB_n$, let
\[
	\Cut_n\big( w\big)
	\eqdef \big(\Cut_{n-1}( w_{i_1}^{(n-1)}),
			\ldots,\Cut_{n-1}( w_{i_{m_n}}^{(n-1)})\big).
\]
\end{itemize}
The map $\Cut_n$ ``cuts out any tail" which was added in the definitions of $\cB_1,\ldots,\cB_n$. 
We let
\begin{equation}\label{secn}\begin{split}
	&\underline \Cut_n\colon(\cB_n)^\bZ\to(\cA_n)^\bZ,\\
	&\underline \Cut_n(\ldots, w_{-1}| w_0, w_1,\ldots)
	\eqdef (\ldots,\Cut_n( w_{-1})|\Cut_n( w_0),\Cut_n( w_1),\ldots).
\end{split}\end{equation}

To prove the next lemma just note that both alphabets have the same cardinality.

\begin{lemma}\label{lem:gnconjugates}
For every $n\in\bN$,  the maps $\sigma_{\cB_n}$ on $(\cB_n)^\bZ$ and $\sigma_{\cA_n}$ on $(\cA_n)^\bZ$ are topologically conjugate by $\underline \Cut_n$.
\end{lemma}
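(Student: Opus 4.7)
The statement reduces to checking that $\underline\Cut_n$ is a homeomorphism intertwining the two shifts. Since $\underline\Cut_n$ is defined coordinate-wise in \eqref{secn}, the entire lemma follows once we verify that the generating map $\Cut_n\colon\cB_n\to\cA_n$ is itself a bijection between the two finite alphabets. All further conclusions (bijectivity at the level of bi-infinite sequences, continuity, and shift-equivariance) are automatic from the coordinate-wise nature of the construction.

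The plan is to prove bijectivity of $\Cut_n$ by induction on $n$. The base case $n=0$ is trivial because $\Cut_0$ is the identity on $\cB_0=\cA_0$. Assume $\Cut_{n-1}\colon\cB_{n-1}\to\cA_{n-1}$ is a bijection. By Corollary \ref{cor:CONreptai}, the tail-adding map $T_{(\cB_{n-1}^{m_n})_{\ft_n}}\colon\cB_{n-1}^{m_n}\to\cB_n$ is bijective, so each $w\in\cB_n$ decomposes in exactly one way as $(w_{i_1}^{(n-1)},\ldots,w_{i_{m_n}}^{(n-1)},\ft_n(w_{i_1}^{(n-1)},\ldots,w_{i_{m_n}}^{(n-1)}))$ with $w_{i_k}^{(n-1)}\in\cB_{n-1}$. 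The recursive formula for $\Cut_n$ then yields $\Cut_n(w)=(\Cut_{n-1}(w_{i_1}^{(n-1)}),\ldots,\Cut_{n-1}(w_{i_{m_n}}^{(n-1)}))\in(\cA_{n-1})^{m_n}=\cA_n$, and this composition of bijections is injective. By \eqref{eq:cardAn} and the definition of $\cB_n$ via repetition-and-tailing, both alphabets have the same cardinality $M_n=(M_{n-1})^{m_n}$, so injectivity upgrades to bijectivity.

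Once $\Cut_n$ is a bijection between finite discrete sets, its coordinate-wise extension $\underline\Cut_n$ in \eqref{secn} is a bijection of the product spaces $(\cB_n)^\bZ$ and $(\cA_n)^\bZ$. The cylinder topology on each side is the product topology over these finite alphabets, so $\underline\Cut_n$ is a homeomorphism (its inverse is the coordinate-wise extension of $\Cut_n^{-1}$). Finally, the identity $\underline\Cut_n\circ\sigma_{\cB_n}=\sigma_{\cA_n}\circ\underline\Cut_n$ is immediate from the definition \eqref{secn}: if $\underline w=(\ldots,w_{-1}|w_0,w_1,\ldots)\in(\cB_n)^\bZ$, both compositions return $(\ldots,\Cut_n(w_0)|\Cut_n(w_1),\Cut_n(w_2),\ldots)$.

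There is no real obstacle here; the only point worth emphasizing is that, although $\Cut_n$ is described by an algorithm that ``strips tails'', its well-definedness on $\cB_n$ relies on Corollary \ref{cor:CONreptai} to guarantee a unique parsing of each $w\in\cB_n$ into blocks of $\cB_{n-1}$ plus a unique tail, so that the recursive formula has no ambiguity. After that, the cardinality count and the coordinate-wise extension do the rest.
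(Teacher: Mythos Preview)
Your proof is correct and follows essentially the same approach as the paper, which simply remarks ``just note that both alphabets have the same cardinality'' and leaves the rest implicit. You have spelled out the details the paper omits: the inductive bijectivity of $\Cut_n$ (invoking Corollary~\ref{cor:CONreptai} for unique parsing), the coordinate-wise extension to a homeomorphism, and the shift-equivariance check.
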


\subsubsection{Definition and control of roof functions}

For every $n\in\bN_0$ define the roof function
\begin{equation}\label{eq:bug}
	R_n\colon\cA_n\to\bN,\quad
	R_n(a)
	\eqdef \left\lvert \Cut_n^{-1}(a)\right\rvert,
\end{equation}
where the length is considered in $\Sigma_N^\ast$ identifying each concatenation of words with the corresponding word spelled in $\{1,\ldots,N\}$. As before, we also consider the associated map $\underline R_n\colon(\cA_n)^\bZ\to\bN$.

The next corollary estimates the lengths of the (inductively defined) tails added in each step. It is an immediate consequence of Theorem \ref{thepro:tailing} and Corollary \ref{cor:CONreptai}.

\begin{corollary}[Control of tail-lengths]\label{cor:cons1}
For every $n\in\bN$, $\cB_n\subset\Sigma_N^\ast$ is a finite disjoint collection of words which defines a CIFS on $J$ relative to $K$, $2^{-n}\alpha_0$, $2^{-n}\alpha$, and $2^{-n}\varepsilon$. Moreover,
\[
	\lvert\ft_n( w^{(n-1)}_{i_1},\ldots, w^{(n-1)}_{i_{m_n}})\rvert
	\le L_1\frac{1}{2^{n-1}}\sum_{j=1}^{m_n}
		\lvert w^{(n-1)}_{i_j}\rvert\lvert\alpha\rvert.
\]
\end{corollary}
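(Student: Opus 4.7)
The plan is to prove both statements simultaneously by induction on $n\ge 0$, essentially by iterating Theorem \ref{thepro:tailing} and invoking Corollary \ref{cor:CONreptai} at each step to propagate disjointness.

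\textbf{Base case.} For $n=0$, $\cB_0=\cB$ is by hypothesis a finite disjoint collection of words defining a CIFS on $J$ relative to $K,\alpha_0=\alpha+\varepsilon,\alpha$, and $\varepsilon$, which matches the claim with the factor $2^{-0}=1$ (and there is no tail to bound at this level).

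\textbf{Inductive step.} Assume $\cB_{n-1}$ is a finite disjoint collection of words defining a CIFS on $J$ relative to $K$, $2^{-(n-1)}\alpha_0$, $2^{-(n-1)}\alpha$, and $2^{-(n-1)}\varepsilon$. Since $\varepsilon<\lvert\alpha\rvert/4<\lvert\alpha\rvert/2$ and the three quantifiers have been scaled by the same factor, the hypothesis of Theorem \ref{thepro:tailing} holds for $\cB_{n-1}$ with contraction parameters $2^{-(n-1)}\alpha_0,\,2^{-(n-1)}\alpha,\,2^{-(n-1)}\varepsilon$. By construction we chose $m_n\ge N_2(\cB_{n-1})$, so that theorem applies and produces the tailing map $\ft_n$ and the collection $\cB_n=(\cB_{n-1}^{m_n})_{\ft_n}$. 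The new CIFS quantifiers given by the theorem are exactly the halved versions of the old ones, i.e.\
\[
\tfrac12\cdot 2^{-(n-1)}\alpha_0=2^{-n}\alpha_0,\quad
\tfrac12\cdot 2^{-(n-1)}\alpha=2^{-n}\alpha,\quad
\tfrac12\cdot 2^{-(n-1)}\varepsilon=2^{-n}\varepsilon,
\]
as required. Moreover, disjointness of $\cB_{n-1}$ together with Corollary \ref{cor:CONreptai} gives disjointness of $\cB_n$. Observe that the constant $L_1=L_1(F,J)$ appearing in Theorem \ref{thepro:tailing} depends only on $F$ and the blending interval $J$, both of which are fixed throughout the induction; in particular the same $L_1$ is available at every step.

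\textbf{Tail bound.} Applying the length estimate \eqref{eq:lengthoftailings} of Theorem \ref{thepro:tailing} to $\cB_{n-1}$, where the role of ``$\alpha$'' in that theorem is played by $2^{-(n-1)}\alpha$, we obtain
\[
\lvert\ft_n( w^{(n-1)}_{i_1},\ldots, w^{(n-1)}_{i_{m_n}})\rvert
\le L_1\sum_{j=1}^{m_n}\lvert w^{(n-1)}_{i_j}\rvert\,\bigl\lvert 2^{-(n-1)}\alpha\bigr\rvert
= L_1\,\frac{1}{2^{n-1}}\sum_{j=1}^{m_n}\lvert w^{(n-1)}_{i_j}\rvert\,\lvert\alpha\rvert,
\]
which is exactly the claimed inequality. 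This closes the induction and proves the corollary.

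The argument is essentially bookkeeping: the only thing that needs checking carefully is that the scaling of the three CIFS quantifiers by $2^{-(n-1)}$ interacts correctly with the halving performed by Theorem \ref{thepro:tailing}, and that the ``$\alpha$'' appearing in the tail-length bound at step $n$ is the Lyapunov quantifier of $\cB_{n-1}$, namely $2^{-(n-1)}\alpha$, which is precisely what produces the $1/2^{n-1}$ prefactor. There is no genuine obstacle here beyond keeping track of these scalings.
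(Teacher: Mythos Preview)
Your proof is correct and follows essentially the same approach as the paper, which simply states that the corollary is an immediate consequence of Theorem \ref{thepro:tailing} and Corollary \ref{cor:CONreptai}. You have spelled out the straightforward induction in full detail, correctly tracking that the parameters of $\cB_{n-1}$ are $2^{-(n-1)}\alpha_0,\,2^{-(n-1)}\alpha,\,2^{-(n-1)}\varepsilon$ so that halving yields the level-$n$ parameters and the ``$\alpha$'' in the tail bound \eqref{eq:lengthoftailings} becomes $2^{-(n-1)}\alpha$, producing the $2^{-(n-1)}$ prefactor.
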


The following corollary puts the above bounds on the tailing map  into the context of roof functions in our abstract model suspension spaces. 

\begin{corollary}[Estimates on roof functions]\label{newcor:notormenta}
	The associated  family of roof functions $(R_n)_n$ satisfies Assumption \ref{ass:roof}  with $K=L_1\lvert\alpha\rvert$.	
\end{corollary}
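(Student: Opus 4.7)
The plan is to unpack the definition of $R_n$ in \eqref{eq:bug} and compare $R_n(b)$ with $\sum_{k=0}^{m_n-1} R_{n-1}(a_k)$ directly. Given $b\in\cA_n$ with $\Sub_{n,n-1}(b)=(a_0,\ldots,a_{m_n-1})$, I would write $w_k\eqdef\Cut_{n-1}^{-1}(a_k)\in\cB_{n-1}$. Unraveling the inductive definition of $\Cut_n$ preceding \eqref{secn} gives
\[
\Cut_n^{-1}(b) = \big(w_0,\ldots,w_{m_n-1},\ft_n(w_0,\ldots,w_{m_n-1})\big)\in\cB_n,
\]
so reading off the total length of this concatenation in the alphabet $\{1,\ldots,N\}$ and applying \eqref{eq:bug} to both $R_n$ and $R_{n-1}$ yields the identity
\[
R_n(b) = \sum_{k=0}^{m_n-1} R_{n-1}(a_k) + \big\lvert\ft_n(w_0,\ldots,w_{m_n-1})\big\rvert.
\]

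The lower bound of Assumption \ref{ass:roof} follows immediately, since Theorem \ref{thepro:tailing} produces a tail of the form $(\eta_1,\ldots,\eta_\ell,\beta_1,\ldots,\beta_s)$ with $\ell\ge 1$, and so $\lvert\ft_n(\cdot)\rvert>0$. For the upper bound I would invoke the tail-length estimate already recorded in Corollary \ref{cor:cons1}, namely
\[
\big\lvert\ft_n(w_0,\ldots,w_{m_n-1})\big\rvert \le \frac{L_1\lvert\alpha\rvert}{2^{n-1}}\sum_{k=0}^{m_n-1}\lvert w_k\rvert = \frac{L_1\lvert\alpha\rvert}{2^{n-1}}\sum_{k=0}^{m_n-1}R_{n-1}(a_k),
\]
where the second equality uses $\lvert w_k\rvert=R_{n-1}(a_k)$ directly from \eqref{eq:bug}. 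Substituting into the identity above yields
\[
R_n(b)\le\Big(1+\frac{L_1\lvert\alpha\rvert}{2^{n-1}}\Big)\sum_{k=0}^{m_n-1}R_{n-1}(a_k),
\]
which is precisely Assumption \ref{ass:roof} with $K=L_1\lvert\alpha\rvert$.

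There is no genuine obstacle: the corollary is a bookkeeping consequence of the repeat-and-tail construction of $\cB_n$ together with the quantitative tail-length bound already established in Corollary \ref{cor:cons1}. The one small point to keep track of is measuring all word lengths in the original alphabet $\{1,\ldots,N\}$, which is exactly the convention baked into the definition \eqref{eq:bug} of $R_n$; once this is used to identify $\lvert w_k\rvert$ with $R_{n-1}(a_k)$, both inequalities drop out.
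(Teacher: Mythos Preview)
Your proof is correct and follows essentially the same approach as the paper: both arguments observe that the lower bound in Assumption \ref{ass:roof} holds because the tail has positive length, and derive the upper bound directly from the tail-length estimate in Corollary \ref{cor:cons1}. Your version is slightly more explicit in unpacking the identity $R_n(b)=\sum_k R_{n-1}(a_k)+\lvert\ft_n(\cdot)\rvert$ via $\Cut_n^{-1}$, whereas the paper phrases the same inequality in terms of the extended functions $\underline R_n$ on sequence spaces, but the content is identical.
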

\begin{proof}
The first inequality in  Assumption \ref{ass:roof} holds true by construction. 
Recalling that $\underline\Sub_{n,n-1}$ denoted the substitution map from $(\cA_n)^\bZ$ to $(\cA_{n-1})^\bZ$ as defined in \eqref{eq:tailaddn-1n}, from Corollary \ref{cor:cons1} it follows that
\[
	\underline R_n\circ \underline\Sub_{n,n-1}^{-1}
	\le (1+L_1\frac{1}{2^{n-1}}\lvert\alpha\rvert)
		\sum_{j=0}^{m_n-1}\underline R_{n-1}\circ\sigma_{\cA_{n-1}}^j,
\]
which implies the second inequality taking $K=L_1\lvert\alpha\rvert$.  
\end{proof}

\subsubsection{Inductive definition of horseshoes}\label{ssec:inddefhor}

For every $n\in\bN$, let
\begin{equation}\label{eq:defGammanLambdan}
	\Lambda_n
	\eqdef \Lambda(\cB_n)
	\subset \Sigma_N\times J
	\spac{and}
	\Gamma_n
	\eqdef \Gamma(\cB_n)
	\supset\Lambda_n
\end{equation}
be as in Propositions \ref{pro:defineshorseshoe} and \ref{procor:specCIFS}, respectively. We can view each set $\Lambda_n$ as the ``ground floor''  of the horseshoe $\Gamma_n$ (the reason for this notation will become clear thereafter). Recall that $(\underline w,x)\mapsto F^{\lvert w_0\rvert}(\underline w,x)$ is a return map on $\Lambda_n$. Moreover, the map 
\begin{equation}\label{eq:agrree}
	\Pi_n\eqdef\Pi_{\cB_n}\colon (\cB_n)^\bZ\to\Lambda_n,
\end{equation}	
provided by Proposition \ref{pro:defineshorseshoe} satisfies
\begin{equation}\label{eq:conj2}
	(\Pi_n\circ \sigma_{\cB_n})(\underline w,x)
	= (F^{\lvert w_0\rvert}\circ\Pi_n)(\underline w,x).
\end{equation}
As $\cB_n$ is disjoint, by Proposition \ref{pro:defineshorseshoe}, the map $\Pi_n$ is ($\max R_n$)-to-one. Recall that, in general, $\Pi_n$ is uniformly finite-to-one and hence $\Pi_n^{-1}$ is multivalued. However, as $\cB_n$ is disjoint and hence, by Lemma \ref{lem:COND}, is uniquely left decipherable, the value 
\begin{equation}\label{eq:simplee}
	\underline R_n\circ\underline\Cut_n\circ\Pi_n^{-1}
\end{equation}	
is well-defined.

\subsection{Horseshoes are factors of suspension spaces}\label{subsec:suspen}

We now invoke the construction in Section \ref{ssec:susmod} to obtain the cascade of suspension spaces $\fS_n=\fS_{\cA_n,R_n}$ associated to the cascade of words $\cA_n$ and roof functions $R_n$, $n\in\bN$.
We will also consider the suspension of $\sigma_n=\sigma_{\cA_n}$ by $R_n$ and denote it by $\Phi_n=\Phi_{\cA_n,R_n}$. Recall the definition of the ground floor $\fG_n=(\cA_n)^\bZ\times\{0\}$ in \eqref{def:groundlfoor}. By \eqref{def:groundlfoorreturn} and using Notation \ref{rem:returns}, it holds
\[
	\Phi_n^{\underline R_n\circ\fp_n}|_{\fG_n}
	= \sigma_{n}\times\id.
\]
By construction of the suspension space, the map $\Phi_n^{\underline R_n\circ\fp_n}$ is the first return-map on $\fG_n$.

Recall the homeomorphism $\underline \Cut_n\colon(\cB_n)^\bZ\to(\cA_n)^\bZ$  in \eqref{secn} and the continuous surjective finite-to-one map $\Pi_n\colon(\cB_n)^\bZ\to\Lambda_n$ in \eqref{eq:agrree}. The following (commuting) diagrams put into relation the shift map on our abstract shift space $(\cA_n)^\bZ$, the shift map on the word space $(\cB_n)^\bZ\subset\Sigma_N$, and the induced return map on the part of the horseshoe obtained as the attractor of the CIFS at level $n$. The map $h_n$ is defined in \eqref{eq:defhn} below. 
\[
\hspace{-2cm}
\xymatrixcolsep{4pc}\xymatrix{
	&(\cA_n)^\bZ	\ar[d]^{\underline \Cut_n^{-1}}\ar[r]^{\,\,\sigma_{n}=\sigma_{\cA_n}\,\,}	
	&(\cA_n)^\bZ 	\ar[d]^{\underline \Cut_n^{-1}}\\
	&(\cB_n)^\bZ	\ar[d]^{\Pi_n}\ar[r]^{\sigma_{\cB_n}} 			
	&(\cB_n)^\bZ	\ar[d]	^{\Pi_n}\\ 
	&\Lambda_n\ar[r]^{F^{\underline R_n\circ \underline \Cut_n\circ\Pi_n^{-1}}} 
	&\Lambda_n	
	}
\xymatrixcolsep{3pc}\xymatrix{
	&\ar@/_2.5pc/[dd]_{h_n}
	(\cA_n)^\bZ\times\{0\}
		\ar[d]^{\underline \Cut_n^{-1}\circ\fp_n}	
		\ar[r]^{\,\Phi_n^{\underline R_n\circ\fp_n}\,\,}	
	&(\cA_n)^\bZ\times\{0\} 	\ar[d]^{\underline \Cut_n^{-1}\circ\fp_n}
	\\
	&(\cB_n)^\bZ	\ar[d]	^{\Pi_n} 
	&(\cB_n)^\bZ	\ar[d]	^{\Pi_n}
	\\ 
	&\Lambda_n\ar[r]^{F^{\underline R_n\circ \underline \Cut_n\circ\Pi_n^{-1}}} 
	&\Lambda_n	
	}
\]
Hence, the map
\begin{equation}\label{eq:forget}
	\underline a\in(\cA_n)^\bZ\mapsto
	\big(\Pi_n\circ\underline\Cut_n^{-1}\big)(\underline a)\in\Lambda_n
\end{equation}
is continuous, onto $\Lambda_n$,  at most ($\max R_n$)-to-one, and the above diagram comutes. The key result in this section is to extend the map \eqref{eq:forget} to a factor map between the suspension space and the full horseshoe, considering corresponding invariant measures. For that consider the $\Phi_n$-ergodic Borel probability measure 
\[
	\lambda_n
	\eqdef \lambda_{\cA_n,R_n}
\]	 
on the suspension space $\fS_n$ as defined in \eqref{eq:deflambdan}. 

\begin{proposition}\label{pro:semiconj}
	There is a continuous surjective map $H_n\colon \fS_n=\fS_{\cA_n,R_n}\to\Gamma_n$ which is uniformly finite-to-one such that
\[
	\card H_n^{-1}(\{X\})
	\le \left(\max R_n\right)^2,
	\spac{for every} X\in\Gamma_n,
\]	
satisfying \[H_n\circ \Phi_n=F\circ H_n.\]
Letting 
\begin{equation}\label{eq:defmun}	
	\mu_n
	\eqdef (H_n)_\ast\lambda_n,
\end{equation}
the measure preserving system $(\Gamma_n,F,\mu_n)$ is a factor of the measure preserving system $(\fS_n, \Phi_n,\lambda_n)$ by $H_n$. Moreover, $(\Gamma_n,F,\mu_n)$ is ergodic and it holds
\[
	h(F,\mu_n)
	= h(\Phi_n,\lambda_n)
	= \frac{\log M_n}{\frac{1}{M_n}\sum_{a\in\cA_n} R_n(a)},
	\spac{where}
	M_n=\card\cA_n.
\]
\end{proposition}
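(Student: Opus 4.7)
The plan is to define $H_n$ first on the ground floor $\fG_n=(\cA_n)^\bZ\times\{0\}$ and then extend it to the whole suspension space by following the $F$-orbits. On $\fG_n$, the natural candidate is
\[
	H_n(\underline a,0)
	\eqdef \big(\Pi_n\circ\underline\Cut_n^{-1}\big)(\underline a),
\]
which by \eqref{eq:forget} is continuous, surjective onto $\Lambda_n$, and at most $(\max R_n)$-to-one (by Proposition \ref{pro:defineshorseshoe}, since $\cB_n$ is disjoint). For a point $(\underline a,s)\in\fS_n$ in its canonical representation (with $0\le s<\underline R_n(\underline a)$), I would set
\[
	H_n(\underline a,s)
	\eqdef F^s\big(H_n(\underline a,0)\big).
\]
The first step is to check that this definition descends to the quotient defining $\fS_n$: namely, that the identification $(\underline a,\underline R_n(\underline a))\sim_n(\sigma_n(\underline a),0)$ is respected. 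Writing $w_0\eqdef \underline\Cut_n^{-1}(\underline a)_0$, we have $|w_0|=R_n(a_0)=\underline R_n(\underline a)$, hence by \eqref{eq:conj2} and Lemma \ref{lem:gnconjugates}
\[
	F^{\underline R_n(\underline a)}\big(H_n(\underline a,0)\big)
	= F^{|w_0|}\big(\Pi_n\circ\underline\Cut_n^{-1}(\underline a)\big)
	= \Pi_n\big(\sigma_{\cB_n}\circ\underline\Cut_n^{-1}(\underline a)\big)
	= H_n(\sigma_n(\underline a),0),
\]
which gives the required compatibility. The second step is continuity: $H_n$ is continuous on each ``floor'' because $\Pi_n$, $\underline\Cut_n^{-1}$, and $F$ are continuous, and the above compatibility check handles the boundary in the quotient topology.

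The conjugation $H_n\circ\Phi_n=F\circ H_n$ is then immediate from the definition of $\Phi_n$: on the non-boundary part $\Phi_n(\underline a,s)=(\underline a,s+1)$, and on the roof $\Phi_n(\underline a,\underline R_n(\underline a)-1)=(\sigma_n(\underline a),0)$, and in both cases applying $H_n$ gives $F\circ H_n$. Surjectivity onto $\Gamma_n=\bigcup_{k=0}^{R-1}F^k(\Lambda_n)$ follows since $H_n(\fG_n)=\Lambda_n$ and every floor $(\underline a,s)$ with $s<R$ contributes $F^s$-images. For the finite-to-one bound, given $X\in\Gamma_n$ and a putative preimage $(\underline a,s)$ (in canonical form), one has $\Pi_n(\underline\Cut_n^{-1}(\underline a))=F^{-s}(X)\in\Lambda_n$; since $s\in\{0,\ldots,R-1\}$ and for each admissible $s$ there are at most $\max R_n$ preimages of $F^{-s}(X)$ under $\Pi_n\circ\underline\Cut_n^{-1}$, the bound $(\max R_n)^2$ follows.

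With $H_n$ in place and $\mu_n\eqdef(H_n)_\ast\lambda_n$, the semiconjugacy yields $F$-invariance of $\mu_n$. Ergodicity of $\mu_n$ is inherited from ergodicity of $(\fS_n,\Phi_n,\lambda_n)$ stated in Lemma \ref{lem:Abramov} (a factor of an ergodic system is ergodic). Finally, since $H_n$ is a uniformly finite-to-one measurable factor map, it has zero fiber entropy, so the Abramov--Rokhlin formula (or directly the Ledrappier--Walters relative variational principle \cite{LedWal:77}) gives $h(F,\mu_n)=h(\Phi_n,\lambda_n)$, and Lemma \ref{lem:Abramov} identifies this common value with $(\log M_n)/(M_n^{-1}\sum_{a\in\cA_n}R_n(a))$.

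The main obstacle I expect is bookkeeping in the finite-to-one estimate: one has to count carefully how $s$ and the ambiguities of $\Pi_n\circ\underline\Cut_n^{-1}$ interact with the canonical representation constraint $0\le s<\underline R_n(\underline a)$, and verify that the total count is bounded by $(\max R_n)^2$ and not something larger. Everything else is a routine verification combining the definitions from Sections \ref{se:suspmode} and \ref{sec:horses} with Lemma \ref{lem:gnconjugates} and the semiconjugation \eqref{eq:conj2}.
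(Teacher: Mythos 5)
Your proposal is correct and takes essentially the same approach as the paper: the paper defines $h_n=\Pi_n\circ\underline\Cut_n^{-1}\circ\fp_n$ on the ground floor, verifies the same compatibility/conjugacy identities (Claims \ref{cla:cunha}, \ref{cla:cunhb}), and then invokes a standalone extension lemma (Lemma \ref{cla:cunhc}) to produce $H_n$ with the $(\max R_n)^2$ bound, followed by the same Ledrappier--Walters/Abramov argument for entropy. The only difference is organizational: you unwind the extension by hand via $H_n(\underline a,s)\eqdef F^s(H_n(\underline a,0))$, whereas the paper packages that step into the reusable Lemma \ref{cla:cunhc}; the content and the counting in the finite-to-one estimate are the same.
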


The following commuting diagram illustrates the above proposition.

\[\xymatrix{
	&\ar[d]_{h_n}
	(\cA_n)^\bZ\times\{0\}\subset
	&\ar[d]_{H_n}
	\fS_n			
	\ar[r]^{\Phi_n}
	&\fS_n
	\ar[d]^{H_n}
	&\lambda_n
	\ar[d]^{(H_n)_\ast}
	\\
	&\Lambda_n\subset
	&\Gamma_n\ar[r]^{F}
	&\Gamma_n&\mu_n
	}	
\]
To prove the  proposition, we need a preliminary result and start by introducing some notation.

\begin{notation}[Return maps]\label{rem:returns}{\rm
	Throughout this paper we consider several types of return maps. Given a map $S\colon X\to X$, a set $A\subset X$, and a function $R\colon A\to\bN$, we let 
\[
	S^R\colon A\to X,\quad
	S^R(x)
	\eqdef S^{R(x)}(x)
	= \overset{R(x)\text{ times}}{(S\circ\cdots\circ S)}(x).
\]	 
}\end{notation}

\begin{lemma}\label{cla:cunhc}
	Let $S\colon X\to X$ and $T\colon Y\to Y$ be two homeomorphisms on compact metric spaces. Assume that there are sets $A\subset X$ and $B\subset Y$ and continuous function $R_S\colon A\to\bN$ and $R_T\colon B\to\bN$  such that $S^{R_S}\colon A\to A$ is the first return-map on $A$ and $T^{R_T}\colon B\to B$ is a (not necessarily first) return map on $B$. Suppose that there is a continuous surjective map $h\colon A\to B$ satisfying 
\[
	R_T\circ h=R_S
	\spac{and}
	h\circ S^{R_S}=T^{R_T}\circ h.
\]	
Let $A'\eqdef \bigcup_{k\ge0}S^k(A)$ and $B'\eqdef \bigcup_{k\ge0}T^k(B)$.
Then there exists a continuous surjective map $H\colon A'\to B'$ which extends $h$ to $A'$ such that 
\[
	H\circ S|_{A'} 
	= T\circ H.
\] 
Moreover, if there is $K\in\bN$ satisfying
\[
	\card h^{-1}(\{b\})\le K
	\spac{for every }b\in B
\]	 
then $H$ is finite-to-one with 
\[
	\card H^{-1}(\{y\})\le K\sup R_T
	\spac{for every}
	y\in B'.
\]
\end{lemma}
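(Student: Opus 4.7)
The plan is to use the first-return property of $S^{R_S}$ to represent every point of $A'$ uniquely as an iterate $S^k(a)$ with $a\in A$ and $0\le k<R_S(a)$, and then to define $H\colon A'\to B'$ by $H(S^k(a))\eqdef T^k(h(a))$. The two compatibility hypotheses on $h$ are precisely what is needed to make this definition both well-defined across ``column boundaries'' and to intertwine the dynamics.

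First I would verify uniqueness of representation: if $S^k(a)=S^j(a')$ with $a,a'\in A$ and, without loss of generality, $0\le k\le j<\min\{R_S(a),R_S(a')\}$, then applying $S^{-k}$ yields $a=S^{j-k}(a')\in A$, so $a'$ returns to $A$ in $j-k<R_S(a')$ iterations; since $R_S(a')$ is the \emph{first} return time, this forces $j=k$ and hence $a=a'$. Well-definedness of $H$ follows, and taking $k=0$ shows $H|_A=h$. Next I would verify the conjugation $H\circ S=T\circ H$: if $x=S^k(a)$ with $k<R_S(a)-1$, then $S(x)=S^{k+1}(a)$ lies in the same column and $H(S(x))=T^{k+1}(h(a))=T(H(x))$ directly; if instead $k=R_S(a)-1$, then $S(x)=S^{R_S}(a)\in A$, so invoking both hypotheses, $H(S(x))=h(S^{R_S}(a))=T^{R_T(h(a))}(h(a))=T^{R_S(a)}(h(a))=T(H(x))$.

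For continuity I would exploit that the integer-valued continuous function $R_S$ is locally constant: around each $a\in A$ there is a neighborhood $U\subset A$ on which $R_S\equiv R_S(a)$, and on the local chart $S^k(U)\subset A'$ (for fixed $0\le k<R_S(a)$) the map $H$ agrees with the continuous expression $T^k\circ h\circ S^{-k}$. These local expressions glue into a globally continuous $H$ using the uniqueness of the representation together with closedness of $A$ in $X$. For surjectivity I would take $y=T^j(b)\in B'$ with $b\in B$ and iteratively replace $b$ by $T^{R_T(b)}(b)\in B$ (which lies in $B$ since $T^{R_T}$ is a return map) to reduce to $0\le j<R_T(b)$; then surjectivity of $h$ provides $a\in A$ with $h(a)=b$ and automatically $R_S(a)=R_T(b)>j$, so $H(S^j(a))=y$.

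Finally, for the preimage count, if $H(x)=y$ with $x=S^k(a)$ in canonical form, then $h(a)=T^{-k}(y)\in B$ and $0\le k<R_T(T^{-k}(y))\le\sup R_T$; hence $k$ is constrained to $\{0,1,\ldots,\sup R_T-1\}$, and for each admissible $k$ there are at most $K$ choices of $a\in h^{-1}(\{T^{-k}(y)\})$, yielding $\card H^{-1}(\{y\})\le K\sup R_T$. The main subtlety I anticipate is the continuity argument at points where columns meet — ensuring that if $x_n\to x$ in $A'$, then the canonical representations converge — and this ultimately rests on $A$ being closed in $X$, which holds in the intended application where $A=\fG_n$.
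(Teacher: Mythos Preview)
Your proposal is correct and follows essentially the same approach as the paper: define $H(S^k(a))\eqdef T^k(h(a))$ via the canonical first-return representation, verify the semiconjugation by splitting into the cases $k<R_S(a)-1$ and $k=R_S(a)-1$, and bound $\card H^{-1}(\{y\})$ by observing that the canonical height $k$ is constrained to $\{0,\ldots,\sup R_T-1\}$ with at most $K$ choices of base point for each $k$. Your write-up is in fact more careful than the paper's, which omits the explicit verification of surjectivity and continuity; your remark that the continuity argument at column boundaries implicitly uses closedness of $A$ (satisfied in the application, where $A=\fG_n$) is a genuine observation that the paper leaves unaddressed.
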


\begin{proof}
Let us first define $H\colon A'\to B'$. 
Given $x'\in A'$, as $S^{R_S}$ is a first return to $A$, there are uniquely determined $x\in A$ and $k\in\{0,\ldots, R_S(x)-1\}$, so that $x'=S^k(x)$.  Let 
\[
	H(x')\eqdef T^k(h(x)).
\]	 
Noting that $y'=S(x')=S^{k+1}(x)$ satisfies $H(y')=T^{k+1}(h(x))$,  it holds
\[
	H\circ S(x')
	= H\circ S^{k+1}(x)
	= T^{k+1}\circ h(x)
	= T(T^k\circ h(x))
	= T(H\circ S^k(x)) 
	= T\circ H(x'),
\]
proving that the maps $S|_{A'}\colon A'\to A'$ and $T|_{B'}\colon B'\to B'$ are semiconjugate as claimed.

To check the claim about the cardinality of preimages, given $x'=S^k(x)$ as above, assume now that there is $y'\in A'$, $y'\ne x'$, such that $H(x')=H(y')$. There are uniquely determined $y\in A$ and $\ell\ge0$ so that $y'=S^\ell(y)$. Then $x'\ne y'$ implies $(x,k)\ne(y,\ell)$. As  $H(x')=H(y')$, it holds $T^k(h(x))=T^\ell(h(y))$ and hence $h(y)=T^{-\ell}(T^k(h(x)))$.
As the point $(x,k)$ was given and as there are at most $\sup R_T$ possible values for $\ell$, there are at most  $\sup R_T$ such points $h(y)$. Since $h$ is at most $K$-to-one, the claim follows.
\end{proof}

\begin{proof}[Proof of Proposition \ref{pro:semiconj}]
Note that 
\[
	(\underline \Cut_n^{-1}\circ\fp_n)\big((\cA_n)^\bZ\times\{0\}\big)
	= (\cB_n)^\bZ
	\spac{and}
	 \Pi_n((\cB_n)^\bZ)
	= \Lambda_n.
\]
Let
\begin{equation}\label{eq:defhn}
	h_n\colon(\cA)^\bZ\times\{0\}\to\Lambda_n,\quad
	h_n(\underline a)
	\eqdef  \left(\Pi_n\circ\underline \Cut_n^{-1}\circ\fp_n\right)
	(\underline a,0)
\end{equation}
We apply Lemma \ref{cla:cunhc} letting 
\[\begin{split}
	&X=\fS_n, \quad S=\Phi_n,  \quad A=(\cA_n)^\bZ\times\{0\},  \quad R_S=\underline R_n\circ\fp_n,\\
	&Y=\Gamma_n,  \quad T=F,  \quad B=\Lambda_n,  \quad R_T= 
	\underline R_n\circ \underline \Cut_n\circ\Pi_n^{-1},\quad h= h_n.
\end{split}\]	 
Recall that by \eqref{eq:simplee}, the function $R_T$ is well-defined.
Recall that, by the definition of the suspension space and with the notation of this lemma,  
\[
	A'
	= \fS_n
	= \bigcup_{k}\Phi_n^k\big((\cA_n)^\bZ\times\{0\}\big)
	\spac{and}
	B'
	= \Gamma_n.
\]	

In the next two claims we check the hypotheses of Lemma \ref{cla:cunhc}.
 
\begin{claim}\label{cla:cunha}
	$(\underline R_n\circ \underline \Cut_n\circ\Pi_n^{-1})\circ h_n 
	= \underline R_n\circ\fp_n$ on $A$.
\end{claim}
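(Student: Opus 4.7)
The plan is to unwind the definitions of $h_n$, $\fp_n$, and $\underline R_n$, and then reduce the claim to the observation that $\underline R_n$ is a step function of the zeroth coordinate, combined with the unique left decipherability of $\cB_n$ (which is already what makes the left-hand side well-defined, as noted in \eqref{eq:simplee}).

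First I would fix $(\underline a,0)\in A=(\cA_n)^\bZ\times\{0\}$ and set $\underline w\eqdef\underline\Cut_n^{-1}(\underline a)\in(\cB_n)^\bZ$, so that by \eqref{eq:defhn} and $\fp_n(\underline a,0)=\underline a$ one has $h_n(\underline a,0)=\Pi_n(\underline w)\in\Lambda_n$. The point is then to show that every element of $\Pi_n^{-1}(\Pi_n(\underline w))$ has the same zeroth coordinate as $\underline w$ in the alphabet $\cB_n$.

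For that, let $\underline w'=(\ldots,w_{-1}'|w_0',w_1',\ldots)\in(\cB_n)^\bZ$ satisfy $\Pi_n(\underline w')=\Pi_n(\underline w)$. By Proposition \ref{pro:defineshorseshoe}, both $\underline\iota_{\cB_n}(\underline w')$ and $\underline\iota_{\cB_n}(\underline w)$ coincide as bi-infinite sequences $\underline\xi\in\Sigma_N$, and in both decodings the word at position $0$ starts precisely at coordinate $0$ of $\underline\xi^+=\pi^+(\underline\xi)$. Therefore $w_0$ and $w_0'$ are both prefixes of $\underline\xi^+$, so one is a prefix of the other. Since $\cB_n$ is disjoint by Corollary \ref{cor:cons1}, this forces $w_0=w_0'$, and hence the zeroth coordinate of $\underline\Cut_n(\underline w')$ equals $\Cut_n(w_0)=a_0$.

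Because $\underline R_n(\underline b)=R_n(b_0)$ depends only on the zeroth coordinate, this yields
\[
	(\underline R_n\circ\underline\Cut_n\circ\Pi_n^{-1})(h_n(\underline a,0))
	=R_n(a_0)
	=\underline R_n(\underline a)
	=(\underline R_n\circ\fp_n)(\underline a,0),
\]
establishing the claim. The only nontrivial step is verifying that the multivalued composition on the left collapses to a single value; this is precisely the point where unique left decipherability of $\cB_n$ (Lemma \ref{lem:COND}, via disjointness from Corollary \ref{cor:cons1}) enters, and it is the reason the whole suspension-to-horseshoe factorization is consistent with the roof function.
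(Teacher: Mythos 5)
Your proof is correct and takes essentially the same approach as the paper: the paper's own proof is a one-line composition that implicitly relies on the well-definedness observation made at \eqref{eq:simplee}, and what you have done is unwind that observation into a concrete argument (both decodings place a word starting at position $0$, both are prefixes of $\underline\xi^+$, disjointness forces equality). You have simply supplied the details the paper left to the reader.
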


\begin{proof}
Using the definition of $h_n$ in \eqref{eq:defhn} and \eqref{eq:simplee}, we get
\[\begin{split}
	(\underline R_n\circ \underline \Cut_n\circ\Pi_n^{-1})\circ h_n
	= (\underline R_n\circ \underline \Cut_n\circ\Pi_n^{-1})
		\circ \left(\Pi_n\circ \underline \Cut_n^{-1}\circ\fp_n\right)
	= \underline R_n\circ \fp_n,
\end{split}\]
proving the claim.	
\end{proof}

\begin{claim}\label{cla:cunhb}
	$h_n\circ \Phi_n^{\underline R_n\circ \fp_n} 
	= F^{\underline R_n\circ \underline \Cut_n\circ\Pi_n^{-1}}\circ h_n$ 
	on $A$.
\end{claim}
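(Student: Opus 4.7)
My plan is to unfold both sides of the claimed identity, apply to an arbitrary point $(\underline a,0)\in A=(\cA_n)^\bZ\times\{0\}$, and verify they agree using the three commutations already recorded: (i) the return to the ground floor formula \eqref{def:groundlfoorreturn}, which says $\Phi_n^{\underline R_n(\underline a)}(\underline a,0)=(\sigma_n(\underline a),0)$; (ii) Lemma \ref{lem:gnconjugates}, which gives $\underline\Cut_n^{-1}\circ\sigma_n=\sigma_{\cB_n}\circ\underline\Cut_n^{-1}$; and (iii) the semi-conjugation \eqref{eq:conj2}, which reads $\Pi_n\circ\sigma_{\cB_n}=F^{\lvert w_0\rvert}\circ\Pi_n$ on a sequence whose $0$th coordinate is $w_0\in\cB_n$.

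The computation for the left-hand side would then proceed as
\[
h_n\big(\Phi_n^{\underline R_n(\underline a)}(\underline a,0)\big)
= h_n(\sigma_n(\underline a),0)
= \Pi_n\big(\underline\Cut_n^{-1}(\sigma_n(\underline a))\big)
= \Pi_n\big(\sigma_{\cB_n}(\underline\Cut_n^{-1}(\underline a))\big),
\]
where the last equality uses Lemma \ref{lem:gnconjugates}. Writing $\underline w=\underline\Cut_n^{-1}(\underline a)$, its $0$th coordinate is $w_0=\Cut_n^{-1}(a_0)$, so by definition \eqref{eq:bug} of $R_n$ we have $\lvert w_0\rvert=R_n(a_0)=\underline R_n(\underline a)$. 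Applying \eqref{eq:conj2} gives
\[
\Pi_n(\sigma_{\cB_n}(\underline w))
= F^{\lvert w_0\rvert}(\Pi_n(\underline w))
= F^{\underline R_n(\underline a)}\big(\Pi_n(\underline\Cut_n^{-1}(\underline a))\big)
= F^{\underline R_n(\underline a)}\big(h_n(\underline a,0)\big).
\]
On the right-hand side, by Claim \ref{cla:cunha} the exponent is $(\underline R_n\circ\underline\Cut_n\circ\Pi_n^{-1})(h_n(\underline a,0))=\underline R_n(\underline a)$, so both sides equal $F^{\underline R_n(\underline a)}(h_n(\underline a,0))$.

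I do not anticipate any genuine obstacle here: everything reduces to chasing definitions through the two commuting diagrams displayed just above the claim, and the crucial observation that the roof function $R_n$ is, by \eqref{eq:bug}, precisely the length of the word $\Cut_n^{-1}(a_0)\in\cB_n$ spelled over $\{1,\ldots,N\}$, which is the number of $F$-iterates needed for $\Pi_n$ to match one application of $\sigma_{\cB_n}$. The only minor point to be careful about is that $\Pi_n^{-1}$ is multivalued, but $\underline R_n\circ\underline\Cut_n\circ\Pi_n^{-1}$ is well-defined by \eqref{eq:simplee}, so the expression on the right-hand side of the claim is unambiguous. After this verification, combined with Claim \ref{cla:cunha}, the hypotheses of Lemma \ref{cla:cunhc} are all met and Proposition \ref{pro:semiconj} will follow by applying that lemma.
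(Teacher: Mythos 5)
Your proof is correct and follows the same route as the paper's: rewrite the left-hand side using the ground-floor return formula, pass the shift through $\underline\Cut_n^{-1}$ via Lemma \ref{lem:gnconjugates}, and apply the semi-conjugation \eqref{eq:conj2}. You make explicit the key identification $\lvert w_0\rvert = R_n(a_0) = \underline R_n(\underline a)$ coming from \eqref{eq:bug} and then invoke Claim \ref{cla:cunha} to match exponents, whereas the paper folds this step implicitly into the citation of \eqref{eq:conj2}; the content is the same.
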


\begin{proof}
	Note that on $(\cA_n)^\bZ\times\{0\}$ it holds
\begin{equation}\label{eq:firstformula}
	\Phi_n^{\underline R_n\circ\fp_n}
	= \sigma_{n}\times\id.
\end{equation}
Using the definition of $h_n$ in \eqref{eq:defhn} and \eqref{eq:firstformula}, it follows
\[\begin{split}
	\big(h_n\circ\Phi_n^{\underline R_n\circ \fp_n}\big)(\underline a,0) 
	&= \big(\left(\Pi_n\circ \underline \Cut_n^{-1}\circ\fp_n\right)
		\circ (\sigma_{n}\times\id)\big)(\underline a,0) \\
	&= \big(\Pi_n\circ \underline \Cut_n^{-1}\circ\sigma_n\big)(\underline a)\\
	{\tiny{\text{by Lemma \ref{lem:gnconjugates}}}}\quad
	&= \big(\Pi_n\circ (\underline \Cut_n^{-1}\circ\sigma_n)\big)(\underline a)
	= \big(\Pi_n \circ (\sigma_{\cB_n}\circ \underline \Cut_n^{-1})\big)(\underline a)\\
	{\tiny{\text{by \eqref{eq:conj2}}}}\quad
	&= \big(F^{\underline R_n\circ \underline \Cut_n\circ\Pi_n^{-1}}\circ
		(\Pi_n\circ \underline \Cut_n^{-1})\big)(\underline a)\\
	&= \big(F^{\underline R_n\circ \underline \Cut_n\circ\Pi_n^{-1}}\circ
		(\Pi_n\circ \underline \Cut_n^{-1}\circ\fp_n)\big)(\underline a,0)\\
	&= \big(F^{\underline R_n\circ \underline \Cut_n\circ\Pi_n^{-1}}\circ h_n\big)(\underline a,0)	,
\end{split}\]
proving the claim.
\end{proof}

Claims \ref{cla:cunha} and \ref{cla:cunhb} allow us to apply Lemma \ref{cla:cunhc} as explained above. Hence, there is a continuous surjective map $H_n\colon \fS_n\to\Gamma_n$ which extends $h_n$ and satisfies 
\[
	F\circ H_n
	= H_n\circ \Phi_n.
\]
The property about cardinality of preimages also follows from Lemma \ref{cla:cunhc} together with the fact that the map in \eqref{eq:forget} is at most $R_n$-to-one.  

The factor property and ergodicity are immediate consequences of this semiconjugation and the definition of $\mu_n$. Moreover, as the map $H_n$ is finite-to-one, by \cite{LedWal:77}, it holds
\[
	\sup_{\lambda\colon(H_n)_\ast\lambda=\mu_n}h(\Phi_n,\lambda)
	= h(F,\mu_n)
		+\int_{\Cs_n}h_{\rm top}(\Phi_n,H_n^{-1}(\{X\}))\,d\mu_n(X).
\]	
In the integral, $h_{\rm top}(\cdot)$ denotes the topological entropy, which is zero for every $X$ because $H_n^{-1}(\{X\})$ is finite for every $X$. 
As the entropy of a factor system is always less than or equal to the entropy of its extension, this implies
\[
	h(\Phi_n,\lambda_n)
	= h(F,\mu_n).
\]
The assertion about entropy in the proposition now is a consequence of Lemma \ref{lem:Abramov}.
\end{proof}

\subsection{Entropy and Lyapunov exponents of horseshoes}\label{subsec:entropiiie}

We now put the previous results into the context of the cascade of horseshoes $(\Gamma_n)_n$ in \eqref{eq:defGammanLambdan}. Recall that our construction, in particular those horseshoes, depend on the initially fixed ergodic measure $\mu$ as stated in the beginning of  Section \ref{sec:ncashor}. Recall that $L_1=L_1(F,J)$ is as in Definition \ref{defrem:commonblending}.

\begin{corollary}\label{cor:tailinghorse}
	 For every $\tilde\mu\in\cM_{\rm erg}(F|_{\Gamma_n})$ it holds
\[
	\chi(\tilde\mu)\in\Big(\frac{1}{2^n}(\alpha-\varepsilon),
			\frac{1}{2^n}(\alpha+\varepsilon)\Big).
\]
Moreover, the measure $\mu_n\in\cM_{\rm erg}(F|_{\Gamma_n})$ defined in \eqref{eq:defmun} satisfies
\[
	h(F,\mu_n)
	\ge e^{-L_1(1-2^{-n})\lvert\alpha\rvert} \left(h(F,\mu)-\varepsilon_H\right) .
\]	
In particular, any measure $\mu_\infty$ which is weak$\ast$-accumulated%
\footnote{Indeed, our particular choice of $(m_n)_n$ in Section \ref{sec:choicemn} implies convergence, see Lemma \ref{cla:convergence}.} 
 by $(\mu_n)_n$ satisfies
\[
	\chi(\mu_\infty)=0
	\spac{and}
	h(F,\mu_\infty)
	\ge e^{-L_1\lvert\alpha\rvert} \cdot \left(h(F,\mu)-\varepsilon_H\right) .
\]
\end{corollary}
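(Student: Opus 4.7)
The plan is to prove the three claims in order, leveraging the structural results already assembled for the cascade $(\cB_n)_n$.

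Part (1) is immediate from Proposition \ref{procor:specCIFS}: by Corollary \ref{cor:cons1}, $\cB_n$ defines a CIFS on $J$ with quantifiers $K,\,2^{-n}\alpha_0,\,2^{-n}\alpha,\,2^{-n}\varepsilon$, so every $\tilde\mu\in\cM_{\rm erg}(F|_{\Gamma_n})$ has Lyapunov exponent inside the interval $(2^{-n}(\alpha-\varepsilon),\,2^{-n}(\alpha+\varepsilon))$, which collapses to $\{0\}$ as $n\to\infty$.

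For Part (2), I use Proposition \ref{pro:semiconj} to write $h(F,\mu_n)=h(\Phi_n,\lambda_n)=\log M_n/\frakR_n$, with $\frakR_n=\int\underline R_n\,d\fb_n$. The numerator is controlled by the inductive identity $M_n=M_{n-1}^{m_n}$, giving $\log M_n = m_1\cdots m_n\cdot\log M_0$. For the denominator, Corollary \ref{newcor:notormenta} states that Assumption \ref{ass:roof} holds with $K=L_1|\alpha|$; integrating it and iterating yields
\[
\frakR_n \le m_n\bigl(1+L_1|\alpha|\,2^{-(n-1)}\bigr)\frakR_{n-1}
\le m_1\cdots m_n\cdot\frakR_0\prod_{k=1}^n\bigl(1+L_1|\alpha|\,2^{-(k-1)}\bigr),
\]
and therefore
\[
h(F,\mu_n)\ge\frac{\log M_0}{\frakR_0}\cdot\prod_{k=1}^n\bigl(1+L_1|\alpha|\,2^{-(k-1)}\bigr)^{-1}.
\]
A geometric sum bounds the product by $\exp\bigl(L_1|\alpha|(1-2^{-n})\bigr)$ (after absorbing the universal factor into the definition of $L_1$). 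The base ratio $\log M_0/\frakR_0$ is estimated from below by $h(F,\mu)-\varepsilon_H$ using the bound $\log\card\cB_0\ge(h-\varepsilon_H)\min_w|w|$ from Theorem \ref{teo:existenceCIFS} combined with $\frakR_0\le\max_w|w|$, after observing that the ratio $\min_w|w|/\max_w|w|$ can be made arbitrarily close to $1$ by enlarging the skeleton length $n$ from Claim \ref{clalem:skeletonstar} (equivalently, by initially shrinking $\varepsilon_H$).

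For Part (3), observe that $\chi(\mu)=\int\log|f'_{\xi_0}(x)|\,d\mu$ is the integral of a continuous function, hence $\mu\mapsto\chi(\mu)$ is weak$\ast$-continuous. Along any convergent subsequence $\mu_{n_k}\to\mu_\infty$, Part (1) gives $\chi(\mu_\infty)=\lim_k\chi(\mu_{n_k})=0$. The entropy bound then follows from upper semi-continuity of $\mu\mapsto h(F,\mu)$ (established in \cite{DiaFis:11,CowYou:05}) together with Part (2):
\[
h(F,\mu_\infty)\ge\limsup_k h(F,\mu_{n_k})\ge\lim_n e^{-L_1(1-2^{-n})|\alpha|}(h(F,\mu)-\varepsilon_H)=e^{-L_1|\alpha|}(h(F,\mu)-\varepsilon_H).
\]

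The main technical obstacle is the bookkeeping in Part (2): matching the telescoping product of roof-function ratios against the clean exponential factor $e^{-L_1(1-2^{-n})|\alpha|}$, and ensuring that the ``base'' quantity $\log M_0/\frakR_0$ is genuinely close to $h-\varepsilon_H$. Both issues are resolved by exploiting the flexibility left in the construction of $\cB_0$ (specifically, by choosing the skeleton parameter in the proof of Theorem \ref{teo:existenceCIFS} large enough that all first-order error terms are absorbed into $\varepsilon_H$).
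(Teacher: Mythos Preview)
Your argument is correct and follows essentially the same route as the paper: Part~(1) via Corollary~\ref{cor:cons1} and Proposition~\ref{procor:specCIFS}, Part~(2) via Proposition~\ref{pro:semiconj} and an iteration of Assumption~\ref{ass:roof} (the paper iterates with $\max R_k$ rather than $\frakR_k$, but the resulting bound is the same), and Part~(3) via weak$\ast$-continuity of $\chi$ and upper semi-continuity of entropy. Your explicit remark that $\log M_0/\max R_0\ge h-\varepsilon_H$ requires the ratio $\min_w|w|/\max_w|w|$ to be close to $1$---which is guaranteed by taking the skeleton length large in the proof of Theorem~\ref{teo:existenceCIFS}---is a point the paper passes over silently, so you are in fact being more careful here than the original.
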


\begin{proof}
By Corollary \ref{cor:cons1}, every collection of words $\cB_n$ defines a CIFS on $J$ relative to $K$, $2^{-n}\alpha_0$, $2^{-n}\alpha$, and $2^{-n}\varepsilon$. Hence, Proposition \ref{procor:specCIFS} (1) implies the statement about the exponents for any measure in $\cM_{\rm erg}(F|_{\Gamma_n})$.

Recall that, by Corollary \ref{newcor:notormenta}, Assumption \ref{ass:roof} is satisfied with $K=L_1\lvert\alpha\rvert$.
By Proposition \ref{pro:semiconj}, it follows
\[\begin{split}
	h(F,\mu_n)
	= h(\Phi_n,\lambda_n)
	&= \frac{\log M_n}{\frac{1}{M_n}\sum_{a\in\cA_n}R_n(a)}\\
	{\tiny\text{by Proposition \ref{procor:notormenta}}}\quad
	&\ge \frac{m_n\log M_{n-1}}
			{m_n(1+L_12^{-(n-1)}\lvert\alpha\rvert)\max R_{n-1}}\\
	&\ge\ldots
	\ge \frac{\log M_0}{\prod_{k=0}^{n-1}(1+L_12^{-k}\lvert\alpha\rvert)
		 \max R_0}\\
	&\ge e^{-L_1(1-2^{-n})\lvert\alpha\rvert}\cdot
		\frac{\log M_0}{ \max R_0}\\
	{\tiny\text{by \eqref{eq:estentropyy}, also using $R_0(a)=\lvert a\rvert$}}\quad	
	&\ge e^{-L_1(1-2^{-n})\lvert\alpha\rvert}
		\cdot(h(F,\mu)-\varepsilon_H),
\end{split}\]
proving the lower bound for entropy. 

Finally, recalling that $\chi(\mu_n)$ is the integral of a continuous function, we get $\chi(\mu_\infty)=0$ for any limit measure $\mu_\infty$. Further, as the entropy map is upper semi-continuous, the result about entropy follows taking limits as $n\to\infty$. 
\end{proof}

\subsection{Proof of Proposition \ref{pro:main}}

For every $n\in\bN$ consider the numbers 
\[
	\alpha_n
	\eqdef \inf\{\chi(\tilde\mu)\}
	\spac{and}
	\beta_n\eqdef \sup\{\chi(\tilde\mu)\},
\]
where $\inf$ and $\sup$ are taken over all $\tilde\mu\in\cM_{\rm erg}(F|_{\Gamma_n})$. By Corollary \ref{cor:tailinghorse}
\[
	\frac{1}{2^n}(\alpha-\varepsilon)
	<\alpha_n
	<\beta_n
	<\frac{1}{2^n}(\alpha+\varepsilon),
\]
and
\[
	h_{\rm top}(F,\Gamma_n)
	\ge h(F,\mu_n)
	\ge e^{-L_1(1-2^{-n})\lvert\alpha\rvert} \left(h(F,\mu)-\varepsilon_H\right) .
\]
The natural projection of $\Gamma_n$ to its first coordinate is the set $\CS{\cB_n}$ which, by definition, is a coded shift. The fact that $\Lambda_n$ is the attractor of a CIFS implies that every fiber intersecting $\Lambda_n$ contains only one point. Hence the claimed property of the cardinality a fiber intersecting  $\Gamma_n$ follows. This proves the proposition.
\qed

\section{Inherited internal structure of horseshoes}\label{sec:internal}

In this section, we see how the structure of ground and intermediate floors in the suspension space $\fS_n$ described in Section \ref{sec:susmodint} passes on to a corresponding internal structure of the horseshoe $\Gamma_n$ via the factor map $H_n$ in Proposition \ref{pro:semiconj}. 

First recall the definition of intermediate floors $\fG_n^{(j)}=\fG_n^{(n-1,(j))}$, $j\in\{0,\ldots,m_n-1\}$ in \eqref{eq:intfloor-a} and the notation in \eqref{eq:notationintflor}. Let
\begin{equation}\label{eq:defLambdarrr1}
	\Lambda_n^{(n-1,(j))}
	\eqdef H_n(\fG_n^{(n-1,(j))}).
\end{equation}

Analogously, taking into account inductively the spelling of a word in $\cB_n$ in the alphabet $\cB_\ell$, for some $\ell\in\{0,\ldots,n-1\}$, we consider the intermediate floor with $(\ell,n)$-address $\va=(\mathrm a_\ell,\ldots,\mathrm a_{n-1})$. Recalling the definition of the intermediate floor $\fG_n^{(\ell,\va)}$ in \eqref{eq:defintflo}, let 
\[
	\Lambda_n^{(\ell,\va)}
	\eqdef H_n(\fG_n^{(\ell,\va)}).
\]

\begin{remark}[Almost-return maps]
To motivate the above definitions, consider some point $X=(\xi,x(\xi))\in\Lambda_{n}$ and its forward orbit. As $\xi=\pi_1(X)\in\PCS{\cB_n}$, it has a ``spelling in the alphabet of words $\cB_{n}$'' as 
\[
	\xi
	=\underline\iota_{\cB_n}\big(\ldots, w^{(n)}_{i_{-1}}| w^{(n)}_{i_0}, w^{(n)}_{i_1},\ldots\big).
\]	
Recall that by \eqref{eq:conj2} it holds
\[
	F^{\lvert w^{(n)}_{i_0}\rvert}(X)
	\in\Lambda_{n},
\]
that is, this map is a return map on $\Lambda_{n}$.
Recall that, by \eqref{eq:msoqui1}, it holds 
\[
	\Lambda_n
	\subset \PCS{\cB_n}\times J
	\subset \Sigma_N\times J.
\]
We now refine this relation to any address.

For our main argument in Section \ref{sec:proooof} the following observation will be essential. Recall that $ w^{(n)}_{i_0}$ resulted from our repeat-and-tail scheme:
\[
	 w^{(n)}_{i_0}
	= \big( w_{j_1}^{(n-1)},\ldots, w_{j_{m_n}}^{(n-1)},
			\ft_n( w_{j_1}^{(n-1)},\ldots, w_{j_{m_n}}^{(n-1)})\big),
	\spac{where}		
	 w_{j_k}^{(n-1)}\in\cB_{n-1},
\]
analogously for the other elements $ w^{(n)}_{i_k}$. By definition \eqref{eq:defLambdarrr1}, the set $\Lambda_n^{(n-1,(j))}$ contains the point  on the forward orbit of $X$ whose position on the trajectory is determined by  the position of the word $ w^{(n-1)}_j$ in the sequence $\xi$. By Lemma \ref{lemcor:msoqui} below  it holds 
\[
	\Lambda_n^{(n-1,(j))}
	\subset \Sigma_N^-\times[ w_j^{(n-1)}]^+\times J
\]	 
and hence the corresponding part of the orbit of $X$ after few iterations is very close to the corresponding orbit starting in $\Lambda_{n-1}$. Indeed, Lemma \ref{lemcor:msoqui} considers any address $\va$ and provides an even finer description.
In very rough terms, this lemma states 
\[
	\text{``}\,\,\Lambda_n^{(\ell,\va)}
	\subset \Big(\Sigma_N^- \times \big(\cB_\ell\times \Sigma_N^+\big)\Big)\times J\,\,\text{''},
\]
though this formula is not precise for two reasons: first, it does not mark the 0th position of the two-sided sequence and second, $\cB_\ell$ is the union of words which possibly do not have equal length. Let us hence state the precise statement. 
\end{remark}

Recall notations in Lemma \ref{lem:hund} and that $\Cut_\ell^{-1}$ defined in \eqref{secn} ``adds tails" up to level $\ell$. Recall also Notation \ref{eq:notation}.

\begin{lemma}\label{lemcor:msoqui}
	For every $n\in\bN$, $\ell\in\{0,\ldots,n-1\}$, and $(\ell,n)$-address $\va$ and $\underline a\in(\cA_n)^\bZ$, it holds
\[
	H_n(\underline a,s_n^{(\ell,\va)})
	\in\Sigma_N^-\times
	\big[(\Cut_\ell^{-1}\circ\varsigma_n^{(\ell,\va)})(\underline a)\big]^+
	\times J.
\]
\end{lemma}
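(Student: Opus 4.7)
The plan is to combine the semi-conjugacy $H_n\circ \Phi_n=F\circ H_n$ from Proposition \ref{pro:semiconj} with the localization formula in Lemma \ref{lem:hund}, together with the fact that each alphabet $\cB_k$ defines a CIFS on $J$ by Corollary \ref{cor:cons1}. First I would reduce to the ground floor: since $(\underline a,s_n^{(\ell,\va)}(\underline a))$ lies on the intermediate floor $\fG_n^{(\ell,\va)}\subset \fR_n^{(\ell)}$, it is already in canonical form, so the semi-conjugacy gives $H_n(\underline a,s_n^{(\ell,\va)}(\underline a))=F^{s_n^{(\ell,\va)}(\underline a)}(h_n(\underline a))$, where $h_n(\underline a)=(\xi,y)$ with $\xi\eqdef\underline\iota_{\cB_n}(\underline\Cut_n^{-1}(\underline a))\in\Sigma_N$ and $y\eqdef x(\underline\Cut_n^{-1}(\underline a))\in J$ by Proposition \ref{pro:defineshorseshoe} and \eqref{eq:msoqui1}. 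Iterating $F^s$ shifts $\xi$ by $s$ positions and transports the fiber by $f_\xi^s$.

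For the symbolic coordinate, the goal is to show $\pi^+(\sigma^{s_n^{(\ell,\va)}(\underline a)}(\xi))\in[\Cut_\ell^{-1}(\varsigma_n^{(\ell,\va)}(\underline a))]^+$. The bi-infinite sequence $\xi$ is the spelling, in the alphabet $\{1,\ldots,N\}$, of the bi-infinite concatenation of the words $\Cut_n^{-1}(a_k)\in\cB_n$; each such word recursively decomposes into sub-words at levels $\ell,\ell+1,\ldots,n-1$, possibly interspersed with tails added by $\ft_{\ell+1},\ldots,\ft_n$. Lemma \ref{lem:hund} identifies $s_n^{(\ell,\va)}(\underline a)$ as precisely the total length (in $\{1,\ldots,N\}$) of the initial segment of $\Cut_n^{-1}(a_0)$ preceding the $\mathrm a_\ell$-th level-$\ell$ sub-word sitting inside the higher-level chunks determined by $\mathrm a_{\ell+1},\ldots,\mathrm a_{n-1}$. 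By the defining property of $\varsigma_n^{(\ell,\va)}$ in \eqref{eq:posintfloooor}, this next sub-word is exactly $\Cut_\ell^{-1}(\varsigma_n^{(\ell,\va)}(\underline a))\in\cB_\ell$, which yields the cylinder condition.

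For the fiber coordinate, the decomposition
\[
s_n^{(\ell,\va)}(\underline a)=\sum_{i=1}^{\lVert\va\rVert}R_{\ell_i}(\varsigma_n^{(\ell_i,\va^{(i-1)})}(\underline a)),\quad \ell_i=w(\va^{(i)}),
\]
from Lemma \ref{lem:hund} organizes the $s$ iterations of $F$ into $\lVert\va\rVert$ chunks, the $i$-th chunk being the application of the map $f_{[\Cut_{\ell_i}^{-1}(\varsigma_n^{(\ell_i,\va^{(i-1)})}(\underline a))]}$ associated to a complete word in $\cB_{\ell_i}$. Corollary \ref{cor:cons1} guarantees that $\cB_{\ell_i}$ is a CIFS on $J$, hence each such map sends $J$ into $J$. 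A straightforward induction on $i$, starting from $y\in J$, then shows that the fiber stays in $J$ throughout, so in particular after all $\lVert\va\rVert$ chunks it lies in $J$.

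The main obstacle will be the bookkeeping needed to verify that the chunks indexed by the successor sequence $\va^{(0)},\ldots,\va^{(\lVert\va\rVert)}$ in \eqref{eq:serefer} really do partition the initial length-$s_n^{(\ell,\va)}(\underline a)$ segment of $\Cut_n^{-1}(a_0)$ into complete $\cB_{\ell_i}$-words, with the matching forward-cylinder statement at each intermediate step. This amounts to carefully unwinding the nested definitions of $\Cut_n$, $L_n^{(\ell,\va)}$, and $\varsigma_n^{(\ell,\va)}$, and is handled by an induction on $\lVert\va\rVert$; the base case $\va=0$ is immediate, since then $s_n^{(\ell,0)}(\underline a)=0$ and the claim reduces directly to the identity $\xi=\underline\iota_{\cB_n}(\underline\Cut_n^{-1}(\underline a))$ together with $y\in J$.
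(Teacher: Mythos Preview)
Your proposal is correct and follows essentially the same approach as the paper: both arguments fix $\va$, induct along the successor sequence $\va^{(0)},\ldots,\va^{(\lVert\va\rVert)}$ from \eqref{eq:serefer}, use the semi-conjugacy $H_n\circ\Phi_n=F\circ H_n$ to advance one chunk at a time, and invoke the CIFS property of each $\cB_{\ell_i}$ (Corollary \ref{cor:cons1}) to keep the fiber in $J$. The paper makes explicit one piece of bookkeeping you should anticipate in the inductive step: when passing from $\va^{(k-1)}$ to $\va^{(k)}$ one must distinguish the case $\ell_k=\ell_{k-1}$ (the next sub-word is the second element of the current level-$\ell_{k-1}$ concatenation) from the case $\ell_k<\ell_{k-1}$ (one must first unpack the current level-$\ell_{k-1}$ word as an $m_{\ell_k+1}$-times repeated and $\ft_{\ell_k+1}$-tailed string of level-$\ell_k$ words), but this is exactly the sort of unwinding you already flagged as the main obstacle.
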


\begin{proof}
To prepare the proof, recall the terminology in Section \ref{sec:locintflor}.
Without loss of generality, we assume that the address $\va=(\mathrm a_\ell,\ldots,\mathrm a_{n-1})$ has simplified representation, that is, $\mathrm a_\ell\ne0$. 
Consider the sequence of addresses  (each with simplified representation) $\va^{(0)}=0$, $\ldots,$ $\va^{(k)},\ldots,\va^{(\lVert \va\rVert)}=\va$ as in \eqref{eq:serefer} and denote by $\ell_k = w(\va^{(k)})$ the corresponding levels which go from $\ell_0=n-1$ down to $\ell_{\lVert\va\rVert}=\ell$. Let 
\[
	\alpha_k
	\eqdef \varsigma_n^{(\ell_k,\va^{(k)})}(\underline a)
	= \left((\sigma_{{\ell_k}}^{\tau_k} \circ\underline\Sub_{n,\ell_k})(\underline a)\right)_0
	\in\cA_{\ell_k},
	\,\,\text{where}\,\,
	\tau_k
	= \sum_{i=\ell_k}^{n-1} 
		\mathrm a_i \cdot m_{\ell_k+1}\cdots m_{i+1}.
\]
Also consider the corresponding times in the suspension space for $k=0,\ldots,\lVert\va\rVert$
\begin{equation}\label{sell-final}\begin{split}
	t_0
	&=0,\ldots,\\
	t_k
	&= \sum_{i=1}^{\lVert\va^{(k)}\rVert} 
		R_{\ell_i}\big(\varsigma_n^{(\ell_i,\va^{(i-1)})}(\underline a)\big)
	= s_n^{(\ell_k,\va^{(k)})}(\underline a),\ldots,\\
	t_{\lVert\va\rVert}
	&= s_n^{(\ell,\va)}(\underline a)	.
\end{split}\end{equation}
For later reference, also recalling \eqref{eq:bug}, note that for every $k$ it holds 
\begin{equation}\label{eq:skiter}
	t_k-t_{k-1}
	= R_{\ell_k}\big(\varsigma_n^{(\ell_k,\va^{(k-1)})}(\underline a)\big)
	= R_{\ell_k}(\alpha_k)
	= \lvert  \Cut_{\ell_k}^{-1}(\alpha_k)\rvert,
\end{equation}
where $\lvert\cdot\rvert$ is the length of the corresponding word in the alphabet $\{1,\ldots,N\}$. 

We are now prepared to prove the lemma. Let
 \[
	H_n(\underline a, t_k)
	= \big(\eta^{(k)},x_k\big)
	\in\Sigma_N\times\bS^1,	
	\spac{where}
	\eta^{(k)}
	\eqdef \sigma^{t_k}(\eta^{(0)}).
\]
The proof will be by induction on $k=0,\ldots,\lVert\va\rVert$.
We have that
 \[\begin{split}
	(\eta^{(\lVert\va\rVert)},x_{\lVert\va\rVert})
	&= H_n(\underline a,t_{\lVert\va\rVert})\\
	{\tiny\text{(by definition of the suspension space) }\quad}
	&= (H_n\circ\Phi_n^{t_{\lVert\va\rVert}})(\underline a,0)\\
	{\tiny\text{(by semiconjuation in Proposition \ref{pro:semiconj}) }\quad}
	&= \big(F^{t_{\lVert\va\rVert}}\circ H_n\big)(\underline a,0).
 \end{split}\] 
Thus, by definition of the skew product $F$ together with \eqref{sell-final}, it follows 
\[
	H_n(\underline a,0)
	= F^{-t_{\lVert\va\rVert}}(\eta^{(\lVert\va\rVert)},x_{\lVert\va\rVert})
	= ( \sigma^{-t_{\lVert\va\rVert}}(\eta^{(\lVert\va\rVert)}),x_0)
	= (\eta^{(0)},x_0).
\]	
One the one hand, $H_n$ extends $h_n$ and maps $(\cA_n)^\bZ\times\{0\}$ into $\PCS{\cB_n}\times J$ and hence $\eta^{(0)}\in \PCS{\cB_n}=\underline\iota_{\cB_n}(\cB_n^\bZ)$. On the other hand, $\eta^{(0)}=(\underline\iota_{\cB_n}\circ \underline\Cut_n^{-1})(\underline a)$.
As done before, in the following we will use that $\cB_n$ is disjoint and hence uniquely left decipherable.
Hence, using the notation above, for $\underline a=(\ldots,a_{-1}|a_0,a_1,\ldots)$ the associated word $(\iota_{\cB_n}\circ\Cut_n^{-1})(a_0)\in\Sigma_N^\ast$ is uniquely determined by $a_0$. With the notation in \eqref{eq:specific} the corresponding  cylinder is 
\[
	[\Cut_n^{-1}(a_0)]^+
	=[(\iota_{\cB_n}\circ\Cut_n^{-1})(a_0)]^+
\]	 
and we obtain
\[
	H_n(\underline a,0)
	\in \Sigma_N^-\times [\Cut_n^{-1}(a_0)]^+\times J.
\]
This proves the assertion for $k=0$.

Assume that the assertion was shown for $k-1$ for some $k\ge1$. It holds
\begin{equation}\label{eq:indstep}\begin{split}
 	(\eta^{(k)},x_k)
	= H_n(\underline a, t_k) 
	= H_n(\Phi_n^{t_k}(\underline a,0))
	&= H_n\big( \Phi_n^{t_k-t_{k-1}}
		\circ \Phi_n^{t_{k-1}}(\underline a,0)\big)\\
	{\tiny\text{(by the factor property) }\quad}
	&= F^{t_k-t_{k-1}}(H_n( 	\Phi_n^{t_{k-1}}(\underline a,0)))\\
	&= F^{t_k-t_{k-1}}(H_n(\underline a,t_{k-1}))\\
	&= F^{t_k-t_{k-1}}(\eta^{(k-1)},x_{k-1}).
 \end{split}\end{equation}
By induction hypothesis, it holds 
\[
	(\eta^{(k-1)},x_{k-1})
	\in \Sigma_N^-\times[ \Cut_{\ell_{k-1}}^{-1}(\alpha_{k-1})]^+\times J.
\]	 
By definition of the skew product $F$ and \eqref{eq:indstep}, it follows
 \[\begin{split}
	(\eta^{(k)},x_k)
	&= \big(\sigma^{t_k-t_{k-1}}(\eta^{(k-1)}),f_{[\eta^{(k-1)}]}^{t_k-t_{k-1}}(x_{k-1})\big)\\
	\tiny{\text{using \eqref{eq:skiter} \quad}}
	&= \big(\sigma^{R_k}(\eta^{(k-1)}),f_{[\eta^{(k-1)}]}^{R_k}(x_{k-1})\big),
	\spac{where}
	R_k
	\eqdef R_{\ell_k}(\alpha_k).
 \end{split}\] 

To finish the proof, it is enough to check the following.

\begin{claim}
 The first $R_k$ symbols of $\eta^{(k-1)}$ (in the original alphabet $\{1,\ldots,N\}$) form a word in $\cB_{\ell_k}$. In particular, $f_{[\eta^{(k-1)}]}^{R_k}$ is a map of the CIFS on $J$ defined by $\cB_{\ell_k}$. 
\end{claim}
 
\begin{proof} 
By definition of $ \Cut_{\ell_k}$, it holds $ \Cut_{\ell_k}^{-1}(\alpha_k)\in\cB_{\ell_k}$. Note also that it is a subword of $\eta^{(k-1)}$. There are two cases to check. 
 
\smallskip\noindent
\textbf{Case 1: $\ell_k=\ell_{k-1}$.}
In this case, $ \Cut_{\ell_k}^{-1}(\alpha_k)$ is the second element in the bi-infinite concatenation of words (in the alphabet $\cB_{\ell_k}=\cB_{\ell_{k-1}}$) forming $\eta^{(k-1)}$, that is, with
\[
	\eta^{(k-1)}
	= (\ldots |\eta^{(k-1)}_{i_0},\eta^{(k-1)}_{i_1},\ldots)
	\spac{it holds}
	 \Cut_{\ell_k}^{-1}(\alpha_k)
	= \eta^{(k-1)}_{i_1}\in\cB_{\ell_k}
\]
and $\eta^{(k)}=\sigma^{R_k}(\eta^{(k-1)})=(\ldots \eta^{(k-1)}_{i_0}|\eta^{(k-1)}_{i_1},\ldots)$,
and we are done.  

\smallskip\noindent
\textbf{Case 2: $\ell_k<\ell_{k-1}$.}
In this case, we recall that $\eta^{(k-1)}$ is a bi-infinite concatenation of words in $\cB_{\ell_{k-1}}$, 
\[
	\eta^{(k-1)}
	= (\ldots |\eta^{(k-1)}_{i_0},\eta^{(k-1)}_{i_1},\ldots)
\]
where each such subword is a $m_{\ell_k}$-times repeated and tailed version of words in $\cB_{\ell_k}$,
\[
	\eta^{(k-1)}_{i_0}
	= \big( w^{(\ell_k)}_{j_1}, w^{(\ell_k)}_{j_2},
		\ldots, w^{(\ell_k)}_{j_{m_{\ell_{k-1}}}},
		\ft_{\ell_k+1}( w^{(\ell_k)}_{j_1}, w^{(\ell_k)}_{j_2},
			\ldots, w^{(\ell_k)}_{j_{m_{\ell_{k-1}}}})\big).
\]
In particular, $ \Cut_{\ell_k}^{-1}(\alpha_k)= w^{(\ell_k)}_{j_2}\in\cB_{\ell_k}$ and $\eta^{(k)}=(\ldots| w^{(\ell_k)}_{j_2},\ldots)$, and we are done.   
\end{proof}
The proof of the lemma is now complete.
\end{proof}

\section{Core of the proof of Theorem  \ref{teo:2}}\label{sec:proooof}

This section puts together all ingredients developed throughout this paper. 
Let us collects the mains ones to state the key result towards the proof of Theorem \ref{teo:2}.

Given $F\in\mathrm{SP}^1_{\rm shyp}(\Sigma_N\times\bS^1)$ and some $F$-ergodic measure $\mu$ with negative Lyapunov exponent, by Theorem \ref{teo:existenceCIFS} we obtain an initial collection of words $\cB_0$ which defines a CIFS with quantifiers on some interval $J$. Given a sequence $(m_n)_n$, we define a cascade of collections of words $(\cB_n)_n$  where each word in $\cB_n$ is the $m_n$-repeated and tailed version of words in $\cB_{n-1}$, the tailing map $\ft_n$ as in Theorem \ref{thepro:tailing}. By Proposition \ref{pro:defineshorseshoe}, each collection $\cB_n$ has an associated horseshoe $\Gamma_n$ of $F$. The word length defines a corresponding roof function $R_n$. 

Our construction is accompanied by a cascade of abstract alphabets $(\cA_n)_n$.  Each $\cA_n$ defines a shift space, endowed with the shift map $\sigma_n=\sigma_{\cA_n}$ and the Bernoulli measure $\fb_n$, which forms the ground floor $\fG_n$ of the suspension space $\fS_n=\fS_{\cA_n,R_n}$, with associated suspension map $\Phi_n$ and measure $\lambda_n$, see Section \ref{ssec:susmod}. The measure preserving system $(\fS_n,\Phi_n,\lambda_n)$ is an extension of $(\Gamma_n,F,\mu_n)$ by the factor map $H_n$, see Proposition \ref{pro:semiconj}.

\begin{proposition}\label{prothe:suspflow}
	Let $F\in \mathrm{SP}^1_{\rm shyp}(\Sigma_N\times\bS^1)$, $N\ge2$, and consider an $F$-invariant ergodic measure $\mu$ with Lyapunov exponent $\alpha\eqdef\chi(\mu)<0$ and positive entropy $h=h(F,\mu)$. For every $\varepsilon_H\in(0,h)$ and $\varepsilon_E\in(0,\lvert\alpha\rvert/4)$, there are a closed interval $J\subset\bS^1$ and an initial finite disjoint collection of words $\cB_0\subset\Sigma_N^\ast$ defining a CIFS on $J$ relative to some  
$K>1$, $\alpha+\varepsilon_E$, $\alpha$, and $\varepsilon_E$ satisfying
\begin{equation}\label{pro:theoremB1}
	\min_{ w\in\cB_0}\lvert w\rvert(h-\varepsilon_H)
	\le \log\card\cB_0
	\le \max_{ w\in\cB_0}\lvert w\rvert(h+\varepsilon_H).
\end{equation}
Moreover, there is a sufficiently fast growing sequence of natural numbers $(m_n)_n$ such that the measure preserving systems $(\fS_n=\fS_{\cA_n,R_n},\Phi_n,\lambda_n)$ satisfy the following. For every continuous function $\phi\colon\Sigma_N\times\bS^1\to\bR$  and $\varepsilon>0$, there exists $L_0=L_0(\phi,\varepsilon)\in\bN$ such that for every $\ell\ge L_0$ and $n\ge \ell+1$, there exists a subset $\fS_{n,\phi,\varepsilon}$ of $\fS_n$ such that $\lambda_n(\fS_{n,\phi,\varepsilon})>1-\varepsilon$ and for every $(\underline a,s)\in \fS_{n,\phi,\varepsilon}$ it holds
\begin{equation}\label{eq:expectedd}
	\left\lvert\frac{1}{\frakR_\ell}
		\sum_{k=0}^{\frakR_\ell-1}\psi_n(\Phi_n^k(\underline a,s))
		- \int\phi\,d\mu \right\rvert
	<\varepsilon,
\end{equation}
where
\[
	\frakR_n
	\eqdef \int \underline R_n\,d\fb_n
	\spac{and}
	\psi_n\colon\fS_n\to\bR, \quad
	\psi_n(\underline a,s)\eqdef(\phi\circ H_n)(\underline a,s).
\]
\end{proposition}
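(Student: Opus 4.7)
The first assertion, the existence of $J$ and $\cB_0$ satisfying~\eqref{pro:theoremB1}, follows directly from Theorem~\ref{teo:existenceCIFS}. At this initial step I would also strengthen Claim~\ref{clalem:skeletonstar}: fixing a countable dense family $\{\phi_j\}_{j\in\bN}\subset C(\Sigma_N\times\bS^1)$, the skeleton construction in~\cite[Section~4]{DiaGelRam:17} can be refined so that, beyond its prescribed entropy and Lyapunov properties, each skeleton orbit piece satisfies the Birkhoff approximation
\[
	\Big\lvert\sum_{k=0}^{n-1}\phi_j(F^k(\xi^i,x_i)) - n\int\phi_j\,d\mu\Big\rvert
	\le n 2^{-j}
	\quad\text{for every }j\in\bN;
\]
this is a routine application of Birkhoff's ergodic theorem to $(F,\mu)$ combined with the $(n,\delta)$-generic selection inherent in the skeleton argument.

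The sequence $(m_n)_n$ is defined inductively. Given $\cB_0,\ldots,\cB_{n-1}$, my plan is to pick $m_n\ge N_2(\cB_{n-1})$ (so that Theorem~\ref{thepro:tailing} defines $\cB_n$) and large enough that Proposition~\ref{pro:LD}, applied at level $n-1$ with potentials $\underline R_{n-1}$ and $\Delta(\phi_j\circ H_{n-1})$ for every $j\le n$, produces concentration sets of $\fb_{n-1}$-measure $\ge 1-2^{-n}$ and fluctuation error $\le 2^{-n}$. A diagonal argument over the countable family $\{\phi_j\}_j$ then covers every continuous $\phi$ by uniform approximation.

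To prove~\eqref{eq:expectedd}, fix $\phi$ and $\varepsilon>0$ and, by approximation, assume $\phi=\phi_j$ with $2^{-j}<\varepsilon/3$. Using $H_n\circ\Phi_n = F\circ H_n$, rewrite the sum as $\sum_{k=0}^{\frakR_\ell-1}\phi(F^k Y)$ with $Y=H_n(\underline a,s)$. By Lemma~\ref{lemcor:msoqui}, the $F$-orbit segment between consecutive visits to level-$\ell$ intermediate floors $\fG_n^{(\ell,\va)}$ and $\fG_n^{(\ell,\va+1_\ell)}$ is exactly the traversal of one word in $\cB_\ell$; for a $\lambda_n$-typical starting point, the time horizon $\frakR_\ell$ contains, up to boundary errors of size at most $\max R_\ell$ absorbed by uniform continuity, precisely one such traversal. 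Decoding this $\cB_\ell$-word via $\Cut_\ell^{-1}$ represents it as $m_1 m_2\cdots m_\ell$ concatenated skeleton pieces from $\cB_0$ interspersed with cascade tails from levels $1,\ldots,\ell$; the strengthened skeleton property forces the sum over the skeleton pieces to equal $(m_1 m_2\cdots m_\ell)\frakR_0\int\phi\,d\mu$ up to an error of order $2^{-j}\frakR_\ell + o_\ell(\frakR_\ell)$.

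The hardest step will be controlling the tail contributions, which by Proposition~\ref{procor:notormenta}(4) occupy a uniformly positive fraction of $\frakR_\ell$, bounded above by $1-e^{-L_2}$, and so cannot be treated as vanishing error. My plan is to apply Proposition~\ref{pro:LD} at every intermediate level $k\le\ell$ via the metric isomorphism $\underline\Sub_{n,k}$ of Remark~\ref{rem:isomorphisms}, controlling the fluctuation of the total sum of $\phi$ over the level-$k$ tails encountered during time $\frakR_\ell$ around its $\fb_k$-expected mean. The geometric decay $\max\lvert\ft_k\rvert \le L_2 2^{-k}\frakR_k$ combined with the structural fact that tails produced by Theorem~\ref{thepro:tailing} are governed by Axiom~CEC$+$ (their orbits traverse the blending region in a controlled manner, so their $\phi$-averages can be matched against the skeleton target via uniform continuity and the distortion control of Proposition~\ref{pro:distortion}) allows matching the tail-averages to their level-$0$ skeleton targets with cumulative discrepancy bounded by a summable series in $k$, hence $<\varepsilon/3$ once $L_0$ is chosen large enough. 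Taking $\fS_{n,\phi,\varepsilon}$ to be the lift via $\fp_n^{-1}$, with bounded time-offset in $\{0,\ldots,\max R_n-1\}$, of the intersection over $k=0,\ldots,\ell$ of the concentration sets provided by Proposition~\ref{pro:LD}, the measure condition $\lambda_n(\fS_{n,\phi,\varepsilon})>1-\varepsilon$ holds by the summable precisions $2^{-n}$, and on this set the Birkhoff estimate~\eqref{eq:expectedd} follows.
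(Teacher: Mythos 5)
Your first paragraph and your inductive choice of $(m_n)_n$ are consistent with the paper's actual set-up, but the core of your argument in the last paragraph has a genuine gap. You propose to decompose a time-$\frakR_\ell$ orbit segment into level-$0$ skeleton pieces plus cascade tails and to match the Birkhoff sums over the tails to the target $\int\phi\,d\mu$ ``via uniform continuity and the distortion control of Proposition~\ref{pro:distortion}.'' Neither of those tools gives any relation between a tail orbit's $\phi$-average and $\int\phi\,d\mu$: the tailing words furnished by Axiom~CEC$+$ are chosen only to expand and cover in the fiber, and their $\phi$-Birkhoff averages are arbitrary. This matters because, as you yourself note, the cumulative tail fraction of the time horizon is bounded away from zero (the level-$k$ tails occupy a fraction $\sim L_1\lvert\alpha\rvert 2^{-(k-1)}$ of the budget, so the sum over $k=1,\ldots,\ell$ is $\sim L_1\lvert\alpha\rvert$, a constant, not a quantity made small by choosing $L_0$ large). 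Hence a raw error bound of the form $2\lVert\phi\rVert\cdot(\text{tail fraction})$ does not tend to zero, and without a mechanism that actually controls $\phi$ on the tails your estimate cannot reach $\varepsilon$.

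The paper avoids this entirely by changing the comparison target. It never tries to match against $\mu$ via level-$0$ skeleton pieces. Instead it works two levels down: the Birkhoff sum over time $\frakR_\ell$ is cut into at most $m_\ell$ level-$(\ell-1)$ orbit pieces (only the single level-$\ell$ tail is discarded, and that has length $\le L_2 2^{-\ell}\frakR_\ell$, genuinely negligible once $\ell\ge L_0$). Proposition~\ref{pro:LD} applied at level $\ell-1$ controls those pieces against $\int\Delta\psi_{\ell-1}\,d\fb_{\ell-1}$, which by Abramov's formula (Lemma~\ref{lem:Abramov}) equals $\frakR_{\ell-1}\int\psi_{\ell-1}\,d\lambda_{\ell-1}=\frakR_{\ell-1}\int\phi\,d\mu_{\ell-1}$; crucially, $\mu_{\ell-1}$ already ``carries'' all the tails from levels $1,\ldots,\ell-1$, so there is nothing left to match on those tails. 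The final step is the weak$\ast$ convergence of $(\mu_n)_n$ (Lemma~\ref{cla:convergence}), which makes the integrals $\int\phi\,d\mu_{\ell-1}$ converge; this is what Section~\ref{sec:convergence} is for, and it is the ingredient your proposal replaces with the (unavailable) tail-matching. In summary, the missing idea is to compare against the level-$(\ell-1)$ suspension measure (so that only the level-$\ell$ tail need be discarded) rather than directly against $\mu$ via level-$0$ skeletons.

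Two smaller imprecisions: (i) your claim that the time window $\frakR_\ell$ typically contains ``precisely one traversal'' of a $\cB_\ell$-word is not right; a $\lambda_n$-typical point lies in the interior of a level-$\ell$ strip, so the window straddles two adjacent strips, which the paper handles explicitly via $S_1(j)+S_2(j)$ in Lemma~\ref{cla:secondestimateML1a}; and (ii) the proposed strengthening of Claim~\ref{clalem:skeletonstar} may well be achievable, but it is not needed in the paper's route, and on its own it does not resolve the tail problem.
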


The proof of the above proposition will be split into subsections.

First, note that by Theorem \ref{teo:existenceCIFS} there exist a closed interval $J\subset\bS^1$ and a finite disjoint collection of words $\cB_0\subset\Sigma_N^\ast$ defining a CIFS on $J$ relative to some constant $K>1$ and $\alpha+\varepsilon_E$, $\alpha$, and $\varepsilon_E$, and also satisfying \eqref{pro:theoremB1}. Let $\cA_0=\cB_0$. 

\subsection{Choice of the fast growing sequence $(m_n)_n$.}\label{sec:choicemn}

We start by fixing a dense sequence of continuous functions $\phi_k \colon \Sigma_N\times\bS^1\to\bR$, $k\in\bN$. 

The sequence $(m_n)_n$ is defined inductively over $n\in\bN_0$. Let $m_0=1$. 
Suppose that for $n\in\bN$ all numbers $\{m_0$, $m_1$, $\ldots,$ $m_{n-1}\}$ are chosen and hence $\cB_{n-1}$ and $\cA_{n-1}$ are defined and verify:
\begin{itemize}
\item the collection $\cB_{n-1}$ defines a CIFS on $J$ relative to $K\ge1$, $2^{-n}(\alpha+\varepsilon_E)$, $2^{-n}\alpha$, and $2^{-n}\varepsilon_E$,
\item there are the associated attractor $\Lambda_{n-1}$ for the CIFS and the horseshoe $\Gamma_{n-1}\supset \Lambda_{n-1}$ generated by it, see Proposition \ref{pro:defineshorseshoe},
\item the word length on $\cB_{n-1}$ defines the roof function $R_{n-1}$,
\item the abstract Bernoulli shift  $((\cA_{n-1})^\bZ,\sigma_{{n-1}},\fb_{n-1})$  forms the ground floor $\fG_{n-1}$ for the measure preserving suspension system $(\fS_{n-1},\Phi_{n-1},\lambda_{n-1})$, see Section \ref{ssec:susmod}. This system is an entropy-preserving extension of $(\Gamma_{n-1},F,\mu_{n-1})$ by the factor map $H_{n-1}$, see Proposition \ref{pro:semiconj}. 
\end{itemize}

To define $m_n$, for $k\in\{1,\ldots,n\}$ consider the auxiliary lifted potentials associated to $\phi_k$,
\[
	\psi_{n-1,k}\colon\fS_{n-1}\to\bR,
	\spac{where}
	\psi_{n-1,k}(\underline a,s)
	\eqdef (\phi_k\circ H_{n-1})(\underline a,s),
\]
and let
\begin{itemize}
\item[(I)] (controlled large deviation) $N_0(\psi_{n-1,k},2^{-n})\in\bN$ be as in Proposition \ref{pro:LD} applied to $\cA_{n-1}$ and $R_{n-1}$,
\item[(II)] (controlled distortion) $N_1(\phi_k,2^{-n})\in\bN$ be as in Proposition \ref{pro:distortion} applied to $\cB_{n-1}$,
\item[(III)] (tailing map) $N_2=N_2(\cB_{n-1})\in\bN$ be as in Theorem \ref{thepro:tailing} applied to $\cB_{n-1}$.
 \end{itemize}
We now define $m_n$ by
\[
	m_n
	\eqdef \max\Big\{
		\max_{k=1,\ldots,n}N_0(\psi_{n-1,k},2^{-n}),
		\max_{k=1,\ldots,n}N_1(\phi_k,2^{-n}),N_2,m_{n-1}\Big\}
	+1.
\]

Finally, we define $\cB_n$ as the $m_n$-times repeated and tailed version of words in $\cB_{n-1}$, 
\[	\cB_n=(\cB_{n-1}^{m_n})_{\ft_n},\] with the tailing map $\ft_n$ as in Theorem \ref{thepro:tailing}. We also let $\cA_n=(\cA_{n-1})^{m_n}$. This finishes the inductive definition.

Note that by Corollary \ref{newcor:notormenta} we have that Assumption \ref{ass:roof} about the roof functions $R_n$ is satisfied taking $K=L_1\lvert\alpha\rvert$.

\subsection{General scheme of the proof of Proposition \ref{prothe:suspflow}}

Let us sketch the steps of the proof and recall the main ingredients which will be implemented, compare also Figure \ref{fig.4}. 

We first show that the sequence $(\mu_n)_n$ converges in the weak$\ast$ topology, see Section \ref{sec:convergence}.

To prove the proposition, we need to show the approximation property \eqref{eq:expectedd} for a sufficiently large set of points. 
To do so, given $\phi$ first find $k_0\in\bN$ such that $\phi_{k_0}$ from our dense family is close to it. Choose large $\ell\ge k_0$ and let $n\ge \ell+1$, see Section \ref{sec:quantifiers}.

By implementing item (I) above, controlled large deviation on level $\ell-1$ provides us a large set of good orbit pieces on $\fS_{\ell-1}$ which (up to $m_\ell$ consecutive  times) run from the $(\ell-1)$st level ground floor to its roof. Here each piece has a close-to-expected length and a close-to-expected finite Birkhoff sum of the lift of the potential $\phi_{k_0}$ to $\fS_{\ell-1}$.  As $\phi_{k_0}$ and $\phi$ are close, these properties extend to the lift of $\phi$. See Section \ref{sec:restate}.

Consider the principal part $\fR_n^{(\ell)}\subset\fS_n$ defined in \eqref{eq:fodase} which decomposes into strips $\fL_n^{(\ell,\va)}$ indexed by $(\ell,\va)$-addresses
\[
	\fR_n^{(\ell)}
	=\cupdot_{\va=(\mathrm a_\ell,\ldots,\mathrm a_{n-1})}\fL_n^{(\ell,\va)}.
\] 
Every strip with $(\ell,n)$-address $\va$ decomposes into $(\ell-1,n)$-substrips
\[
	\fL_n^{(\ell,\va)}
	= \cupdot_{j=0,\ldots,m_\ell-1}\fL_n^{(\ell-1,j\va)}
\]
 which start at the corresponding intermediate floors $\fG_n^{(\ell-1,j\va)}$. Recall the definition of the map $L_n^{(\ell-1,j\va)}$ in \eqref{eq:Lnellalpha} mapping the  ``model suspension space'' $\fS_{\ell-1}$ bijectively onto the substrip $\fL_n^{(\ell-1,j\va)}$. In this way, each good orbit piece obtained by controlled large deviation is sent to its counterpart on the level $n$-suspension space $\fS_n$. This is more precisely stated in Main Lemma \ref{mlemmapro:proofpro} in Section \ref{sec:separating} whose proof is postponed to Section \ref{sec:MainLemma}.

Assuming Main Lemma \ref{mlemmapro:proofpro}, in Section \ref{sec:endofproof} we conclude the proof of the proposition. The following are the main ingredients. The Bernoulli measure $\fb_{\ell-1}$ on the $(\ell-1)$st level ground floor lifts isomorphically to the Bernoulli measure  $\fb_n$ on the $n$th level ground floor which, in turn, lifts naturally to its copy $\fb_n^{(\ell,\va)}$ on the $(\ell,n)$-intermediate floor. This allows us to conclude that the large measure set on the model space has its large measure counterpart on each strip. Stitching together all strips provides a large measure subset of the principal part $\fR_n^{(\ell)}$. Finally we will see that, by construction, the tail part $\fT_n^{(\ell)}$ has comparably small measure.

\bigskip
\begin{figure}[h] 
 \begin{overpic}[scale=0.95]{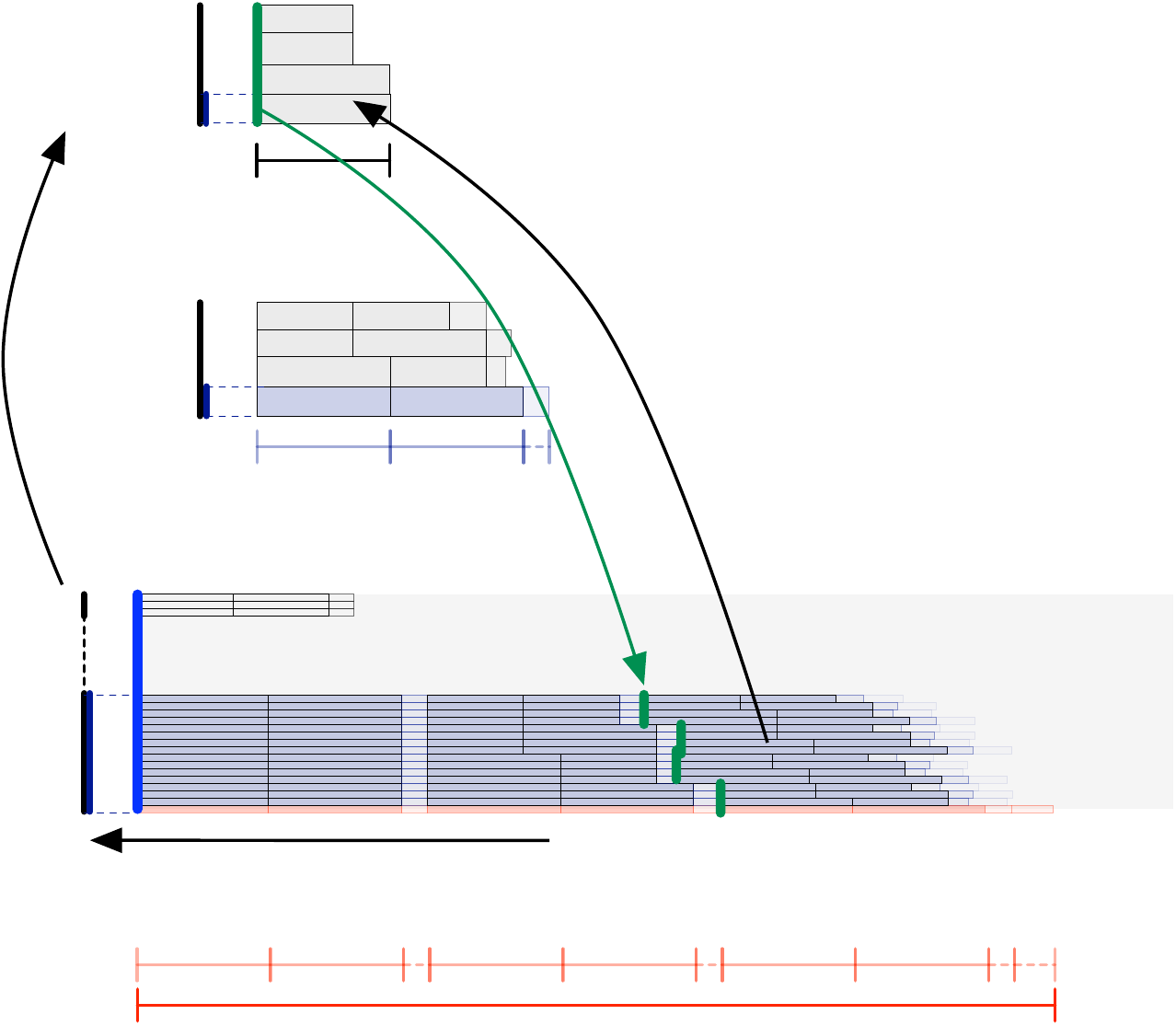}
 	\put(-1,70){{\rotatebox{90}{\small$\big((\cA_{\ell-1})^\bZ,\fb_{\ell-1}\big)$}}}
 	\put(12,53){\small{\rotatebox{90}{$(\cA_\ell)^\bZ$}}}
 	\put(-1,22){\small{\rotatebox{90}{$\big((\cA_n)^\bZ,\fb_n\big)$}}}
 	\put(96,34){{$\fS_n$}}
	\put(28,14){{\small$\fp_n$}}
 	\put(10,9){{\rotatebox{90}{\small\textcolor{blue}{$\fG_n$}}}}
 	\put(60,8){{\rotatebox{90}{\small\textcolor{dgreen}{$\fG_n^{(n-1,j\va)}$}}}}
	\put(5,41){\small{$\underline\Sub_{n,\ell-1}$}}
	\put(44,40){\small{\textcolor{dgreen}{$L_n^{(\ell-1,j\va)}$}}}
	\put(61,40){\small{$P_{n,\ell-1}$}}
	\put(22,41){\small{\rotatebox{45}{$R_{\ell-1}(b_0)$}}}
	\put(31,41){\small{\rotatebox{45}{$R_{\ell-1}(b_1)$}}}
	\put(70,60){{$\cA_\ell=(\cA_{\ell-1}^{m_\ell})_{\ft_\ell}$}}
	\put(5.5,77.5){\small{$[b_0,b_1]_{\ell-1}$}}
	\put(45,77.5){{$\underline b=(\ldots|b_0,b_1,\ldots)=\underline\Sub_{n,\ell-1}(\underline a)\in(\cA_{\ell-1})^\bZ$}}
	\put(2,18){\small{$\underline a$}}
	\put(20,30){{\rotatebox{90}{\small$\cdots$}}}
	\put(85,29){{\rotatebox{45}{\small$\cdots$}}}
	\put(93,20){\small$\cdots$}
 \end{overpic}
  \caption{Cascade of suspension spaces: Ingredients of the proof of Proposition \ref{prothe:suspflow}}
 \label{fig.4}
\end{figure}

\subsection{Weak$\ast$ convergence of the factor measures $\mu_n$}\label{sec:convergence}

Our construction provides sequences of probability measures $(\mu_n)_n=(H_n)_\ast\lambda_n\subset\cM_{\rm erg}(F)$, see \eqref{eq:defmun}. 
We first show that this sequence converges in the weak$\ast$ topology.

\begin{lemma}\label{cla:convergence}
	The sequence $(\mu_n)_n$ converges in the weak$\ast$ topology.
\end{lemma}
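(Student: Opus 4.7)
The plan is to show that for every continuous $\phi\colon\Sigma_N\times\bS^1\to\bR$ the sequence of real numbers $(\int\phi\,d\mu_n)_n$ is Cauchy, from which weak$\ast$ convergence of $(\mu_n)_n$ follows via the Riesz representation theorem. The key input is Proposition~\ref{prothe:suspflow}, which will pin each $\int\phi\,d\mu_n$ (for $n$ sufficiently large) to the fixed quantity $\int\phi\,d\mu$ up to an error that is controlled uniformly in $n$.

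Fix $\phi$ continuous and $\varepsilon>0$. Let $L_0=L_0(\phi,\varepsilon)$ be provided by Proposition~\ref{prothe:suspflow} and fix some $\ell\ge L_0$. For every $n\ge\ell+1$ the proposition yields a set $\fS_{n,\phi,\varepsilon}\subset\fS_n$ with $\lambda_n(\fS_{n,\phi,\varepsilon})>1-\varepsilon$ on which the $\frakR_\ell$-time Birkhoff average of $\psi_n=\phi\circ H_n$ under $\Phi_n$ differs from $\int\phi\,d\mu$ by at most $\varepsilon$. By $\Phi_n$-invariance of $\lambda_n$, the integral over $\fS_n$ of this Birkhoff average equals $\int\psi_n\,d\lambda_n$, which in turn equals $\int\phi\,d\mu_n$ since $\mu_n=(H_n)_\ast\lambda_n$. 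Splitting the integral between $\fS_{n,\phi,\varepsilon}$ and its complement (of $\lambda_n$-measure at most $\varepsilon$) and using the trivial bound $\lVert\phi\rVert_\infty$ on the Birkhoff average outside the good set, one obtains
\[
\Bigl|\int\phi\,d\mu_n-\int\phi\,d\mu\Bigr|
\le \varepsilon + 2\lVert\phi\rVert_\infty\varepsilon
\]
uniformly in $n\ge\ell+1$. Consequently, for every $n,m\ge\ell+1$ we have $|\int\phi\,d\mu_n-\int\phi\,d\mu_m|\le 2(1+2\lVert\phi\rVert_\infty)\varepsilon$, and since $\varepsilon$ was arbitrary the sequence $(\int\phi\,d\mu_n)_n$ is Cauchy in $\bR$.

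With convergence of $\int\phi\,d\mu_n$ established for every continuous $\phi$, the linear functional $\phi\mapsto\lim_n\int\phi\,d\mu_n$ on $C(\Sigma_N\times\bS^1)$ is positive and bounded by $\lVert\phi\rVert_\infty$, hence is represented by a Borel probability measure $\mu_\infty$ via the Riesz representation theorem, and $\mu_n\to\mu_\infty$ in the weak$\ast$ topology. The proof of this lemma is essentially immediate once Proposition~\ref{prothe:suspflow} is in hand; the substantive work lies in establishing that proposition, which is deferred to Sections~\ref{sec:proooof} and~\ref{sec:proooofb}. The only point requiring a moment's care is the uniformity in $n$ of the bound above, which is built into the proposition through the fact that $L_0$ and hence $\ell$ and $\frakR_\ell$ depend only on $\phi$ and $\varepsilon$, not on $n$.
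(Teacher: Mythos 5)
Your argument is circular. Lemma~\ref{cla:convergence} is proved in Section~\ref{sec:convergence}, \emph{before} the proof of Proposition~\ref{prothe:suspflow} is completed, precisely because it is an input to that proposition: equation~\eqref{eq:gettingclose-phi} in Section~\ref{sec:quantifiers} is justified by invoking Lemma~\ref{cla:convergence}, and \eqref{eq:gettingclose-phi} is then essential in establishing \eqref{eq:proof3} and hence Main Lemma~\ref{mlemmapro:proofpro}, which is the heart of Proposition~\ref{prothe:suspflow}. So you cannot use the proposition to prove the lemma.

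There is also a substantive (not merely logical-ordering) obstruction that your argument makes visible. If the quantity $\int\phi\,d\mu$ in \eqref{eq:expectedd} were literally the integral against the initial measure $\mu$ (the one with $\chi(\mu)=\alpha<0$), then your computation $\lvert\int\phi\,d\mu_n-\int\phi\,d\mu\rvert\le(1+2\lVert\phi\rVert_\infty)\varepsilon$, valid for arbitrary $\varepsilon>0$ and all large $n$, would force $\mu_n\to\mu$ weak$\ast$. That is false: by Corollary~\ref{cor:tailinghorse} one has $\chi(\mu_n)\to0$, so any weak$\ast$ limit has central exponent $0$, whereas $\chi(\mu)=\alpha<0$. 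The resolution is that the ``$\int\phi\,d\mu$'' appearing in \eqref{eq:expectedd} and \eqref{eq:gettingclose-phi} is (by an abuse of notation) the integral against the weak$\ast$ limit $\mu_\infty$ of $(\mu_n)_n$ --- exactly the object whose existence Lemma~\ref{cla:convergence} establishes. In other words, the proposition cannot even be \emph{stated} in a usable form until the lemma is known.

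The paper's own proof of the lemma is self-contained and elementary: it fixes a dense sequence $(\phi_k)_k$ of continuous functions, writes $\int\phi_k\,d\mu_n$ and $\int\phi_k\,d\mu_{n-1}$ via Abramov's formula as normalized sums of $\Delta\psi_{n,k}$ and $\Delta\psi_{n-1,k}$ over cylinders in $(\cA_n)^\bZ$ and $(\cA_{n-1})^\bZ$, and compares the two using the substitution map $\Sub_{n,n-1}$, the distortion bound of Claim~\ref{clacor:ka3}, and the roof-function estimates of Proposition~\ref{procor:notormenta}, obtaining $\lvert\int\phi_k\,d\mu_n-\int\phi_k\,d\mu_{n-1}\rvert<C(k,n)$ with $\sum_n C(k,n)<\infty$. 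That yields the Cauchy property without any reference to Proposition~\ref{prothe:suspflow}. You would need to redo the argument along those lines rather than invoking the proposition.
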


Let us first state a preliminary result which is a direct consequence of item (II) above about distortion control, together with Proposition \ref{procor:notormenta}  (1)--(2).
Recall the definition of the sum $\Delta\psi_{n,k}$ in \eqref{eq:defDeltapsi} and in its variation $\var_{\cA_n}$ in \eqref{eq:defAve}. We consider the abstract alphabets $\cA_n$ and, for simplicity, write $[\cdot]_n$ for the cylinder in the sequence space $(\cA_n)^\bZ$.

\begin{claim}\label{clacor:ka3}
	For every $n\in\bN$ and $k\in\{1,\ldots,n\}$ it holds
\[
	\var_{\cA_n}(\Delta\psi_{n,k})
	\le  \frac{1}{2^n}m_n\max R_{n-1}
	\le L_2\frac{1}{2^n}\frakR_n.
\]	
Moreover, reformulating the above taking into account the bijection $\fS_{n,n-1}$ between $\cA_n$ and $(\cA_{n-1})^{m_n}$, for every $a\in\cA_n$ it holds
\[
	\max_{\underline a,\underline a'\in[a]_n}\Big\lvert
	\sum_{i=0}^{m_n-1}
		\Delta \psi_{n-1,k}(\sigma_{{n-1}}^i(\underline\Sub_{n,n-1}(\underline a)))
		- \Delta\psi_{n,k}(\underline a')
	\Big\rvert 	
	\le L_2\frac{1}{2^n}\frakR_n.
\]
\end{claim}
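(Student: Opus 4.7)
The plan is to reduce the variation bound to distortion control for the CIFS $\cB_{n-1}$ via Proposition \ref{pro:distortion}, exploiting the fact that $m_n$ was chosen (item (II) in Section \ref{sec:choicemn}) so that $m_n\ge N_1(\phi_k,2^{-n})$ for that CIFS. Before anything, the second inequality $\frac{1}{2^n}m_n\max R_{n-1}\le L_2\frac{1}{2^n}\frakR_n$ is immediate from parts (1) and (2) of Proposition \ref{procor:notormenta}, which together give $m_n\max R_{n-1}<\max R_n\le L_2\frakR_n$.

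For the main variation estimate, I would fix $a\in\cA_n$ and $\underline b,\underline c\in[a]_n$ and use Proposition \ref{pro:semiconj} together with $\psi_{n,k}=\phi_k\circ H_n$ to rewrite $\Delta\psi_{n,k}(\underline b)-\Delta\psi_{n,k}(\underline c)$ as the difference of two Birkhoff sums of $\phi_k$ of length $R_n(a)$ starting at $H_n(\underline b,0)$ and $H_n(\underline c,0)$. A special case of Lemma \ref{lemcor:msoqui} places both orbit starts in the product set $\Sigma_N^-\times[\Cut_n^{-1}(a)]^+\times J$. By construction the word $\Cut_n^{-1}(a)\in\cB_n$ decomposes as a concatenation of $m_n$ words from $\cB_{n-1}$, of total length $L=\sum_{j=1}^{m_n}R_{n-1}((\Sub_{n,n-1}(a))_{j-1})\le m_n\max R_{n-1}$, followed by the tail $\ft_n(\cdot)$. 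Applying Proposition \ref{pro:distortion} to $\cB_{n-1}$ with tolerance $\tau=2^{-n}$, which is legitimate because $m_n\ge N_1(\phi_k,2^{-n})$, controls the contribution of the first $L$ iterations by $2^{-n}L\le 2^{-n}m_n\max R_{n-1}$, precisely the bound we want.

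The main obstacle is handling the tail iterations $s\in\{L,\ldots,R_n(a)-1\}$, since $\ft_n(\cdot)$ is not a word of $\cB_{n-1}$ and Proposition \ref{pro:distortion} does not apply verbatim. My plan is to observe that after $L$ iterations the CIFS contraction of $\cB_{n-1}$ has squeezed the two fiber coordinates into a common subinterval of size at most $|J|Ke^{L\cdot 2^{-(n-1)}(\alpha+\varepsilon_E)}$, which decays super-exponentially; combined with the bound $|\ft_n(\cdot)|\le L_2\cdot 2^{-n}\frakR_n$ from Proposition \ref{procor:notormenta}\,(4) and the uniform continuity of $\phi_k$, this tail contribution is negligible compared to the main $L$-term estimate and can be absorbed after a slight enlargement of constants. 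Should this route prove too delicate, an alternative is to apply Proposition \ref{pro:distortion} directly to the CIFS $\cB_n$ with $m=1$ and the single word $w=\Cut_n^{-1}(a)$; the rapid growth of $m_n$ built into Section \ref{sec:choicemn} ensures $R_n(a)$ exceeds the corresponding threshold for the weaker contraction rate $2^{-n}\alpha_0$.

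For the ``Moreover'' statement, set $\underline b=\underline\Sub_{n,n-1}(\underline a)$. Iterating the factor relation $H_{n-1}\circ\Phi_{n-1}=F\circ H_{n-1}$ across $m_n$ successive returns to the $(n-1)$st ground floor converts $\sum_{i=0}^{m_n-1}\Delta\psi_{n-1,k}(\sigma_{n-1}^i(\underline b))$ into the length-$L$ Birkhoff sum $\sum_{t=0}^{L-1}\phi_k(F^t(H_{n-1}(\underline b,0)))$. The key observation is that $H_{n-1}(\underline b,0)$ and $H_n(\underline a',0)$ share the same forward base coordinates on the first $L$ symbols of $\Sigma_N^+$, because $\Cut_n^{-1}(a)$ and the concatenation $(\Cut_{n-1}^{-1}(b_0),\ldots,\Cut_{n-1}^{-1}(b_{m_n-1}))$ agree as words in $\{1,\ldots,N\}^\ast$ up to the tail $\ft_n(\cdot)$, while both initial fibers lie in $J$. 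The same distortion argument as above then bounds the difference of the two length-$L$ sums by $2^{-n}L$, while the residual $|\ft_n(\cdot)|$ terms of $\Delta\psi_{n,k}(\underline a')$ are crudely bounded by $2\|\phi_k\|\cdot L_2\cdot 2^{-n}\frakR_n$ using Proposition \ref{procor:notormenta}\,(4); collecting terms and absorbing numerical constants yields the announced $L_2\cdot 2^{-n}\frakR_n$ bound.
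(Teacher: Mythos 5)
Your decomposition — Proposition~\ref{procor:notormenta}\,(1)--(2) for the second inequality, Proposition~\ref{pro:distortion} applied to $\cB_{n-1}$ with $m_n\ge N_1(\phi_k,2^{-n})$ from item~(II) for the distortion part — is exactly what the paper points to, and you have correctly zeroed in on the one point the paper's one-line justification glosses over: the $R_n(a)$-term Birkhoff sum defining $\Delta\psi_{n,k}$ runs past the $L=\sum_j|w_j|$ iterations that Proposition~\ref{pro:distortion} controls, by the length of the tail $\ft_n(\cdot)$. Your instinct that this must be handled is right. The two resolutions you sketch, however, both need repair.

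For the first resolution: the statement that ``after $L$ iterations the CIFS contraction of $\cB_{n-1}$ has squeezed the two fiber coordinates'' is true at time $L$, but it does not by itself control times $j\in(L,R_n(a))$. The tail $\ft_n(\cdot)$ is by construction an \emph{expanding} block (Axiom CEC$+$), so the two fiber orbits can in principle separate again. What saves you is that $\Cut_n^{-1}(a)$ is a single word in $\cB_n$, and by Corollary~\ref{cor:cons1} the collection $\cB_n$ is a CIFS with constants $K$, $2^{-n}\alpha_0$, $\ldots$; property~(b) of \emph{that} CIFS bounds $\lvert (f_{[\Cut_n^{-1}(a)]}^j)'\rvert\le Ke^{j\cdot 2^{-n}\alpha_0}$ for \emph{all} $j\le R_n(a)$, including during the tail. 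Since $L\ge m_n\ge N_1>2\ell$, one checks $L\cdot 2^{-n}\lvert\alpha_0\rvert$ is already large enough that this bound keeps the fiber diameter below $\delta$ throughout the tail. You should cite $\cB_n$'s CIFS property here, not $\cB_{n-1}$'s. Similarly, your crude estimate $2\lVert\phi_k\rVert\cdot\lvert\ft_n(\cdot)\rvert$ for the tail is not absorbable: combined with $\lvert\ft_n(\cdot)\rvert\le L_2 2^{-n}\frakR_n$ it is of the \emph{same order} as the target bound and carries the factor $\lVert\phi_k\rVert$, so it cannot be ``absorbed after a slight enlargement of constants'' once $\lVert\phi_k\rVert$ is large; the finer estimate (modulus-of-continuity control on all but the last $O(1)$ tail terms) is actually required. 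For the alternative resolution: Proposition~\ref{pro:distortion} as stated requires $m\ge N_1$, and for $\cB_n$ the threshold $N_1$ grows like $2^n$ (the contraction rate is $2^{-n}\alpha_0$), so $m=1$ fails the hypothesis; what would work is to rerun the \emph{proof} of Proposition~\ref{pro:distortion} with the total length $R_n(a)\ge N_1$ in place of $m\ge N_1$, since that is all the argument actually uses. Finally, note that even with the sharpened tail estimate the literal constant $\frac{1}{2^n}m_n\max R_{n-1}$ in the first displayed inequality is not quite achieved (one picks up an additive term of order $\lvert\ft_n\rvert\cdot 2^{-n}$); this slack is harmless for everything downstream, which only uses the final $L_2 2^{-n}\frakR_n$ form with an additional factor of $2$ in Lemma~\ref{lemcla:invoke}, but it is worth flagging.
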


\begin{proof}[Proof of Lemma \ref{cla:convergence}]
It suffices to show that for the dense sequence of continuous functions $(\phi_k)_k$,  
it holds
\begin{equation}\label{eq:foooorr}
	\Big\lvert\int\phi_k\,d\mu_n-\int\phi_k\,d\mu_{n-1}\Big\rvert
	< C(k,n),
\end{equation}
for some summable sequence $(C(k,n))_n$.
Recall that $\fb_n([a]_n)=(m_1\cdots m_n)^{-1}$ for every $a\in\cA_n$. Covering the sequence space $(\cA_n)^\bZ$ by the cylinders $\{[a]_n\colon a\in\cA_n\}$, it holds
\begin{equation}\label{eq:first}
	\frac{1}{\frakR_n}
	\sum_{a\in\cA_n}\min_{\underline a\in[a]_n}\Delta\psi_{n,k}(\underline a)
	\frac{1}{m_1\cdots m_n}
	\le \int\phi_k\,d\mu_n
\end{equation}
together with the analogous upper bound.
Analogously, covering the sequence space $(\cA_{n-1})^\bZ$ by cylinders of length $m_n$, it holds
\begin{equation}\label{eq:firstb}\begin{split}
	 \int& \phi_k \,d\mu_{n-1} \\
	&\le\frac{1}{\frakR_{n-1} } \sum_{(b_1,\ldots,b_{m_n})} 
	\max_{\underline b} \frac{1}{m_n}
		\sum_{i=0}^{m_n-1}
	\Delta \psi_{n-1,k}(\sigma_{{n-1}}^i(\underline b))
	\frac{1}{m_1\cdots m_{n-1}m_n},
\end{split}\end{equation}
where the maximum is taken over all sequences $\underline b$ in the cylinder $ [b_1,\ldots,b_{m_n}]_{n-1}\subset(\cA_{n-1})^\bZ$ and the sum is taken over all $(b_1,\ldots,b_{m_n})\in (\cA _{n-1})^{m_n}$. The lower bound is analogous.  

By Proposition \ref{procor:notormenta} (3), it holds 
\begin{equation}\label{eq:k1}
	\frac{1}{\frakR_n}
	< \frac{1}{m_n \frakR_{n-1}} 
	 <\frac{1}{\frakR_n}+ \frac{1}{\frakR_n}\frac{L_2}{2^n}.
\end{equation}
We now estimate \eqref{eq:foooorr}, let us compare \eqref{eq:firstb} with \eqref{eq:first}.
Applying \eqref{eq:k1} and taking into consideration the bijective map $\Sub_{n,n-1}$  between $\cA_n$ and $(\cA _{n-1})^{m_n}$, together with $m_n\card\cA_{n-1}=\card\cA_n$, $m_n\max R_{n-1}<\max R_n$, and Claim \ref{clacor:ka3}, it follows
\[\begin{split}
	&\left\lvert \frac{1}{m_n\frakR_{n-1}}
		\sum_{b\in (\cA _{n-1})^{m_n}} 
		\max_{\underline b\in [b]_{n-1}} 
		\sum_{i=0}^{m_n-1}\Delta \psi_{n-1,k}(\sigma_{{n-1}}^i(\underline b))
	- \frac{1}{\frakR_n}
	\sum_{a\in\cA_n}\min_{\underline a\in[a]_n}\Delta\psi_{n,k}(\underline a)
	\right\rvert \\
	&\le \frac{1}{\frakR_n}\left\lvert 
		\sum_{b\in (\cA _{n-1})^{m_n}} 
		\max_{\underline b\in [b]_{n-1}} 
		\sum_{i=0}^{m_n-1}\Delta \psi_{n-1,k}(\sigma_{{n-1}}^i(\underline b))
	- \sum_{a\in\cA_n}\min_{\underline a\in[a]_n}\Delta\psi_{n,k}(\underline a)
	\right\rvert \\
	&\phantom{=}
	+ \frac{1}{\frakR_n}\frac{L_2}{2^n}\cdot
		m_n\card\cA_{n-1}\cdot m_n\max R_{n-1}\cdot\lVert\phi_k\rVert\\
	&\le \frac{1}{\frakR_n}
		\sum_{a\in \cA _n} 
	\left\lvert 		
		\max_{\underline a\in [a]_n} 
		\sum_{i=0}^{m_n-1}\Delta \psi_{n-1,k}(\sigma_{{n-1}}^i(\underline\Sub_{n,n-1}(\underline a)))
		- \min_{\underline a\in[a]_n}\Delta\psi_{n,k}(\underline a)
	\right\rvert \\
	&\phantom{=}
	+ \frac{1}{\frakR_n}\frac{L_2}{2^n}\cdot
		\card\cA_n\cdot \max R_n\cdot\lVert\phi_k\rVert\\
	&\le \frac{1}{\frakR_n} \card\cA_n\cdot L_2\frac{1}{2^n}\frakR_n
	+ \frac{\max R_n}{\frakR_n}\frac{L_2}{2^n}\card\cA_n\lVert\phi_k\rVert\\
	&\le m_1\cdots m_n \cdot C(k,n),
		\spac{where} 
	C(k,n)\eqdef\left\{
			L_2\frac{1}{2^n}
			+ L_2^2\frac{1}{2^n}\lVert\phi_k\rVert
		\right\},
\end{split}\]
where for the last estimate we used \eqref{eq:cardAn} $\card\cA_n=m_1\cdots m_n$ and Proposition \ref{procor:notormenta} (2). The analogous estimate holds exchanging $\max$ and $\min$. 

Clearly, $(C(k,n))_n$ is summable. 
Combining the estimates \eqref{eq:first} and \eqref{eq:firstb} of the integrals, it follows \eqref{eq:foooorr}. This finishes the proof.
\end{proof}

\subsection{Choice of quantifiers.}\label{sec:quantifiers}
Fix a continuous function $\phi\colon\Sigma_N\times\bS^1\to\bR$ and $\varepsilon\in(0,1/3)$. Choose $k_0\in\bN$ and $L_0=L_0(\phi,\varepsilon)\ge k_0+2$ such that for all $\ell\ge L_0$ it holds
\begin{equation}\label{eq:gettingclose}\begin{split}
	\lVert\phi_{k_0}-\phi\rVert
	<\varepsilon,\quad
	\frac{L_2}{2^{\ell-2}}\max\left\{1,\lVert\phi\rVert\right\}
	<\varepsilon,\quad
	2\lVert\phi\rVert\frac{1}{\ell}L_2
	<\varepsilon.
\end{split}\end{equation}
Hence, for every $n\ge L_0$ and $k\in\{1,\ldots,n\}$ the assertions in (I)--(III)  apply to the function $\phi_k$ and its lift $\psi_{n-1,k}=\phi_k\circ H_{n-1}$ and $m_n$.
By Lemma \ref{cla:convergence}, we can assume that $L_0\in\bN$ is large enough that for every $\ell\ge L_0$ it holds
\begin{equation}\label{eq:gettingclose-phi}
	\left\lvert\int\phi\,d\mu_{\ell-1}-\int\phi\,d\mu\right\rvert
	<\varepsilon.
\end{equation}

\subsection{Invoking assertions (I)--(II) to restate large deviation control.}\label{sec:restate}

Let us restate the estimate in item (I) in a more convenient way.   

\begin{lemma}[Controlled large deviation]\label{lemcla:invoke}
	For every $\ell\ge L_0+1$ there exists  a set 
\[
	A\subset(\cA_{\ell-1})^\bZ
	\spac{satisfying}
	\fb_{\ell-1}(A)
	>1-\varepsilon	
\]
so that for every $\underline b\in A$, $i=0,\ldots,m_{\ell}-1$, and $k\in\{1,\ldots,m_{\ell}\}$ we have
\begin{equation}\label{eq:applyLD}
	\Big\lvert\sum_{j=i}^{i+k-1}\left(\underline R_{\ell-1}(\sigma_{{\ell-1}}^j(\underline b))
		-\frakR_{\ell-1}\right)\Big\rvert
	<m_\ell\frac{1}{2^\ell}
	< m_\ell\varepsilon
\end{equation}
and for $\psi_{\ell-1}\eqdef \phi\circ H_{\ell-1}$ it holds
\begin{equation}\label{eq:regena}
	\Big\lvert
	\sum_{j=i}^{i+k-1}\left(\Delta\psi_{\ell-1}(\sigma_{{\ell-1}}^j(\underline b))
		-\int \Delta\psi_{\ell-1}\,d\fb_{\ell-1}\right)\Big\rvert
	< 2\varepsilon L_2 \frakR_\ell 
		+2\varepsilon k .
\end{equation}
\end{lemma}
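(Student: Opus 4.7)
\medskip

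The plan is to apply the large-deviation result of Proposition \ref{pro:LD} to the alphabet $\cA=\cA_{\ell-1}$, the roof $R=R_{\ell-1}$, and the potential $\psi_{\ell-1,k_0}=\phi_{k_0}\circ H_{\ell-1}$, using the specific value $k_0$ fixed in Section~\ref{sec:quantifiers}. By the inductive choice of $m_\ell$ in Section~\ref{sec:choicemn}, we have $m_\ell\ge N_0(\psi_{\ell-1,k_0},2^{-\ell})$. Hence Proposition \ref{pro:LD} produces a set $A\subset(\cA_{\ell-1})^\bZ$ with $\fb_{\ell-1}(A)>1-2^{-\ell}>1-\varepsilon$ (using $\ell\ge L_0+1$ and \eqref{eq:gettingclose}) on which, for every $\underline b\in A$, $i\in\{0,\ldots,m_\ell-1\}$, and $k\in\{1,\ldots,m_\ell\}$, both controlled large-deviation inequalities hold with $\varepsilon_{\rm prop}=2^{-\ell}$.

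The first inequality in the lemma is then immediate: Proposition~\ref{pro:LD} gives $|\sum_{j=i}^{i+k-1}(\underline R_{\ell-1}(\sigma_{\ell-1}^j\underline b)-\frakR_{\ell-1})|<m_\ell\cdot 2^{-\ell}$, which is $<m_\ell\varepsilon$ by the choice of $L_0$.

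For the second inequality, Proposition \ref{pro:LD} gives, for $\psi_{\ell-1,k_0}$,
\[
\Big|\sum_{j=i}^{i+k-1}\!\!\big(\Delta\psi_{\ell-1,k_0}(\sigma_{\ell-1}^j\underline b)-\int\Delta\psi_{\ell-1,k_0}\,d\fb_{\ell-1}\big)\Big|
<m_\ell\big(2\var_{\cA_{\ell-1}}(\Delta\psi_{\ell-1,k_0})+2^{-\ell}\big).
\]
Now Claim \ref{clacor:ka3} bounds $\var_{\cA_{\ell-1}}(\Delta\psi_{\ell-1,k_0})\le L_2\cdot 2^{-(\ell-1)}\frakR_{\ell-1}$. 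Combined with Proposition \ref{procor:notormenta}(3), which gives $m_\ell\frakR_{\ell-1}<\frakR_\ell$, and the trivial estimate $m_\ell\le\frakR_\ell$ (since $\frakR_{\ell-1}\ge 1$), the right-hand side is bounded by a quantity of the form $C\cdot L_2\cdot 2^{-\ell}\frakR_\ell$ for an absolute constant $C$, which by \eqref{eq:gettingclose} is $\le\varepsilon L_2\frakR_\ell$ provided $L_0$ is chosen large enough.

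It remains to bridge from $\psi_{\ell-1,k_0}$ to $\psi_{\ell-1}=\phi\circ H_{\ell-1}$. Since $\lVert\phi_{k_0}-\phi\rVert<\varepsilon$ by \eqref{eq:gettingclose}, we have $|\Delta\psi_{\ell-1}(\underline b)-\Delta\psi_{\ell-1,k_0}(\underline b)|\le\varepsilon\underline R_{\ell-1}(\underline b)$ and $|\int\Delta\psi_{\ell-1}-\int\Delta\psi_{\ell-1,k_0}|\le\varepsilon\frakR_{\ell-1}$; summing over $k$ indices and using the first inequality to control $\sum_{j=i}^{i+k-1}\underline R_{\ell-1}(\sigma_{\ell-1}^j\underline b)\le k\frakR_{\ell-1}+m_\ell 2^{-\ell}$, the resulting error from replacing $k_0$ by $\phi$ is at most $2\varepsilon k\frakR_{\ell-1}+\varepsilon m_\ell 2^{-\ell}$, where the second summand is absorbed into the $\varepsilon L_2\frakR_\ell$ term and the first yields the $2\varepsilon k$ contribution after using $\frakR_{\ell-1}$-bounds. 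The main bookkeeping obstacle is precisely this last step: one has to track how the $\frakR_{\ell-1}$ factor arising from the potential-replacement interacts with the $k$-term and the $L_2\frakR_\ell$-term without inflating the constants, using Proposition \ref{procor:notormenta}(2)--(3) to keep $m_\ell\frakR_{\ell-1}$ under $\frakR_\ell$ and the smallness of $2^{-\ell}$ relative to $\varepsilon$.
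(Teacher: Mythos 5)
Your plan is the paper's plan: invoke Proposition \ref{pro:LD} with alphabet $\cA_{\ell-1}$, roof $R_{\ell-1}$, potential $\psi_{\ell-1,k_0}$, and error parameter $2^{-\ell}$, use the inductive choice $m_\ell\ge N_0(\psi_{\ell-1,k_0},2^{-\ell})$ to justify the application, bound $\var_{\cA_{\ell-1}}(\Delta\psi_{\ell-1,k_0})$ via Claim \ref{clacor:ka3} and Proposition \ref{procor:notormenta}(3), and finally trade $\phi_{k_0}$ for $\phi$ using $\lVert\phi_{k_0}-\phi\rVert<\varepsilon$ from \eqref{eq:gettingclose}. Everything up to the swap is correct and matches the paper.

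The issue is precisely the step you hesitate on. You correctly compute that the swap produces an error of order $2\varepsilon k\frakR_{\ell-1}$, not $2\varepsilon k$: since $\Delta\psi(\underline b)$ sums $\psi$ along a fiber of height $\underline R_{\ell-1}(\underline b)$, the sup estimate $\lVert\psi_{\ell-1}-\psi_{\ell-1,k_0}\rVert<\varepsilon$ inflates to $\lvert\Delta\psi_{\ell-1}(\underline b)-\Delta\psi_{\ell-1,k_0}(\underline b)\rvert\le\varepsilon\,\underline R_{\ell-1}(\underline b)$ and $\lvert\int(\Delta\psi_{\ell-1}-\Delta\psi_{\ell-1,k_0})\,d\fb_{\ell-1}\rvert\le\varepsilon\frakR_{\ell-1}$. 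There is no ``$\frakR_{\ell-1}$-bound'' that eliminates this factor: $\frakR_{\ell-1}\ge\min R_{\ell-1}\ge m_{\ell-1}\cdots m_1\min R_0$ grows with $\ell$, so $2\varepsilon k\frakR_{\ell-1}$ is far larger than $2\varepsilon k$. Your final sentence asserting the first summand ``yields the $2\varepsilon k$ contribution'' is false, and you cannot reach the bound exactly as stated.

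You should be aware that this discrepancy is inherited from the paper: its proof of \eqref{eq:regena} likewise only invokes $\lVert\psi_{\ell-1,k_0}-\psi_{\ell-1}\rVert<\varepsilon$ and asserts ``hence we obtain \eqref{eq:regena}'' without accounting for the $\underline R_{\ell-1}$-factor hidden in $\Delta\psi$. What the argument actually yields is a bound of the form $C\varepsilon L_2\frakR_\ell+2\varepsilon k\frakR_{\ell-1}$. This is harmless downstream, since the lemma is used only with $k\le m_\ell$ and $2\varepsilon m_\ell\frakR_{\ell-1}<2\varepsilon\frakR_\ell$ by Proposition \ref{procor:notormenta}(3), so the corrected bound is still $O(\varepsilon\frakR_\ell)$ and merely changes the constant $C_0$ in Main Lemma \ref{mlemmapro:proofpro}. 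But you should state and carry the $\frakR_{\ell-1}$-factor (or simply write the bound as $C'\varepsilon\frakR_\ell$) rather than claim $2\varepsilon k$.
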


\begin{proof}
As $\ell-1\ge L_0$, by assertion (I), Proposition \ref{pro:LD} applied to $\psi_{\ell-1,k_0}$, $2^{-\ell}<\varepsilon$, $\cA_{\ell-1}$, and $R_{\ell-1}$ let us control large deviation. More precisely it provides a set $A\subset (\cA_{\ell-1})^\bZ$ satisfying $\fb_{\ell-1}(A)>1-\varepsilon$ such that for every $\underline b\in A$, $i=0,\ldots,m_\ell-1$, and $k\in\{1,\ldots,m_\ell-1\}$ we have 
\[
	\Big\lvert
	\sum_{j=i}^{i+k-1}
		\left(\Delta\psi_{\ell-1,k_0}(\sigma_{{\ell-1}}^j(\underline b))
		-\int \Delta\psi_{\ell-1,k_0}\,d\fb_{\ell-1}\right)\Big\rvert
	<m_\ell(2\var_{\cA_{\ell-1}}(\Delta\psi_{\ell-1,k_0})+\varepsilon).	
\]
To estimate the right hand side, we apply Claim \ref{clacor:ka3} and Proposition \ref{procor:notormenta} (3). It follows 
\[\begin{split}
	2m_\ell\var_{\cA_{\ell-1}}(\Delta\psi_{\ell-1,k_0})
	&\le m_\ell L_2\frac{1}{2^{\ell-2}}\frakR_{\ell-1}
	< \frakR_\ell\frac{L_2}{2^{\ell-2}}.
\end{split}\]
Analogously, using also $m_\ell<m_\ell\max R_{\ell-1}<L_2\frakR_\ell$ and then \eqref{eq:gettingclose}, it follows
\[
	\Big\lvert
	\sum_{j=i}^{i+k-1}\left(\Delta\psi_{\ell-1,k_0}(\sigma_{{\ell-1}}^j(\underline b))
		-\int \Delta\psi_{\ell-1,k_0}\,d\fb_{\ell-1}\right)\Big\rvert
	< 2\varepsilon L_2 \frakR_\ell .
\]
Finally, to substitute the approximating function $\psi_{\ell-1,k_0}$ by $\psi_{\ell-1}$, using the first estimate in \eqref{eq:gettingclose} we get
$
	\lVert \psi_{\ell-1,k_0}-\psi_{\ell-1}\rVert
	<\varepsilon
$
and hence we obtain \eqref{eq:regena}.
\end{proof}

\subsection{Transporting good orbits from $\fS_{\ell-1}$ to $\fS_n$.}
\label{sec:separating}
We now study appropriate subsets of the principal 
part $\fR_n$ of the suspension space $\fS_n$. 
We invoke Lemma \ref{lemcla:invoke} on level $\ell-1$ to control large deviations on certain orbits and ``transport'' them to level $n$.
Note that if $\va$ is an $(\ell,n)$-address, then $0\va$ is an $(\ell-1,n)$-address. 
Given $A\subset(\cA_{\ell-1})^\bZ$ as in Lemma \ref{lemcla:invoke}, recalling the definition of the map $L_n^{(\ell-1,0\va)}$ in \eqref{eq:Lnellalpha}, let
\[
	A_n^{(\ell-1,0\va)}
	\eqdef L_n^{(\ell-1,0\va)}(A\times\{0\})
	= P_{n,\ell-1}^{-1}(A\times\{0\})
	\subset \fG_n^{(\ell-1,0\va)}
	=\fG_n^{(\ell,\va)}.
\]
Recall the definition of the measure $\fb_n^{(\ell,\va)}$ in  \eqref{eq:defbnellva}. 
By Lemma \ref{lem:defBerinmflo}, the Bernoulli measure $\fb_{\ell-1}$ lifts naturally to its copy $\fb_n^{(\ell-1,0\va)}$ and it holds
\begin{equation}\label{eq:analogously}\begin{split}
	\fb_n^{(\ell-1,0\va)}(A_n^{(\ell-1,0\va)})
	&= (\fb_{\ell-1}\circ\fp_{\ell-1}\circ P_{n,\ell-1}) (P_{n,\ell-1}^{-1}(A\times\{0\}))
	= \fb_{\ell-1}(A)\\
	&>1-\varepsilon.
\end{split}\end{equation}

Let us consider the following set of addresses
\begin{equation}\label{eq:grasssch}\begin{split}
	\mathfrak A_n^\ell
	\eqdef \big\{\va=(\mathrm a_\ell,\ldots,\mathrm a_{n-1})\colon
	 &\mathrm a_\ell\in\{0,\ldots,m_{\ell+1}-2\},\\
	 &\mathrm a_k\in\{0,\ldots,m_{k+1}-1\}\text{ for }k\ne\ell\big\}
\end{split}\end{equation}	 
to which our following arguments can be applied. This restriction on $\mathrm a_\ell$ will be explained in the beginning of the proof of Main Lemma \ref{mlemmapro:proofpro} in Section \ref{sec:MainLemma}. Given $\va\in\mathfrak A_n^\ell$, consider all points whose orbits start in $A_n^{(\ell-1,0\va)}$ and pass through the corresponding ``good set'' $A_n^{(\ell-1,0\va+1_\ell)}$ in the \emph{adjacent} intermediate floor:
\begin{equation}\label{eq:Iwill}
	B_n^{(\ell-1,0\va)}
	\eqdef \big\{\zeta\in A_n^{(\ell-1,0\va)}\colon \Phi_n^{\underline R_\ell\circ P_{n,\ell}}(\zeta)\in
		A_n^{(\ell-1,0\va+1_\ell)}\big\}
	\subset\fL_n^{(\ell,\va)}	.
\end{equation}
Indeed, by our restriction on the index $\mathrm a_\ell$ in address $\va$ this adjacent address $0\va+1_\ell$ is admissible. Analogously to \eqref{eq:analogously}, replacing $0\va$ by $0\va+1_\ell$, it holds
\[
	\fb_n^{(\ell-1,0\va+1_\ell)}(A_n^{(\ell-1,0\va+1_\ell)})
	>1-\varepsilon.
\]
Applying again Lemma \ref{lem:defBerinmflo}, the following holds.
\begin{claim}\label{cla:groundinte}
	For every $\va\in\mathfrak A_n^\ell$, it holds $\fb_n^{(\ell-1,0\va)}(B_n^{(\ell-1,0\va)})>1-2\varepsilon$.
\end{claim}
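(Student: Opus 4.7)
The plan is to show that $\Phi_n^{\underline R_\ell\circ P_{n,\ell}}$ transports the measure $\fb_n^{(\ell-1,0\va)}$ on the intermediate floor $\fG_n^{(\ell-1,0\va)}$ to the measure $\fb_n^{(\ell-1,0\va+1_\ell)}$ on the adjacent intermediate floor $\fG_n^{(\ell-1,0\va+1_\ell)}$. Once this measure-theoretic equivariance is in hand, the conclusion is immediate from the union bound, since both $A_n^{(\ell-1,0\va)}$ and $(\Phi_n^{\underline R_\ell\circ P_{n,\ell}})^{-1}(A_n^{(\ell-1,0\va+1_\ell)})$ then have $\fb_n^{(\ell-1,0\va)}$-measure exceeding $1-\varepsilon$ by \eqref{eq:analogously}.

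First I would note that $\fG_n^{(\ell-1,0\va)}=\fG_n^{(\ell,\va)}$ and $\fG_n^{(\ell-1,0\va+1_\ell)}=\fG_n^{(\ell,\va+1_\ell)}$ by the equivalence of addresses, and the restriction $\mathrm a_\ell\le m_{\ell+1}-2$ built into $\mathfrak A_n^\ell$ (see \eqref{eq:grasssch}) guarantees that $\va+1_\ell$ is an admissible $(\ell,n)$-address. Next I would verify the key geometric fact: for every $\underline a\in(\cA_n)^\bZ$, the point $(\underline a,s_n^{(\ell,\va)}(\underline a))$ in $\fG_n^{(\ell,\va)}$ is carried by $\Phi_n^{\underline R_\ell\circ P_{n,\ell}}$ to $(\underline a,s_n^{(\ell,\va+1_\ell)}(\underline a))\in\fG_n^{(\ell,\va+1_\ell)}$, staying inside the same fiber over $\underline a$. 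This follows from Lemma~\ref{lem:hund} together with the identity
\[
s_n^{(\ell,\va+1_\ell)}(\underline a)-s_n^{(\ell,\va)}(\underline a)
=R_\ell(\varsigma_n^{(\ell,\va)}(\underline a))
=\underline R_\ell\big(P_{n,\ell}(\underline a,s_n^{(\ell,\va)}(\underline a))\big),
\]
where the last equality uses that $P_{n,\ell}$ sends the intermediate floor $\fG_n^{(\ell,\va)}$ into the ground floor of $\fS_\ell$ with symbolic coordinate $\sigma_\ell^{k(n,\ell,\va)}(\underline\Sub_{n,\ell}(\underline a))$, whose first symbol is precisely $\varsigma_n^{(\ell,\va)}(\underline a)$.

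From this fiber-preservation I would deduce $\fp_n\circ\Phi_n^{\underline R_\ell\circ P_{n,\ell}}=\fp_n$ on $\fG_n^{(\ell,\va)}$. Combined with the identity $\fb_n^{(\ell,\va')}=\fb_n\circ\fp_n|_{\fG_n^{(\ell,\va')}}$ from the very definition \eqref{eq:defbnellva} (applied to both $\va'=\va$ and $\va'=\va+1_\ell$), this gives
\[
(\Phi_n^{\underline R_\ell\circ P_{n,\ell}})_\ast\fb_n^{(\ell-1,0\va)}
=\fb_n^{(\ell-1,0\va+1_\ell)}.
\]
Hence $\fb_n^{(\ell-1,0\va)}\big((\Phi_n^{\underline R_\ell\circ P_{n,\ell}})^{-1}(A_n^{(\ell-1,0\va+1_\ell)})\big)=\fb_n^{(\ell-1,0\va+1_\ell)}(A_n^{(\ell-1,0\va+1_\ell)})>1-\varepsilon$ by \eqref{eq:analogously} applied to the address $0\va+1_\ell$. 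Together with $\fb_n^{(\ell-1,0\va)}(A_n^{(\ell-1,0\va)})>1-\varepsilon$, the intersection $B_n^{(\ell-1,0\va)}$ has measure at least $1-2\varepsilon$, as claimed.

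The only subtle point is really the fiber-preservation in the second paragraph; everything else is bookkeeping with the notation from Section~\ref{sec:susmodint}. I do not anticipate any genuine obstacle, since Lemma~\ref{lem:hund} already packages the arithmetic of intermediate-floor positions in exactly the form needed.
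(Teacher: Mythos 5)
Your proof is correct and takes the same route the paper intends; the paper's entire proof of this claim is the one-line remark ``Applying again Lemma~\ref{lem:defBerinmflo},'' and your fiber-preservation computation (showing $\fp_n\circ\Phi_n^{\underline R_\ell\circ P_{n,\ell}}=\fp_n$ on $\fG_n^{(\ell,\va)}$ via Lemma~\ref{lem:hund} and then pushing forward $\fb_n^{(\ell-1,0\va)}$ to $\fb_n^{(\ell-1,0\va+1_\ell)}$) is precisely the content hidden in that citation. The admissibility check for $\va+1_\ell$ and the closing union bound match the paper's setup exactly.
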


Consider the set of points in the substrip $\fL_n^{(\ell,\va)}$ whose orbit starts in $B_n^{(\ell-1,0\va)}$,
\[
	C_n^{(\ell,\va)}
	\eqdef \big\{\Phi_n^k(\zeta)\colon
	\zeta\in B_n^{(\ell-1,0\va)}, k\in\bN_0\big\}
	\cap \fL_n^{(\ell,\va)}.
\]
As the strips $\fL_n^{(\ell,\va)}$ are pairwise disjoint for different $(\ell,n)$-addresses, these sets are also pairwise disjoint.

\begin{mlemma}\label{mlemmapro:proofpro}
There are constants $C=C(\phi)>1$ and $L_0'\ge L_0(\phi,\varepsilon)$ such that for every $\ell\ge L_0'$, $n\ge \ell+1$, and $(\ell,n)$-address $\va\in\mathfrak A_n^\ell$  it holds
\begin{itemize}
\item[(i)] {\rm(Birkhoff averages)} for every $\zeta \in C_n^{(\ell,\va)}$
\[
	\Big\lvert\frac{1}{\frakR_\ell}
		\sum_{s=0}^{\frakR_\ell-1}\psi_n(\Phi_n^s(\zeta))\big)
	- \int \phi\,d\mu\Big\rvert
	<C\varepsilon,
\]
\item[(ii)] {\rm(Expected roof functions)} for every $\zeta\in B_n^{(\ell-1,0\va)}$
\[
	\Big\lvert
	\sum_{i=0}^{m_\ell-1}
	\big(\underline R_{\ell-1}\circ\sigma_{{\ell-1}}^i\circ \fp_{\ell-1}\circ 
			P_{n,\ell-1}\big)(\zeta)
	- m_\ell\frakR_{\ell-1} \Big\rvert
	<m_\ell \varepsilon	.
\]	
\end{itemize}
\end{mlemma}

We postpone the proof of Main Lemma \ref{mlemmapro:proofpro} to Section \ref{sec:MainLemma}.

\subsection{End of the proof of Proposition \ref{prothe:suspflow}.}\label{sec:endofproof}
%
Let $\ell\ge L_0'$, $n\ge\ell+1$, and $\va\in\mathfrak A_n^\ell$. By  Main Lemma \ref{mlemmapro:proofpro},   every point in the set $C_n^{(\ell,\va)}$ satisfies the claimed approximation property of its $\frakR_\ell$-Birkhoff sum.
 To finish the proof of the proposition, we need to show that this set has large $\lambda_n$-measure.  
\begin{lemma}
	For every $\ell\ge L_0'$ sufficiently large and $n\ge\ell+1$ it holds
\[
	\lambda_n\Big(\cupdot_{\va\in\mathfrak A_n^\ell} C_n^{(\ell,\va)}\Big)
	>1-3\varepsilon.
\]	
\end{lemma}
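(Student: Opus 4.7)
The plan is to decompose $\fS_n=\fR_n^{(\ell)}\,\smallcupdot\,\,\fT_n^{(\ell)}$ from~\eqref{eq:fodase}, further break the principal part into strips, and then verify that (a) the tail part is negligible, (b) the restriction to $\va\in\mathfrak A_n^\ell$ loses only a vanishing proportion, and (c) inside each admissible strip, the ``first visits'' of orbits issued from $B_n^{(\ell-1,0\va)}$ already fill up most of it.

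First I would compute $\lambda_n(\fL_n^{(\ell,\va)})=\frakR_\ell/\frakR_n$ for any $\va\in W_n^{(\ell)}$. The fiber of $\fL_n^{(\ell,\va)}$ over $\underline a\in(\cA_n)^\bZ$ has length $R_\ell(\varsigma_n^{(\ell,\va)}(\underline a))$ by~\eqref{eq:defintflo} and Lemma~\ref{lemeq:bijima}; since $\underline\Sub_{n,\ell}$ pushes $\fb_n$ to the Bernoulli measure $\fb_\ell$ (Remark~\ref{rem:isomorphisms}), integration against $\fb_n$ yields $\frakR_\ell$. Summing over the $m_{\ell+1}\cdots m_n$ addresses in $W_n^{(\ell)}$ and applying Proposition~\ref{procor:notormenta}(3), I obtain $\lambda_n(\fR_n^{(\ell)})\ge 1-L_2\,2^{-\ell}$, so $\lambda_n(\fT_n^{(\ell)})<\varepsilon$ for $\ell\ge L_0'$ large. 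The discarded addresses (those with $\mathrm a_\ell=m_{\ell+1}-1$) form a $1/m_{\ell+1}$-fraction, which is $<\varepsilon$ because $(m_n)_n$ is strictly increasing; therefore $\sum_{\va\in\mathfrak A_n^\ell}\lambda_n(\fL_n^{(\ell,\va)})\ge 1-2\varepsilon$.

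Next, for $\va\in\mathfrak A_n^\ell$ the admissibility of $\va+1_\ell$ lets me argue that every $\zeta\in B_n^{(\ell-1,0\va)}\subset\fG_n^{(\ell,\va)}$ has its first $R_\ell(\varsigma_n^{(\ell,\va)}(\underline a))$ iterates under $\Phi_n$ remaining in the strip (they reach the adjacent intermediate floor $\fG_n^{(\ell,\va+1_\ell)}$ only at the next step). These ``first visits'' lie in $C_n^{(\ell,\va)}$ by definition. Their complement in $\fL_n^{(\ell,\va)}$ is exactly
\[
  \fL_n^{(\ell,\va)}\cap\fp_n^{-1}\!\big((\cA_n)^\bZ\setminus\fp_n(B_n^{(\ell-1,0\va)})\big).
\]
By Claim~\ref{cla:groundinte} together with Lemma~\ref{lem:defBerinmflo}, $(\cA_n)^\bZ\setminus\fp_n(B_n^{(\ell-1,0\va)})$ has $\fb_n$-measure $<2\varepsilon$. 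Combining this with $\max R_\ell\le L_2\frakR_\ell$ from Proposition~\ref{procor:notormenta}(2), I would conclude
\[
  \lambda_n(\fL_n^{(\ell,\va)}\setminus C_n^{(\ell,\va)})\le 2L_2\varepsilon\cdot\lambda_n(\fL_n^{(\ell,\va)}).
\]
Summing over $\va\in\mathfrak A_n^\ell$ and combining with the previous paragraph gives
\[
  \lambda_n\Big(\cupdot_{\va\in\mathfrak A_n^\ell}C_n^{(\ell,\va)}\Big)\ge(1-2L_2\varepsilon)(1-2\varepsilon),
\]
from which the announced bound $>1-3\varepsilon$ follows upon absorbing the factor $L_2$ into the original choice of $\varepsilon$ fixed in Section~\ref{sec:quantifiers} (i.e., one begins the proof of Proposition~\ref{prothe:suspflow} with $\varepsilon$ replaced by $\varepsilon/(2L_2+2)$, which is a harmless rescaling of~\eqref{eq:gettingclose}).

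The main obstacle will be the bookkeeping in the third paragraph: precisely identifying $\fL_n^{(\ell,\va)}\setminus(\text{first visits})$ with a product cylinder set and carefully exploiting $\fb_n^{(\ell-1,0\va)}=\fb_n\circ\fp_n|_{\fG_n^{(\ell-1,0\va)}}$ to transfer the measure estimate from Claim~\ref{cla:groundinte} into a bound on $\lambda_n$. This is cleanly handled by combining the suspension-space structure of Section~\ref{sec:susmodint} with the Bernoulli-invariance arguments of Section~\ref{ssec:bermea}; the rest of the argument is routine bookkeeping with the estimates from Proposition~\ref{procor:notormenta}.
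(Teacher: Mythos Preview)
Your argument is correct and delivers the lemma with the constant $3$ replaced by $2L_2+2$; the rescaling you propose is legitimate for the purposes of Proposition~\ref{prothe:suspflow}, though it does not literally yield the lemma as stated. One small slip: the complement of the ``first visits'' is only \emph{contained in} $\fL_n^{(\ell,\va)}\cap\fp_n^{-1}\big((\cA_n)^\bZ\setminus\fp_n(B_n^{(\ell-1,0\va)})\big)$, not equal to it (later returns of the $\Phi_n$-orbit may also land in the strip), but this is harmless since you only need an upper bound.

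The paper's route is different in one structural respect. Instead of bounding the complement via the crude inequality $\max R_\ell\le L_2\frakR_\ell$, the paper computes $(\fb_n\times\mathfrak m)(C_n^{(\ell,\va)})$ from below by integrating the \emph{actual} orbit length $\sum_{i=0}^{m_\ell-1}\underline R_{\ell-1}(\sigma_{\ell-1}^i(\cdot))$ over $B_n^{(\ell-1,0\va)}$ and invoking Main Lemma~\ref{mlemmapro:proofpro}(ii) to estimate this sum by $m_\ell(\frakR_{\ell-1}-\varepsilon)$. This is why item~(ii) of the Main Lemma is stated at all; your approach bypasses it entirely. The payoff for the paper is the sharper bound $(1-2\varepsilon)(1-\varepsilon)>1-3\varepsilon$ once $\ell$ is large, with no residual $L_2$ factor. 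Your approach is cleaner conceptually (no need for the large-deviation control of roof-function sums) but pays for it with the extra constant.
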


\begin{proof}
In order to estimate the $\lambda_n$-measure of the union of all such points,  first note that
\[\begin{split}
	&(\fb_n\times\mathfrak m)
	\Big(\cupdot_{\va\in\mathfrak A_n^\ell} C_n^{(\ell,\va)}\Big)
	= \sum_{\va\in\mathfrak A_n^\ell} (\fb_n\times\mathfrak m)(C_n^{(\ell,\va)})\\
	{\tiny{\text{using \eqref{eq:defbnellva}}}}\quad
	&= \sum_{\va\in\mathfrak A_n^\ell} 
		\int_{B_n^{(\ell-1,0\va)}}
		\mathfrak m\big(C_n^{(\ell,\va)}\cap (\{\zeta\}\times\bN)\big)\,d\fb_n^{(\ell-1,0\va)}(\zeta)\\
	{\tiny{\text{by Lemma \ref{lem:defBerinmflo}}}}\quad&\\
	= \sum_{\va\in\mathfrak A_n^\ell} 
		\int_{B_n^{(\ell-1,0\va)}}\sum_{i=0}^{m_\ell-1}&
		(\underline R_{\ell-1}\circ \sigma_{{\ell-1}}^i \circ\fp_{\ell-1}\circ 
			P_{n,\ell-1})(\zeta)
		\,d\big(\fb_{\ell-1}\circ(\fp_{\ell-1}\circ 
			P_{n,\ell-1})\big)(\zeta)\\
	{\tiny{\text{Main Lemma \ref{mlemmapro:proofpro}}}}\quad		
	&> \sum_{\va\in\mathfrak A_n^\ell}\fb_n^{(\ell-1,0\va)}(B_n^{(\ell-1,0\va)}) 
		\cdot m_\ell(\frakR_{\ell-1}-\varepsilon)\\
	{\tiny{\text{by Claim	\ref{cla:groundinte}}}}\quad
	&= \card\mathfrak A_n^\ell\cdot
		(1-2\varepsilon)\cdot m_\ell(\frakR_{\ell-1}-\varepsilon)\\
	{\tiny{\text{by \eqref{eq:grasssch}}}}\quad	
	&=  m_n\cdots m_{\ell+2}(m_{\ell+1}-1) \cdot(1-2\varepsilon)\cdot m_\ell
			(\frakR_{\ell-1}-\varepsilon)\\
	&>  m_n\cdots m_\ell \frakR_{\ell-1}
		\big(1-\frac{1}{m_{\ell+1}})\cdot(1-2\varepsilon\big)
		(1-\varepsilon).		
\end{split}\]
On the other hand, by \eqref{eq:fodase},
\[\begin{split}
	(\fb_n\times\mathfrak m)(\fS_n)
	&= (\fb_n\times\mathfrak m)(\fR_n^{(\ell-1)}\,\smallcupdot\, \fT_n^{(\ell-1)})
	= (\fb_n\times\mathfrak m)(\fR_n^{(\ell-1)})+ 
		(\fb_n\times\mathfrak m)(\fT_n^{(\ell-1)})\\
	&= \sum_\va \sum_{j=0}^{m_\ell-1}
		(\fb_n\times\mathfrak m)(\fL_n^{(\ell-1,j\va)})
		+(\fb_n\times\mathfrak m)(\fT_n^{(\ell-1)}),
\end{split}\]
where here the sum is taken over all $(\ell,n)$-addresses $\va$.
To estimate $(\fb_n\times\mathfrak m)(\fT_n^{(\ell-1)})$, recall Remark \ref{rem:tails} about the length of tails added at each step. To simplify the estimate, as the formal localization of the intermediate floors where tails are added is rather involved, we use  again Lemma \ref{lem:defBerinmflo} to ``move between the measures'' on intermediate floors. Together with Proposition \ref{procor:notormenta} (4) we get
\[
	(\fb_n\times\mathfrak m)(\fT_n^{(\ell-1)})
	= \sum_{k=\ell}^nm_n\cdots m_k\int\lvert\underline\ft_{k-1}\rvert\,d\fb_{k-1}
	\le L_2\sum_{k=\ell}^n m_n\cdots m_k\frakR_{k-1}\frac{1}{2^{k-1}}.
\]
Since by Proposition \ref{procor:notormenta} (3) it holds 
\[
	\frakR_{k-1}<(1+L_2\frac{1}{2^{k-1}})m_{k-1}\frakR_{k-2},
\]	
 it follows	
\[\begin{split}
	(\fb_n\times\mathfrak m)(\fT_n^{(\ell-1)})
	&< L_2\sum_{k=\ell}^n m_n\cdots m_\ell\frakR_{\ell-1}\frac{1}{2^{k-1}}
		\cdot \prod_{j=\ell+1}^{k}(1+L_2\frac{1}{2^{j-1}})\\
	&\le \frac{4L_2}{2^\ell} m_n\cdots m_{\ell}\frakR_{\ell-1}
		\cdot e^{2L_2/2^\ell}.
\end{split}\]
On the other hand, using analogous estimates for the principal (that is, nontail) part
\[\begin{split}
	(\fb_n\times\mathfrak m)(\fR_n^{(\ell-1)})
	&= \sum_\va\sum_{j=0}^{m_\ell-1}
		\int (\underline R_{\ell-1}\circ \fp_{\ell-1}\circ P_{n,\ell-1}) \,d\fb_n^{(\ell-1,j\va)}\\
	&\le \sum_\va\sum_{j=0}^{m_\ell-1}\frakR_{\ell-1}
	= ( m_n\cdots m_{\ell+1})\cdot m_\ell \cdot\frakR_{\ell-1}.
\end{split}\]
Putting together the previous estimates, we obtain
\[
	\lambda_n\Big(\bigcup_{\va\in\mathfrak A_n^\ell} C_n^{(\ell,\va)}\Big)
	= \frac{(\fb_n\times\mathfrak m)\big(\bigcup_{\va\in\mathfrak A_n^\ell} C_n^{(\ell,\va)}\big)}
		{(\fb_n\times\mathfrak m)(\fR_n^{(\ell-1)}\cup\fT_n^{(\ell-1)})}
	\ge
	 \frac{(1-\frac{1}{m_{\ell+1}})(1-2\varepsilon)(1-\varepsilon)}
		{1+4L_22^{-\ell}e^{2L_2/2^\ell}}.
\]
To conclude the proof of the lemma, it suffices to take $L_0'\ge L_0$  sufficiently large.
\end{proof}

To conclude the proof of Proposition \ref{prothe:suspflow} it remains to prove Main Lemma \ref{mlemmapro:proofpro}.

\subsection{Proof of Main Lemma \ref{mlemmapro:proofpro}}\label{sec:MainLemma}

Before starting the proof, let us sketch its mains steps.
The elements of the alphabet $\cA_n$ are obtained by concatenating elements on the lower level $\ell-1$. The substitution map $\Sub_{n,\ell-1}$ translates between $\cA_n$ and $\cA_{\ell-1}$.
Accordingly, words in the collection $\cB_n$ are obtained by our repeat-and-tail procedure applied to words on each lower level, in particular on level $\ell-1$. 
By construction, the images of good orbit pieces under the factor $H_{\ell-1}$ are sufficiently close to the images of their counterparts under the factor $H_n$. By implementing controlled distortion, this allows us to compare the finite Birkhoff sums of  $\phi$ of their lifts  on $\fS_{\ell-1}$ with their counterparts on $\fS_n$. 

The large deviation result Proposition \ref{pro:LD}  was obtained for (at most $m_\ell$) consecutive Birkhoff sums on level $\ell-1$. This corresponds to taking concatenated orbit pieces on consecutive substrips $\fL_n^{(\ell-1,j\va)}$ that stretch over two adjacent strips $\fL_n^{(\ell,\va)}$ and $\fL_n^{(\ell,\va+1_\ell)}$. Together they will form an orbit piece of close-to-expected length $\frakR_\ell$. 
This now explains our choice of addresses $\mathfrak A_n^\ell$ in \eqref{eq:grasssch}:  if we started from inside the last strip, $a_\ell=m_{\ell+1}-1$, then what follows after it is not the next strip but the tail.

Fix some $(n,\ell)$-address $\va\in\mathfrak A_n^\ell$. 
Consider a point
\begin{equation}\label{eq:interscetionhyp}
	\gamma\in B_n^{(\ell-1,0\va)}
	\subset\fG_n^{(\ell,\va)}.
\end{equation}	 
We will show that item (ii) in Main Lemma is true for every such $\gamma$ (Lemma \ref{cla:secondestimateML}). We also show that item (i) in Main Lemma holds for every point in the slice which is ``in the same fiber'' of the suspension space as $\gamma$, that is, for every 
\[
	\zeta
	\in \cL_n^{(\ell,\va)}
	\spac{so that}
	\fp_n(\zeta)
	=\fp_n(\gamma).
\]
To prove the lemma, in Step 2 we first consider $\zeta$ in some intermediate floor, see Lemma \ref{cla:secondestimateML1a}. The general case is concluded in Step 3, see Lemma \ref{cla:secondestimateML1b}.  But first in Step 0 we fix some notation and in Step 1 we implement Section \ref{sec:internal}.

\smallskip\noindent\textbf{Step 0: Auxiliary codification of orbits.}
For the following see Figure \ref{fig.40}.
For $\gamma$ as in \eqref{eq:interscetionhyp}, let 
\[
	\underline a
	\eqdef \fp_n(\gamma)
	\in(\cA_n)^\bZ.
\]
Recalling Lemma \ref{lem:hund} which expresses the unique point of the intersection of $\underline a$-fiber with the intermediate floor $\fG_n^{(\ell,\va)}$,  for $j=0,\ldots,m_\ell-1$ write
\begin{equation}\label{eq:defzetai}
	\zeta_j
	\eqdef (\underline a,s_n^{(\ell-1,j\va)}(\underline a))
	\in \fG_n^{(\ell-1,j\va)}
	\subset \fL_n^{(\ell,\va)}
	\subset \fS_n.
\end{equation}
Note that 
\[
	\zeta_0
	= (\underline a,s_n^{(\ell-1,0\va)}(\underline a))
	= (\underline a,s_n^{(\ell,\va)}(\underline a))
	= \gamma.
\]
Note that every $\zeta_j$ is in the orbit of $\zeta_0=\gamma$ (with respect to the suspension map $\Phi_n$). Analogously, choose points on intermediate floors within the adjacent slice addressed by $\va+1_\ell$ by letting
\[
	 w_j
	\eqdef (\underline a,s_n^{(\ell-1,j(\va+1_\ell))}(\underline a))
	\in \fG_n^{(\ell-1,j(\va+1_\ell))}
	\subset\fL_n^{(\ell,\va+1_\ell)}
\]
and note that $ w_j$ is also on the orbit of $\gamma$. 

\bigskip
\begin{figure}[h] 
 \begin{overpic}[scale=0.90]{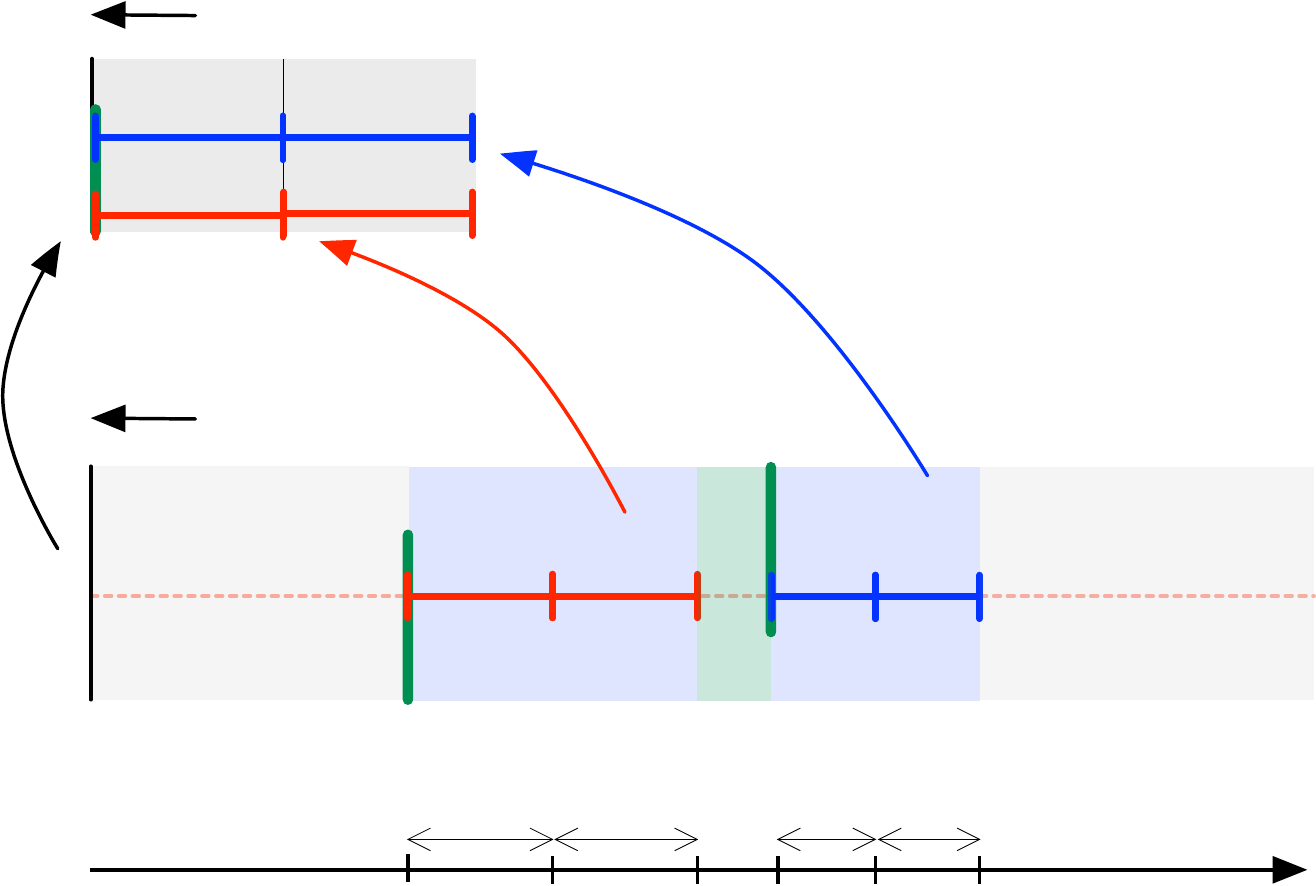}
 	\put(-2,47){{\rotatebox{90}{\small$\big((\cA_{\ell-1})^\bZ,\fb_{\ell-1}\big)$}}}
 	\put(-2,16){\small{\rotatebox{90}{$\big((\cA_n)^\bZ,\fb_n\big)$}}}
	\put(3,22){\small{$\underline a$}}
	\put(3,51){\small{$\underline b$}}
	\put(3,58){\small{$\underline c$}}
	\put(17,65.5){{\small$\fp_{\ell-1}$}}
	\put(17,35){{\small$\fp_n$}}
 	\put(96,15){{\small$\fS_n$}}
 	\put(47,15){{\small$\fL_n^{(\ell,\va)}$}}
 	\put(63,15){{\small$\fL_n^{(\ell,\va+1_\ell)}$}}
	\put(32,10){{\rotatebox{90}{\small\textcolor{green}{$A_n^{(\ell-1,0\va)}$}}}}
	\put(59.5,27){{\rotatebox{90}{\small\textcolor{green}{$A_n^{(\ell-1,0\va+1_\ell)}$}}}}
	\put(54.5,16){{\rotatebox{90}{\small{tail}}}}
	\put(8,52){{\small$\zeta_0'$}}
	\put(22.5,52){{\small$\zeta_1'$}}
	\put(32,23){{\small$\zeta_0$}}
	\put(28,20){{\small$\gamma$}}
	\put(43,23){{\small$\zeta_1$}}
	\put(59.5,23){{\small$ w_0$}}
	\put(67.5,23){{\small$ w_1$}}
	\put(8,58){{\small$ w_0'$}}
	\put(22.5,58){{\small$ w_1'$}}
	\put(2,39){{\small$\underline \Sub_{n,\ell-1}$}}
	\put(41,39){{\small$ P_{n,\ell-1}$}}
	\put(52,52){{\small$ P_{n,\ell-1}$}}
	\put(36,5){{\small$r_0$}}
	\put(46,5){{\small$r_1$}}
	\put(62,5){{\small$s_0$}}
	\put(69,5){{\small$s_1$}}
 \end{overpic}
  \caption{Points ``in the same fiber'' as $\gamma=\zeta_0$ in the suspension space $\fS_n$ project to their counterparts in the model space $\fS_{\ell-1}$}
 \label{fig.40}
\end{figure}

To define the counterparts of $\zeta_0, w_0$ on the model space, let
\[\begin{split}
	\underline b
	&\eqdef \underline\Sub_{n,\ell-1}(\underline a)
	= (\fp_{\ell-1}\circ P_{n,\ell-1})(\zeta_0)
	\in(\cA_{\ell-1})^\bZ\\
	\underline c
	 &\eqdef (\fp_{\ell-1}\circ P_{n,\ell-1})( w_0)
	 \in(\cA_{\ell-1})^\bZ
\end{split}	\]
(recall the definition of $\underline\Sub_{n,\ell-1}$ in \eqref{def:Tnell} and compare Figure \ref{fig.40}). 
Recalling the choice of $A\subset(\cA_{\ell-1})^\bZ$ in Lemma \ref{lemcla:invoke} and the definition of $B_n^{(\ell-1,0\va)}$ in \eqref{eq:Iwill},
 it holds $\underline b,\underline c\in A$.
For $j=0,\ldots,m_\ell-1$ let
\begin{equation}\label{eq:defzetai-new}\begin{split}
	\zeta_j'
	&\eqdef (\sigma_{{\ell-1}}^j(\underline b),0)
	\in \fS_{\ell-1},\\
	 w_j'
	&\eqdef \big(\sigma_{{\ell-1}}^j(\underline c),0)
	\in\fS_{\ell-1}.
\end{split}\end{equation}
One checks that, using the notation \eqref{eq:posintfloooor}, it holds
\begin{equation}\label{eq:defzetai-old}
\begin{split}
	\varsigma_n^{(\ell-1,j\va)}(\underline a)
	&= b_j^{(\ell-1)},
	\spac{where}
	\underline b=(\ldots,b^{(\ell-1)}_{-1}|b^{(\ell-1)}_0,\ldots),\\
	\varsigma_n^{(\ell-1,j(\va+1_\ell))}(\underline a)
	&= c_j^{(\ell-1)},
	\spac{where}
	\underline c=(\ldots,c^{(\ell-1)}_{-1}|c^{(\ell-1)}_0,\ldots).
\end{split}	
\end{equation}
Also let
\begin{equation}\label{eq:defrisi}
	r_j
	\eqdef R_{\ell-1}(b_j^{(\ell-1)})
	=\underline R_{\ell-1}(\sigma_{{\ell-1}}^j(\underline b))
\end{equation}
and note that for $j=0,\ldots,m_\ell-2$ this number is the length of the orbit segment (with respect to the suspension map) between $\zeta_j$ and $\zeta_{j+1}$, that is
\[
	r_j= s_n^{(\ell-1,(j+1)\va)}(\underline a)-s_n^{(\ell-1,j\va)}(\underline a).
\]
For $j=0,\ldots,m_\ell-1$ also let
\begin{equation}\label{eq:defrisi-s}
	s_j
	\eqdef R_{\ell-1}(c_j^{(\ell-1)})
	= \underline R_{\ell-1}(\sigma_{{\ell-1}}^j(\underline c)).
\end{equation}
Finally, by the estimate of  the maximal length of a tail added at level $\ell$ in Proposition \ref{procor:notormenta} (4) and using \eqref{eq:gettingclose}, it holds
\begin{equation}\label{eq:estitails}
	s_n^{(\ell-1,0\va+1_\ell)}(\underline a)
	-\big(s_n^{(\ell-1,(m_\ell-2)\va)}(\underline a)+r_{m_\ell-2}\big)
	\le \max \lvert \ft_\ell\rvert
	\le L_2\frac{1}{2^\ell}\frakR_\ell
			<\varepsilon\cdot\frakR_\ell.
\end{equation}

\smallskip\noindent\textbf{Step 1: Implementing the internal structure of horseshoes.}
Let us now use Section \ref{sec:internal}. Recall the factor map $H_{\ell-1}\colon \fS_{\ell-1}\to\Gamma_{\ell-1}$. By definition of $\zeta_j', w_j'$ together with Lemma \ref{lemcor:msoqui}, we have
\[\begin{split}
	H_{\ell-1}(\zeta_j')
	&\in\Sigma_N^-\times[\Cut_{\ell-1}^{-1}(b_j^{(\ell-1)})]^+\times J,\\
	H_{\ell-1}( w_j')
	&\in\Sigma_N^-\times[\Cut_{\ell-1}^{-1}(c_j^{(\ell-1)})]^+\times J.
\end{split}\]
By definition, the above points are in $\Lambda_{\ell-1}$.
By definition of $\zeta_j$ in \eqref{eq:defzetai} together with Lemma \ref{lemcor:msoqui} it follows, it holds
\[\begin{split}
	H_n(\zeta_j)
	=H_n\big(\underline a,s_n^{(\ell-1,j\va)}(\underline a)\big)
	&\in\Sigma_N^-\times[(\Cut_{\ell-1}^{-1}\circ \varsigma_n^{(\ell-1,j\va)})(\underline a)]^+\times J \\
	{\tiny{\text{using \eqref{eq:defzetai-old}}}}\quad
	&=\Sigma_N^-\times[\Cut_{\ell-1}^{-1}(b_j^{(\ell-1)})]^+\times J.
\end{split}\]
Analogously,
\[	H_n( w_j)
	\in\Sigma_N^-\times[\Cut_{\ell-1}^{-1}(c_j^{(\ell-1)})]^+\times J.
\]	
Note that $H_n(\zeta_j),H_n( w_j)$ are points in $\Lambda_n$. 

Proposition \ref{pro:distortion} implies the following key distortion estimate.
For its statement and proof we use the usual short notation for a Birkhoff sum $S_n\varphi=\varphi+\varphi\circ G+\ldots+\varphi\circ G^{n-1}$; the map $G$ is given by the context. Recall that $\psi_n$ is the lift of $\phi$ to $\fS_n$.

\begin{claim}\label{cla:distortion}
	With the notation above, for every $j=0,\ldots,m_\ell-1$ it holds
\[\begin{split}
	&\Big\lvert S_{r_j}\psi_n(\zeta_j)
		- S_{r_j}\psi_{\ell-1}(\zeta_j')\Big\rvert
	= \Big\lvert S_{r_j}\phi(H_n(\zeta_j))
		-S_{r_j}\phi(H_{\ell-1}(\zeta_j'))\Big\rvert	
	<\varepsilon r_j,\\
	&\Big\lvert S_{s_j}\psi_n( w_j)
		- S_{s_j}\psi_{\ell-1}( w_j')\Big\rvert
	= \Big\lvert S_{s_j}\phi(H_n( w_j))
		-S_{s_j}\phi(H_{\ell-1}( w_j'))\Big\rvert	
	<\varepsilon s_j.
\end{split}\]	
\end{claim}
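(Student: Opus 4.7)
The two left-hand equalities in the claim are immediate: combining the factor properties $H_n\circ\Phi_n = F\circ H_n$ and $H_{\ell-1}\circ\Phi_{\ell-1}=F\circ H_{\ell-1}$ with $\psi_n = \phi\circ H_n$ and $\psi_{\ell-1}=\phi\circ H_{\ell-1}$ and iterating gives $S_{r_j}\psi_n(\zeta_j) = S_{r_j}\phi(H_n(\zeta_j))$, and similarly at level $\ell-1$. The content of the claim is therefore the two right-hand distortion inequalities, and since the second is proved identically to the first (with $c_j^{(\ell-1)}, s_j$ in place of $b_j^{(\ell-1)}, r_j$), I describe only the first.

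I would deduce this bound from the controlled distortion result Proposition \ref{pro:distortion}, applied to the CIFS $\cB_{\ell-1}$ on the blending interval $J$, to the continuous potential $\phi$, and with tolerance $\varepsilon$. The crucial input has already been prepared in Step 1: Lemma \ref{lemcor:msoqui} places both $H_n(\zeta_j)$ and $H_{\ell-1}(\zeta_j')$ in the single set
\[
	H \;\eqdef\; \Sigma_N^- \times \big[\Cut_{\ell-1}^{-1}(b_j^{(\ell-1)})\big]^+ \times J,
\]
which is of exactly the form considered in that proposition, with $m=1$ and with single word $w = \Cut_{\ell-1}^{-1}(b_j^{(\ell-1)}) \in \cB_{\ell-1}$ of length $r_j$. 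The proposition then yields $\lvert S_{r_j}\phi(X) - S_{r_j}\phi(Y)\rvert < \varepsilon\, r_j$ for all $X,Y \in H$, and specializing to $X = H_n(\zeta_j)$ and $Y = H_{\ell-1}(\zeta_j')$ gives the desired estimate.

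\textbf{Main subtlety.} The hypothesis of Proposition \ref{pro:distortion} is stated as $m\ge N_1(\phi,\tau)$, whereas I am invoking it with $m=1$. Inspection of its proof shows that the bound $m\ge N_1$ is used only through $n = \sum_{i=1}^m |w_i| \ge m \ge N_1$; all subsequent arguments depend only on $n\ge N_1$. Consequently, the estimate remains valid with $m=1$ as long as the single word satisfies $|w|\ge N_1(\phi,\varepsilon)$ relative to the CIFS $\cB_{\ell-1}$. This is precisely the place where one must enlarge $L_0'$ beyond $L_0(\phi,\varepsilon)$ in the Main Lemma: since
\[
	\min_{w\in\cB_{\ell-1}}|w| \;\ge\; m_1 m_2 \cdots m_{\ell-1}\cdot \min_{w\in\cB_0}|w|,
\]
and the sequence $(m_n)_n$ chosen in Section \ref{sec:choicemn} grows arbitrarily fast, one can take $L_0'$ large enough that this minimum length exceeds $N_1(\phi,\varepsilon)$ for the CIFS $\cB_{\ell-1}$ for every $\ell\ge L_0'$. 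No further input is needed beyond adjusting this threshold and, if one prefers to work with the dense family $(\phi_k)_k$, absorbing the additional $2\lVert\phi-\phi_{k_0}\rVert r_j$ error from \eqref{eq:gettingclose} into the constant $C=C(\phi)$ of the Main Lemma.
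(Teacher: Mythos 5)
Your proof is correct and uses the same main tool the paper points to (Proposition~\ref{pro:distortion}), so the overall verdict is favorable; but it is worth recording that your route through that proposition is one of two natural ones, and there is a small point you gloss over.

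Your observation that the proof of Proposition~\ref{pro:distortion} only uses $n=\sum_j|w_j|\ge N_1$, so that $m=1$ is admissible once $|w|\ge N_1$, is accurate. The place where you are a bit quick is the assertion that the minimum word length in $\cB_{\ell-1}$ eventually exceeds $N_1(\phi,\varepsilon)$ for that CIFS: the subtlety is that $N_1$ here is not a fixed number but depends on $\ell$ through the contraction parameter $\alpha_0^{(\ell-1)}=2^{-(\ell-1)}(\alpha+\varepsilon_E)$, and since the contraction weakens by a factor of two at each level, $N_1$ for $\cB_{\ell-1}$ grows roughly like $2^{\ell}$. Your inequality $\min_{w\in\cB_{\ell-1}}|w|\ge m_1\cdots m_{\ell-1}\min_{w\in\cB_0}|w|$ does win, but only because the construction forces $m_n\gtrsim 2^n$ (e.g.\ the requirement $1\le N_2|\alpha^{(n-1)}|$ in \eqref{eq:formula-1nnnmao}), so the product is superexponential; it would be good to say this explicitly rather than appeal to ``arbitrarily fast growth.''

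The construction in Section~\ref{sec:choicemn} (item (II), which builds $m_n\ge N_1$ for $\cB_{n-1}$) suggests the authors had in mind a slightly different application: write $w=\Cut_{\ell-1}^{-1}(b_j^{(\ell-1)})\in\cB_{\ell-1}$ as a concatenation of $m_{\ell-1}$ words from $\cB_{\ell-2}$ followed by a tail, apply Proposition~\ref{pro:distortion} at level $\ell-2$ with $m=m_{\ell-1}$ (so the hypothesis $m\ge N_1$ is literally satisfied by construction), and then absorb the tail contribution using $|\ft_{\ell-1}|\le L_2 2^{-(\ell-1)}\frakR_{\ell-1}\le L_2^2\,2^{-(\ell-1)}r_j$ from Proposition~\ref{procor:notormenta}. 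That route does not require re-reading the proof of Proposition~\ref{pro:distortion}, but it costs an extra tail estimate and requires passing through the dense family $(\phi_k)$. Your route at level $\ell-1$ with $m=1$ is cleaner once the hypothesis is correctly re-read, and, as you note, gives the constant $\varepsilon$ directly without absorbing a $\|\phi-\phi_{k_0}\|$ term. Both work; yours is arguably the more efficient one, provided the threshold $L_0'$ and the growth-rate comparison are stated carefully.
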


\smallskip\noindent\textbf{Step 2: Birkhoff sums for $\zeta_j$.}

 \begin{figure}[h] 
 \begin{overpic}[scale=.9]{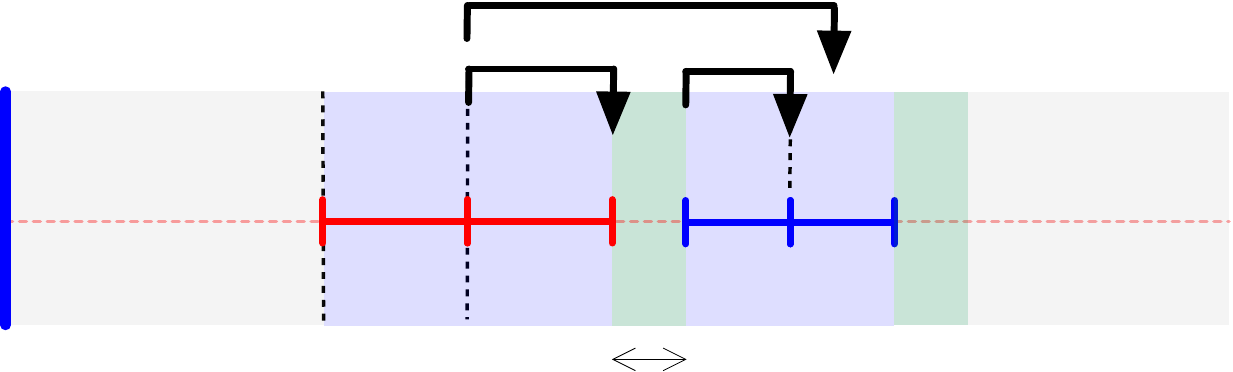}
 	\put(96,5){{\small$\fS_n$}}
 	\put(39,14){{\small$\zeta_j $}}
 	\put(56.5,14){{\small$ w_0$}}
	\put(51.5,5){{\rotatebox{90}{\small{$\ell$th level tail}}}}
	\put(27,-2.5){{\rotatebox{90}{\small$\fG_n^{(\ell,\va)}$}}}
	\put(39,-2.5){{\rotatebox{90}{\small$\fG_n^{(\ell-1,j\va)}$}}}
	\put(65,-2.5){{\rotatebox{90}{\small$\fG_n^{(\ell-1,j\va+1_\ell)}$}}}
 \end{overpic}
  \caption{$\ell$th level Birkhoff sums starting at some intermediate floor with $(\ell-1,n)$-address $j\va$  are split into $(\ell-1)$st level Birkhoff sums, but ignoring any tail of level $\ell$.}
 \label{fig.2}
\end{figure}

\begin{lemma}\label{cla:secondestimateML1a}
	Item (i) in Main Lemma \ref{mlemmapro:proofpro} holds for every $\zeta_j$, $j\in\{0,\ldots,m_\ell-1\}$, taking $C=C_0\eqdef 5\lVert\phi\rVert+8+4L_2$.
\end{lemma}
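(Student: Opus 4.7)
The plan is to decompose the orbit of length $\frakR_\ell$ starting at $\zeta_j$ into pieces corresponding to the floors of level $\ell-1$ that it crosses, compare each piece with its counterpart on the model suspension space $\fS_{\ell-1}$ via the distortion estimate of Claim \ref{cla:distortion}, and finally push it down to $\mu$ through Abramov's formula.

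First I would decompose the orbit $\zeta_j, \Phi_n(\zeta_j), \ldots, \Phi_n^{\frakR_\ell-1}(\zeta_j)$ as follows. For $i = j, \ldots, m_\ell - 1$ the orbit crosses the intermediate floor $\fG_n^{(\ell-1,i\va)}$ (so that it is at $\zeta_i$ after $\sum_{k=j}^{i-1} r_k$ iterates), then after $r_{m_\ell-1}$ further iterates it enters the $\ell$th-level tail of length $\ft_\ell$, after which it reaches $w_0 \in \fG_n^{(\ell-1,0\va+1_\ell)}$ and continues through the adjacent strip crossing $w_0, \ldots, w_{j-1}$. Let $T \eqdef \sum_{i=j}^{m_\ell-1} r_i + \ft_\ell + \sum_{i=0}^{j-1} s_i$ be the total length of these $m_\ell$ pieces plus the tail. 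Using the two instances of the large-deviation estimate \eqref{eq:applyLD} (for $\underline b$ with $i=j, k=m_\ell-j$ and for $\underline c$ with $i=0, k=j$), together with the tail bound \eqref{eq:estitails} and Proposition \ref{procor:notormenta} (3) giving $|\frakR_\ell - m_\ell\frakR_{\ell-1}| \le L_2 2^{-\ell}\frakR_\ell$, one concludes $|T - \frakR_\ell| < c_1\varepsilon\frakR_\ell$ for some constant $c_1$ depending only on $L_2$. So the Birkhoff sum of length $\frakR_\ell$ differs from the Birkhoff sum of length $T$ by at most $|T-\frakR_\ell|\lVert\phi\rVert < c_1\varepsilon\frakR_\ell\lVert\phi\rVert$.

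Next I would use Claim \ref{cla:distortion} to pass from the $\fS_n$-Birkhoff sums of $\psi_n$ over the pieces $(\zeta_i, r_i)$ and $(w_i, s_i)$ to the corresponding $\fS_{\ell-1}$-Birkhoff sums of $\psi_{\ell-1}$ over $(\zeta_i', r_i)$ and $(w_i', s_i)$, picking up an error of at most $\varepsilon(T - \ft_\ell) \le 2\varepsilon\frakR_\ell$. Since $\zeta_i' = (\sigma_{\ell-1}^i\underline b, 0)$ and $w_i' = (\sigma_{\ell-1}^i\underline c, 0)$ sit on the ground floor of $\fS_{\ell-1}$ and $r_i, s_i$ are precisely the return times of those points, the corresponding model Birkhoff sums equal $\Delta\psi_{\ell-1}(\sigma_{\ell-1}^i\underline b)$ and $\Delta\psi_{\ell-1}(\sigma_{\ell-1}^i\underline c)$ respectively by definition \eqref{eq:defDeltapsi}. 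The contribution of the tail is separately bounded by $\ft_\ell\lVert\phi\rVert < \varepsilon\frakR_\ell\lVert\phi\rVert$. Summing and applying \eqref{eq:regena} to $\underline b$ and to $\underline c$ (both of which lie in $A$ by \eqref{eq:interscetionhyp} and the definition of $B_n^{(\ell-1,0\va)}$), I obtain
\[
  \Big| \sum_{s=0}^{T-1}\psi_n(\Phi_n^s(\zeta_j)) - m_\ell\!\int\!\Delta\psi_{\ell-1}\,d\fb_{\ell-1} \Big| \,<\, c_2\varepsilon\frakR_\ell,
\]
for some constant $c_2$ depending only on $L_2$ and $\lVert\phi\rVert$.

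Finally I would translate $m_\ell \int \Delta\psi_{\ell-1}\, d\fb_{\ell-1}$ into $\frakR_\ell\int\phi\,d\mu$. By Abramov's formula (Lemma \ref{lem:Abramov}) and the factor relation $(H_{\ell-1})_\ast \lambda_{\ell-1} = \mu_{\ell-1}$ from Proposition \ref{pro:semiconj}, we have $\int\Delta\psi_{\ell-1}\,d\fb_{\ell-1} = \frakR_{\ell-1}\int\phi\,d\mu_{\ell-1}$. Thus
\[
  \big| m_\ell\!\int\!\Delta\psi_{\ell-1}\,d\fb_{\ell-1} - \frakR_\ell\!\int\!\phi\,d\mu \big|
  \,\le\, |m_\ell\frakR_{\ell-1} - \frakR_\ell|\,\lVert\phi\rVert + \frakR_\ell\big|\!\int\!\phi\,d\mu_{\ell-1} - \!\int\!\phi\,d\mu\big|,
\]
which by Proposition \ref{procor:notormenta} (3) and \eqref{eq:gettingclose-phi} is bounded by $\varepsilon\frakR_\ell(\lVert\phi\rVert+1)$. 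Combining the three error estimates, one arrives at
\[
  \Big|\frac{1}{\frakR_\ell}\sum_{s=0}^{\frakR_\ell-1}\psi_n(\Phi_n^s(\zeta_j)) - \int\phi\,d\mu\Big| \,<\, C_0\varepsilon
\]
with $C_0 = 5\lVert\phi\rVert + 8 + 4L_2$ after a careful bookkeeping of constants. The only nontrivial step is the size of $T$ relative to $\frakR_\ell$, which is where the restriction $\mathrm a_\ell < m_{\ell+1}-1$ imposed in \eqref{eq:grasssch} is crucial: it guarantees that the orbit, after leaving the strip $\fL_n^{(\ell,\va)}$, lands inside the next strip $\fL_n^{(\ell,\va+1_\ell)}$ rather than being swallowed by a tail at a higher level (whose length is not controlled by \eqref{eq:estitails}).
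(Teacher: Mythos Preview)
Your proposal is correct and follows essentially the same route as the paper: decompose the $\frakR_\ell$-orbit of $\zeta_j$ into the $(\ell-1)$-level pieces $(\zeta_i,r_i)$ and $(w_i,s_i)$ plus the $\ell$th-level tail, use Claim \ref{cla:distortion} to replace each piece by its model counterpart $\Delta\psi_{\ell-1}(\sigma_{\ell-1}^i\underline b)$ or $\Delta\psi_{\ell-1}(\sigma_{\ell-1}^i\underline c)$, apply the large-deviation bounds \eqref{eq:applyLD}--\eqref{eq:regena}, and finish via Abramov and \eqref{eq:gettingclose-phi}. The only cosmetic difference is that the paper works with the tail-free sum $S_1+S_2$ and bounds the discrepancy with the $\frakR_\ell$-sum directly (estimate \eqref{eq:proof1}), whereas you include the tail in your intermediate length $T$ and then subtract it; the bookkeeping is otherwise identical.
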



\begin{proof}
Given $j\in\{0,\ldots,m_\ell-1\}$, to estimate the Birkhoff sum 
\begin{equation}\label{eq:almostBirkhoff}
	\sum_{s=0}^{\frakR_\ell-1}\psi_n(\Phi_n^s(\zeta_j)),
\end{equation}
 we separate $m_\ell$ (disjoint) Birkhoff sums at level $(\ell-1)$ which start at  intermediate floors with $(\ell-1,n)$-addresses 
 \[	
 j\va ,(j+1)\va,\ldots, (m_\ell-1)\va  
 \spac{and}
 0(\va+1_\ell), 1(\va+1_\ell),\ldots,(j-1)(\va+1_\ell),
\] 
 respectively. Compare Figure \ref{fig.2}.
Let
\[
	S_1
	=S_1(j)
	\eqdef \sum_{i=j}^{m_\ell-1}
		S_{r_i}\psi_n(\zeta_i),
		\quad
	S_2
	=S_2(j)
	\eqdef \sum_{i=0}^{j-1}
		S_{s_i}\psi_n( w_i),
\]
where $S_1$ corresponds to the first collection of addresses and $S_2$ to the second one. Note that $S_1+S_2$ is  almost equal to the Birkhoff sum \eqref{eq:almostBirkhoff} except for the following two facts:  
\begin{itemize}
\item[(a)] the sum $S_1+S_2$  takes into account (disjoint) orbit pieces whose total length is in general not equal to the ``expected'' value  $\frakR_\ell$, 
\item[(b)] the sum $S_1+S_2$ ignores all values of $\psi_n$ at points of the tail added at the $\ell$th level.
\end{itemize}

To address item (a), first note that calculating $S_1+S_2$ we sum over $m_\ell$ orbit pieces each having a length very close to the expected one $\frakR_{\ell-1}$. Let us estimate this deviation:
\begin{equation}\label{eq:stranho}\begin{split}
	D_1
	&\eqdef \Big\lvert 
		m_\ell\frakR_{\ell-1}
		- \sum_{i=j}^{m_\ell-1}r_i
		- \sum_{i=0}^{j-1}s_i
	\Big\rvert\\
	\tiny{\text{recalling  \eqref{eq:defrisi}, \eqref{eq:defrisi-s} }}\quad
	&=\Big\lvert 
		m_\ell\frakR_{\ell-1}
		- \sum_{i=j}^{m_\ell-1}\underline R_{\ell-1}(\sigma_{{\ell-1}}^i(\underline b))
		- \sum_{i=0}^{j-1}\underline R_{\ell-1}(\sigma_{{\ell-1}}^i(\underline c))
	\Big\rvert\\
	&\le \Big\lvert 
		(m_\ell-j)\frakR_{\ell-1}
		- \sum_{i=j}^{m_\ell-1}\underline R_{\ell-1}(\sigma_{{\ell-1}}^i(\underline b))
		\Big\rvert	\\
	&\phantom{\le \left\lvert \frakR_\ell - m_\ell\frakR_{\ell-1}\right\rvert}
		+ \Big\lvert 
		j\frakR_{\ell-1}
		- \sum_{i=0}^{j-1}\underline R_{\ell-1}(\sigma_{{\ell-1}}^i(\underline c))
		\Big\rvert	\\
	{\tiny \text{applying \eqref{eq:applyLD} twice and using $m_\ell<\frakR_\ell$}}\quad
	&\le  	 2m_\ell \varepsilon
	<2\varepsilon\frakR_\ell
\end{split}\end{equation}

To address now item (b), first recall that by \eqref{eq:estitails} the length of the ``tail between the orbit pieces'' where the Birkhoff sums $S_1$ and $S_2$ are taken  is at most $\varepsilon\frakR_\ell$.
Further, by Proposition \ref{procor:notormenta} (3), the estimate \eqref{eq:stranho} of $D_1$, and \eqref{eq:gettingclose} and the choice of $\ell$, it holds
\begin{equation}\label{eq:caiu}
	\Big\lvert \frakR_{\ell} -  \sum_{i=j}^{m_\ell-1}r_i- \sum_{i=0}^{j-1}s_i\Big\rvert
	\le  \left\lvert 	\frakR_{\ell} - m_\ell \frakR_{\ell-1}\right\rvert
		+ D_1
	\le 	L_2\frac{1}{2^\ell}\frakR_\ell
		+ 2\varepsilon\frakR_\ell
	\le 3\varepsilon\frakR_\ell	.
\end{equation}

The Birkhoff sum  in \eqref{eq:almostBirkhoff} takes values over the same collection of points as in the sum $S_1+S_2$, except for two blocks of points. The first block consists of points on the tail (at most $\varepsilon\frakR_\ell$ points). The second block  consists of points at the end of the orbit piece in \eqref{eq:almostBirkhoff} (at most the difference between $\frakR_\ell$ and the sum of terms in  the tail, $S_1$, and $S_2$; that is, at most $\varepsilon\frakR_\ell+3\varepsilon\frakR_\ell$ terms). 
Hence, it follows
\begin{equation}\label{eq:proof1}
\begin{split}
	\left\lvert 
		\sum_{s=0}^{\frakR_{\ell} -1}\psi_n(\Phi_n^s(\zeta_j))
		- (S_1+S_2)
	\right\rvert
	\le 
	(\varepsilon\frakR_\ell+(\varepsilon\frakR_\ell+3\varepsilon\frakR_\ell)) \lVert\psi_n\rVert
	\le 
	5\varepsilon  \lVert\phi\rVert \frakR_\ell,
\end{split}\end{equation}
where for the second inequality we also used  $\lVert\psi_n\rVert\le\lVert\phi\rVert$.
This concludes the discussion of the obstructions (a) and (b).

As next step let us estimate $S_1$ and $S_2$.  First note that
\[\begin{split}
	&\Big\lvert S_1 
		- (m_\ell -j)\frakR_{\ell-1}\int\psi_{\ell-1}\,d\lambda_{\ell-1}\Big\rvert\\
	&= \Big\lvert \sum_{i=j}^{m_\ell-1}S_{r_i}\psi_n(\zeta_i)
		- (m_\ell -j)\frakR_{\ell-1}\int\psi_{\ell-1}\,d\lambda_{\ell-1}\Big\rvert\\	
	&\le \Big\lvert 
		\sum_{i=j}^{m_\ell-1}\left(S_{r_i}\psi_n(\zeta_i)
		- S_{r_i}\psi_{\ell-1}(\zeta_i')\right)\Big\rvert
		+\Big\lvert 
		\sum_{i=j}^{m_\ell-1}\Big(S_{r_i}\psi_{\ell-1}(\zeta_i')
		-\frakR_{\ell-1} \int\psi_{\ell-1}\,d\lambda_{\ell-1}\Big)\Big\rvert.
\end{split}\]
To estimate the first term, by Claim \ref{cla:distortion}, we obtain
\[
	\Big\lvert 
		\sum_{i=j}^{m_\ell-1}\left(S_{r_i}\psi_n(\zeta_i)
		- S_{r_i}\psi_{\ell-1}(\zeta_i')\right)\Big\rvert
	\le \varepsilon\sum_{i=j}^{m_\ell-1}r_i.
\]
To estimate the second term, note that  
\[\begin{split}
	\Big\lvert
	&\sum_{i=j}^{m_\ell-1}\left(S_{r_i}\psi_{\ell-1}(\zeta_i')
		-\frakR_{\ell-1} \int\psi_{\ell-1}\,d\lambda_{\ell-1}
		\right)\Big\rvert\\
	{\tiny{\text{by definition of $\zeta_i'$ in \eqref{eq:defzetai-new}}}}\quad	
	&= \Big\lvert	
	\sum_{i=j}^{m_\ell-1}\left(\Delta\psi_{\ell-1}(\sigma_{{\ell-1}}^i(\underline b))
		-\frakR_{\ell-1} \int\psi_{\ell-1}\,d\lambda_{\ell-1}	
		\right)\Big\rvert	\\
	{\tiny{\text{by Abramov's formula in Lemma \ref{lem:Abramov}}}}\quad
	&= \Big\lvert	
	\sum_{i=j}^{m_\ell-1}\left(\Delta\psi_{\ell-1}(\sigma_{{\ell-1}}^i(\underline b))
		- \int\Delta\psi_{\ell-1}\,d\fb_{\ell-1}	
		\right)\Big\rvert\\
	{\tiny{\text{by \eqref{eq:regena}	}}}\quad
	&<2\varepsilon L_2\frakR_\ell+2\varepsilon (m_\ell-i).
\end{split}\]
Putting the previous estimates together, we get
\[
	\Big\lvert S_1 
		- (m_\ell -j)\frakR_{\ell-1}\int\psi_{\ell-1}\,d\lambda_{\ell-1}\Big\rvert
	\le 	 \varepsilon\sum_{i=j}^{m_\ell-1}r_i
		+2\varepsilon L_2\frakR_\ell
		+2\varepsilon(m_\ell-i).
\]
We get the analogous estimate for $S_2$,
\[
	\Big\lvert S_2 
		- j\frakR_{\ell-1}\int\psi_{\ell-1}\,d\lambda_{\ell-1}\Big\rvert
	\le 	 \varepsilon\sum_{i=0}^{j-1}s_i	
		+2\varepsilon L_2\frakR_\ell
		+2\varepsilon j.
\]
This implies
\begin{equation}\label{eq:proof2}
\begin{split}
	\Big\lvert (S_1+S_2) 
	&- m_\ell \frakR_{\ell-1}\int\psi_{\ell-1}\,d\lambda_{\ell-1}\Big\rvert\\
	&\le \varepsilon\Big(\sum_{i=j}^{m_\ell-1}r_i
		+\sum_{i=0}^{j-1}s_i\Big)
		+ 4\varepsilon L_2\frakR_\ell 
		+2\varepsilon m_\ell\\
	{\tiny{\text{by \eqref{eq:caiu} and $m_\ell<\frakR_\ell$}}}	\,\,
	&< \varepsilon\frakR_\ell
		+\varepsilon 3\varepsilon\frakR_\ell
		+ 4\varepsilon L_2\frakR_\ell 
		+2\varepsilon \frakR_\ell
	= \varepsilon
		\Big( 3+3\varepsilon+4L_2\Big)\frakR_\ell .
\end{split}\end{equation}
This finishes the estimate of $S_1$ and $S_2$.

Note that by assumption $\ell\ge L_0$ with  \eqref{eq:gettingclose-phi} and using $\psi_{\ell-1}=\phi\circ H_{\ell-1}$ and $\mu_{\ell-1}=(H_{\ell-1})_\ast\lambda_{\ell-1}$ we  have
\[
	\Big\rvert\int\psi_{\ell-1}\,d\lambda_{\ell-1}
		-\int\phi\,d\mu\Big\rvert
	= \Big\rvert\int\phi\,d\mu_{\ell-1}
		-\int\phi\,d\mu\Big\rvert	
	<\varepsilon.	
\]
Finally, with the above, we conclude
\begin{equation}\label{eq:proof3}
\begin{split}
	\Big\lvert m_\ell \frakR_{\ell-1}&\int\psi_{\ell-1}\,d\lambda_{\ell-1}
		- \frakR_{\ell}\int\phi\,d\mu \Big\rvert\\
	&\le 	\Big\lvert m_\ell \frakR_{\ell-1}- \frakR_{\ell}\Big\rvert \cdot
		\lVert\phi\rVert
	+ \frakR_\ell\cdot\Big\rvert\int\psi_{\ell-1}\,d\lambda_{\ell-1}
		-\int\phi\,d\mu\Big\rvert\\
	{\tiny{\text{by Proposition \ref{procor:notormenta} (3)
	}}}\quad
	&\le 	L_2\frac{1}{2^\ell}\frakR_\ell \cdot\lVert\phi\rVert
		+\frakR_\ell\cdot\varepsilon
		= \Big(L_2\frac{1}{2^\ell}\lVert\phi\rVert+\varepsilon\Big)\frakR_\ell\\
	{\tiny\text{by \eqref{eq:gettingclose}}}\quad
	&\le 2\varepsilon\cdot\frakR_\ell.
\end{split}\end{equation}
Hence, \eqref{eq:proof1},  \eqref{eq:proof2}, and \eqref{eq:proof3} together imply
\[\begin{split}
	\left\lvert
		\frac{1}{\frakR_\ell}
			\sum_{s=0}^{\frakR_{\ell}-1}\psi_n(\Phi_n^s(\zeta_j))
		- \int\phi\,d\mu\right\rvert
	&\le (5\lVert\phi\rVert+8+4L_2)\varepsilon.
\end{split}\]
This proves the lemma.
\end{proof}

\smallskip\noindent\textbf{Step 3: Birkhoff sums for any other $\zeta\in C_n^{(\ell,\va)}$.}

\begin{lemma}\label{cla:secondestimateML1b}
	Item (i) in Main Lemma \ref{mlemmapro:proofpro} holds for every $\zeta\in C_n^{(\ell,\va)}$ taking $C=C_0+1$.
\end{lemma}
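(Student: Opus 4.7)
The plan is to reduce to Lemma \ref{cla:secondestimateML1a} by exploiting that any $\zeta \in C_n^{(\ell,\va)}$ is a forward $\Phi_n$-iterate of one of the $m_\ell$ base points $\zeta_j$ defined in \eqref{eq:defzetai}, with a number of intermediate steps that will be negligible compared to $\frakR_\ell$. The argument is then a shift-and-compare estimate that produces only an additional error of order $\varepsilon$.

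First, I would use \eqref{eq:fodase} applied one level below, together with Remark \ref{rem:tails}, to decompose the strip $\fL_n^{(\ell,\va)}$ as the disjoint union of the $m_\ell$ sub-strips $\fL_n^{(\ell-1,j\va)}$, $j=0,\ldots,m_\ell-1$, plus the level-$\ell$ tail at the top. Given $\zeta \in C_n^{(\ell,\va)}$, the definition of $C_n^{(\ell,\va)}$ provides $\gamma \in B_n^{(\ell-1,0\va)}$ and $k\ge 0$ with $\zeta=\Phi_n^k(\gamma)$; since $\zeta_0=\gamma,\zeta_1,\ldots,\zeta_{m_\ell-1}$ all lie on this orbit at successive intermediate floors, there is a unique $j\in\{0,\ldots,m_\ell-1\}$ and $k_1 \ge 0$ with $\zeta=\Phi_n^{k_1}(\zeta_j)$. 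The essential bound, derived by reading off in which of the above pieces $\zeta$ lies, is
\[
k_1 \le \max R_{\ell-1} + \max|\ft_\ell|,
\]
with the worst case arising when $\zeta$ sits inside the level-$\ell$ tail (then $j=m_\ell-1$).

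Next, I would convert this bound into one relative to $\frakR_\ell$. Proposition \ref{procor:notormenta} gives
\[
\frac{\max R_{\ell-1}}{\frakR_\ell} \le \frac{L_2}{m_\ell}, \qquad \frac{\max|\ft_\ell|}{\frakR_\ell} \le \frac{L_2}{2^\ell},
\]
and since both $m_\ell$ and $2^\ell$ tend to infinity, I would enlarge the constant $L_0'$ (beyond what Main Lemma \ref{mlemmapro:proofpro} already provides) so that, for every $\ell\ge L_0'$, the quantity $k_1$ satisfies $k_1 \le \frac{\varepsilon}{2\max\{1,\lVert\phi\rVert\}}\,\frakR_\ell$. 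A direct telescoping then yields the shift estimate
\[
\big|S_{\frakR_\ell}\psi_n(\zeta) - S_{\frakR_\ell}\psi_n(\zeta_j)\big|
= \Big|\sum_{s=\frakR_\ell}^{\frakR_\ell+k_1-1}\psi_n(\Phi_n^s(\zeta_j)) - \sum_{s=0}^{k_1-1}\psi_n(\Phi_n^s(\zeta_j))\Big|
\le 2k_1\lVert\phi\rVert
\le \varepsilon\,\frakR_\ell.
\]

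Finally, I would divide by $\frakR_\ell$ and combine this with Lemma \ref{cla:secondestimateML1a} applied to $\zeta_j$ (which itself belongs to $C_n^{(\ell,\va)}$, being a forward $\Phi_n$-iterate of $\gamma$ that remains in $\fL_n^{(\ell,\va)}$): this gives
\[
\Big|\frac{1}{\frakR_\ell}\sum_{s=0}^{\frakR_\ell-1}\psi_n(\Phi_n^s(\zeta)) - \int\phi\,d\mu\Big|
\le C_0\varepsilon + \varepsilon = (C_0+1)\varepsilon,
\]
which is the desired bound with $C=C_0+1$. The only nontrivial piece of bookkeeping is the control of $k_1$ via Proposition \ref{procor:notormenta}; the rest is a routine shift-invariance argument and I do not anticipate any serious obstacle.
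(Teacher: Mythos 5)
Your argument is correct and follows essentially the same route as the paper's proof: write $\zeta=\Phi_n^{k_1}(\zeta_j)$ for the nearest preceding intermediate floor of level $\ell-1$, bound the number $k_1$ of shifted indices, note that the two length-$\frakR_\ell$ Birkhoff sums differ in at most $2k_1$ terms, and invoke Lemma~\ref{cla:secondestimateML1a}. The one place you are in fact a bit more careful than the paper is the bound on $k_1$: the paper simply asserts $r\le\max R_{\ell-1}$, but if $\zeta$ sits in the level-$\ell$ tail of $\fL_n^{(\ell,\va)}$ (which is not excluded from $C_n^{(\ell,\va)}$, since $\fL_n^{(\ell,\va)}=L_n^{(\ell,\va)}(\fS_\ell)$ contains the image of $\fT_\ell$), the correct bound is $r\le\max R_{\ell-1}+\max\lvert\ft_\ell\rvert$ as you write; both extra terms are of order $\frakR_\ell/m_\ell$ and $\frakR_\ell/2^\ell$ respectively by Proposition~\ref{procor:notormenta}, and so, after adjusting $L_0'$ as you suggest (or equivalently using \eqref{eq:gettingclose} as the paper does), this costs only the single additional $\varepsilon$ in $C=C_0+1$.
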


\begin{proof}
Choose $j\in\{0,\ldots,m_\ell-1\}$ which addresses the previous intermediate floor of level $(\ell-1)$, that is, using notation \eqref{eq:defzetai}, choose the minimal index $j$ for which there is $r\ge0$ such that $\zeta=\Phi_n^r(\zeta_j)$ (compare Figure \ref{fig.3}). 

\begin{figure}[h] 
 \begin{overpic}[scale=.9]{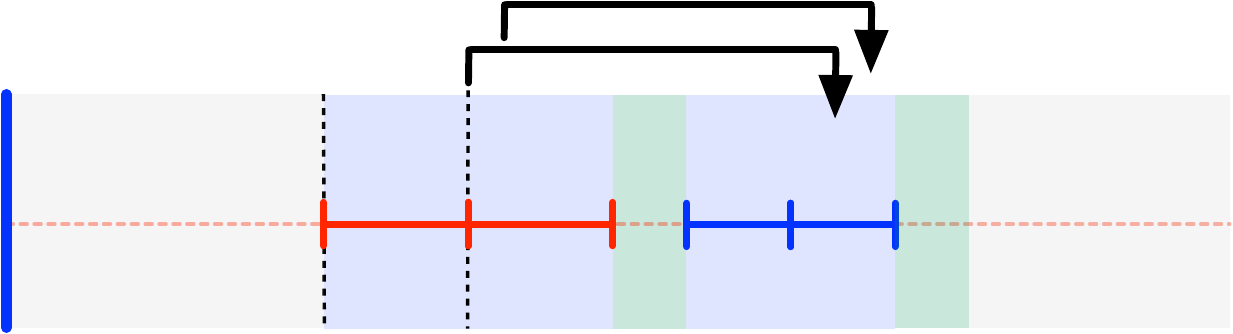}
 	\put(96,1){{\small$\fS_n$}}
 	\put(35,11){{\small$\zeta_j $}}
 	\put(40,11){{\small$\zeta$}}
	\put(51.5,5){{\rotatebox{90}{\small{$\ell$th level tail}}}}
	\put(27,-2.5){{\rotatebox{90}{\small$\fG_n^{(\ell,\va)}$}}}
	\put(39,-2.5){{\rotatebox{90}{\small$\fG_n^{(\ell-1,j\va)}$}}}
 \end{overpic}
  \caption{$\ell$th level Birkhoff sums starting at any point in a slice (shaded region) are replaced by one starting at some $(\ell-1)$st level intermediate floor}
 \label{fig.3}
\end{figure}
To estimate the differences between the Birkhoff sums along the orbit segments of length $\frakR_\ell$ starting at $\zeta$ and $\zeta_j$, respectively, just note that both  are on the same orbit and both have the same  length  and hence share most of its terms $r+1,\ldots,\frakR_\ell-r$.  Hence, 
\begin{equation}\label{eq:lunch}
	\Big\lvert \sum_{s=0}^{\frakR_\ell-1}\psi_n(\Phi_n^s(\zeta))
		- \sum_{s=0}^{\frakR_\ell-1}\psi_n(\Phi_n^s(\zeta_j))
	\Big\rvert
	\le 2r\lVert\psi_n\rVert
	\le 2r\lVert\phi\rVert.
\end{equation}
Note that together with Proposition \ref{procor:notormenta} (1)--(2) it holds
\begin{equation}\label{eq:lunch2}
	r
	\le \max R_{\ell-1}
	\le \frac{1}{m_\ell}\max R_\ell
	\le \frac{1}{m_\ell}L_2\frakR_\ell.
\end{equation}
Hence
\[\begin{split}
	\Big\lvert 
	\frac{1}{\frakR_\ell}\sum_{s=0}^{\frakR_\ell-1}\psi_n(\Phi_n^s(\zeta))
	- \int\phi\,d\mu\Big\rvert
	&\le \frac{1}{\frakR_\ell}
		\Big\lvert \sum_{s=0}^{\frakR_\ell-1}\psi_n(\Phi_n^s(\zeta))
		- \sum_{s=0}^{\frakR_\ell-1}\psi_n(\Phi_n^s(\zeta_j))
	\Big\rvert\\
	&\quad\phantom{\le}
	+\Big\lvert 
	\frac{1}{\frakR_\ell}\sum_{s=0}^{\frakR_\ell-1}\psi_n(\Phi_n^s(\zeta_j))
	- \int\phi\,d\mu\Big\rvert\\
	{\tiny{\text{using \eqref{eq:lunch}, \eqref{eq:lunch2},  \eqref{eq:gettingclose}, and Lemma \ref{cla:secondestimateML1a}}}}\quad
	&\le 2\lVert\phi\rVert\frac{1}{m_\ell}L_2
		+(5\lVert\phi\rVert+8+4L_2)\varepsilon
	\le (1+C_0)\varepsilon,
\end{split}\]
proving the lemma.
\end{proof}

\begin{lemma}\label{cla:secondestimateML}
	Item (ii) in Main Lemma \ref{mlemmapro:proofpro} is true for every $\gamma\in B_n^{(\ell-1,0\va)}$.
\end{lemma}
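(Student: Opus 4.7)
The plan is to reduce this statement directly to the controlled large-deviation bound for the roof function that has already been recorded in Lemma \ref{lemcla:invoke}. First I would unpack the hypothesis $\gamma \in B_n^{(\ell-1,0\va)}$. By the very definitions in \eqref{eq:Iwill} and of $A_n^{(\ell-1,0\va)} = P_{n,\ell-1}^{-1}(A\times\{0\})$, such a $\gamma$ lies on the intermediate floor $\fG_n^{(\ell-1,0\va)}$ and its projection
\[
	\underline b \eqdef (\fp_{\ell-1}\circ P_{n,\ell-1})(\gamma)
\]
belongs to the set $A\subset(\cA_{\ell-1})^\bZ$ of $\fb_{\ell-1}$-measure greater than $1-\varepsilon$ supplied by Lemma \ref{lemcla:invoke}. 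This is all of $\gamma\in B_n^{(\ell-1,0\va)}$ that is needed for item (ii); the forward-passage condition $\Phi_n^{\underline R_\ell\circ P_{n,\ell}}(\gamma)\in A_n^{(\ell-1,0\va+1_\ell)}$ plays no role here.

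Next, I would simply invoke the first inequality \eqref{eq:applyLD} of Lemma \ref{lemcla:invoke}, applied to this $\underline b$ with $i=0$ and $k=m_\ell$. It gives
\[
	\Big\lvert \sum_{j=0}^{m_\ell-1}\underline R_{\ell-1}(\sigma_{\ell-1}^j(\underline b)) - m_\ell\frakR_{\ell-1}\Big\rvert < m_\ell\varepsilon,
\]
and substituting back $\underline b = (\fp_{\ell-1}\circ P_{n,\ell-1})(\gamma)$ reproduces verbatim the inequality asserted in item (ii) of Main Lemma \ref{mlemmapro:proofpro}.

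There is genuinely no obstacle at this last step: the substantive work was done earlier, first in Proposition \ref{pro:LD} via the Bernstein inequality, and then in the choice of the fast-growing sequence $(m_n)_n$ in Section \ref{sec:choicemn}, which was engineered so that $m_\ell \ge N_0(\psi_{\ell-1},2^{-\ell})$ and the large-deviation conclusion is therefore available at scale $m_\ell$. The present lemma is just the bookkeeping observation that the ``good'' set $B_n^{(\ell-1,0\va)}$ sitting on the intermediate floor of $\fS_n$ inherits, through the factor map $P_{n,\ell-1}$ together with Lemma \ref{lem:defBerinmflo}, the roof-length deviation control from its model counterpart $A\times\{0\}\subset\fS_{\ell-1}$.
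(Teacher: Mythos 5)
Your proof is correct and takes essentially the same route as the paper. The paper's one-line proof ("take $j=0$ in \eqref{eq:stranho} and recall $\zeta_0=\gamma$") unrolls exactly to your argument: with $j=0$ the second sum in \eqref{eq:stranho} is empty, and the remaining term is a single application of \eqref{eq:applyLD} with $i=0$, $k=m_\ell$, which is what you invoke directly.
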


\begin{proof}
It suffices to take $j=0$ in \eqref{eq:stranho} to recall that $\zeta_0=\gamma$. 
\end{proof}

As $\gamma\in B_n^{(\ell,\va)}$ was arbitrary, Main Lemma \ref{mlemmapro:proofpro} (i) now is a consequence of Lemmas \ref{cla:secondestimateML1a} and \ref{cla:secondestimateML1b}. Main Lemma \ref{mlemmapro:proofpro}  (ii)  follows from Lemma \ref{cla:secondestimateML}. This finishes the proof.
\qed

\section{Proof of Theorem \ref{teo:2}}\label{sec:proooofb}

Our construction provides the sequences of horseshoes $(\Gamma_n)_n$, as in \eqref{eq:defGammanLambdan}, and Borel probability measures $(\mu_n)_n\subset\cM_{\rm erg}(F)$, as in \eqref{eq:defmun}. By Lemma \ref{cla:convergence}, the sequence $(\mu_n)_n$ weak$\ast$ converges to some probability measure $\mu_\infty$ as $n\to\infty$. By Corollary \ref{cor:tailinghorse}, it holds
\[
	\chi(\mu_\infty)=0
	\spac{and}
	h(F,\mu_\infty)
	\ge e^{-L_1\lvert\alpha\rvert}(h(F,\mu)-\varepsilon_H).
\]	
It remains to show that $\mu_\infty$ is ergodic. For that we will use Proposition \ref{pro:mainrussian} below that is a minor extension of \cite[Lemma 2]{GorIlyKleNal:05}%
\footnote{In \cite{GorIlyKleNal:05}, it is assumed that every measure in the sequence is uniformly distributed on a periodic orbit}. 
For completeness, we prove it in the Appendix.

For every continuous $\phi\colon\Sigma_N\times\bS^1\to\bR$ and $\varepsilon>0$ let $L_0=L_0(\phi,\varepsilon)\in\bN$ as in Proposition \ref{prothe:suspflow}. Hence, for every $\ell\ge L_0$ and $n\ge \ell+1$ the subset 
\[
	\Gamma_{n,\phi,\varepsilon}
	\eqdef H_n(\fS_{n,\phi,\varepsilon})
	\subset \Gamma_n
\]
satisfies 
\[
	\mu_n(\Gamma_{n,\phi,\varepsilon})
	= (H_n)_\ast\lambda_n(\Gamma_{n,\phi,\varepsilon})
	= \lambda_n(\fS_{n,\phi,\varepsilon})
	>1-\varepsilon	.
\]
It also follows that for every $X=H_n(\underline a,s) \in H_n(\fS_{n,\phi,\varepsilon})=\Gamma_{n,\phi,\varepsilon}$ it holds
\[
	\left\lvert\frac{1}{\frakR_\ell}
		\sum_{k=0}^{\frakR_\ell-1}\phi(F^k(X))
		- \int\phi\,d\mu \right\rvert
	= \left\lvert\frac{1}{\frakR_\ell}
		\sum_{k=0}^{\frakR_\ell-1}\psi_n(\Phi_n^k(\underline a,s))
		- \int\phi\,d\mu \right\rvert	
	<\varepsilon,
\]
where we also used the fact that by Proposition \ref{pro:semiconj} the maps $\Phi_n$ and $F|_{\Gamma_n}$ are semi-conjugate by $H_n$.  We now use the following result.

\begin{proposition}\label{pro:mainrussian}
Let $G\colon X\to X$ be a homeomorphism of a compact metric space. Consider sequences of Borel measurable subsets $(\Upsilon_n)_n$ of $X$, Borel measures $(\varrho_n)_n$ on $X$ weak$\ast$ converging to some Borel measure $\varrho$, and positive integers $(T(n))_n$ tending to $\infty$.
	Assume that for every $\phi\colon X\to\bR$ continuous and $\varepsilon>0$, there exists $L=L(\phi,\varepsilon)\in\bN$ such that for every $\ell\ge L$ there exists $N=N(\ell)\ge \ell$ such that for every $n\ge N$ there exists a measurable subset $\Upsilon_{n,\phi,\varepsilon}\subset\Upsilon_n$ with $\varrho_n(\Upsilon_{n,\phi,\varepsilon})>1-\varepsilon$ such that  
\[
	\left\lvert\frac{1}{T(\ell)}\sum_{k=0}^{T(\ell)-1}\phi(G^k(x))-\int\phi\,d\varrho\right\rvert
	<\varepsilon
	\spac{for every}
	x\in\Upsilon_{n,\phi,\varepsilon}.
\]
Then $\varrho$ is $G$-ergodic.	
\end{proposition}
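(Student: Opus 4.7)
The plan is to promote the hypothesis into convergence of the Birkhoff averages of length $T(\ell)$ to $\int\phi\,d\varrho$ in $\varrho$-probability along the subsequence $T(\ell)$, then extract a diagonal further subsequence along which the convergence is $\varrho$-almost sure for a countable dense family of test functions, and finally derive both $G$-invariance and ergodicity of $\varrho$ from this.

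For the first step, given a continuous $\phi\colon X\to\bR$ and $\varepsilon>0$, I would consider the set
\[
	B_{\ell,\varepsilon}(\phi)
	\eqdef\Big\{x\in X\colon\Big\lvert\frac{1}{T(\ell)}\sum_{k=0}^{T(\ell)-1}\phi(G^k(x))-\int\phi\,d\varrho\Big\rvert\le\varepsilon\Big\},
\]
which is closed since $\phi$ is continuous and $G$ a homeomorphism. Feeding the hypothesis with threshold $\varepsilon/2$, for every $\ell\ge L(\phi,\varepsilon/2)$ and every $n\ge N(\ell)$ one has $\Upsilon_{n,\phi,\varepsilon/2}\subset B_{\ell,\varepsilon/2}(\phi)$, hence $\varrho_n(B_{\ell,\varepsilon/2}(\phi))>1-\varepsilon/2$. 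Since $B_{\ell,\varepsilon/2}(\phi)$ is closed and $\varrho_n\to\varrho$ weak$\ast$, the Portmanteau theorem then gives
\[
	\varrho(B_{\ell,\varepsilon}(\phi))
	\ge\varrho(B_{\ell,\varepsilon/2}(\phi))
	\ge\limsup_{n\to\infty}\varrho_n(B_{\ell,\varepsilon/2}(\phi))
	\ge 1-\varepsilon/2,
\]
which is the desired convergence in $\varrho$-probability.

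Next I would fix a countable dense family $\{\phi_j\}_{j\in\bN}$ in $C(X,\bR)$ and extract, by a standard diagonal argument, a subsequence $(T(\ell_k))_k$ and a full $\varrho$-measure set $X_0\subset X$ on which, for every $j\in\bN$, both
\[
	\frac{1}{T(\ell_k)}\sum_{i=0}^{T(\ell_k)-1}\phi_j(G^i(x))\to\int\phi_j\,d\varrho
	\spac{and}
	\frac{1}{T(\ell_k)}\sum_{i=0}^{T(\ell_k)-1}\phi_j(G^{i+1}(x))\to\int\phi_j\circ G\,d\varrho
\]
as $k\to\infty$ (the second convergence follows by applying the first step also to $\phi_j\circ G$). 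The difference of the two left-hand sides equals $(\phi_j(G^{T(\ell_k)}(x))-\phi_j(x))/T(\ell_k)$, which tends to zero uniformly, yielding $\int\phi_j\circ G\,d\varrho=\int\phi_j\,d\varrho$ for every $j$; density gives $G$-invariance of $\varrho$. With invariance available, Birkhoff's ergodic theorem identifies the $\varrho$-a.e.\ limit of the full sequence $n^{-1}\sum_{k=0}^{n-1}\phi_j(G^k(x))$ as $\bE_\varrho[\phi_j\mid\mathcal{I}](x)$, where $\mathcal{I}$ is the $G$-invariant $\sigma$-algebra; uniqueness of the a.e.\ limit forces $\bE_\varrho[\phi_j\mid\mathcal{I}]=\int\phi_j\,d\varrho$ for every $j$. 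By $L^1(\varrho)$-density, this identity extends to all indicators of $G$-invariant Borel sets, pinning their $\varrho$-measure to $\{0,1\}$, which is ergodicity.

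The main hurdle is the first step: Portmanteau transfers measure only one way across weak$\ast$ limits, and applying it naively to the open set $\{\lvert\cdots\rvert<\varepsilon\}$ would yield only a useless upper bound on $\varrho$. The remedy, shifting thresholds from $\varepsilon/2$ to $\varepsilon$ and inserting the closed intermediate set $B_{\ell,\varepsilon/2}(\phi)$ so that upper semicontinuity on closed sets can be exploited, is the only genuinely delicate point; everything afterwards is routine diagonalization plus Birkhoff.
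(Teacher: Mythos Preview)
Your proof is correct and follows essentially the same strategy as the paper: pass the ``large good set'' information from $\varrho_n$ to $\varrho$ through weak$\ast$ convergence, then combine with Birkhoff's theorem on a countable dense family of test functions. The executions differ in two respects worth noting. First, you apply Portmanteau directly to the closed level set $B_{\ell,\varepsilon/2}(\phi)$, whereas the paper works with the upper topological limit $\bigcap_k\overline{\bigcup_{n\ge k}\Upsilon_{n,\phi,\varepsilon}}$ of the given sets and invokes a Portmanteau-type bound for that; your route is a bit more transparent since the closed set is explicit. Second, you prove $G$-invariance of $\varrho$ from scratch via the $\phi_j$ versus $\phi_j\circ G$ trick, while the paper simply applies Birkhoff's theorem ``to the invariant measure $\varrho$'' without comment --- this is harmless in the paper's application because each $\varrho_n=\mu_n$ is $F$-invariant and weak$\ast$ limits of invariant measures are invariant, but your argument makes the proposition self-contained as stated. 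The remaining difference (your diagonal subsequence extraction versus the paper's Borel--Cantelli with summable $\varepsilon_k$) is cosmetic.
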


The comments above imply that we can apply Proposition \ref{pro:mainrussian}   with $G=F$, $X=\Sigma_N\times\bS^1$, $\Upsilon_n=\Gamma_n$, $\varrho_n=\mu_n$, $T(n)=\frakR_n$, $L=L_0$, $N=L_0$, $\Upsilon_{n,\phi,\varepsilon}=\Gamma_{n,\phi,\varepsilon}$. Therefore, the limit measure $\mu_\infty$ is ergodic.
\qed

\section*{Appendix: Ergodicity of limit measures (Proof of Proposition \ref{pro:mainrussian})}

Given $\phi\colon X\to\bR$, denote
\[
	\underline\phi(x)\eqdef \liminf_{n\to\infty}\frac1n\sum_{k=0}^{n-1}\phi(G^k(x))
	\spac{and}
	\overline\phi(x)\eqdef \limsup_{n\to\infty}\frac1n\sum_{k=0}^{n-1}\phi(G^k(x)).
\]
Given $\varepsilon>0$, denote the upper topological limit of $(\Upsilon_{n,\phi,\varepsilon})_n$ by $\Upsilon_{\phi,\varepsilon}$, that is,
\[
	\Upsilon_{\phi,\varepsilon}
	\eqdef {\bigcap_{k=1}^\infty\overline{\bigcup_{n=k}^\infty \Upsilon_{n,\phi,\varepsilon}}}
	=\big\{y\in X\colon
		\exists n_k\to\infty, 
		y_k\in \Upsilon_{n_k,\phi,\varepsilon},y=\lim_{k\to\infty}y_k\big\}.
\]
We use the following fact that is straightforward to check.

\begin{claim}\label{lempro:1}
	For every continuous $\phi\colon X\to\bR$  and $\varepsilon>0$ it holds 
\[
	\varrho(\Upsilon_{\phi,\varepsilon})\ge \limsup_{n\to\infty}\varrho_n(\Upsilon_{n,\phi,\varepsilon}).
\]	
\end{claim}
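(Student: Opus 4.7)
The plan is to exploit the fact that $\Upsilon_{\phi,\varepsilon}$ is the intersection of a decreasing sequence of closed sets, and then apply the (one-sided) Portmanteau property of weak$\ast$ convergence for closed sets.

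For each $k\ge 1$, let me set
\[
	U_k\eqdef \overline{\bigcup_{n=k}^\infty \Upsilon_{n,\phi,\varepsilon}},
\]
so that each $U_k$ is closed, the sequence $(U_k)_k$ is decreasing, and by definition $\Upsilon_{\phi,\varepsilon}=\bigcap_{k=1}^\infty U_k$. For every fixed $k$ and every $n\ge k$ one has $\Upsilon_{n,\phi,\varepsilon}\subset U_k$, hence $\varrho_n(\Upsilon_{n,\phi,\varepsilon})\le \varrho_n(U_k)$. Taking $\limsup$ in $n$ gives
\[
	\limsup_{n\to\infty}\varrho_n(\Upsilon_{n,\phi,\varepsilon})
	\le \limsup_{n\to\infty}\varrho_n(U_k).
\]

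Since $\varrho_n\to\varrho$ in the weak$\ast$ topology and $U_k$ is closed, the Portmanteau theorem yields $\limsup_{n\to\infty}\varrho_n(U_k)\le \varrho(U_k)$. Combining the two inequalities, I get for every $k\in\bN$
\[
	\limsup_{n\to\infty}\varrho_n(\Upsilon_{n,\phi,\varepsilon})
	\le \varrho(U_k).
\]

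Finally, since $(U_k)_k$ is a decreasing sequence of Borel sets with intersection $\Upsilon_{\phi,\varepsilon}$ and $\varrho$ is a finite Borel measure (a weak$\ast$ limit of probability measures on a compact metric space, hence a probability measure), continuity of $\varrho$ from above gives $\varrho(U_k)\to\varrho(\Upsilon_{\phi,\varepsilon})$ as $k\to\infty$. Passing to the limit $k\to\infty$ in the previous display yields the claim. The argument is quite short; the only ``subtle'' point is recognizing that the topological upper limit of the sequence $(\Upsilon_{n,\phi,\varepsilon})_n$ is automatically closed and can be written as a decreasing intersection of closed sets, which puts us precisely in a setting where weak$\ast$ convergence is well-behaved.
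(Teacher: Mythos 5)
Your proof is correct. The paper itself offers no argument for this claim (it is simply called ``straightforward to check''), and the route you take — writing the topological upper limit as a decreasing intersection of closed sets $U_k$, bounding $\varrho_n(\Upsilon_{n,\phi,\varepsilon})\le\varrho_n(U_k)$ for $n\ge k$, applying the one-sided Portmanteau inequality for closed sets, and then letting $k\to\infty$ by continuity from above — is precisely the standard way to establish it.
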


\begin{lemma}\label{lemcla:1}
	For every continuous $\phi\colon X\to\bR$  and $\varepsilon>0$ there exists a set $\Xi_{\phi,\varepsilon}\subset \Upsilon_{\phi,\varepsilon}$ such that $\varrho(\Xi_{\phi,\varepsilon})>1-\varepsilon$ and 
\[
	\int\phi\,d\varrho-\varepsilon
	<\overline\phi(x)=\underline\phi(x)
	<\int\phi\,d\varrho+\varepsilon
\]	 
for every $x\in\Xi_{\phi,\varepsilon}$.
\end{lemma}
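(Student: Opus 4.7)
The plan is to combine three ingredients: (i) $G$-invariance of $\varrho$ derived from the hypothesis, (ii) a pointwise upgrade of the $T(\ell)$-time Birkhoff control from the sets $\Upsilon_{n,\phi,\varepsilon}$ to their upper topological limit $\Upsilon_{\phi,\varepsilon}$, and (iii) Birkhoff's ergodic theorem to force existence of the Cesàro limit on a set of full $\varrho$-measure. For any continuous $\psi$ let $S_\ell\psi:=\tfrac{1}{T(\ell)}\sum_{j=0}^{T(\ell)-1}\psi\circ G^j$, which is itself continuous on $X$. Without loss of generality we may assume (by replacing $\Upsilon_{n,\phi,\varepsilon}$ with $\bigcup_{\varepsilon'\le\varepsilon}\Upsilon_{n,\phi,\varepsilon'}$ if necessary) that $\varepsilon'\le\varepsilon$ implies $\Upsilon_{n,\phi,\varepsilon'}\subseteq\Upsilon_{n,\phi,\varepsilon}$; this preserves the strict Birkhoff bound on the larger set and forces $\Upsilon_{\phi,\varepsilon'}\subseteq\Upsilon_{\phi,\varepsilon}$.

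First I would establish that $\varrho$ is $G$-invariant. Given any continuous $\psi$ and $\varepsilon>0$, apply the hypothesis with $\phi=\psi$: for $\ell\ge L(\psi,\varepsilon)$ and $n\ge N(\ell)$ one has $\lvert S_\ell\psi-\int\psi\,d\varrho\rvert<\varepsilon$ on $\Upsilon_{n,\psi,\varepsilon}$, a set of $\varrho_n$-measure at least $1-\varepsilon$. Integrating against $\varrho_n$ and bounding the complement trivially by $2\lVert\psi\rVert$ yields $\lvert\int S_\ell\psi\,d\varrho_n-\int\psi\,d\varrho\rvert\le(1+2\lVert\psi\rVert)\varepsilon$. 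Letting $n\to\infty$ using weak$\ast$ convergence and continuity of $S_\ell\psi$, and repeating the same estimate with $\psi$ replaced by $\psi\circ G$ (whose $S_\ell$-sum differs pointwise from that of $\psi$ by $(\psi\circ G^{T(\ell)}-\psi)/T(\ell)=O(1/T(\ell))$), produces $\lvert\int\psi\,d\varrho-\int\psi\circ G\,d\varrho\rvert\le 2(1+2\lVert\psi\rVert)\varepsilon+O(1/T(\ell))$. Sending $\ell\to\infty$ then $\varepsilon\to 0$ gives $G$-invariance.

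Second, fix $\varepsilon_0\in(0,\varepsilon)$ and pick any $x\in\Upsilon_{\phi,\varepsilon_0}$. By definition there are $n_k\to\infty$ and $x_{n_k}\in\Upsilon_{n_k,\phi,\varepsilon_0}$ with $x_{n_k}\to x$. For every $\ell\ge L(\phi,\varepsilon_0)$ and all $k$ large enough so that $n_k\ge N(\ell)$, the hypothesis gives $\lvert S_\ell\phi(x_{n_k})-\int\phi\,d\varrho\rvert<\varepsilon_0$; continuity of $S_\ell\phi$ then transports the bound to $x$, yielding
\[
\Bigl\lvert S_\ell\phi(x)-\int\phi\,d\varrho\Bigr\rvert\le\varepsilon_0
\quad\text{for all }x\in\Upsilon_{\phi,\varepsilon_0}\text{ and all }\ell\ge L(\phi,\varepsilon_0).
\]

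Third, by $G$-invariance and Birkhoff's ergodic theorem the Cesàro limit $\phi^\ast(x):=\lim_n\tfrac{1}{n}\sum_{k=0}^{n-1}\phi(G^k(x))$ exists on a $\varrho$-full-measure set $Y$, and $\overline\phi=\underline\phi=\phi^\ast$ on $Y$. Set $\Xi_{\phi,\varepsilon}:=\Upsilon_{\phi,\varepsilon_0}\cap Y\subseteq\Upsilon_{\phi,\varepsilon}$, using the nestedness arranged at the start. Since $T(\ell)\to\infty$, the previous display forces $\phi^\ast(x)\in[\int\phi\,d\varrho-\varepsilon_0,\int\phi\,d\varrho+\varepsilon_0]$ for every $x\in\Xi_{\phi,\varepsilon}$, and combined with $\varepsilon_0<\varepsilon$ this gives the strict bound claimed. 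Finally, Claim \ref{lempro:1} (applied with $\varepsilon_0$ in place of $\varepsilon$) and the hypothesis $\varrho_n(\Upsilon_{n,\phi,\varepsilon_0})>1-\varepsilon_0$ yield $\varrho(\Xi_{\phi,\varepsilon})=\varrho(\Upsilon_{\phi,\varepsilon_0})\ge\limsup_n\varrho_n(\Upsilon_{n,\phi,\varepsilon_0})\ge 1-\varepsilon_0>1-\varepsilon$. The main obstacle is the first step: $G$-invariance of $\varrho$ is not an explicit hypothesis of Proposition \ref{pro:mainrussian} and must be extracted from the uniform $T(\ell)$-average control via the telescoping-plus-weak$\ast$-limit argument above; once invariance is in hand, Steps~2 and~3 combine continuity and Birkhoff's theorem in a routine way.
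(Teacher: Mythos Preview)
Your proof is correct and follows essentially the same route as the paper: transport the $T(\ell)$-time Birkhoff control from $\Upsilon_{n,\phi,\varepsilon}$ to the upper topological limit $\Upsilon_{\phi,\varepsilon}$ by continuity of $S_\ell\phi$, then intersect with the full-measure set from Birkhoff's theorem where $\overline\phi=\underline\phi$. The paper's argument is identical in structure, though it is looser with constants (it ends with a $2\varepsilon$ or $3\varepsilon$ bound rather than $\varepsilon$, which is harmless for the application); your use of an auxiliary $\varepsilon_0<\varepsilon$ and the nestedness reduction is a clean way to match the stated strict bound exactly.

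The one substantive addition in your write-up is Step~1: you explicitly derive $G$-invariance of $\varrho$ from the hypotheses via the telescoping argument $S_\ell(\psi\circ G)-S_\ell\psi=O(1/T(\ell))$ combined with weak$\ast$ convergence. The paper's proof simply writes ``Applying the Birkhoff theorem to the invariant measure $\varrho$'' without justification. In the paper's application the $\varrho_n=\mu_n$ are $F$-invariant, so invariance of the limit is automatic, but as Proposition~\ref{pro:mainrussian} is stated this hypothesis is absent, and your derivation fills that gap. Your invariance argument is correct; a minor simplification is that if one is willing to add ``$\varrho_n$ are $G$-invariant'' to the hypotheses (as holds in the application), Step~1 becomes unnecessary.
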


\begin{proof}
	Given $\phi$ and $\varepsilon$, let $L=L(\phi,\varepsilon)$ and for $\ell\ge L$ let $N=N(\ell)$ be as in the hypothesis of the proposition. 
By Claim \ref{lempro:1} and our hypothesis, 
\[
	\varrho(\Upsilon_{\phi,\varepsilon})\ge\limsup_n\varrho_n(\Upsilon_{n,\phi,\varepsilon})>1-\varepsilon.
\]	 

Every $x\in\Upsilon_{\phi,\varepsilon}$ is the limit of some sequence of points $x_i$  in $\Upsilon_{n_i,\phi,\varepsilon}$, $n_i\ge \ell$. Hence, for $n_i\ge\ell$ sufficiently large, it holds
\[
	\left\lvert \frac{1}{T(\ell)}\sum_{k=0}^{T(\ell)-1}\phi(G^k(x))
			-\frac{1}{T(\ell)}\sum_{k=0}^{T(\ell)-1}\phi(G^k(x_i))\right\rvert
	<\varepsilon.
\]
By our hypothesis on $x_i\in\Upsilon_{n_i,\phi,\varepsilon}$, it holds
\[
	\left\lvert \frac{1}{T(\ell)}\sum_{k=0}^{T(\ell)-1}\phi(G^k(x_i))
			- \int\phi\,d\varrho\right\rvert
	<\varepsilon.		
\]
Hence, for every $x\in\Upsilon_{\phi,\varepsilon}$ and $\ell\ge1$ sufficiently large it holds 
\[
	\left\lvert \frac{1}{T(\ell)}\sum_{k=0}^{T(\ell)-1}\phi(G^k(x))
	- \int\phi\,d\varrho\right\rvert
	<2\varepsilon.
\]
Therefore, with the notation above, for every $x\in\Upsilon_{\phi,\varepsilon}$
\[
	 \overline\phi(x) > \int\phi\,d\varrho -2\varepsilon
	 \spac{and}
	 \underline\phi(x) < \int\phi\,d\varrho+2\varepsilon.
\]
Applying the Birkhoff theorem to the invariant measure $\varrho$, we get a set $Z$ with $\varrho(Z)=1$ so that at every $z\in Z$ it holds $\overline\phi(z)=\underline\phi(z)$. 
By the above, for every $z\in \Xi_{\phi,\varepsilon}\eqdef Z\cap \Upsilon_{\phi,\varepsilon}$ it holds $\lvert\overline\phi(z)-\phi(\varrho)\rvert<3\varepsilon$ and $\varrho(\Xi_{\phi,\varepsilon})=\varrho(\Upsilon_{\phi,\varepsilon})>1-\varepsilon$. This proves the lemma.
\end{proof}

Let us now prove that $\varrho$ is ergodic. 
Take a dense set of continuous functions $\{\phi_k\}_k$ and a summable sequence of positive numbers $(\varepsilon_k)_k$. As 
\[
	\sum_k\varrho(\Upsilon_{\phi_k,\varepsilon_k}^c)
	\le \sum_k\varepsilon_k<\infty,
\]	 
by the Borel-Cantelli lemma, there is a set $\Upsilon$ satisfying $\varrho(\Upsilon)=1$ such that every $x\in\Upsilon$ is contained in only finitely many sets $\Upsilon_{\phi_k,\varepsilon_k}^c$. It follows that for every continuous $\phi$ and  $x\in\Upsilon$ Birkhoff averages of $\phi$ converge to $\int\phi\,d\varrho$. This implies that $\varrho$ is $G$-ergodic.
\qed

\bibliographystyle{siam}

\end{document}